\theoremstyle{plain}                       
\newtheorem{lemma}{Lemma}[section]
\newtheorem{theorem}[lemma]{Theorem}
\newtheorem{remark}[lemma]{Remark}
\newtheorem{definition}[lemma]{Definition}
\theoremstyle{remark}
\def\Xint#1{\mathchoice
  {\XXint\displaystyle\textstyle{#1}}%
  {\XXint\textstyle\scriptstyle{#1}}%
  {\XXint\scriptstyle\scriptscriptstyle{#1}}%
  {\XXint\scriptscriptstyle\scriptscriptstyle{#1}}%
  \!\int}
\def\XXint#1#2#3{{\setbox0=\hbox{$#1{#2#3}{\int}$}
  \vcenter{\hbox{$#2#3$}}\kern-.5\wd0}}
\def\dashint{\Xint-}
\begin{document}
\allowdisplaybreaks
\pagestyle{myheadings}\markboth{$~$ \hfill {\rm Q. Xu,} \hfill $~$} {$~$ \hfill {\rm  } \hfill$~$}

\author{
Li Wang,\quad Qiang Xu, \quad Peihao Zhao\\
}

%


\title{\textbf{Quantitative Estimates for Homogenization
of Nonlinear Elliptic Operators in Perforated Domains}}
\maketitle
\begin{abstract}
This paper was devoted to study the quantitative homogenization
problems for nonlinear elliptic operators in perforated
domains. We obtained a sharp error estimate $O(\varepsilon)$ when the problem was anchored in the reference domain $\varepsilon\omega$.
If concerning a bounded perforated domain, one will see a bad influence from the boundary layers,
which leads to the loss of the convergence rate by $O(\varepsilon^{1/2})$.
Equipped with the error estimates, we developed both interior and boundary Lipschitz estimates at large-scales. As an application, we
received the so-called quenched Calder\'on-Zygumund estimates by Shen's
real arguments. To overcome some difficulties, we
improved the extension theory from (\cite[Theorem 4.3]{OSY}) to $L^p$-versions with  $\frac{2d}{d+1}-\epsilon<p<\frac{2d}{d-1}+\epsilon$ and $0<\epsilon\ll1$. Appealing to this, we established
Poincar\'e-Sobolev inequalities of local type on perforated domains.
Some of results in the present literature are new even for related linear elliptic models.
\\
\textbf{Key words.}
homogenization; perforated domains;
nonlinear elliptic operators;
convergence rates; large-scale Lipschitz estimates; quenched Calder\'on-Zygumund estimates.
\end{abstract}

\tableofcontents

\section{Introduction and main results}

\subsection{Hypothesises and main results}\label{subsec:1.1}
The aim of the present paper is to establish some error estimates
and large-scale Lipschitz estimates for
a class of monotone operators in periodically perforated domains,
arising in the homogenization theory.
More precisely, let $d\geq 2$ and $\Omega\subset\mathbb{R}^d$ be a bounded
Lipschitz domain (unless otherwise stated). Let $\omega\subset \mathbb{R}^{d}$
be an unbounded Lipschitz domain with 1-periodic structure
(we call it the reference domain).
In other words, if $l^{+}(y)$ denotes the characteristic
function of $\omega$, then $l^{+}$ is a 1-periodic function.
We denote $\varepsilon$-homothetic set
$\{x\in \mathbb{R}^{d}:x/\varepsilon\in\omega\}$
by $\varepsilon \omega$, and so
the function $l_{\varepsilon}^{+}(x)=l^{+}(x/ \varepsilon)$
represents the characteristic function of $\varepsilon\omega$.
Consider the following quasilinear elliptic equations in the divergence form with the mixed boundary conditions,
depending on a parameter $0<\varepsilon\ll 1$,
\begin{eqnarray}\label{pde:1.1}
\left\{\begin{aligned}
\mathcal{L}_{\varepsilon} u_\varepsilon \equiv
-\nabla\cdot A(x/\varepsilon,\nabla u_\varepsilon)
&= F &\qquad &\text{in}~~\Omega_{\varepsilon}, \\
\sigma_{\varepsilon}(u_{\varepsilon})&=0
&\qquad &\text{on}~~K_{\varepsilon},\\
 u_\varepsilon &= g &\qquad & \text{on}~~\Gamma_{\varepsilon},
\end{aligned}\right.
\end{eqnarray}
where $\Omega_{\varepsilon}:=\Omega\cap \varepsilon\omega
$, $\Gamma_{\varepsilon}:=\partial \Omega_{\varepsilon}
\cap\partial \Omega$, $K_{\varepsilon}:
=\partial \Omega_{\varepsilon}\cap\Omega$ and,
$\sigma_{\varepsilon}(u_{\varepsilon})=\vec{n}
\cdot A(x/\varepsilon,\nabla u_{\varepsilon})$ is
known as the conormal derivative of $u_\varepsilon$ on related boundaries.
Given three constants $\mu_0,\mu_1,\mu_2>0$, the function
$A\in C^{1}(\mathbb{R}^d\times\mathbb{R}^d;\mathbb{R}^d)$ and
additionally satisfies
the structure conditions below.
\begin{itemize}
\item[1.] For any $y,\xi,\xi^\prime\in\mathbb{R}^d$,
there hold the \emph{coerciveness} and \emph{growth} conditions
\begin{equation}\label{a:1}
\left\{\begin{aligned}
& \big<A(y,\xi)-A(y,\xi^\prime),\xi-\xi^\prime\big>
\geq\mu_0|\xi-\xi^\prime|^2;\\
&|A(y,\xi)-A(y,\xi^{\prime})|\leq\mu_{1}|\xi-\xi^{\prime}|.
\end{aligned}\right.
\end{equation}
\item[2.] For every $\xi\in\mathbb{R}^d$,
$A(\cdot,\xi)$ is \emph{1-periodic} and
\begin{equation}\label{a:2}
 A(y,0) = 0.
\end{equation}
\item[3.] The \emph{smoothness} assumption is also imposed, i.e.,
\begin{equation}\label{a:3}
|A(y,\xi)-A(y^\prime,\xi)|\leq \mu_2|y-y^\prime|^\tau|\xi|,
\end{equation}
where $\tau\in(0,1]$.
\end{itemize}
(It is not hard to verify that one may take
$A(y,\xi) = \frac{1+|\xi|^2}{1+b(y)|\xi|^2}\xi$ as a nontrivial example, such that it satisfies all the assumptions above, provided $b$ being a 1-periodic function with a suitable smoothness and boundedness assumption.)
We say $u_{\varepsilon}$ is a weak solution to \eqref{pde:1.1}
if there holds
\begin{equation}\label{pri:1.5}
  \int_{\Omega_{\varepsilon}}A(x/\varepsilon,\nabla u_\varepsilon)\cdot
  \nabla wdx
  =\int_{\Omega_{\varepsilon}} F wdx
\end{equation}
for any $w\in H^{1}(\Omega_{\varepsilon},\Gamma_{\varepsilon})$,
and $u_{\varepsilon}-g\in H^{1}(\Omega_{\varepsilon},
\Gamma_{\varepsilon})$. Here
$H^{1}(\Omega_{\varepsilon},\Gamma_{\varepsilon})$
denotes the closure in $H^{1}(\Omega_{\varepsilon})$
of $C^\infty(\mathbb{R}^d)$
with functions vanishing on $\Gamma_{\varepsilon}$
(see Subsection $\ref{subsec:1.2}$).
Under the assumptions $\eqref{a:1}$ and $\eqref{a:2}$, the existence and
uniqueness of the weak solution to \eqref{pde:1.1}
follows from Browder-Minty's theorem (see for example
\cite[Theorem 26.A]{Z}).
Moreover,
the following qualitative homogenization result had
been shown in V. Zhikov and M. Rychago's work
\cite{Zhikov,ZR}, i.e., there hold
that $l_{\varepsilon}^{+}u_\varepsilon \rightharpoonup u_0$
weakly in $L^2(\Omega)$, and
$l_{\varepsilon}^{+}\nabla u_\varepsilon
\rightharpoonup\nabla u_0$ with
$l_{\varepsilon}^{+}A(x/\varepsilon,
\nabla u_\varepsilon)\rightharpoonup\widehat{A}(\nabla u_0)$
weakly in $L^2(\Omega;\mathbb{R}^d)$.
Here $u_0$ is the solution to the effective (homogenized) equation
\begin{equation}\label{pde:1.3}
\left\{\begin{aligned}
\mathcal{L}_{0} u_0 \equiv
-\nabla\cdot\widehat{A}(\nabla u_0) &= F &\qquad&\text{in}~~\Omega, \\
 u_0 &= g &\qquad& \text{on}~\partial\Omega.
\end{aligned}\right.
\end{equation}
The function $\widehat{A}:\mathbb{R}^d\to\mathbb{R}^d$ is defined for every $\xi\in\mathbb{R}^d$ by
\begin{equation}\label{eq:1.1}
\widehat{A}(\xi) = \theta^{-1}\int_{Y\cap\omega} A(y,\xi+\nabla_y N(y,\xi))dy
\quad\text{and}\quad \theta = |Y\cap\omega|,
\end{equation}
in which $N(y,\xi)$ is the so-called corrector,
associated with the following cell problem:
\begin{equation}\label{pde:1.2}
\left\{\begin{aligned}
& \nabla\cdot A(\cdot,\xi+\nabla N(\cdot,\xi)) = 0 &\quad&\text{in}~ Y\cap \omega,\\
& \vec{n}\cdot A(\cdot,\xi+\nabla N(\cdot,\xi))=0 &\quad&\text{on}~Y\cap\partial\omega,\\
& N(\cdot,\xi)\in H^1_{\text{per}}(Y\cap \omega),
\quad& \dashint_{Y\cap \omega}&N(\cdot,\xi) = 0,
\end{aligned}\right.
\end{equation}
where the notation $\dashint_\Omega := \frac{1}{|\Omega|}\int_\Omega$ represents the
average of integral and $Y=(-\frac{1}{2},\frac{1}{2}]^{d}$.

In order to investigate some quantitative
estimates, we introduce
some geometry assumptions on the reference domain $\omega$  as follows.
\begin{enumerate}
  \item[4. ] \emph{A separated property}. It's assumed that
any two connected components
of $\mathbb{R}^{d}\backslash\omega$
are separated by some positive distance.
Specifically, if $\mathbb{R}^{d}\backslash\omega
=\bigcup_{k=1}^{\infty}H_{k}$ in which $H_k$
is connected and bounded for each k,
then there exists a constant $\mathfrak{g}^{\omega}$ such that
\begin{equation}\label{g}
  0<\mathfrak{g}^{\omega}\leq\inf_{i\neq j}\bigg\{\text{dist}(H_{i},H_j)\bigg\}.
\end{equation}
  \item [5. ] \emph{Regular boundaries}. For each of the components $\{H_k\}$, the boundary of $H_{k}$
is additionally assumed to be $C^{1,\alpha}$ with $\alpha\in(0,1)$,
where the component $H_k$ is usually referred to as a ``hole'' in the context.
\end{enumerate}
Then we call $\omega$ a ``regular'' reference domain, if it satisfies
the above two conditions.

\vspace{0.2cm}

Now, the main results of the paper are stated as following.

\begin{theorem}[convergence rates]\label{thm:1.1}
Let $\omega$ be a regular reference domain.
Suppose that $\mathcal{L}_\varepsilon$ satisfies the conditions
$\eqref{a:1}$, $\eqref{a:2}$ and $\eqref{a:3}$.
Given
$F\in H^{1}(\Omega)$ ,
let
$u_\varepsilon\in H^{1}(\Omega_{\varepsilon})$
and $u_0\in H^1(\Omega)$ be the weak solution to
$\eqref{pde:1.1}$ and $\eqref{pde:1.3}$, respectively.
Then one may obtain the following error estimates.
\begin{itemize}
\item If $g\in H^{3/2}(\partial\Omega)$ and $\Omega$ is a bounded $C^{1,1}$ domain, then there holds
\begin{equation}\label{pri:1.2}
\|u_\varepsilon - u_0\|_{L^2(\Omega_{\varepsilon})}
\leq C\varepsilon^{1/2}\Big\{\|F\|_{H^{1/2}(\Omega)}
+\|g\|_{H^{3/2}(\partial\Omega)}
\Big\}.
\end{equation}
\item If $g\in W^{1-1/p,p}(\partial\Omega)$ for some $0<p-2\ll 1$ and $\Omega$ is a bounded Lipschitz domain, then there exists a Meyer's index $\sigma:=1/2-1/p$, such that,
\begin{equation}\label{pri:1.6}
\|u_\varepsilon - u_0\|_{L^2(\Omega_{\varepsilon})}
\leq C\varepsilon^{\sigma}
\Big\{\|F\|_{H^{\sigma}(\Omega)}
+\|g\|_{W^{1-1/p,p}(\partial\Omega)}
\Big\},
\end{equation}
\end{itemize}
where $C$ depends on $\mu_0,\mu_1,\mu_2,\tau,d,r_0,\mathfrak{g}^{\omega}$
and the boundary character of $\omega$ and $\Omega$.
\end{theorem}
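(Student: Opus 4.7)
The plan is to carry out a two-scale expansion adapted to the quasilinear perforated setting, and to close the argument with a monotonicity-based energy estimate together with the Poincar\'e-Sobolev and extension inequalities developed later in the paper. Let $N(y,\xi)$ be the corrector from \eqref{pde:1.2}, let $S_\varepsilon$ be a Steklov smoothing at scale $\varepsilon$, and let $\eta_\varepsilon \in C_c^\infty(\Omega)$ be a cutoff that vanishes in the boundary strip of width $\rho_\varepsilon$, equals $1$ past width $2\rho_\varepsilon$, and satisfies $|\nabla\eta_\varepsilon| \lesssim \rho_\varepsilon^{-1}$. Define
\begin{equation*}
v_\varepsilon(x) := u_0(x) + \varepsilon\,\eta_\varepsilon(x)\,N\!\left(x/\varepsilon,\, S_\varepsilon(\nabla u_0)(x)\right), \qquad w_\varepsilon := u_\varepsilon - v_\varepsilon,
\end{equation*}
so that $w_\varepsilon \in H^1(\Omega_\varepsilon,\Gamma_\varepsilon)$ and is an admissible test function for \eqref{pri:1.5}.

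Testing the weak formulation against $w_\varepsilon$ and invoking the coerciveness in \eqref{a:1} yields
\begin{equation*}
\mu_0\int_{\Omega_\varepsilon}|\nabla w_\varepsilon|^2\,dx \;\le\; \int_{\Omega_\varepsilon} \bigl\langle A(x/\varepsilon,\nabla u_\varepsilon) - A(x/\varepsilon,\nabla v_\varepsilon),\,\nabla w_\varepsilon\bigr\rangle\, dx.
\end{equation*}
The right-hand side decomposes into three pieces after writing $\nabla v_\varepsilon = \nabla u_0 + \eta_\varepsilon(\nabla_y N)(x/\varepsilon,S_\varepsilon\nabla u_0) + \varepsilon\,(\text{lower order})$. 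A \emph{cell remainder} of the form $A(y,\nabla u_0+(\nabla_y N)(y,S_\varepsilon\nabla u_0))-\widehat A(\nabla u_0)$, evaluated at $y=x/\varepsilon$, is absorbed via the auxiliary flux corrector associated to \eqref{pde:1.2}, whose construction crucially uses the conormal condition $n\cdot A(\cdot,\xi+\nabla N)=0$ on $Y\cap\partial\omega$ so that integrations by parts produce no residual hole terms; one derivative is then transferred to $w_\varepsilon$, turning the nominal $\varepsilon^{-1}$ from differentiating $N(x/\varepsilon,\cdot)$ into an actual $\varepsilon$ gain. A \emph{linearization remainder} coming from the Steklov smoothing and the difference $N(\cdot,S_\varepsilon\nabla u_0)-N(\cdot,\nabla u_0)$ is controlled by the Lipschitz estimate in \eqref{a:1}, the smoothness assumption \eqref{a:3}, and the Lipschitz dependence of $N$ on $\xi$, giving $O(\varepsilon)$ in energy norm. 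The \emph{boundary-layer remainder}, supported in $\{\eta_\varepsilon\neq 1\}\cap\Omega_\varepsilon$, is controlled by $\|\nabla u_0\|_{L^2(\Sigma_{2\rho_\varepsilon})}$, where $\Sigma_\rho := \{x\in\Omega:\operatorname{dist}(x,\partial\Omega)<\rho\}$.

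In the $C^{1,1}$ case, elliptic regularity for $\widehat{\mathcal L}_0$ (which inherits the monotone Lipschitz structure of $A$) gives $u_0\in H^2(\Omega)$ with norm bounded by $\|F\|_{H^{1/2}}+\|g\|_{H^{3/2}}$, and the trace-type bound $\|\nabla u_0\|_{L^2(\Sigma_\rho)}\lesssim\rho^{1/2}\|u_0\|_{H^2}$ together with the choice $\rho_\varepsilon\sim\varepsilon$ produces $\|\nabla w_\varepsilon\|_{L^2(\Omega_\varepsilon)}\lesssim\varepsilon^{1/2}$; the Poincar\'e-Sobolev inequality on the perforated domain (announced in the introduction) upgrades this to the $L^2$ bound \eqref{pri:1.2}. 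In the Lipschitz case, $u_0$ is no longer in $H^2$, but Meyers' higher-integrability theorem for $\widehat{\mathcal L}_0$ on Lipschitz domains gives $\nabla u_0\in L^p(\Omega)$ for some $p>2$ arbitrarily close to $2$, with the desired data bound. H\"older's inequality then replaces the strip estimate with $\|\nabla u_0\|_{L^2(\Sigma_\rho)}\lesssim\rho^{1/2-1/p}\|\nabla u_0\|_{L^p(\Omega)}$, producing the Meyers index $\sigma=1/2-1/p$ and hence \eqref{pri:1.6}; the cell and linearization remainders are treated as before, with the $L^2$-dualities now replaced by $L^2$-$L^{2p/(p-2)}$ dualities handled through the $L^p$-extension theorem developed in this paper.

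The main obstacle is the joint interaction of nonlinearity and perforation. The nonlinearity of $A$ forces the Steklov freezing of $\nabla u_0$ in the $\xi$-slot of $N$ and demands careful bookkeeping of the Lipschitz dependence of $A$ and $N$ on $\xi$; the perforation demands the extension theorem and the new local Poincar\'e-Sobolev inequalities to pass from norms on $\Omega_\varepsilon$ to norms on $\Omega$ uniformly in $\varepsilon$. The conormal boundary condition on $K_\varepsilon$ is essential for the flux corrector to be well-defined on $Y\cap\omega$ and to prevent uncontrolled surface terms on the holes during integration by parts.
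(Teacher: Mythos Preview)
Your decomposition into cell, linearization, and boundary-layer remainders matches the paper's terms $I_2$, $I_3$, $I_4$, but you have omitted the one piece that is genuinely new in the perforated setting: the source-term mismatch. When you test \eqref{pde:1.1} against $w_\varepsilon$ you get $\int_{\Omega_\varepsilon} Fw_\varepsilon=\int_\Omega l_\varepsilon^+ F\tilde w_\varepsilon$, whereas the homogenized equation, tested against the extension (cut off near $\partial\Omega$), produces $\theta\int_\Omega F\tilde w_\varepsilon\psi'_\varepsilon$; the flux corrector links $l_\varepsilon^+A(\cdot,\varphi+\nabla_yN)$ to $\theta\widehat A(\varphi)$, not to $\widehat A(\varphi)$, so after your ``cell remainder'' step there remains
\[
I_1=\int_\Omega\bigl(l_\varepsilon^+-\theta\psi'_\varepsilon\bigr)F\,\tilde w_\varepsilon\,dx,
\]
which is $O(1)$ because $l_\varepsilon^+-\theta$ is merely a bounded mean-zero oscillation. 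In the unperforated case this term is absent (there $l_\varepsilon^+\equiv1=\theta$), which is why the standard two-scale bookkeeping you sketched does not see it.

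The paper closes this gap by solving the auxiliary periodic problem $-\Delta\Psi=l^+-\theta$ on $Y$ and writing $l_\varepsilon^+-\theta=-\varepsilon\nabla_x\cdot(\nabla_y\Psi)(x/\varepsilon)$; one integration by parts then yields $\varepsilon\int_\Omega\nabla_y\Psi(x/\varepsilon)\cdot\nabla F\,\tilde w_\varepsilon\psi'_\varepsilon+\varepsilon\int_\Omega\nabla_y\Psi(x/\varepsilon)\,F\,\nabla(\tilde w_\varepsilon\psi'_\varepsilon)$. The second integral is harmless, but the first forces regularity on $F$: it is estimated by the fractional duality of Lemma~\ref{interpolation} as $\varepsilon\cdot\varepsilon^{s-1}\|F\|_{H^s}\|\tilde w_\varepsilon\psi'_\varepsilon\|_{H^{1-s}}$, and \emph{this} is the sole origin of the norms $\|F\|_{H^{1/2}}$ and $\|F\|_{H^\sigma}$ in \eqref{pri:1.2}--\eqref{pri:1.6}. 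Your assertion that the $H^2$ estimate for $u_0$ is bounded by $\|F\|_{H^{1/2}}+\|g\|_{H^{3/2}}$ is a symptom of this omission: the $H^2$ theory only needs $\|F\|_{L^2}$, and the fractional norm of $F$ enters nowhere in the regularity of $u_0$---it comes exclusively from $I_1$. Without this step your energy inequality does not close.
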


We refer the reader to Subsection $\ref{subsec:1.2}$
for the definition of fractional Sobolev-type spaces such as  $H^{1/2}(\Omega)$, $W^{1-1/p,p}(\partial\Omega)$, as well as,
the notation $r_0$ and ``$\ll$''.
If ignoring the influence caused by the boundary conditions
related to $\partial\Omega$, then we can obtain
the following sharp error estimates.

\begin{theorem}[optimal convergence rates]\label{thm:1.5}
Assume $\omega$ and $\mathcal{L}_\varepsilon$ satisfy
the same conditions as in Theorem $\ref{thm:1.1}$,
while we take $\Omega=\mathbb{R}^d$ here.
Let $0<\lambda\leq \mu_0$.
Given $F\in C^1_0(\mathbb{R}^d)$, let $u_{\varepsilon,\lambda}\in H^{1}(\Omega_\varepsilon)$
and $u_{0,\lambda}\in H^1(\mathbb{R}^d)$ be the weak solutions
to
\begin{equation}\label{pde:1.5}
(\emph{i})\left\{\begin{aligned}
\lambda u_{\varepsilon,\lambda}
-\nabla\cdot A(x/\varepsilon,\nabla
u_{\varepsilon,\lambda})
&= F &\quad& \emph{in}\quad \Omega_\varepsilon;\\
\sigma_\varepsilon(u_{\varepsilon,\lambda})
&=0  &\quad& \emph{on}\quad \partial\Omega_\varepsilon,
\end{aligned}\right.
\qquad
(\emph{ii})~
\lambda u_{0,\lambda} - \nabla\cdot \widehat{A}(\nabla u_{0,\lambda})
= F \quad \emph{in} \quad \mathbb{R}^d,
\end{equation}
respectively.  Then there holds optimal error
estimates:
\begin{itemize}
  \item In the case of $d\geq 3$, we have
  \begin{equation}\label{pri:1.7}
  \|u_{\varepsilon,\lambda}
  -u_{0,\lambda}\|_{L^\frac{2d}{d-2}(\Omega_\varepsilon)}
  \leq C\varepsilon\|\nabla F\|_{L^{\frac{2d}{d+2}}(\mathbb{R}^d)}
  \end{equation}
  where the constant $C$ is independent of $\lambda$.
  \item In the case of $d=2$, we acquire
  \begin{equation}\label{pri:1.8}
  \|u_{\varepsilon,\lambda}-
  u_{0,\lambda}\|_{L^p(\Omega_\varepsilon)}
  \leq C\varepsilon\|F\|_{H^{1}(\mathbb{R}^2)}
  \end{equation}
  for $2\leq p<\infty$, where
  the constant $C$ depends on
$\mu_0,\mu_1,\mu_2, \lambda, \tau, p$
and the boundary character of $\omega$.
\end{itemize}
\end{theorem}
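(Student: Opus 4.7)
The plan is to prove both \eqref{pri:1.7} and \eqref{pri:1.8} by a unified energy argument built on a two-scale expansion plus a periodic antisymmetric flux corrector, followed by the $L^p$-extension operator constructed earlier in the paper together with Sobolev embedding. Since the operator is nonlinear, a literal duality argument is unavailable; instead I extract sharp $H^1(\Omega_\varepsilon)$ gradient bounds on the remainder and upgrade them to the target $L^p$ norm. A preparatory regularity step is needed: because $F\in C^1_0(\mathbb{R}^d)$, differentiating the homogenized equation for $u_{0,\lambda}$ and testing against $\nabla^2 u_{0,\lambda}$, using the coerciveness inherited by the linearization of $\widehat A$ from \eqref{a:1}, produces
\begin{equation*}
\|\nabla^2 u_{0,\lambda}\|_{L^2(\mathbb{R}^d)}+\|\nabla u_{0,\lambda}\|_{L^{2d/(d-2)}(\mathbb{R}^d)}\leq C\|\nabla F\|_{L^{2d/(d+2)}(\mathbb{R}^d)}\qquad (d\geq 3),
\end{equation*}
uniformly in $\lambda>0$, together with its $d=2$ analogue $\|\nabla^2 u_{0,\lambda}\|_{L^2(\mathbb{R}^2)}\leq C(\lambda)\|F\|_{H^1(\mathbb{R}^2)}$. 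Hence $u_{0,\lambda}$ is regular enough that no Steklov smoothing of $\nabla u_{0,\lambda}$ is needed.

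Next, set $\tilde u_{\varepsilon,\lambda}(x):=u_{0,\lambda}(x)+\varepsilon N(x/\varepsilon,\nabla u_{0,\lambda}(x))$ and $w_\varepsilon:=u_{\varepsilon,\lambda}-\tilde u_{\varepsilon,\lambda}$. Subtracting the two equations and integrating by parts on $\Omega_\varepsilon$ reduces the analysis to the periodic fluctuation $\Phi(y,\xi):=A(y,\xi+\nabla_y N(y,\xi))-\widehat A(\xi)$ at $\xi=\nabla u_{0,\lambda}$, which has zero cell mean on $Y\cap\omega$ by definition of $\widehat A$. Invoking the $L^p$-extension theorem to push $\Phi(\cdot,\xi)$ across the holes of $Y$ and then solving a periodic potential problem on $Y$ produces an antisymmetric periodic matrix $B(y,\xi)$ with $\nabla_y\cdot B(\cdot,\xi)=\Phi(\cdot,\xi)$ in $Y\cap\omega$. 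Rescaling $y=x/\varepsilon$ and integrating by parts once more converts the leading error into a bulk remainder of order $\varepsilon|\nabla^2 u_{0,\lambda}|$ (arising from the $\xi$-derivatives of $N$ and $B$); the Neumann condition in the cell problem \eqref{pde:1.2} makes the conormal traces on $\partial(\varepsilon\omega)$ vanish, and the antisymmetry of $B$ kills the only would-be $\varepsilon^{-1}$ term.

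Testing the resulting error equation with $w_\varepsilon$ and invoking the coerciveness \eqref{a:1} yields
\begin{equation*}
\mu_0\|\nabla w_\varepsilon\|_{L^2(\Omega_\varepsilon)}^2+\lambda\|w_\varepsilon\|_{L^2(\Omega_\varepsilon)}^2\leq C\varepsilon\,\|\nabla^2 u_{0,\lambda}\|_{L^2(\mathbb{R}^d)}\,\|\nabla w_\varepsilon\|_{L^2(\Omega_\varepsilon)},
\end{equation*}
so $\|\nabla w_\varepsilon\|_{L^2(\Omega_\varepsilon)}\leq C\varepsilon\|\nabla^2 u_{0,\lambda}\|_{L^2(\mathbb{R}^d)}$ uniformly in $\lambda$. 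Extending $w_\varepsilon$ to $\mathbb{R}^d$ by the $L^p$-extension operator and applying Sobolev gives $\|w_\varepsilon\|_{L^{2d/(d-2)}}\leq C\|\nabla w_\varepsilon\|_{L^2}$ in $d\geq 3$; the correction $\varepsilon N(x/\varepsilon,\nabla u_{0,\lambda})$ is absorbed using the pointwise estimate $|N(y,\xi)|\leq C|\xi|$, a consequence of \eqref{a:1} and \eqref{a:2} applied in \eqref{pde:1.2}, together with the Sobolev bound on $\nabla u_{0,\lambda}$ from the first paragraph. This yields \eqref{pri:1.7}. For $d=2$ the same argument closes via $H^1\hookrightarrow L^p$ with its $p$-dependent constant for every $p<\infty$, and the $\lambda$-dependence in \eqref{pri:1.8} enters through the two-dimensional regularity bound above.

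The main obstacle is the construction of the nonlinear flux corrector $B(\cdot,\xi)$ with quantitative $\xi$-dependence. In the linear periodic setting this reduces to a routine Poisson problem on $Y$, but here $B$ encodes the truly nonlinear fluctuation $\Phi$ and must be built through the holes, which is precisely where the $L^p$-version of the extension theorem developed earlier in the paper becomes decisive. Producing bounds on $\partial_\xi B$ uniform over the range of $\nabla u_{0,\lambda}$ then requires differentiating the cell problem \eqref{pde:1.2} in $\xi$ and exploiting the Lipschitz structure of \eqref{a:1}, and it is exactly this control that allows the final constant in \eqref{pri:1.7} to be independent of $\lambda$.
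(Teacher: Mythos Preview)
Your outline contains a genuine gap that is specific to perforated domains. When you ``subtract the two equations and integrate by parts on $\Omega_\varepsilon$'', the source terms do \emph{not} simply cancel and leave only the flux fluctuation $\Phi$. If you test the homogenized equation with $w_\varepsilon$ on $\Omega_\varepsilon$, integration by parts produces a surface term $\int_{\partial(\varepsilon\omega)}\vec n\cdot\widehat A(\nabla u_{0,\lambda})\,w_\varepsilon\,dS$ which is $O(1)$, not $O(\varepsilon)$ (the hole surface has measure $\sim\varepsilon^{-1}$ per unit volume). Equivalently, if you extend $w_\varepsilon$ to $\tilde w_\varepsilon$ on $\mathbb R^d$ and test there, the right-hand side picks up the bulk term $\int_{\mathbb R^d}(l_\varepsilon^{+}-\theta)(F-\lambda u_{0,\lambda})\,\tilde w_\varepsilon\,dx$, which again is $O(1)$ before any treatment. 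The cell Neumann condition in \eqref{pde:1.2} only gives $\vec n\cdot A(y,\xi+\nabla_y N(y,\xi))=0$ on $\partial\omega$; it says nothing about $\vec n\cdot\widehat A(\xi)$, and there is no reason your antisymmetric potential $B$ should satisfy $n_jB_{ji}=0$ on $\partial\omega$. So the sentence ``the Neumann condition in the cell problem makes the conormal traces on $\partial(\varepsilon\omega)$ vanish'' is not justified, and the displayed energy inequality with right-hand side $C\varepsilon\,\|\nabla^2 u_{0,\lambda}\|_{L^2}\,\|\nabla w_\varepsilon\|_{L^2}$ is not what one actually obtains.

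The paper closes this gap by a second, independent periodic potential: one solves $-\Delta_y\Psi=l^{+}(y)-\theta$ on the full torus $Y$ (so that $\nabla_y\Psi\in W^{1,p}_{\text{per}}(Y)$), writes $(l_\varepsilon^{+}-\theta)=-\varepsilon\,\nabla_x\cdot(\nabla_y\Psi)(x/\varepsilon)$, and integrates by parts. This converts the offending term into $\varepsilon\int\nabla_y\Psi\cdot\nabla(F-\lambda u_{0,\lambda})\,\tilde w_\varepsilon+\varepsilon\int\nabla_y\Psi\,(F-\lambda u_{0,\lambda})\,\nabla\tilde w_\varepsilon$, and it is precisely here that $\nabla F$ (and, for $\lambda$-independence in $d\geq 3$, the energy bound $\sqrt\lambda\,\|\nabla u_{0,\lambda}\|_{L^2}\lesssim\|F\|_{L^2}$) enter. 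Your flux corrector $B$ handles only the $\theta\widehat A-l^+A(\cdot,\xi+\nabla N)$ part of the error; the $l^+-\theta$ mismatch against the zeroth-order data needs this separate mechanism, and without it the estimate does not close at order $\varepsilon$.
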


\begin{remark}
\emph{Compared to the case of unperforated domains,
the regularity of the source term $F$ has a significant impact on the power of convergence rate
(see also \cite{WXZ2020} for linear systems). In other words,
the norms $\|F\|_{H^{1/2}(\Omega)}$ in $\eqref{pri:1.2}$ and $\|F\|_{H^{\sigma}(\Omega)}$ in $\eqref{pri:1.6}$
can not be weaken in terms of smooth index of the
Sobolev space, otherwise we will loss the power of the rate. However, from the estimate $\eqref{pri:1.7}$,
one may believe that its integral index has a potential chance to be improved. Besides, letting $\lambda\to 0$ in
$\eqref{pri:1.7}$ one may derive that
  \begin{equation*}
  \|u_{\varepsilon}
  -u_{0}\|_{L^\frac{2d}{d-2}(\Omega_\varepsilon)}
  \leq C\varepsilon\|\nabla F\|_{L^{\frac{2d}{d+2}}(\mathbb{R}^d)}
  \end{equation*}
where $u_{\varepsilon}$, $u_0$ (up to a constant) uniquely
solve the equation: $-\nabla\cdot A(x/\varepsilon,\nabla
u_{\varepsilon})= F$ in $\Omega_\varepsilon$ with
$\sigma_\varepsilon(u_{\varepsilon}) = 0$ on $\partial(\varepsilon\omega)$, and the effective one
$- \nabla\cdot \widehat{A}(\nabla u_{0})
= F$ in $\mathbb{R}^d$, respectively. This result is new
even for linear equations with variable coefficients.
To our best knowledge, the same type result was merely acquired for linear elliptic equations with constant coefficients in \cite[Theorem 2.4]{Y} via fundamental solution arguments.}
\end{remark}

\begin{remark}
\emph{Roughly speaking,
the frame work of Theorems $\ref{thm:1.1}$ and $\ref{thm:1.5}$ is based upon energy estimates.
Inevitably, the phenomenon of boundary layer will be generated in the calculations, which means we have to
handle the quantities like $\varepsilon\|N(\cdot/\varepsilon;\xi)\|_{H^{1/2}(\partial\Omega)}$ or $\|\nabla u_0\|_{L^2(O_\varepsilon)}$ (where
$O_\varepsilon$ is the boundary layer set of $\Omega$
defined in Subsection $\ref{subsec:1.2}$), which merely
offer us $O(\varepsilon^{1/2})$ at most.
In general, for linear equations, people may employ a duality argument to accelerate the convergence rate to the sharp one
(see for example \cite{S5,TS1,X3}), which is also known as
the Aubin-Nitsche's approach in numerical fields.
However, successfully applying this idea to homogenization problems on perforated domains involves more advanced techniques in analysis (see \cite{WXZ2020}). Concerning
the nonlinear model $\eqref{pde:1.1}$, a possible way to
get the sharp convergence rate is appealing to maximum
principles for divergence operators (see for example
\cite[Section 10.5]{GT}), while
this approach merely holds for scalar equations. Without
a proof, we claim that under the conditions
$g\in C^{1,1}(\partial\Omega)$ and $F\in L^p(\Omega)\cap H^1(\Omega)$ with $p>d$, there holds
\begin{equation*}
 \|u_\varepsilon - u_0\|_{L^q(\Omega_\varepsilon)}
 \lesssim \varepsilon\Big\{\|F\|_{H^1(\Omega)}
 +\|F\|_{L^p(\Omega)}+\|g\|_{C^{1,1}(\partial\Omega)}\Big\},
\end{equation*}
where $q=2d/d-2$ if $d\geq 3$; $2\leq q<\infty$ if $d=2$.
It is still an interesting question whether there is a method that does not depend on maximum
principles to obtain the best error estimates of the nonlinear model $\eqref{pde:1.1}$ even when $\omega=\mathbb{R}^d$. In other words, our present results
 rely on the so-called De Giorgi-Nash-Moser theory heavily.}
\end{remark}

Then we turn to the regularity estimates of weak solutions.

\begin{theorem}[interior Lipschitz estimates at large-scales]
\label{thm:1.2}
Let $B_2\subset\Omega$.
Suppose that $\mathcal{L}_\varepsilon$ satisfies the same conditions as in Theorem $\ref{thm:1.1}$.
Let $u_\varepsilon\in H^1(B_2^\varepsilon)$ be a weak solution of \begin{equation}\label{pde:7.3}
 \left\{ \begin{aligned}
 \mathcal{L}_{\varepsilon}u_\varepsilon  &= 0 ~&&\emph{in}~~ B^{\varepsilon}_2,\\
 \sigma_{\varepsilon}(u_{\varepsilon}) &= 0 ~&&\emph{on}~~
 \partial B^{\varepsilon}_2|_{B_2},
  \end{aligned}\right.
\end{equation}
where $B_2^\varepsilon:=B_2\cap(\varepsilon\omega)$ and
$\partial B_2^\varepsilon|_{B_2}:=\partial B_2^\varepsilon\cap B_2$ (see Subsection $\ref{subsec:1.2}$).
Then one may derive that
\begin{equation}\label{pri:1.1}
\begin{aligned}
\Big(\dashint_{B^{\varepsilon}_r} |\nabla u_\varepsilon|^2\Big)^{\frac{1}{2}}
\lesssim\Big(\dashint_{B^{\varepsilon}_1} |\nabla u_\varepsilon|^2\Big)^{\frac{1}{2}}
\end{aligned}
\end{equation}
for any $\varepsilon\leq r\leq (1/2)$,
where the up to constant depends on $\mu_0,\mu_1,\mu_2,\tau,d,\mathfrak{g}^{\omega}$
and the boundary character of $\omega$.
\end{theorem}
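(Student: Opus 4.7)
The plan is to implement the Armstrong--Smart and Shen scheme for large-scale regularity in homogenization, with Theorem \ref{thm:1.1} supplying the quantitative closeness of $u_{\varepsilon}$ to solutions of the homogenized equation and the classical interior $C^{1,\alpha}$-theory supplying the smoothness of the latter. Introduce
\[
H(r)\;:=\;\inf_{c\in\mathbb{R}}\Big(\dashint_{B_{r}^{\varepsilon}}|u_{\varepsilon}-c|^{2}\Big)^{1/2}, \qquad \phi(r)\;:=\;\Big(\dashint_{B_{r}^{\varepsilon}}|\nabla u_{\varepsilon}|^{2}\Big)^{1/2},
\]
and observe that a Caccioppoli inequality applied to \eqref{pde:7.3} yields $\phi(r)\leq Cr^{-1}H(2r)$, so that \eqref{pri:1.1} reduces to proving $r^{-1}H(r)\lesssim\phi(1)$ uniformly on $\varepsilon\leq r\leq 1/2$.

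Fix such an $r$, set $\delta:=\varepsilon/r$, and rescale by $r$ to reduce to the unit ball with small parameter $\delta$. Using the extension theorem announced in the abstract, extend the rescaled $u_{\varepsilon}$ from $B_{3/4}^{\varepsilon}$ to $\tilde{u}_{\varepsilon}\in H^{1}(B_{3/4})$ with controlled norm, and let $v$ solve the homogenized Dirichlet problem
\[
-\nabla\cdot\widehat{A}(\nabla v)=0\ \text{in}\ B_{3/4},\qquad v=\tilde{u}_{\varepsilon}\ \text{on}\ \partial B_{3/4}.
\]
A local version of Theorem \ref{thm:1.1}, obtained by inserting a smooth cutoff $\eta$ in the first-order two-scale ansatz $v+\varepsilon N(\cdot/\varepsilon,\nabla v)$ and testing the weak formulations of $u_{\varepsilon}$ and $v$ against $u_{\varepsilon}-v-\varepsilon\eta\, N(\cdot/\varepsilon,\nabla v)$, yields
\[
\Big(\dashint_{B_{3/4}^{\varepsilon}}|u_{\varepsilon}-v|^{2}\Big)^{1/2}\;\lesssim\;\delta^{\sigma}\phi(1).
\]
On the other hand, \eqref{a:3} is inherited by $\widehat{A}$, so by the classical $C^{1,\alpha}$-theory for scalar quasilinear elliptic equations of linear growth, $v$ is $C^{1,\alpha}$ inside $B_{1/2}$ and admits, for every $\theta\in(0,1/4)$ and some $\bar{c}\in\mathbb{R}$, the affine-flatness estimate $(\dashint_{B_{\theta}}|v-\bar{c}|^{2})^{1/2}\leq C\theta^{1+\alpha}\phi(1)$. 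Combining these two bounds with the triangle inequality produces the one-step excess decay
\[
H(\theta r)\;\leq\;C\theta^{1+\alpha}\, r\,\phi(r)+C\delta^{\sigma}\, r\,\phi(r).
\]

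Choose $\theta\in(0,1/4)$ so that $C\theta^{\alpha}\leq 1/2$, and iterate the decay on the geometric sequence $r_{k}=\theta^{k}$ for $k=0,1,\dots,k_{\varepsilon}$, where $k_{\varepsilon}$ is the largest index with $r_{k_{\varepsilon}}\geq\varepsilon$; at each step reinject $\phi(r_{k})\leq Cr_{k}^{-1}H(2r_{k})$ via Caccioppoli, and note that the geometric error $(\varepsilon/r_{k})^{\sigma}$ forms a summable series. This gives $\phi(r_{k})\lesssim\phi(1)$ uniformly in $k$. For $r\in(\theta^{k+1},\theta^{k})$ the bound follows by monotonicity of averages, and for $r$ of order $\varepsilon$ one rescales by $\varepsilon$ to a unit-cell problem and applies the classical De~Giorgi--Nash--Moser regularity on the Neumann-perforated unit cell, producing $\phi(r)\lesssim\phi(\theta^{k_{\varepsilon}})\lesssim\phi(1)$.

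The hard part is the local convergence rate above. Theorem \ref{thm:1.1} is stated globally, for the Dirichlet problem on a fixed Lipschitz domain and with $H^{1/2}$-type data, whereas \eqref{pde:7.3} carries only a Neumann condition on the holes inside $B_{2}$ and no trace data on $\partial B_{2}$. Three issues arise simultaneously: (i) one must construct a bounded $H^{1}$-extension of $u_{\varepsilon}$ across the holes in $B_{3/4}^{\varepsilon}$, which is exactly where the $L^{p}$-extension theory announced in the abstract is indispensable; (ii) the smooth cutoff in the two-scale expansion introduces a boundary-layer error of order $\delta^{1/2}$ concentrated in a thin strip near $\partial B_{3/4}$, which must be absorbed carefully via the Poincar\'e--Sobolev inequalities on perforated domains mentioned in the abstract; and (iii) since $A$ is nonlinear, the difference $u_{\varepsilon}-v$ does not satisfy a closed equation, so the monotonicity half of \eqref{a:1} must be invoked in place of subtraction of solutions to recover an $L^{2}$-estimate on $u_{\varepsilon}-v$ from the tested identity. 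Once these points are settled, the iteration described above is routine.
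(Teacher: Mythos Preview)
Your overall strategy is the correct one and matches the paper's scheme, but the iteration you set up cannot close as written because of the choice of excess quantity. You iterate on
\[
H(r)=\inf_{c\in\mathbb{R}}\Big(\dashint_{B_r^{\varepsilon}}|u_\varepsilon-c|^2\Big)^{1/2},
\]
and then claim that the homogenized solution $v$ ``admits, for every $\theta\in(0,1/4)$ and some $\bar c\in\mathbb{R}$, the affine-flatness estimate $(\dashint_{B_\theta}|v-\bar c|^2)^{1/2}\le C\theta^{1+\alpha}\phi(1)$''. This is false: for a $C^{1,\alpha}$ function one has $v(x)=v(0)+\nabla v(0)\cdot x+O(|x|^{1+\alpha})$, so $\inf_{c}(\dashint_{B_\theta}|v-c|^2)^{1/2}\sim|\nabla v(0)|\,\theta+O(\theta^{1+\alpha})$, and the leading term is of order $\theta$, not $\theta^{1+\alpha}$. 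Plugging the correct order into your one-step inequality yields $\Phi(\theta r)\le C\,\Phi(2r)+C(\varepsilon/r)^\sigma\Phi(2r)$ with a constant $C$ that cannot be made $\le 1/2$ by choosing $\theta$ small. In Campanato terms, decay of the constant-subtracted excess encodes only H\"older continuity, not a Lipschitz bound.

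The fix, which is precisely what the paper does, is to iterate on the \emph{affine} excess
\[
G_\varepsilon(r,u_\varepsilon)=\frac{1}{r}\inf_{M\in\mathbb{R}^d,\;c\in\mathbb{R}}\Big(\dashint_{B_r^{\varepsilon}}|u_\varepsilon-Mx-c|^2\Big)^{1/2},
\]
for which the $C^{1,\alpha}$ regularity of $v$ does give the $\theta^\alpha$ gain (Lemma~\ref{lemma:4.4}), and to track the slope $\psi(r)=|M_r|$ separately; the abstract iteration Lemma~\ref{lemma:4.3} then bounds $G_\varepsilon(r,u_\varepsilon)+|M_r|$ uniformly, and one recovers $r^{-1}H(r)\lesssim\phi(1)$ only at the very end by choosing $M=0$. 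A second, more minor, gap: your $C^{1,\alpha}$ estimate for $v$ lives on the full ball $B_\theta$, but your excess lives on the perforated ball $B_\theta^\varepsilon$; passing from one to the other is not free and requires a comparison of the type $G(r,v)\lesssim G_\varepsilon(2r,v)$, which the paper isolates as Lemma~\ref{lemma:2.6} using the structure of $\mathcal{L}_0$ and the separated-holes geometry. (Also: \eqref{a:3} concerns regularity in the $y$-variable, which $\widehat A$ does not carry; the interior $C^{1,\alpha}$ regularity of $v$ instead follows from linearizing $\mathcal{L}_0$ and applying De~Giorgi--Nash--Moser, see Theorem~\ref{lemma:4.2} and Remark~\ref{remark:2.4}.)
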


Here, the notation $\lesssim$ means $\leq$ up to a multiplicative constant
and we usually call it ``the up to constant'' (see Subsection $\ref{subsec:1.2}$).

In terms of mixing boundary problems, there is no pointwise  $C^{1,\alpha}$ estimates near boundaries without any geometry
assumption of the interface, even though we
assume $\varepsilon=1$ and the equations $\eqref{pde:1.1}$
become a linear one with smooth coefficients.
Here, the boundaries of $\Omega_\varepsilon$ near $\partial\Omega$ would be even worse as $\varepsilon$ varies.
However, one may derive the following large-scale estimates
as substitutions.

\begin{theorem}[boundary Lipschitz estimates at large-scales]
\label{thm:1.3}
Let $0<\varepsilon\ll1$ and $\Omega$ be a bounded $C^{1,1}$ domain. Suppose that $\mathcal{L}_\varepsilon$ and $\omega$  satisfy the same conditions as in Theorem $\ref{thm:1.1}$.
Let $u_\varepsilon\in H^1(D_4^\varepsilon)$
be a weak solution of
\begin{equation}\label{pde:7.4}
\left\{\begin{aligned}
\mathcal{L}_\varepsilon u_\varepsilon &= 0&\qquad&\emph{in}~~ D^{\varepsilon}_4,\\
\sigma_{\varepsilon}(u_{\varepsilon})&= 0&\qquad&\emph{on}~~ \partial D_4^\varepsilon|_{D_4},\\
u_{\varepsilon}&= 0&\qquad&\emph{on}~~
\partial D_4^\varepsilon|_{\Delta_4},
\end{aligned}\right.
\end{equation}
where the notation $D_{4}^{\varepsilon},
\partial D_{4}^\varepsilon|_{\{D_4\text{~or~}\Delta_{4}\}}$ are referred to Subsection $\ref{subsec:1.2}$. Then there holds
\begin{equation}
\begin{aligned}\label{pri:1.14}
\Big(\dashint_{D^{\varepsilon}_r} |\nabla u_\varepsilon|^2dx\Big)^{\frac{1}{2}}
\lesssim\Big(\dashint_{D^{\varepsilon}_{1}} |\nabla u_\varepsilon|^2dx\Big)^{\frac{1}{2}}
\end{aligned}
\end{equation}
for any $\varepsilon\leq r\leq 1/2$, where
the up to constant additionally relies on
the boundary character of $\Omega$ compared to
the counterpart in $\eqref{pri:1.1}$.
\end{theorem}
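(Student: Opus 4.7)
I would mimic the Avellaneda--Lin--Shen three-scale iteration in the nonlinear perforated Dirichlet setting, using Theorem~\ref{thm:1.1} as the one-step input and the $C^{1,\alpha}$ boundary regularity of the effective nonlinear Dirichlet problem as the frozen-coefficient output. Set $\phi(r):=\big(\dashint_{D^\varepsilon_r}|\nabla u_\varepsilon|^2\big)^{1/2}$; the target (\ref{pri:1.14}) is the assertion $\phi(r)\lesssim\phi(1)$ uniformly for $r\in[\varepsilon,1/2]$. Because $\Omega\in C^{1,1}$, I first flatten $\Delta_4$ by a $C^{1,1}$ diffeomorphism; the pulled-back operator still satisfies (\ref{a:1})--(\ref{a:3}) up to altered constants, and the transformed perforations obey the same uniform large-scale estimates used in Theorems~\ref{thm:1.1} and~\ref{thm:1.2}, so I may assume $\Delta_r$ is flat.

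Fix $r\in[\varepsilon,1/2]$ and introduce the auxiliary homogenized function $v$ solving $-\nabla\cdot\widehat A(\nabla v)=0$ in the unperforated set $D_r$ with $v=0$ on $\Delta_r$ and $v=\mathcal{E}u_\varepsilon$ on $\partial D_r\cap\Omega$, where $\mathcal{E}$ denotes the $L^p$-extension across the holes supplied by the paper's upgrade of \cite[Theorem~4.3]{OSY}. A localized, rescaled version of Theorem~\ref{thm:1.1} yields
\begin{equation*}
\Big(\dashint_{D^\varepsilon_r}|u_\varepsilon-v|^2\Big)^{1/2}\leq C(\varepsilon/r)^{1/2}\,r\,\phi(2r).
\end{equation*}
Since $\widehat A$ is $C^1$, monotone and linearly bounded and $v=0$ on the flat part of $\partial D_r$, classical nonlinear Schauder / Lieberman theory yields $v\in C^{1,\alpha}(\overline{D_{r/2}})$; in particular, for every $\theta\in(0,1/4)$ there is an affine function $P_{\theta r}$, tangent to the Dirichlet zero datum, with $\big(\dashint_{D_{\theta r}}|\nabla v-\nabla P_{\theta r}|^2\big)^{1/2}\leq C\theta^\alpha\phi(r)$.

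Combining the two via a Caccioppoli estimate for $u_\varepsilon-v$ on $D^\varepsilon_{2r}\setminus D^\varepsilon_{\theta r}$ — valid because $u_\varepsilon-v$ has zero Dirichlet trace on $\Delta_r$ and inherits a homogeneous conormal condition on the interior holes — produces a Shen-type recursion of the form
\begin{equation*}
H(\theta r)\leq \theta^\alpha H(r)+C(\varepsilon/r)^{1/2}\phi(2r),\qquad H(s):=\inf_{q\in\mathbb{R}^d}\Big(\dashint_{D^\varepsilon_s}|\nabla u_\varepsilon - q|^2\Big)^{1/2},
\end{equation*}
coupled with the doubling bound $\phi(s)\lesssim H(s)+\phi(1)$. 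The standard real-variable iteration lemma (the very scheme invoked elsewhere in the paper) then propagates this from $r\sim 1$ down to $r\sim\varepsilon$ and yields (\ref{pri:1.14}).

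The main obstacle is the one-step comparison, not the iteration. Running the energy / two-scale expansion behind Theorem~\ref{thm:1.1} on a localized boundary ball generates two competing boundary layers: a fresh one along $\Delta_r$, handled by the usual $\varepsilon^{1/2}$ corrector cutoff, and another along $\Delta_r\cap\varepsilon\omega$, where truncating the first-order corrector threatens the conormal condition on the nearby holes. Controlling this interaction without losing the rate is precisely where the $L^p$-extension upgrade of \cite[Theorem~4.3]{OSY} and the accompanying local Poincar\'e--Sobolev inequalities enter essentially; a secondary technical point is that, $\widehat A$ being nonlinear, the excess $H$ must be formulated through the linearization of $\widehat A$ at the frozen gradient $q$ rather than by naive subtraction, a delicate but by now routine adjustment inherited from Armstrong--Smart-type schemes.
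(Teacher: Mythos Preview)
Your overall scaffold (approximation by a homogenized Dirichlet solution, $C^{1,\alpha}$ decay for the effective problem, Shen-type iteration) matches the paper's, but several steps are underestimated and one is actually wrong.

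\emph{Flattening.} You flatten $\Delta_4$ at the outset and assert the transformed perforations ``obey the same uniform large-scale estimates.'' They do not: the $C^{1,1}$ diffeomorphism destroys the $\varepsilon$-periodicity of $\omega$, and the extension theorem (Lemma~\ref{extensiontheory}), the Sobolev--Poincar\'e inequalities (Lemma~\ref{lemma:2.20}), and Theorem~\ref{thm:1.1} itself all rely on that periodicity. The paper never flattens the perforated equation; it works on the curved half-ball $D_r$ and only flattens when proving $C^{1,\alpha}$ regularity for the \emph{effective} problem (Theorem~\ref{thm:2.19}). This is also why the excess functional used is not your gradient quantity $H$ but
\[
J_\varepsilon(r,u_\varepsilon)=\frac{1}{r}\inf_{M}\Big\{\Big(\dashint_{D^\varepsilon_r}|u_\varepsilon-M\!\cdot\! x|^2\Big)^{1/2}+r\|\nabla_{\mathrm{tan}}(M\!\cdot\! x)\|_{L^\infty(\Delta_r)}+\|M\!\cdot\! x\|_{L^\infty(\Delta_r)}\Big\},
\]
the last two terms precisely compensating for curvature.

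\emph{The one-step approximation.} You construct $v$ with $v=0$ on $\Delta_r$ and $v=\mathcal{E}u_\varepsilon$ on $\partial D_r\cap\Omega$, then invoke Theorem~\ref{thm:1.1} for an $(\varepsilon/r)^{1/2}$ comparison. But Theorem~\ref{thm:1.1} compares solutions sharing the \emph{same} Dirichlet datum, whereas here $u_\varepsilon$ and $v$ do not agree on $\Delta_r\setminus\varepsilon\omega$ (the extension $\mathcal{E}u_\varepsilon$ need not vanish there). The paper's Lemma~\ref{lemma:5.2} handles exactly this mismatch with a three-layer construction: a mollified problem $v_\varepsilon$ with boundary data $S_\delta(\tilde u_\varepsilon)$, a harmonic corrector $z_\varepsilon$ absorbing $\tilde u_\varepsilon-S_\delta(\tilde u_\varepsilon)$, the homogenized $v_h$, and finally $v$ forced to vanish on $\Delta_r$. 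After optimizing in $\delta$ the rate is only $(\varepsilon/r)^{\sigma/2}$ with $\sigma=1/2-1/p$ (Meyer's index), not $(\varepsilon/r)^{1/2}$.

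\emph{The Caccioppoli step.} You claim a Caccioppoli inequality for $u_\varepsilon-v$ because it ``inherits a homogeneous conormal condition on the interior holes.'' It does not: $v$ solves the homogenized equation on the unperforated $D_r$ and has no conormal condition on $\partial(\varepsilon\omega)$, and $u_\varepsilon-v$ satisfies no single PDE. The paper avoids this entirely by iterating at the $L^2$ level (via $J_\varepsilon$) and passing to $\nabla u_\varepsilon$ only at the end through the genuine Caccioppoli inequality \eqref{pri:5.1} for $u_\varepsilon$ alone.

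Finally, a smaller point: $\widehat A$ is only known to be Lipschitz (Remark~\ref{remark:2.4}), not $C^1$; the boundary $C^{1,\alpha}$ estimate for $v$ comes from linearizing and applying De Giorgi--Nash--Moser (Theorem~\ref{thm:2.19}), which is why $\partial\Omega\in C^{1,1}$ rather than $C^{1,\eta}$ is assumed.
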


\begin{theorem}[quenched Calder\'{o}n-Zygmund estimates]\label{thm:1.4}
Let $2\leq p<\infty$.
Let $0<\varepsilon\ll1$ and $\Omega$ be a bounded $C^{1,1}$ domain. Assume that $\mathcal{L}_{\varepsilon}$ and $\omega$ satisfy the same hypothesises as in Theorem \ref{thm:1.1}.
For any $f\in L^{p}(\Omega;\mathbb{R}^d)$, suppose  that $u_{\varepsilon}$ is the weak solution to
\begin{equation}\label{pri:1.15}
\left\{\begin{aligned} \mathcal{L}_{\varepsilon}u_{\varepsilon}&=\nabla\cdot f&\qquad& \emph{in}\quad \Omega_{\varepsilon},\\ \sigma_{\varepsilon}(u_{\varepsilon})&=-\vec{n}\cdot f&\qquad& \emph{on}\quad K_{\varepsilon},\\
u_{\varepsilon}&=0&\qquad&\emph{on}\quad\Gamma_{\varepsilon}.
\end{aligned}\right.
\end{equation}
Then there holds
\begin{equation}\label{pri:1.16}
\bigg(\int_{\Omega}\Big(\dashint_{B(x,\varepsilon)
\cap\Omega_{\varepsilon}}|\nabla u_{\varepsilon}|^2\Big)^{\frac{p}{2}}dx\bigg)^{\frac{1}{p}}
\lesssim\bigg(\int_{\Omega}\Big(\dashint_{B(x,\varepsilon)
\cap\Omega_\varepsilon}|f|^2\Big)^{\frac{p}{2}}dx\bigg)^{\frac{1}{p}},
\end{equation}
where the up to constant depends on $\mu_0,\mu_1,\mu_2,\tau, d,r_0, \mathfrak{g}^{\omega},p$ and the characters of $\Omega$ and $\omega$.
\end{theorem}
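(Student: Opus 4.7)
The plan is to invoke Shen's real variable argument for the quenched function
\[
F_\varepsilon(x):=\Big(\dashint_{B(x,\varepsilon)\cap\Omega_\varepsilon}|\nabla u_\varepsilon|^2\Big)^{1/2},
\qquad
G_\varepsilon(x):=\Big(\dashint_{B(x,\varepsilon)\cap\Omega_\varepsilon}|f|^2\Big)^{1/2},
\]
(extended by $0$ outside $\Omega$), and deduce $\|F_\varepsilon\|_{L^p(\Omega)}\lesssim\|G_\varepsilon\|_{L^p(\Omega)}$ from a good decomposition on mesoscopic balls. The base case $p=2$ is the straightforward energy estimate obtained by testing $\eqref{pri:1.15}$ with $u_\varepsilon$ and invoking $\eqref{a:1}$. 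For $p>2$, the abstract real-variable theorem reduces matters to checking, for every ball $B=B(x_0,r)$ with $\varepsilon\le r\le c_0\,\mathrm{diam}(\Omega)$ and $x_0\in\overline{\Omega}$, a two-part decomposition $F_\varepsilon\le F_B^{(1)}+F_B^{(2)}$ on $B$ with: (i) $(\dashint_{B}(F_B^{(1)})^{2})^{1/2}\lesssim(\dashint_{\alpha B}G_\varepsilon^2)^{1/2}$; (ii) $(\dashint_{B}(F_B^{(2)})^{q})^{1/q}\lesssim(\dashint_{\alpha B}F_\varepsilon^2)^{1/2}$ for some $q>p$.

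To construct the decomposition, fix $B$ and an enlargement $\alpha B$. Let $w_\varepsilon$ be the weak solution of the homogeneous problem
\[
\left\{\begin{aligned}
\mathcal{L}_\varepsilon w_\varepsilon &= 0 &&\text{in }(\alpha B)\cap\Omega_\varepsilon,\\
\sigma_\varepsilon(w_\varepsilon)&=0 &&\text{on }\partial\Omega_\varepsilon\cap(\alpha B),\\
w_\varepsilon &= u_\varepsilon &&\text{on }\partial(\alpha B)\cap\Omega_\varepsilon,
\end{aligned}\right.
\]
with the Dirichlet condition $w_\varepsilon=0$ inherited on $\Gamma_\varepsilon\cap(\alpha B)$ when $B$ meets $\partial\Omega$. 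Set $v_\varepsilon:=u_\varepsilon-w_\varepsilon$, which lies in the appropriate space vanishing on $\partial(\alpha B)\cap\Omega_\varepsilon$. Testing the equation satisfied by $v_\varepsilon$ with $v_\varepsilon$ itself and using the monotonicity part of $\eqref{a:1}$ yields the coercive bound $\|\nabla v_\varepsilon\|_{L^2((\alpha B)\cap\Omega_\varepsilon)}\lesssim\|f\|_{L^2((\alpha B)\cap\Omega_\varepsilon)}$, which translates to the required estimate (i) for $F_B^{(1)}(x):=(\dashint_{B(x,\varepsilon)\cap\Omega_\varepsilon}|\nabla v_\varepsilon|^2)^{1/2}$.

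For $F_B^{(2)}(x):=(\dashint_{B(x,\varepsilon)\cap\Omega_\varepsilon}|\nabla w_\varepsilon|^2)^{1/2}$, the function $w_\varepsilon$ is precisely of the type handled by Theorem \ref{thm:1.2} in interior balls and Theorem \ref{thm:1.3} in balls meeting $\partial\Omega$ (after invoking the $C^{1,1}$ regularity of $\partial\Omega$ to flatten the boundary). The large-scale Lipschitz estimate gives $(\dashint_{B(x,\varepsilon)\cap\Omega_\varepsilon}|\nabla w_\varepsilon|^2)^{1/2}\lesssim(\dashint_{(\alpha B/2)\cap\Omega_\varepsilon}|\nabla w_\varepsilon|^2)^{1/2}$ uniformly for $x\in B$, which is effectively an $L^\infty$ bound on $F_B^{(2)}$ over $B$. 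Hence condition (ii) holds for every $q<\infty$. Combining (i) and (ii) and running Shen's machinery yields $\|F_\varepsilon\|_{L^p(\Omega)}\lesssim \|G_\varepsilon\|_{L^p(\Omega)}+\|F_\varepsilon\|_{L^2(\Omega)}$, and the $L^2$ term is absorbed by the base estimate.

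The main obstacle is dealing with the nonlinearity in the decomposition step: unlike linear problems, $v_\varepsilon=u_\varepsilon-w_\varepsilon$ does not solve a standard elliptic equation, and one must exploit the coerciveness bound $\langle A(y,\xi)-A(y,\xi'),\xi-\xi'\rangle\ge\mu_0|\xi-\xi'|^2$ directly when testing the difference of the two weak formulations. A secondary, but delicate, issue is the interplay between the quenched $\varepsilon$-averages and the mesoscopic scale $r\ge\varepsilon$ of the real-variable balls; here the improved $L^p$-extension theorem for functions on $\varepsilon\omega$ is used to ensure $F_\varepsilon$ is well-defined on all of $\Omega$ and that the Poincaré--Sobolev machinery survives on the perforated domain uniformly in $\varepsilon$. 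Once these two points are set up, matching the conormal condition on $K_\varepsilon$ and the Dirichlet condition on $\Gamma_\varepsilon$ inside $\alpha B$ for $w_\varepsilon$ is routine.
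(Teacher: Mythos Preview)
Your proposal is correct and follows the same approach as the paper: the decomposition via $w_\varepsilon$ solving the homogeneous problem on $\alpha B\cap\Omega_\varepsilon$, the monotonicity estimate $\eqref{a:1}$ for $v_\varepsilon=u_\varepsilon-w_\varepsilon$, the large-scale Lipschitz estimates (Theorems \ref{thm:1.2} and \ref{thm:1.3}) providing an $L^\infty$ bound on $F_B^{(2)}$ over $B$, and Shen's real method (Lemma \ref{lemma:6.1}) to close. The paper additionally treats balls with $r<\varepsilon$ (trivially, setting $F_B^{(2)}=F_\varepsilon$ and $F_B^{(1)}=0$ there) and handles the conversion between $\varepsilon$-averages and $r$-averages by a dedicated ``primary geometry on integrals'' lemma (estimates $\eqref{pri:6.3}$--$\eqref{pri:6.4}$), which is the precise tool for the ``delicate interplay'' you mention rather than the extension theorem itself.
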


\begin{remark}\label{remark:1.1}
\emph{Concerned with optimal uniform regularity estimates,
the only possibility is to derive the ``interior'' uniform
Lipschitz estimate for the weak solution to $\eqref{pde:7.3}$, i.e.,
\begin{equation*}
  |\nabla u_\varepsilon(x)|
  \lesssim \Big(\dashint_{B^{\varepsilon}_{r}(x)}
  |u_\varepsilon|^2\Big)^{1/2}
\end{equation*}
for any $x\in B^{\varepsilon}_{1}$ and $0<r<1/2$, since
the boundary of $\omega$ owns a good regularity.}
\end{remark}

\begin{remark}
\emph{One may scale $\eqref{pde:1.1}$ to the case $\varepsilon =1$, and
denote its solution $u_1$ in such the case by $u$.
Let us explain the relationship between small-scale estimates (based on perturbations) and large-scale estimates (appealing to homogenization) as follows.
\begin{equation*}
\begin{aligned}
\text{local~smoothness~of~operators}\quad
&\Longrightarrow^{\text{classical~regularity~theory}} \quad \text{small-scale~regularities~of~} u
& &(0<r<1);\\
\text{regularities~of~} u_0
\quad
&\Longleftrightarrow^{\text{homogenization theory}}
\quad \text{large-scale~regularities~of~} u
& &(1\leq r<\infty).
\end{aligned}
\end{equation*}
This picture tells us that large-scale regularities of
$u$ can be good enough, provided homogenized solution $u_0$
being sufficiently smooth, and have no business with
perturbation arguments at small-scales. That is the main
reason why we can still investigate some regularity estimates for weak solutions (such as Theorems $\ref{thm:1.3}$ and
$\ref{thm:1.4}$) even when the known behaviour of
the solution would be ill-posedness measured by some
little stronger norms at small-scales. Also,
from the relationship,
it is not hard to understand why we use an integral average to replace pointwise function in $L^p$-norms to reformulate
Calder\'{o}n-Zygmund estimates as stated in Theorem $\ref{thm:1.4}$.}
\end{remark}

\begin{remark}
\emph{The condition of $\partial\Omega\in C^{1,1}$ in Theorems
$\ref{thm:1.3}$ and $\ref{thm:1.4}$ seems to be
strange at first glance, and a reasonable one should be
$\partial\Omega\in C^{1,\eta}$ with $0<\eta<1$ since
it is exactly the content of Schauder's theory for both
linear and quasi-linear elliptic operators.
However, it revealed a subtle difference between
the linear and nonlinear homogenization problems, which
is whether the homogenized solution $u_0$ still owns a much better regularity.
Regarding to our nonlinear model $\eqref{pde:1.1}$,
we should remind readers about the difference between $\widehat{A}(\cdot)$ and $A(x_0,\cdot)$.
Merely the Lipschitz continuity of $\widehat{A}$ was known (see Lemma $\ref{lemma:2.2}$ and Remark $\ref{remark:2.4}$), and so we just infer that $\nabla u_0\in C^{1,\alpha^\prime}_{\text{loc}}(\Omega)$ for some $\alpha^\prime\in(0,1)$
via linearization coupled with H\"older estimates.
(see Theorems $\ref{lemma:4.2}$, $\ref{thm:2.19}$).
As a comparison, for each fixed $\varepsilon$, it follows  from Schauder's estimates that $u_\varepsilon\in C_{\text{loc}}^{1,\gamma}
(\Omega^\prime_\varepsilon)$ for
any $0<\gamma\leq \alpha$ with $\Omega^\prime\subset\subset\Omega$. Hence, the higher regularity assumption of $\partial\Omega$ is exactly caused by the operation of linearization, which is not necessary if one may confirm that $\widehat{A}(\cdot)$ and $A(x_0,\cdot)$ own the same level smoothness. }
\end{remark}

\subsection{Related to the geometrical assumptions on $\omega$ and smoothness assumption $\eqref{a:3}$}

Compared to homogenization problems on unperforated domains,
the the difficulties arose from perforated domains are essential.
For example, let $A(y,\xi) = \xi$ and then the related corrector
is given by $N(y,\xi) = \phi\cdot\xi$, while it is not hard to check
that the equation $\eqref{pde:1.2}$ may be reduced to
\begin{equation}\label{pde:1.4}
 -\Delta \phi_k = 0  \quad\text{in}~Y\cap\omega,
 \qquad\text{and}\quad
 \vec{n}\cdot\nabla\phi_k = -n_k
 \quad \text{on}\quad Y\cap\partial\omega,
\end{equation}
where $n_k$ is the $k^{\text{th}}$ component of $\vec{n}$,
and $k=1,\cdots, d$.
Clearly, one may observe that
$\int_{Y\cap\omega}\nabla \phi_k dy \not= 0$, which
will bring in some new influence to the process of homogenization,
mostly coming from the geometry of $\omega$.
In this connection, we would like to address
some specific difficulties.
\begin{enumerate}
  \item[(i).] The fact that
$\int_{Y\cap\omega}\nabla N(\cdot,\xi) dy \not= 0$
prevents us from simply repeating the proof used in
\cite[Lemma 2.3]{WXZ} or \cite[Lemma 1]{P} to prove
the coercive property of
$\widehat{A}$ (see Lemma $\ref{lemma:2.2}$), while this property
plays a crucial role
in the quantitative homogenization theory as we have
explained in \cite{WXZ,W} with details.
Given its importance, we employ the extension theorem
developed in \cite[Thoerem 4.2]{OSY} (or
 \cite[Theorem 2.1]{APMP} in the case of
 $\partial Y\cap(\mathbb{R}^d\setminus\omega)=\emptyset$) to show
a clear proof for this property,
inspired by a similar result stated in \cite{PR,ZR}.
\emph{This difficulty
can not be observed from the linear models,
such as the example mentioned above.}
  \item[(ii).] For later two-scale expansions, we will impose
  an composite function $N(x/\varepsilon,\varphi)$
  with $\varphi\in H_0^1(\Omega;\mathbb{R}^d)$, which
  may wreck the periodicity of $N(\cdot,\xi)$ for any fixed $\xi$.
  This loss causes that we can not use the so-called periodic
  cancellation (which is quite useful to error estimates), i.e.,
\begin{equation}\label{KEY}
  \|\varpi(\cdot/\varepsilon)f\|_{L^{2}(\Omega)}
  \leq C\|\varpi\|_{L^{2}(Y)}\|f\|_{L^2(\Omega)}
  + o(1),
  \qquad \text{as}~\varepsilon\to 0,
\end{equation}
where $\varpi\in L^2_{\text{per}}(Y)$ and $f\in
C(\bar{\Omega})$.
Because of this, we have to show
$\nabla_\xi N(y,\xi)\in
L^\infty((Y\cap\omega)\times\mathbb{R}^d)$.
Our argument relies on the local boundedness estimate
coupled with the weak Harnack inequality,
originally developed by L. Caffarelli \cite{C}
for unperforated settings.
Moreover, the imposed flux corrector $E$
(see Lemma $\ref{lemma:2.55}$) will confront with the same
problem when $E$ and $\varphi$ are composed to be the form of
$E(\cdot/\varepsilon,\varphi)$.
This consequently requires
the smoothness assumptions on $A(\cdot,\xi)$ and $\partial\omega$
(see also Remark $\ref{remark:2.5}$). \emph{In the end, we should warn that
Lipschitz estimates for $u_\varepsilon$ near $\partial\Omega$ at small-scales can not be guaranteed
by the assumption $\eqref{a:3}$ and the geometrical assumptions on $\omega$.}
\end{enumerate}

\begin{remark}
\emph{For the linear case, the smoothness assumption $\eqref{a:3}$
and hypothesises of regular boundaries of $\omega$ are not
necessary (see \cite{WXZ2020}). Thus, there is an interesting question whether we can establish a new theory independent of
these two conditions.}
\end{remark}

\subsection{Related to fractional
Sobolev-type spaces in error estimates}\label{subsec:1.3}

To clearly observe the impact caused by the regularity of $F$, we impose the fractional
Sobolev-type space for a description.
Here we merely expose where it involved and the reason why it looks reasonable.
As in \cite{WXZ}, define the first-order approximating corrector $w_\varepsilon := u_\varepsilon - u_0 - \varepsilon N(y,\varphi)$ with $\varphi\in H^1_0(\Omega;\mathbb{R}^d)$.
Then, figure out the following weak formulation that $w_\varepsilon$ satisfies
\begin{equation}\label{f:1.2}
\begin{aligned}
 \int_{\Omega_{\varepsilon}} \big(A(x/\varepsilon,\nabla u_\varepsilon) - A(x/\varepsilon,\nabla v_\varepsilon)\big)
\cdot \nabla w_{\varepsilon} dx
& = \underbrace{\int_\Omega(l_{\varepsilon}^{+}-
\theta\psi^{'}_{\varepsilon})
F\tilde{w}_{\varepsilon}dx}_{\text{T}}
+ \int_{\Omega} \text{~``traditional terms''~} dx
\end{aligned}
\end{equation}
(see the equality $\eqref{eq:3.1}$ for details),
where $\tilde{w}_{\varepsilon}$ is the extension of
$w_{\varepsilon}$ given by Lemma $\ref{extensiontheory}$
and $\psi_{\varepsilon}^\prime$ is a cut-off function
satisfying $\eqref{cutoff}$.
Since ``traditional terms'' could be handled by
a similar argument as in previous work \cite{WXZ,W},
this part is clearly no business with the fractional
Sobolev-type spaces. In fact,
appealing to the auxiliary equation $\eqref{auxi1}$, the term $\text{T}$ will
produce the term like
\begin{equation*}
\varepsilon\int_{\Omega}\nabla F\cdot\nabla_y\Phi(y)(\tilde{w}_\varepsilon\psi_\varepsilon^\prime)dx
\quad\text{with}\quad
y=x/\varepsilon.
\end{equation*}
By Lemma $\ref{interpolation}$, the fractional Sobolev-type spaces therefore play a role
in judging the optimal power of $\varepsilon$.


\subsection{Related to previous works $\&$ Innovations of
present jobs}

The large-scale (uniform) Lipschitz regularity was first
obtained by M. Avellaneda, F. Lin \cite{MAFHL} through
three-step compactness methods for periodic homogenization problems.
Recently, S. Armstrong, T. Kuusi, J.-C. Mourrat, Z. Shen
\cite{AKM,AS}
created a new approach in
aperiodic settings. In this regard,
a fair statement should not ignore
A. Gloria, S. Neukamm, F. Otto's work
\cite{GNO1}, although
theirs formally published quite recently.
Due to the remarkable developments above,
frankly speaking, large-scale regularity estimates in homogenization theory have already been understood deeply,
and some potential contributions of the present work
would be fulfilling some technical gap between main ideas and
concrete problems in the new background.

In terms of our model $\eqref{pde:1.1}$ in linear cases, the first notable outcome was
obtained by O. Oleinik, A. Shamaev, and G. Yosifian
\cite[pp.124, Theorem 1.2]{OSY} for linear elasticity systems:
\begin{equation}\label{f:1.3}
  \|u_{\varepsilon}-u_{0}-\varepsilon\chi_{\varepsilon}
  \nabla u_{0}\|_{H^{1}(\Omega_{\varepsilon})}
  \lesssim\varepsilon^{1/2}
  \Big\{\|g\|_{H^{5/2}(\partial\Omega)}
  +\|F\|_{H^{1}(\Omega)}\Big\},
\end{equation}
under regularity assumptions on the
coefficients and the reference region $\omega$.
Recently, B. Russell \cite[Theorem 1.4]{BR} improved the
above estimate by receiving
\begin{equation*}
 \|u_{\varepsilon}-u_{0}-
 \varepsilon\chi_{\varepsilon}S_\varepsilon^2
 (\psi_\varepsilon\nabla u_{0})\|_{H^{1}(\Omega_{\varepsilon})}\lesssim \varepsilon^{1/2}
  \|g\|_{H^{1}(\partial\Omega)}
\end{equation*}
for Lipschitz domains without regularity assumptions on the coefficients. Meanwhile,
an interior large-scale Lipschitz regularity
(see \cite[Theorem 1.1]{BR}) was also established.
From our point of view, their impressive contributions
are summarized below, \emph{from which readers will see the source of innovation of this job}.
\begin{itemize}
  \item The literature \cite{OSY} developed
  some extension theorems on perforated domains.
  \emph{Their core ideas stimulated the creation of Lemma $\ref{extensiontheory}$ to
  shift our analysis from $L^2$ to $L^p$ spaces.
  Also, we found Sobolev-Poincar\'e inequalities on perforated domains (see Lemma $\ref{lemma:2.20}$),
  which opened a door to Meyer's estimates (see Theorem $\ref{thm:7.1}$) and therefore played a fundamental role in the whole work.}

  \item The so-called flux corrector was introduced
  by \cite{OSY,BR} in a different format, and the later one
  first extended the corrector $\chi$ from $\omega\cap Y$ to $Y$ and then defined flux correctors on $Y$.
  \emph{This way seems to be easily extended to nonlinear equations according to our previous study experience in \cite{WXZ} (see Lemma $\ref{lemma:2.5}$). Also, it is efficient to observe the
  relationship between the regularity of $F$ and the convergence rate, by help of Lemma $\ref{interpolation}$, and we have explained it more in Subsection $\ref{subsec:1.3}$.}

  \item With regard to the estimate $\eqref{pri:1.1}$ for
  linear equations, the literature \cite{BR} developed some techniques to make the scheme on large-scale regularities
  in \cite{S5} valid for perforated domains, such as
  \cite[Lemma 4.4]{BR}.
  \emph{Although our approach is based upon Meyer's estimates (due to the new development mentioned above),
  his main idea  provided us with a blueprint to
  Theorem $\ref{thm:1.2}$ and $\ref{thm:1.3}$, making
  us focus on the new challenges caused by boundaries and
  nonlinearity of operators.
  In this regard, we strongly refer the reader to the proofs in Sections $\ref{sec:4}, \ref{sec:5}$ for these new tricks
  which are quite involved and not suitable to be presented here. For example, see the proof of Lemma $\ref{lemma:5.2}$.}
\end{itemize}

Besides, under higher regularity assumption on given data,
A. Belyaev, A. Pyatnitski\u{1} and G. Chechkin's \cite{ABAPGC} developed another approach to derive error estimates, whose method is independent of flux correctors.
However, their scheme seems to be hardly extended to nonlinear cases. Recently, under fast decay of correlations at large-scales and stationary ensemble, J. Fishcher and S. Neukamm obtained  optimal convergence rates for the model $\eqref{pde:1.5}$ in the case of $\omega=\mathbb{R}^d$
(see \cite[Theorems 2,3]{FN}).
For linear models with high-contrast coefficients, Z. Shen
developed a new scheme (intrinsic way) to derive interior Lipschitz estimates at large-scales (see \cite{Shen}).

Concerning the quenched Calder\'{o}n-Zygmund estimates,
it is initially appeared in
A. Gloria, S. Neukamm, F. Otto's work
\cite{GNO1} for a quantitative
stochastic homogenization theory.
For nonlinear elliptic type equations (on unperforated domains), interior quenched Calder\'{o}n-Zygmund estimates  was received by S. Armstrong, J.-P. Daniel \cite{AD}.
Recently, as an intermediate step, it
was developed for elliptic systems with stationary random coefficients of integrable correlations by
M. Duerinckx and F. Otto \cite{DO}.
In their scheme, the notable ingredient was Shen's real arguments (see Lemma $\ref{lemma:6.1}$), which inspired by L. Caffarelli and I. Peral's work \cite{CP},
and we also take this way to establish Theorem $\ref{thm:1.4}$.
As for traditional Calder\'{o}n-Zygmund theory of singular integral operators, one may acquire weighted-type estimates
appealing to Muckenhoupt's weight classes, and then
we can apply it to improve the estimate
$\eqref{f:1.3}$ to a sharp one for linear models,
whose methods were presented in a separated work \cite{WXZ2020}.

Finally, we mention that
the quantitative homogenization theory has been received an extensive study,
and without attempting to
exhaustive we refer the readers to
\cite{AKM,ASZ,AS,FN,GNO,J,KFS2,NSX,P,BR,S4,S5,SZ1,
TS1,X3,ZVVPSE1}.

\subsection{Organization of the paper}

In Section $\ref{sec:2}$, we first introduced
some quantitative properties of correctors both in average and pointwise senses (see Lemmas $\ref{lemma:2.1}, \ref{lemma:2.55}$), and then we verified
the growth and coerciveness properties of $\widehat{A}$ in
Lemma $\ref{lemma:2.2}$, as well as, some
estimates for flux correctors in Lemma $\ref{lemma:2.5}$.
In Subsection $\ref{subsec:2.2}$,
we introduced some properties of periodic cancellations
and built the boundedness of the operator defined by periodic multiplier in fractional Sobolev-type spaces
(see Lemma $\ref{interpolation}$).
Subsection $\ref{subsec:2.3}$
was devoted to some extension theories (see Lemma
$\ref{extensiontheory}$), and we also established the Sobolev Poincar\'e's inequality on perforated domains there.
To make Preliminaries concise, we separately showed all the related proofs in Section $\ref{sec:8}$.

In Section $\ref{sec:3}$, the main task was to build
the weak formulation of the first-order approximating correctors. The first subsection was
devoted to show the proof of Theorem $\ref{thm:1.1}$,
while the proof of Theorem $\ref{thm:1.5}$
was signed to the second subsection. Sections $\ref{sec:4}$
and $\ref{sec:5}$ handled the interior and
boundary Lipschitz estimates at large-scales, respectively.
Some new tricks had been shown with full details.
In Section $\ref{sec:6}$, we first introduced Shen's real arguments and primary geometry on integrals, and then
had the proof of Theorem $\ref{thm:1.4}$.

In Section $\ref{sec:7}$, some fundamental regularity estimates were imposed. Although these results must be
known by experts, we provided the proofs for the reader's
convenience due to the lack of precise references.

\subsection{Notation}\label{subsec:1.2}
\begin{enumerate}
  \item [(1).] Notation for estimates.
  \begin{enumerate}
    \item  $\lesssim$ stand for $\leq$ up to a multiplicative constant, which may depend on some given parameters imposed in the paper, but never on $\varepsilon$. We write $\sim$ when both $\lesssim$ and
    $\gtrsim$ hold.
    \item We use $\ll$ to indicate that the multiplicative constant is much small than 1.

    \item  $\lesssim^{(*)}, =^{(*)}$ or $\leq^{(*)}$ denotes that the equality or inequality follows from $(*)$.
  \end{enumerate}
  \item [(2).] Geometric notation
  \begin{enumerate}
    \item  $d\geq 2$ is the dimension,
    $r_0$ represents the diameter of $\Omega\subset\mathbb{R}^d$, and
    $\langle x,y\rangle:=\sum_{i=1}^dx_iy_i$ represents
    the inner product in $\mathbb{R}^d$.
    The layer set of $\Omega$ is denoted by $O_{n\varepsilon}:=\{x\in\Omega:\text{dist}(x,\partial\Omega)
        <n\varepsilon\}$, and the co-layer set is defined by $\Sigma_{n\varepsilon}:=\Omega\backslash O_{n\varepsilon}$.
    \item  Let $\vartheta:\mathbb{R}^{d-1}\to\mathbb{R}$ be a Lipschitz function (or $C^{1,\eta}$ function with $0<\eta\leq 1$) such that $\vartheta(0)=0$ and $\|\nabla\vartheta\|_{L^{\infty}(\mathbb{R}^{d-1})}\leq M_{0}$ (or $\|\nabla\vartheta\|_{C^{0,\eta}(\mathbb{R}^{d-1})}\leq M_{0}$). For any $r>0$, let
        \begin{equation*}
        \begin{aligned}
          \Delta_{r}&:=\big\{(x',\vartheta(x'))\in
          \mathbb{R}^d:|x'|<r\big\};\\
          D_r&:=\big\{(x',t)\in\mathbb{R}^d:|x'|<r \quad\text{and}\quad
          \vartheta(x')<t<\vartheta(x')+10(M_0+1)r\big\}.
        \end{aligned}
        \end{equation*}
        Up to a diffeomorphism, one may simply write $D_r=B(z,r)\cap\Omega$; $\Delta_r=B(z,r)\cap\partial\Omega$ with $z\in\partial\Omega$.

  \item $D^\varepsilon_r:=D_r\cap\varepsilon\omega$ (half balls with holes); $\partial D^\varepsilon_r:= (\partial D_r\cap\varepsilon\omega)\cup
      (D_r\cap\partial(\varepsilon\omega))$ (boundaries of $D_r^\varepsilon$); Then one may define the set
      of $\partial D_r^\varepsilon$ intersecting with any  set $U$, denoted by
      $\partial D^{\varepsilon}_r|_{U}:=\partial D^{\varepsilon}_r\cap U$.
      In this regard,
      the boundary set
      $\partial D_r^\varepsilon
      =\partial D_r^{\varepsilon}|_{D_r}
      \cup\partial D_r^{\varepsilon}|_{\Delta_r}
      \cup\partial D_r^{\varepsilon}|_{\partial D_r\setminus\Delta_r}$. If the half ball $D_r^\varepsilon$ is centered at $x$, then one may  denote it by $D_r^\varepsilon(x)$, whose center would be omitted without confusions in general.

  \item Let $B_r:= B(0,r)$ and
  $nB=nB_r=B(0,nr)$; $B^\varepsilon_r:=B(0,r)\cap\varepsilon\omega$.
  Then the boundary set $\partial B_r^\varepsilon
  = \partial B_r^\varepsilon|_{\partial B_r}
  \cup \partial B_r^\varepsilon|_{B_r}$.
  The notation
  $B_r^\varepsilon(x)$ is to stress the center point $x$   (otherwise omitted).
  \end{enumerate}
  \item [(3).] Notation for spaces and functions.
  \begin{enumerate}
    \item  $H^1(\Omega_\varepsilon,\Gamma_\varepsilon)$
        denotes the closure in $H^1(\Omega_\varepsilon)$
        of smooth functions vanishing on
        $\Gamma_\varepsilon$ (see \cite[pp.3]{OSY}).
        (Similarly, one may have the notation
        $H^{1}(B^\varepsilon_r,\partial B_r^\varepsilon|_{\partial B_r})$,
        $H^{1}(D^\varepsilon_r,\partial D_r^\varepsilon|_{\partial D_r})$ for $r>0$ and $W^{1,p}(\Omega_{\varepsilon},\Gamma_{\varepsilon})$)
        $H^1_{\text{per}}(Y\cap \omega)$ represents
        the closure in $H^1(Y\cap\omega)$ of the
        set of 1-periodic $C^{\infty}(\bar{\omega})$
        functions (see \cite[pp.5]{OSY}).

    \item
    We impose the fractional
Sobolev-type spaces.
For
$s\in (0,1)$, we define $H^s(\Omega)$ as follows,
\begin{equation*}
  H^s(\Omega) := \bigg\{u\in L^2(\Omega):
  \frac{|u(x)-u(y)|}{|x-y|^{s+\frac{d}{2}}}\in L^2(\Omega\times\Omega)\bigg\},
\end{equation*}
endowed with the so-called Gagliardo's norm of $u$
\begin{equation*}
 \|u\|_{H^{s}(\Omega)}
 :=\bigg(\int_{\Omega}|u|^2 dx
 + \int_{\Omega}\int_{\Omega}
 \frac{|u(x)-u(y)|^2}{|x-y|^{2s+d}}dxdy\bigg)^{\frac{1}{2}}
\end{equation*}
(see for example \cite[pp.524]{DPV}). Let $H_0^s(\Omega)
:= \overline{C_0^\infty(\Omega)}^{H^{s}}$ for any $s\in(0,1)$,
while we denote its dual space by $H^{-s}(\Omega)$.
If $\Omega=\mathbb{R}^d$, the space $H^{s}(\mathbb{R}^d)$ has an equivalent definition via Fourier's transform
(see \cite[Proposition 3.4]{DPV}). The notation $W^{k-1/p,p}(\partial\Omega)$ (with $1\leq p<\infty$
and $k\geq 1$ being an integer) is known as the Besov space (fractional Sobolev-type space) on $\partial\Omega$, which exactly describes the trace of functions in $W^{k,p}(\Omega)$ (see \cite{JK}). In particular,
we also denote $W^{k-1/2,2}(\partial\Omega)$ by   $H^{k-1/2}(\partial\Omega)$.
  \end{enumerate}
   \item [(4).]Notation for derivatives.
  \begin{enumerate}
    \item
  $\nabla v = (\nabla_1 v, \cdots, \nabla_d v)$ is the gradient of $v$, where
  $\nabla_i v = \partial v /\partial x_i$ denotes the
  $i^{\text{th}}$ derivative of $v$, and
  $\nabla^2 v = (\nabla^2_{ij} v)_{d\times d}$  denotes the Hessian matrix of $v$, where
  $\nabla^2_{ij} v = \frac{\partial^2 v}{\partial x_i\partial x_j}$.
   \item
  $\nabla\cdot v=\sum_{i=1}^d \nabla_i v_i$
  denotes the divergence of $v$, where
  $v = (v_1,\cdots,v_d)$ is a vector-valued function.
  \item $\nabla_y v$ indicates the gradient of $v$ with respective
  to the variable $y$, while $\Delta_{x} v$ denotes
  the Laplace operator with respective to the variable $x$,
  where $\Delta :=\nabla\cdot\nabla$.
  \item $\nabla_{\text{tan}}f$ denotes the tangential derivative of $f$ on the responding boundary.
  \end{enumerate}
\end{enumerate}

Finally, we mention that:
(1)
when we say that the multiplicative constant depends on the character of the domain,
it means that the constant relies on $M_0$;
(2) the Einstein's summation convention for repeated indices is used throughout.

\section{Preliminaries}\label{sec:2}

\subsection{Properties of correctors and flux correctors}

Most of the properties of correctors and flux correctors
associated with perforated domains are similar to
those established in unperforated ones. Roughly speaking,
ideas here mainly inspired by \cite{MD,PR,WXZ,ZR}.

\begin{lemma}\label{lemma:2.1}
Suppose that $A$ satisfies the conditions $\eqref{a:1}$ and
$\eqref{a:2}$.
Let $N(\cdot,\xi)\in H_{\emph{per}}^1(Y\cap \omega)$ be the weak solution to the equation $\eqref{pde:1.2}$, and then
for any $\xi\in\mathbb{R}^d$, we have the following estimates
\begin{equation}\label{pri:2.1}
 \dashint_{Y\cap \omega} |N(\cdot,\xi)|^2 + \dashint_{Y\cap \omega} |\nabla N(\cdot,\xi)|^2 \leq C|\xi|^2
\end{equation}
and
\begin{equation}\label{pri:2.2}
\dashint_{Y\cap \omega} |\nabla_\xi N(\cdot,\xi)|^2 + \dashint_{Y\cap \omega} |\nabla_{\xi}\nabla N(\cdot,\xi)|^2 \leq C,
\end{equation}
where $C$ depends only on $\mu_0,\mu_1,\omega$ and $d$.
Moreover, if $\omega$ satisfies the separated property $\eqref{g}$, then
there holds
\begin{equation}\label{pri:2.15}
|N(y,\xi)-N(y,\xi')|\lesssim |\xi-\xi'| \text{~~~for~~}y\in\omega,~\xi,\xi'\in \mathbb{R}^d,
\end{equation}
i.e., $|\nabla_{\xi}N(y,\xi)|\lesssim 1$
for any $y\in\omega$, and $\xi\in \mathbb{R}^d$.
\end{lemma}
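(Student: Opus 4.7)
The proof splits into three increasingly delicate estimates, each building on the previous. For the first bound $\eqref{pri:2.1}$, I would test the weak form of the cell problem $\eqref{pde:1.2}$ against $N(\cdot,\xi)$ itself (the Neumann condition on $Y\cap\partial\omega$ together with periodicity across $\partial Y\cap\omega$ kill all boundary terms), obtaining
$$\int_{Y\cap\omega} A(y,\xi+\nabla N)\cdot\nabla N\, dy = 0.$$
Adding $\int A(y,\xi+\nabla N)\cdot\xi$ to both sides, then invoking the coerciveness in $\eqref{a:1}$ with $\xi'=0$ together with $A(y,0)=0$ from $\eqref{a:2}$ on the left, and the growth condition with Cauchy--Schwarz on the right, one arrives at $\mu_0\int|\xi+\nabla N|^2 \leq \mu_1|\xi|\,|Y\cap\omega|^{1/2}\bigl(\int|\xi+\nabla N|^2\bigr)^{1/2}$, hence $\dashint|\nabla N(\cdot,\xi)|^2 \lesssim |\xi|^2$. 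The Poincar\'e inequality on $Y\cap\omega$ (applicable because $\dashint N=0$) then finishes the $L^2$ bound on $N$ itself.

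For $\eqref{pri:2.2}$ I would linearize by differentiating $\eqref{pde:1.2}$ in $\xi$. Writing $N^k := \partial_{\xi_k} N$ and $B(y,\xi) := (\partial_\xi A)(y,\xi+\nabla N(y,\xi))$, the function $N^k$ satisfies the linear Neumann problem
$$-\nabla_y\cdot\bigl[B(y,\xi)(e_k+\nabla N^k)\bigr]=0\ \text{in}\ Y\cap\omega,\qquad \vec n\cdot B(e_k+\nabla N^k)=0\ \text{on}\ Y\cap\partial\omega,$$
with $N^k$ periodic and mean-zero in $Y\cap\omega$. The structure conditions $\eqref{a:1}$ transfer to $B$ as the uniform bounds $\mu_0|\eta|^2\leq B\eta\cdot\eta$ and $|B\eta|\leq \mu_1|\eta|$ for a.e.\ $y$. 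Testing against $N^k$ and applying Cauchy--Schwarz as above gives $\dashint|\nabla N^k|^2\lesssim 1$, and then Poincar\'e yields $\dashint|N^k|^2\lesssim 1$, which together produce $\eqref{pri:2.2}$.

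The pointwise Lipschitz bound $\eqref{pri:2.15}$ is the genuine difficulty and is where the geometric hypothesis $\eqref{g}$ together with the $C^{1,\alpha}$-regularity of $\partial\omega$ enter. Writing $N(y,\xi)-N(y,\xi')=\int_0^1\nabla_\xi N(y,\xi'+t(\xi-\xi'))\cdot(\xi-\xi')\,dt$, the claim reduces to a uniform pointwise bound $\|\nabla_\xi N(\cdot,\zeta)\|_{L^\infty(\omega)}\lesssim 1$ for every $\zeta\in\mathbb{R}^d$. Since each component $N^k(\cdot,\zeta)$ solves a linear divergence-form equation with bounded measurable coefficients and inhomogeneity $-\nabla\cdot(Be_k)$ with $Be_k\in L^\infty$, the plan is to invoke De~Giorgi--Nash--Moser local boundedness: interior Moser iteration away from the holes, and its boundary variant near $Y\cap\partial\omega$ (permissible because $C^{1,\alpha}$-regularity allows a flattening of the boundary, and the separation $\mathfrak{g}^{\omega}>0$ guarantees each hole can be handled independently in a neighbourhood disjoint from the other holes). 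This upgrades the $L^2$-bound from the previous step to
$$\|N^k(\cdot,\zeta)\|_{L^\infty(Y\cap\omega)}\lesssim \|N^k(\cdot,\zeta)\|_{L^2(Y\cap\omega)}+1\lesssim 1,$$
uniformly in $\zeta$, after which $\eqref{pri:2.15}$ follows from the fundamental theorem of calculus in $\xi$. The main obstacle I anticipate is producing the Moser-type estimate uniformly across all periodic cells with the inhomogeneous right-hand side $-\nabla\cdot(Be_k)$; this is precisely where the weak Harnack tool referenced in the introduction must be coupled with local boundedness to control $N^k$ up to and including the hole boundaries, rather than merely in the interior.
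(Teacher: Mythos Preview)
Your arguments for $\eqref{pri:2.1}$ and $\eqref{pri:2.2}$ match the paper: identical for $\eqref{pri:2.1}$, and for $\eqref{pri:2.2}$ the paper uses the finite-difference form of your linearisation (testing the difference of the two cell problems at $\xi,\xi'$ against $N(\cdot,\xi)-N(\cdot,\xi')$), which is equivalent and sidesteps having to justify the formal $\xi$-derivative at that stage.

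For $\eqref{pri:2.15}$ you take a genuinely different route. The paper does \emph{not} linearise; instead it passes to $u(y,\xi)=N(y,\xi)+y\cdot\xi$, which satisfies the \emph{homogeneous} nonlinear equation $\nabla\cdot A(y,\nabla u)=0$ with the conormal condition on $\partial\omega$, shifts by a constant $\tilde M$ to make $\tilde u=u+\tilde M>0$, and then applies \emph{both} the local boundedness estimate and the weak Harnack inequality (Lemma~\ref{lemma:7.1}) to this nonnegative solution: $\sup_{B_r}\tilde u(\cdot,\xi)\lesssim\dashint_{B_{\bar r}}\tilde u(\cdot,\xi)$ together with $\inf_{B_r}\tilde u(\cdot,\xi')\gtrsim\dashint_{B_{\bar r}}\tilde u(\cdot,\xi')$. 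Subtracting, the pointwise difference $\tilde u(y,\xi)-\tilde u(y,\xi')$ is controlled by $\dashint|\tilde u(\cdot,\xi)-\tilde u(\cdot,\xi')|\lesssim\dashint|N(\cdot,\xi)-N(\cdot,\xi')|+|\xi-\xi'|$, and the $L^2$-Lipschitz bound $\eqref{pri:2.2}$ closes the estimate. Your strategy---local boundedness for the linear inhomogeneous equation satisfied by $\partial_{\xi_k}N$ with source $-\nabla\cdot(Be_k)$, $Be_k\in L^\infty$---is also valid and arguably more direct, but it trades the weak Harnack inequality for a Moser-type bound that must absorb a divergence-form right-hand side, and it requires the $\xi$-differentiability of $N$ to be established first (which your Step~2 in effect provides via difference quotients). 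One small correction: the hypothesis for $\eqref{pri:2.15}$ is only the separation property $\eqref{g}$ plus the standing Lipschitz assumption on $\omega$; the $C^{1,\alpha}$ regularity of $\partial\omega$ is not invoked here, and neither argument actually needs it.
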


\begin{remark}\label{remark:2.1}
\emph{In view of the estimate $\eqref{pri:2.1}$,
one may conclude that $N(y,0) = 0$ for $y\in Y\cap \omega$.}
\end{remark}

\begin{lemma}\label{lemma:2.55}
Let $\omega$ be a regular domain.
Suppose that $A$ satisfies $\eqref{a:1}$, $\eqref{a:2}$ and
$\eqref{a:3}$.
Assume that $N(y,\xi)$ is the corrector satisfying \eqref{pde:1.2}, then for any $p\geq 2$, there holds
\begin{equation}\label{pri:2.18}
  \bigg(\dashint_{Y\cap \omega}
  \big|\nabla (N(y,\xi)-N(y,\xi'))\big|^pdy
  \bigg)^{1/p}\lesssim
  |\xi-\xi'|
\end{equation}
for any $\xi,\xi'\in\mathbb{R}^d$, where
the up to constant depends on $\mu_0,\mu_1,\mu_2,\tau,d$
and the character of $\omega$.
\end{lemma}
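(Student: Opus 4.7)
The plan is to linearize the corrector equation with respect to $\xi$ so that $v := N(\cdot,\xi) - N(\cdot,\xi')$ satisfies a uniformly elliptic linear Neumann problem on $Y\cap\omega$, and then to bootstrap from an $L^2$ energy estimate up to arbitrary $L^p$ by combining a Meyers-type self-improvement with a Schauder-type argument in the $\xi$-variable.

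First I would subtract the two cell problems $\eqref{pde:1.2}$ for $\xi$ and $\xi'$ and write, via the fundamental theorem of calculus,
\begin{equation*}
A(y,\xi+\nabla N(\cdot,\xi))-A(y,\xi'+\nabla N(\cdot,\xi'))=B(y)\bigl(\xi-\xi'+\nabla v\bigr),
\end{equation*}
where $B(y):=\int_{0}^{1}D_\xi A\bigl(y,\xi'+\nabla N(y,\xi')+s(\xi-\xi'+\nabla v(y))\bigr)\,ds$. By $\eqref{a:1}$, $B$ is bounded and uniformly elliptic with constants depending only on $\mu_0,\mu_1$, so $v$ solves
\begin{equation*}
-\nabla\cdot(B\nabla v)=\nabla\cdot\bigl(B(\xi-\xi')\bigr)\quad\text{in }Y\cap\omega,\qquad \vec{n}\cdot B(\xi-\xi'+\nabla v)=0\quad\text{on }Y\cap\partial\omega,
\end{equation*}
together with periodicity. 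Testing against $v$ (which has zero mean) and applying Poincar\'e delivers the $L^{2}$ bound $(\dashint|\nabla v|^{2})^{1/2}\lesssim|\xi-\xi'|$, i.e.\ the case $p=2$.

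To push past the Hilbertian scale I would invoke a Meyers-type self-improvement: the Sobolev--Poincar\'e inequality on perforated cells (Lemma $\ref{lemma:2.20}$) together with the $L^p$-extension theorem (Lemma $\ref{extensiontheory}$) provides the reverse H\"older inequality required by Gehring's lemma for the linear equation above. This upgrades the previous estimate to $(\dashint|\nabla v|^{2+\epsilon_0})^{1/(2+\epsilon_0)}\lesssim|\xi-\xi'|$ for some $\epsilon_0>0$. To cover \emph{every} $p\geq 2$ I would then turn to the $\xi$-derivative. By $\eqref{pri:2.2}$ and $\eqref{pri:2.15}$, formal differentiation of $\eqref{pde:1.2}$ in $\xi_k$ shows that $\partial_{\xi_k}N$ solves the linear Neumann problem
\begin{equation*}
-\nabla\cdot\bigl[D_\xi A(y,\xi+\nabla N(\cdot,\xi))(e_k+\nabla\partial_{\xi_k}N)\bigr]=0\quad\text{in }Y\cap\omega,
\end{equation*}
with the matching conormal condition on $Y\cap\partial\omega$. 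Under $\eqref{a:3}$ and the $C^{1,\alpha}$ regularity of $\partial\omega$, nonlinear elliptic regularity (invoked in Section $\ref{sec:7}$) gives uniform-in-$\xi$ $C^{1,\alpha}$-regularity for $N(\cdot,\xi)$, and hence H\"older regularity for the coefficient matrix. Schauder theory for linear Neumann problems on $C^{1,\alpha}$ domains then yields $\|\nabla\partial_{\xi_k}N\|_{L^{\infty}(Y\cap\omega)}\lesssim 1$ uniformly in $\xi$. Integrating in $\xi$ along the segment from $\xi'$ to $\xi$,
\begin{equation*}
\nabla v(y)=\int_{0}^{1}\nabla(\partial_{\xi_k}N)\bigl(y,\xi'+t(\xi-\xi')\bigr)(\xi_k-\xi'_k)\,dt,
\end{equation*}
so $\|\nabla v\|_{L^{\infty}(Y\cap\omega)}\lesssim|\xi-\xi'|$, which majorises any $L^p$ norm.

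The main obstacle is the last step: securing Schauder-type regularity for $\nabla N(\cdot,\xi)$ up to the perforation boundaries with constants independent of $\xi$, so that the coefficient matrix in the linearized problem is genuinely H\"older. This is exactly where the smoothness hypothesis $\eqref{a:3}$ and the regular-boundary assumption on $\omega$ enter in an essential way; without them the Meyers step alone only delivers $p\in[2,2+\epsilon_0]$, not the full range stated in the lemma. The auxiliary regularity estimates in Section $\ref{sec:7}$ are tailored to supply precisely this uniform-in-$\xi$ control.
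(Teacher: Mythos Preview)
Your linearization is exactly what the paper does: with $v=N(\cdot,\xi)-N(\cdot,\xi')$ and $B(y)=\int_0^1 D_\xi A(y,\,\cdot\,)\,ds$ evaluated along the segment, $v$ solves a linear divergence-form Neumann problem on $Y\cap\omega$ with right-hand side $\nabla\cdot(B(\xi-\xi'))$, and the $L^2$ bound is the estimate $\eqref{f:2.2}$. The Meyers detour is harmless but unnecessary.

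The gap is the Schauder step. You need the coefficient $D_\xi A\bigl(y,\xi+\nabla N(y,\xi)\bigr)$ to be H\"older, but the standing hypotheses only give $A\in C^{1}(\mathbb{R}^d\times\mathbb{R}^d)$, so $D_\xi A$ is merely \emph{continuous} in both arguments. The $C^{1,\alpha}$ regularity of $N(\cdot,\xi)$ that you correctly extract from the nonlinear theory makes $\nabla N$ H\"older, but composing a H\"older function with a merely continuous $D_\xi A$ does not yield a H\"older coefficient; assumption $\eqref{a:3}$ is a H\"older condition on $A$ in $y$, not on $D_\xi A$. Hence Schauder theory is unavailable and the asserted bound $\|\nabla\partial_{\xi_k}N\|_{L^\infty}\lesssim 1$ is not justified. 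The paper avoids this by stopping at \emph{continuity} of the linearized coefficient (which does follow from $\nabla N\in C^{0,\alpha}$ and $A\in C^1$) and then invoking the global $W^{1,p}$ Calder\'on--Zygmund theory for linear Neumann problems with continuous (VMO) coefficients on $C^{1,\alpha}$ domains, quoting \cite[Theorem 1.1]{KFS1}. That single application gives $(\dashint_{Y\cap\omega}|\nabla v|^p)^{1/p}\lesssim|\xi-\xi'|+(\dashint_{Y\cap\omega}|\nabla v|^2)^{1/2}\lesssim|\xi-\xi'|$ for every finite $p\ge 2$, with no $\xi$-differentiation and no Schauder input required.
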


\begin{remark}\label{remark:2.5}
\emph{In fact, the range of $p$ relies on
the regularity of the boundary of
$\omega$. There are at least two types of Lipschitz domains which
may guarantee the range $2\leq p<\infty$. The one is the so-called
Reifenberg-flat domains,
whose boundary is even permitted to be a fractal structure but
merely owns a ``small'' Lipschitz constant. The other one is
a class of Lipschitz domains with convex properties. Boundary
estimates involving non-smooth domains have been extensively
studied in the past decades, and we refer the readers to
\cite{BW,CP,S5} and the references therein for more details.
Besides, assuming the same conditions
as in Lemma $\ref{lemma:2.55}$,
 on account of the Sobolev's embedding theorem,
 the desired estimate $\eqref{pri:2.15}$ may derive
 from $\eqref{pri:2.2}$ and $\eqref{pri:2.18}$
 straightforwardly by setting
 $p>d$. However, this argument inevitably relies on
 the additional smoothness assumption both on $A$ and
 boundary of $\omega$.}
\end{remark}

\begin{lemma}\label{lemma:2.2}
Suppose $\mathcal{L}_\varepsilon$ satisfies the assumptions
$\eqref{a:1}$, $\eqref{a:2}$.
Let $\widehat{A}$ be given in $\eqref{eq:1.1}$.
Then the effective operator $\mathcal{L}_0$
is still strongly monotone, coercive, i.e,
\begin{equation}\label{pri:2.3}
\left\{\begin{aligned}
&\big<\widehat{A}(\xi)-\widehat{A}(\xi^\prime),\xi-\xi^\prime\big>\geq C_{1}|\xi-\xi^\prime|^2;\\
& |\widehat{A}(\xi)-\widehat{A}(\xi^\prime)|\leq C|\xi-\xi^\prime|;\\
& \widehat{A}(0) = 0,
\end{aligned}\right.
\end{equation}
where $C, C_{1}$ depend on $\mu_0,\mu_1,\omega$ and $d$.
\end{lemma}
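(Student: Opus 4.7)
The plan is to establish the three assertions in $\eqref{pri:2.3}$ in order of increasing difficulty. First, $\widehat{A}(0)=0$ comes for free from Lemma $\ref{lemma:2.1}$ and Remark $\ref{remark:2.1}$: taking $\xi=0$ in $\eqref{pri:2.1}$ forces $N(\cdot,0)\equiv 0$, hence $\widehat{A}(0) = \theta^{-1}\int_{Y\cap\omega} A(y,0)\,dy = 0$ by $\eqref{a:2}$. The Lipschitz estimate then follows by combining the growth part of $\eqref{a:1}$ with the Cauchy--Schwarz inequality to produce
$$\theta|\widehat{A}(\xi)-\widehat{A}(\xi')| \leq \mu_1\Big(\int_{Y\cap\omega}|\xi-\xi' + \nabla N(\cdot,\xi) - \nabla N(\cdot,\xi')|^2\,dy\Big)^{1/2},$$
and then invoking the control $\int_{Y\cap\omega}|\nabla N(\cdot,\xi)-\nabla N(\cdot,\xi')|^2\,dy \leq C|\xi-\xi'|^2$, obtained by writing the gradient difference as $\int_0^1 \nabla_\xi\nabla N(y,\xi'+t(\xi-\xi'))(\xi-\xi')\,dt$ and appealing to $\eqref{pri:2.2}$.

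For the strong monotonicity, set $\eta := \xi-\xi'$ and $M := N(\cdot,\xi) - N(\cdot,\xi') \in H^1_{\text{per}}(Y\cap\omega)$. Unfolding the definition of $\widehat{A}$ gives
$$\theta\langle\widehat{A}(\xi)-\widehat{A}(\xi'),\eta\rangle = \int_{Y\cap\omega}\big\langle A(y,\xi+\nabla N(\cdot,\xi))-A(y,\xi'+\nabla N(\cdot,\xi')),\eta\big\rangle\,dy.$$
I would then test the weak formulations of the cell problem $\eqref{pde:1.2}$ for $N(\cdot,\xi)$ and for $N(\cdot,\xi')$ against $M$; periodicity kills the boundary contributions on $\partial Y$ and the conormal condition handles $Y\cap\partial\omega$, yielding
$$\int_{Y\cap\omega}\big\langle A(y,\xi+\nabla N(\cdot,\xi)) - A(y,\xi'+\nabla N(\cdot,\xi')),\nabla M\big\rangle\,dy = 0.$$
Adding this identity to the previous display and invoking the coerciveness in $\eqref{a:1}$ produces
$$\theta\langle\widehat{A}(\xi)-\widehat{A}(\xi'),\eta\rangle \geq \mu_0\int_{Y\cap\omega}|\eta+\nabla M|^2\,dy,$$
so everything reduces to showing $\int_{Y\cap\omega}|\eta+\nabla M|^2\,dy \gtrsim |\eta|^2$.

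This last inequality is the main obstacle, because in the perforated setting $\int_{Y\cap\omega}\nabla M\,dy \neq 0$ in general, so one cannot close the estimate through the cross-term cancellation one enjoys in the unperforated case. My approach is to apply the extension theorem (Lemma $\ref{extensiontheory}$, descending from \cite[Theorem 4.3]{OSY}) to the quasi-periodic function $u(y):=\eta\cdot y + M(y)$ on $Y\cap\omega$: since $M$ extends periodically across $\omega$, $u$ obeys $u(y+e_k) = u(y) + \eta_k$ on $\omega$, and the extension theorem supplies $\tilde u \in H^1(Y)$ sharing the same quasi-periodicity and $\|\nabla\tilde u\|_{L^2(Y)} \leq C\|\nabla u\|_{L^2(Y\cap\omega)}$. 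Quasi-periodicity forces $\int_Y \nabla \tilde u\,dy = \eta$ (the contribution of $\nabla(\tilde u - \eta\cdot y)$ vanishes by periodicity), whence the Cauchy--Schwarz inequality with $|Y|=1$ gives
$$|\eta|^2 = \Big|\int_Y \nabla\tilde u\,dy\Big|^2 \leq \int_Y|\nabla\tilde u|^2\,dy \leq C\int_{Y\cap\omega}|\eta+\nabla M|^2\,dy,$$
producing $C_1>0$ depending only on $\mu_0$, $d$ and the geometry of $\omega$, and completing the argument.
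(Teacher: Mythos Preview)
Your proof is correct and essentially the same as the paper's: both reduce the monotonicity to the lower bound $\int_{Y\cap\omega}|\eta+\nabla M|^2\,dy \gtrsim |\eta|^2$, and both obtain it by extending to $H^1(Y)$ and using that the periodic part of the extension has mean-zero gradient over $Y$. One caveat: the relevant extension result here is the unit-cell one from \cite[Theorem 4.2]{OSY} (or \cite[Lemma 2.6]{APMP}), not Lemma~\ref{extensiontheory} (which concerns the $\varepsilon$-scale extension into $H^1_0(\Omega_0)$), and your claim that $\tilde u$ inherits the quasi-periodicity is justified precisely by the \emph{linearity} of that extension operator together with the fact that it fixes affine functions---which is exactly how the paper phrases the same step.
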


\begin{remark}\label{remark:2.4}
\emph{Due to the second line of $\eqref{pri:2.3}$,
it is known that $\nabla \widehat{A}(z)$ exists for a.e. $z\in\mathbb{R}^d$. Moreover,
there holds
\begin{equation}\label{a:4}
\sum_{i,j=1}^d\nabla_j\widehat{A}_i(z)\xi_j\xi_i
 = \lim_{t\to 0} \frac{\big<\widehat{A}(z+t\xi)-\widehat{A}(z),\xi\big>}{t}
 \geq C_{1}|\xi|^2
\end{equation}
for any $\xi\in\mathbb{R}^d$ and for a.e. $z\in\mathbb{R}^d$,
and this property will guarantee that
the $H^2$ theory is still valid for the effective operator $\mathcal{L}_0$. However, the present approach
fails to reveal any higher regularity of $\widehat{A}$
beyond the Lipschitz continuity even when we assume $A$ to be
sufficiently smooth on $\mathbb{R}^{d\times d}$.}
\end{remark}

\begin{lemma}[flux correctors]\label{lemma:2.5}
Suppose $A$ satisfies $\eqref{a:1}$ and $\eqref{a:2}$.
Let $b(y,\xi) = \theta \widehat{A}(\xi)-l^{+}(y)A(y,\xi+\nabla N(y,\xi)) $, where $y\in Y$
and $\xi\in\mathbb{R}^d$.
Then we have two properties: \emph{(i)} $\dashint_Y b(\cdot,\xi) = 0$;
\emph{(ii)} $\nabla\cdot b(\cdot,\xi) = 0$ in $Y$. Moreover, there
exists the so-called flux correctors $E_{ji}(\cdot,\xi)\in H^1_{\emph{per}}(Y)$ such that
\begin{equation}\label{eq:2.1}
 b_i(y,\xi) = \frac{\partial}{\partial y_j}\big\{E_{ji}(y,\xi)\big\}
 \qquad \text{and} \quad E_{ji} = -E_{ij},
\end{equation}
and
\begin{equation}\label{pri:2.4}
\dashint_Y |\nabla_\xi E_{ji}(\cdot,\xi)|^2 + \dashint_Y |\nabla_\xi \nabla E_{ji}(\cdot,\xi)|^2 \leq C,
\end{equation}
where $C$ depends only on $\mu_0,\mu_1$ and $d$.
Moreover, if we additional assume $\eqref{a:3}$, then there holds
\begin{equation}\label{pri:2.17}
  |E(y,\xi)-E(y,\xi')|\lesssim |\xi-\xi'|\quad
  \text{~for~any~}y,\xi,\xi'\in\mathbb{R}^d,
\end{equation}
i.e., $|\nabla_{\xi}E(y,\xi)|\leq C$ for any $y,\xi\in\mathbb{R}^d$.
\end{lemma}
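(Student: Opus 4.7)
The plan is to verify the two averaged properties of $b$ first, then construct the flux corrector as the curl of a vector potential arising from a Poisson problem on the torus, and finally read off all $\xi$--regularity of $E$ from the corresponding regularity of $b$ in $\xi$.

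For property (i), I simply unfold the definition: $\dashint_Y b(\cdot,\xi) = \theta\widehat{A}(\xi) - \int_{Y\cap\omega} A(y,\xi+\nabla N(y,\xi))\,dy = 0$ by the very definition \eqref{eq:1.1} of $\widehat{A}$. For property (ii), I test $\nabla\cdot(l^+ A(\cdot,\xi+\nabla N(\cdot,\xi)))$ against an arbitrary $\varphi\in C^\infty_{\text{per}}(Y)$: integrating by parts over $Y\cap\omega$ produces a bulk term that vanishes by the first line of \eqref{pde:1.2} and a boundary term on $Y\cap\partial\omega$ that vanishes by the conormal condition in \eqref{pde:1.2}. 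Since $\widehat{A}(\xi)$ is a constant vector, $\nabla\cdot b(\cdot,\xi)=0$ distributionally on the periodic torus.

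To produce $E$, I use the standard ``curl-of-potential'' trick: for each fixed $\xi$ solve
\begin{equation*}
\Delta f_i(\cdot,\xi) = b_i(\cdot,\xi)\ \ \text{in}\ Y,\qquad f_i(\cdot,\xi)\in H^1_{\text{per}}(Y),\qquad \dashint_Y f_i(\cdot,\xi)=0,
\end{equation*}
which is solvable precisely because of property (i). Define $E_{ji}(\cdot,\xi):=\partial_j f_i(\cdot,\xi)-\partial_i f_j(\cdot,\xi)$; then $E_{ji}=-E_{ij}$ is immediate. Applying $\nabla\cdot$ to the Poisson equation and using property (ii) yields $\Delta(\nabla\cdot f)=0$ on the torus, and since $\nabla\cdot f$ is periodic with zero mean it must be identically $0$. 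Hence $\partial_j E_{ji}=\Delta f_i - \partial_i(\nabla\cdot f)=b_i$, which is \eqref{eq:2.1}.

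For the $\xi$--derivative estimate \eqref{pri:2.4}, I differentiate the Poisson equation in $\xi_k$ to obtain $\Delta(\partial_{\xi_k} f_i)=\partial_{\xi_k} b_i$. The right-hand side is controlled via Lemma \ref{lemma:2.1}: indeed $\partial_{\xi_k} b_i$ involves $\nabla_\xi\widehat{A}$ (bounded by Lemma \ref{lemma:2.2}) and the composition $(\partial_{p_l}A_i)(y,\xi+\nabla N)(\delta_{kl}+\partial_{y_l}\partial_{\xi_k}N)$, whose $L^2(Y)$ norm is bounded using $\eqref{a:1}$ and \eqref{pri:2.2}. Energy estimates on the Poisson equation then give $\|\nabla(\partial_{\xi_k} f_i)\|_{L^2(Y)}\lesssim 1$, which immediately produces the required bound on $\nabla_\xi E$ and $\nabla_\xi\nabla E$ in $L^2$.

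Finally, for the pointwise Lipschitz statement \eqref{pri:2.17}, let $g_i:=f_i(\cdot,\xi)-f_i(\cdot,\xi')$, satisfying $\Delta g_i = b_i(\cdot,\xi)-b_i(\cdot,\xi')$ on the torus with zero mean. Under the additional smoothness assumption \eqref{a:3}, Lemma \ref{lemma:2.55} gives $\|b(\cdot,\xi)-b(\cdot,\xi')\|_{L^p(Y)}\lesssim|\xi-\xi'|$ for every $p\geq 2$ (after using the Lipschitz continuity of $A$ in its second argument together with \eqref{pri:2.18} and the bound on $\widehat A$ from Lemma \ref{lemma:2.2}). Applying $W^{2,p}$ elliptic regularity on the torus with $p>d$ and then Sobolev embedding yields $\|\nabla g_i\|_{L^\infty(Y)}\lesssim |\xi-\xi'|$, so $|E(y,\xi)-E(y,\xi')|\lesssim |\xi-\xi'|$ uniformly in $y$. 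The main obstacle is precisely this last step: the $\xi$--regularity of $E$ is inherited from that of $N$ through the chain $N\rightsquigarrow b\rightsquigarrow f\rightsquigarrow E$, so passing from the $L^2$-in-$y$ bounds of Lemma \ref{lemma:2.1} to a pointwise Lipschitz bound forces me to invoke the $L^p$-version of Lemma \ref{lemma:2.55} and hence the smoothness of $A$ and of $\partial\omega$; this is where the hypothesis \eqref{a:3} is unavoidable (cf.\ Remark \ref{remark:2.5}).
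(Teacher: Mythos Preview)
Your argument is correct and follows essentially the same route as the paper: both construct $E$ as the curl of the potential $f$ solving $\Delta f_i=b_i$ on the torus, deduce \eqref{eq:2.1} from $\nabla\cdot f=0$, obtain the $L^2$-in-$y$ bound \eqref{pri:2.4} from $H^2$ estimates for the Poisson equation applied to $\partial_{\xi_k}f$ (the paper phrases this via finite differences $f(\cdot,\xi)-f(\cdot,\xi')$ rather than direct differentiation, but the content is identical), and upgrade to the pointwise bound \eqref{pri:2.17} by putting $b(\cdot,\xi)-b(\cdot,\xi')$ in $L^p(Y)$, $p>d$, via Lemma \ref{lemma:2.55} and then invoking $W^{2,p}$ (equivalently, interior Lipschitz) regularity for the Laplacian.
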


\subsection{Smoothing operators and periodic cancellations}
\label{subsec:2.2}

We mention that
the Steklov averaging operator was originally introduced
by V. Zhikov, S. Pastukhova \cite{ZVVPSE}, and
the smoothing operator by Z. Shen \cite{S3} (see Definition
\ref{def:2.1}).

\begin{definition}\label{def:2.1}
Fix a nonnegative function $\zeta\in C_0^\infty(B(0,1/2))$, and $\int_{\mathbb{R}^d}\zeta(x)dx = 1$. Define the smoothing operator
\begin{equation}\label{pri:2.161}
S_\varepsilon(f)(x) = f*\zeta_\varepsilon(x) = \int_{\mathbb{R}^d} f(x-y)\zeta_\varepsilon(y) dy,
\end{equation}
where $\zeta_\varepsilon=\varepsilon^{-d}\zeta(x/\varepsilon)$.
\end{definition}

\begin{lemma}\label{lemma:2.4}
Let $f\in L^p(\mathbb{R}^d)$ for some $1\leq p<\infty$. Then for any $\varpi\in L_{per}^p(\mathbb{R}^d)$,
\begin{equation}\label{pri:2.6}
\big\|\varpi(\cdot/\varepsilon)S_\varepsilon(f)\big\|_{L^p(\mathbb{R}^d)}
\leq C\big\|\varpi\big\|_{L^p(Y)}\big\|f\big\|_{L^p(\mathbb{R}^d)},
\end{equation}
where $C$ depends on $d$. Moreover, if $f\in W^{1,p}(\mathbb{R}^d)$ for some $1<p<\infty$, we have
\begin{equation}\label{pri:2.7}
\big\|S_\varepsilon(f)-f\big\|_{L^p(\mathbb{R}^d)}
\leq C\varepsilon\big\|\nabla f\big\|_{L^p(\mathbb{R}^d)},
\end{equation}
where $C$ depends only on $d$.
\end{lemma}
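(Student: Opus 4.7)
\textbf{Proof Plan for Lemma \ref{lemma:2.4}.}
Since $\zeta_\varepsilon(y)\,dy$ is a probability measure supported in $B(0,\varepsilon/2)$, my plan for \eqref{pri:2.6} is to begin with Jensen's inequality applied inside the convolution:
\begin{equation*}
|S_\varepsilon(f)(x)|^p \;\leq\; \int_{\mathbb{R}^d} |f(x-y)|^p\,\zeta_\varepsilon(y)\,dy.
\end{equation*}
Multiplying by $|\varpi(x/\varepsilon)|^p$ and applying Fubini, this reduces the $L^p$ bound to estimating, for each fixed $y$, the inner integral
\begin{equation*}
I(y) := \int_{\mathbb{R}^d} |\varpi(x/\varepsilon)|^p\,\zeta_\varepsilon(x-y)\,dx.
\end{equation*}
Next I would change variables $x = y + \varepsilon w$ to rewrite $I(y) = \int_{\mathbb{R}^d} |\varpi(y/\varepsilon + w)|^p \zeta(w)\,dw$, bound $\zeta$ by $\|\zeta\|_{L^\infty}$, and then invoke the periodicity of $|\varpi|^p$ together with the inclusion $\mathrm{supp}\,\zeta \subset B(0,1/2) \subset Y$ to conclude $I(y) \leq C\,\|\varpi\|_{L^p(Y)}^p$ uniformly in $y$. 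Plugging this back and using $\|f\|_{L^p}^p$ for the remaining integral gives \eqref{pri:2.6}.

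For \eqref{pri:2.7} the idea is to write the error as a first-order difference. Using $\int \zeta_\varepsilon = 1$ and the fundamental theorem of calculus, I would express
\begin{equation*}
S_\varepsilon(f)(x)-f(x) = \int_{\mathbb{R}^d}\!\int_0^1 \nabla f(x-ty)\cdot(-y)\,dt\,\zeta_\varepsilon(y)\,dy,
\end{equation*}
and exploit that $|y|\leq \varepsilon/2$ on $\mathrm{supp}\,\zeta_\varepsilon$ to extract a factor of $\varepsilon$. Minkowski's integral inequality applied in the $(t,y)$ variables, combined with translation invariance of the $L^p$-norm, then produces $\|S_\varepsilon(f)-f\|_{L^p} \leq C\varepsilon\|\nabla f\|_{L^p}$ with the $\zeta_\varepsilon$-integral collapsing to $1$.

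The one point that needs real care—beyond routine Fubini and Minkowski manipulations—is the periodicity reduction in \eqref{pri:2.6}. I must verify that $B(0,1/2)$ embeds into a single fundamental cell of the lattice so that, after translating by the integer part of $y/\varepsilon$, the $w$-integral of the periodic function $|\varpi(\,\cdot\,)|^p$ over the half-ball is controlled by $\|\varpi\|_{L^p(Y)}^p$. This is where the convention $Y=(-\tfrac12,\tfrac12]^d$ (fixed in Subsection \ref{subsec:1.2}) and the support assumption $\zeta\in C_0^\infty(B(0,1/2))$ become essential; otherwise periodic cancellation would fail and one would be forced to use a cruder $L^\infty$ bound on $\varpi$. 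With this geometric observation in hand, both estimates follow in a few lines.
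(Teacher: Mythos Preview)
Your argument is correct and self-contained. The paper itself does not give a proof of this lemma at all; it simply cites \cite[Lemmas 2.1 and 2.2]{S5}. Your Jensen--Fubini reduction for \eqref{pri:2.6}, followed by the change of variables $x=y+\varepsilon w$ and the observation that any translate of $B(0,1/2)$ sits inside a single lattice cell (so that $\int_{z/\varepsilon+B(0,1/2)}|\varpi|^p\le\|\varpi\|_{L^p(Y)}^p$ by periodicity), is exactly the standard mechanism behind the estimate and is presumably what appears in the cited reference. Likewise, your treatment of \eqref{pri:2.7} via the integral form of the mean-value theorem plus Minkowski is the textbook route. There is nothing to correct; if anything, your write-up is more informative than the paper's bare citation.
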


\begin{proof}
See \cite[Lemmas 2.1 and 2.2]{S5}.
\end{proof}

\begin{remark}
\emph{
We denote the neighbourhood of $U\subset\mathbb{R}^d$ by $(U)_\delta:=\cup_{x\in U} B(x,\delta)$.
For any $1\leq p<\infty$, let $f\in L^p((U)_\delta)$ and
$0<\delta\ll 1$. We noticed  the following property of the convolution:
$\text{supp}(\zeta_\delta * f)
\subseteq \text{supp}(\zeta_\delta)*\text{supp}(f)$,
which leads to
\begin{equation}\label{pri:2.26}
\begin{aligned}
&\|S_{\delta}(f)\|_{L^p(U)}
\lesssim\|f\|_{L^p((U)_\delta)};\\
&\|\nabla S_{\delta}(f)\|_{L^p(U)}
=\|S_{\delta}(\nabla f)\|_{L^p(U)}
\lesssim \delta^{-1}\|f\|_{L^{p}((U)_\delta)},
\end{aligned}
\end{equation}
where the up to constant depends on $d$ and $\zeta$.}
\end{remark}

Recalling the definition of fractional Sobolev-type spaces
in Subsection $\ref{subsec:1.2}$, we have
the following results.

\begin{lemma}\label{interpolation}
Let $f\in C^{\infty}_{0}(\Omega)$ and $0\leq s\leq 1$. Then for any $\varpi\in W^{1,p}_{\emph{per}}(Y)$ with $p>d$, there holds
\begin{equation}\label{pri:2.8}
\|\varpi(\cdot/\varepsilon)f\|_{H^{s}(\Omega)}\leq C\varepsilon^{-s}\|\varpi\|_{W^{1,p}(Y)}\|f\|_{H^{s}(\Omega)},
\end{equation}
and
\begin{equation}\label{pri:2.9}
\|\varpi(\cdot/\varepsilon)f\|_{H^{-s}(\Omega)}\leq C\varepsilon^{-s}\|\varpi\|_{W^{1,p}(Y)}\|f\|_{H^{-s}(\Omega)},
\end{equation}
where the constant C depends on $d$ and $\Omega$.
\end{lemma}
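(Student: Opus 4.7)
The plan is to establish the two endpoints $s=0$ and $s=1$ by direct computation, then obtain the intermediate range $0<s<1$ by interpolating the linear multiplier $T_\varepsilon:f\mapsto \varpi(\cdot/\varepsilon)f$, and finally derive the negative-index bound by duality. Throughout, the hypothesis $p>d$ is used only via Morrey's embedding $W^{1,p}(Y)\hookrightarrow L^\infty(Y)$, which delivers the pointwise bound $|\varpi(x/\varepsilon)|\leq\|\varpi\|_{L^\infty(Y)}\lesssim\|\varpi\|_{W^{1,p}(Y)}$ (by periodicity), and via the Sobolev embedding $H^1(\Omega)\hookrightarrow L^{2p/(p-2)}(\Omega)$ (valid since $2p/(p-2)<2d/(d-2)$ when $p>d$, and for all finite exponents when $d=2$).

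For the endpoint $s=0$, the $L^\infty$ control on $\varpi(\cdot/\varepsilon)$ just mentioned gives
\[\|\varpi(\cdot/\varepsilon)f\|_{L^2(\Omega)}\leq\|\varpi\|_{L^\infty(Y)}\|f\|_{L^2(\Omega)}\lesssim\|\varpi\|_{W^{1,p}(Y)}\|f\|_{L^2(\Omega)},\]
matching the desired inequality with the trivial factor $\varepsilon^0=1$. For the endpoint $s=1$, I apply the product rule
\[\nabla\bigl(\varpi(x/\varepsilon)f(x)\bigr)=\varepsilon^{-1}(\nabla\varpi)(x/\varepsilon)\,f(x)+\varpi(x/\varepsilon)\,\nabla f(x).\]
The second term is bounded in $L^2$ by $\|\varpi\|_{L^\infty(Y)}\|\nabla f\|_{L^2}$; for the first, I use H\"older with exponents $p$ and $2p/(p-2)$, combine the periodic scaling $\|(\nabla\varpi)(\cdot/\varepsilon)\|_{L^p(\Omega)}\lesssim\|\nabla\varpi\|_{L^p(Y)}$ (obtained by partitioning $\Omega$ into cells of size $\varepsilon$), and invoke the Sobolev embedding to bound $\|f\|_{L^{2p/(p-2)}(\Omega)}$ by $\|f\|_{H^1(\Omega)}$. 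The outcome is
\[\|\varpi(\cdot/\varepsilon)f\|_{H^1(\Omega)}\lesssim\varepsilon^{-1}\|\varpi\|_{W^{1,p}(Y)}\|f\|_{H^1(\Omega)}.\]

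Having the linear operator $T_\varepsilon$ bounded simultaneously $L^2\to L^2$ with norm $\lesssim\|\varpi\|_{W^{1,p}(Y)}$ and $H^1\to H^1$ with norm $\lesssim\varepsilon^{-1}\|\varpi\|_{W^{1,p}(Y)}$, I invoke the complex interpolation identity $[L^2(\Omega),H^1(\Omega)]_s=H^s(\Omega)$ (which on a bounded Lipschitz domain coincides, with equivalent norms, with the Gagliardo space defined in Subsection~\ref{subsec:1.2}) to deduce $\|T_\varepsilon f\|_{H^s}\lesssim \varepsilon^{-s}\|\varpi\|_{W^{1,p}(Y)}\|f\|_{H^s}$, giving \eqref{pri:2.8}. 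For the negative-order estimate \eqref{pri:2.9}, for $g\in H^{-s}(\Omega)$ and test function $\phi\in C_0^\infty(\Omega)$, I pass the multiplier to the test side via
\[\bigl\langle\varpi(\cdot/\varepsilon)g,\phi\bigr\rangle=\bigl\langle g,\varpi(\cdot/\varepsilon)\phi\bigr\rangle\leq\|g\|_{H^{-s}(\Omega)}\|\varpi(\cdot/\varepsilon)\phi\|_{H_0^s(\Omega)},\]
and apply the positive-index estimate just proved (noting that $\varpi(\cdot/\varepsilon)\phi$ is compactly supported in $\Omega$, so it lies in $H_0^s$). Taking supremum over $\phi$ with $\|\phi\|_{H^s}\leq 1$ and using density yield \eqref{pri:2.9}.

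The step I expect to require most care is the interpolation identification of $H^s(\Omega)$ with $[L^2,H^1]_s$ in Gagliardo's sense, since the lemma's definition of $H^s$ is intrinsic. On a bounded Lipschitz $\Omega$ this identification is standard (it can be reduced to the case $\Omega=\mathbb{R}^d$ via a Stein-type extension, then to the Fourier/Bessel characterisation). The other delicate point is verifying that the constants in both endpoint bounds genuinely depend only on $\|\varpi\|_{W^{1,p}(Y)}$ (and not, e.g., on any Lipschitz norm of $\varpi$), which is what makes the interpolated bound exhibit the precise $W^{1,p}$ dependence stated in \eqref{pri:2.8}; tracking this cleanly through the H\"older/Sobolev chain at $s=1$ is what forces the hypothesis $p>d$ to enter exactly where it does.
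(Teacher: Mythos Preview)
Your proposal is correct and follows essentially the same approach as the paper's proof: both establish the $L^2\to L^2$ and $H^1\to H^1$ endpoint bounds for the multiplier $T_\varepsilon$, interpolate via $[L^2(\Omega),H^1(\Omega)]_s=H^s(\Omega)$, and obtain the negative-index estimate by duality. The only cosmetic difference is the H\"older splitting at the $s=1$ endpoint---you use exponents $(p,\,2p/(p-2))$ for all $d$, while the paper uses $(d,\,2d/(d-2))$ when $d\geq 3$ and your pair when $d=2$---but both choices are valid and yield the same bound.
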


\subsection{Extension operators}\label{subsec:2.3}

\begin{lemma}[extension property]\label{extensiontheory}
Let $\Omega$ and $\Omega_{0}$ be a bounded Lipschitz domains with $\bar{\Omega}\subset\Omega_{0}$ and $\emph{dist}(\partial \Omega_{0},\Omega)>1$.
Let $\omega$ satisfy a separated property.
For $0<\varepsilon<1$, there exists a linear extension operator $P_{\varepsilon}:H^{1}(\Omega_{\varepsilon},
\Gamma_{\varepsilon})\rightarrow H^{1}_{0}(\Omega_{0})$ such that
\begin{equation}\label{pri:2.23}
 \left\{ \begin{aligned}
  & \|P_{\varepsilon}u\|_{H^{1}_{0}(\Omega_{0})}\leq C_{1} \|u\|_{H^{1}(\Omega_{\varepsilon})},\\
  & \|\nabla P_{\varepsilon}u\|_{L^{2}(\Omega_{0})}\leq C_{2} \|\nabla u\|_{L^{2}(\Omega_{\varepsilon})}\\
  \end{aligned}\right.
\end{equation}
for some constants $C_{1},C_{2}$ depending on
the boundary character of $\Omega$ and $\omega$.
Moreover, if $u\in W^{1,p}(\Omega_{\varepsilon},\Gamma_{\varepsilon})$ and  $\frac{2d}{d+1}-\epsilon<p<\frac{2d}{d-1}+\epsilon$ with $0<\epsilon\ll1$, then there holds
\begin{equation}\label{pri:2.22}
\left\{\begin{aligned}
 & \|P_{\varepsilon}u\|_{W^{1,p}(\Omega_{0})}\leq C_3\|u\|_{W^{1,p}(\Omega_{\varepsilon})};\\
 &\|\nabla P_{\varepsilon}u\|_{L^{p}(\Omega_{0})}\leq C_4\|\nabla u\|_{L^{p}(\Omega_{\varepsilon})},
\end{aligned}\right.
\end{equation}
in which the constant $C_3, C_4$ additionally depends on $p$ and $d$.
\end{lemma}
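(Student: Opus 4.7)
The plan is to reduce \eqref{pri:2.22} to a single ``reference'' extension on one hole and then assemble cell-by-cell using the separated property \eqref{g}. The $L^2$-estimates \eqref{pri:2.23} are essentially contained in \cite[Theorem 4.3]{OSY}, so I focus on the $L^p$-version, where the stated range $\tfrac{2d}{d+1}-\epsilon<p<\tfrac{2d}{d-1}+\epsilon$ is exactly what a Meyer-type perturbation of the $L^2$ construction produces.

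First, I would build a reference extension on one hole. Fix a single bounded Lipschitz hole $H\subset Y$ (with $C^{1,\alpha}$ boundary by assumption~5) and construct a local linear operator $\mathcal{E}_H : W^{1,p}(Y\setminus\bar H)\to W^{1,p}(Y)$ by taking the trace $u|_{\partial H}\in W^{1-1/p,p}(\partial H)$, solving a harmonic Dirichlet problem in $H$ with this datum, and gluing. The key estimate $\|\mathcal{E}_H u\|_{W^{1,p}(Y)}\lesssim\|u\|_{W^{1,p}(Y\setminus\bar H)}$ reduces, via trace and Poincar\'e, to an $L^p$-bound for harmonic extension on a Lipschitz domain. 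By Jerison--Kenig theory (or equivalently by Meyers' perturbation starting from the $L^2$ case), this bound holds precisely on $p\in(\tfrac{2d}{d+1}-\epsilon,\tfrac{2d}{d-1}+\epsilon)$, with a constant depending only on the Lipschitz/$C^{1,\alpha}$ character of $\partial H$.

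Next, I would assemble cell-by-cell. The separated property \eqref{g} guarantees that the $\varepsilon$-scaled holes $\varepsilon H_k$ are pairwise separated by at least $\varepsilon\mathfrak{g}^{\omega}$, so on each periodicity cell $\varepsilon(Y+k)$ the holes meeting the cell are mutually disjoint. Applying the $\varepsilon$-rescaled version of $\mathcal{E}_H$ on each such hole defines a candidate extension on $\Omega$. For holes intersecting $\partial\Omega$, the vanishing of $u$ on $\Gamma_\varepsilon$ allows us to first extend $u$ by zero across $\partial\Omega$ into the buffer region $\Omega_0\setminus\bar\Omega$; a standard Stein-type extension for the Lipschitz domain $\Omega$ then puts the result in $W^{1,p}_0(\Omega_0)$. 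Summing the cell-wise estimates (a finite-overlap argument, since each hole is covered by boundedly many cells) yields the first inequality in \eqref{pri:2.22}; the gradient-only bound follows by subtracting a local mean on each hole before extending and invoking Poincar\'e's inequality.

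The principal obstacle is Step~1: the constant in the local extension must be uniform across all holes $\{H_k\}$, depending only on their common boundary character. This uniformity is precisely what forces the range $p\in(\tfrac{2d}{d+1}-\epsilon,\tfrac{2d}{d-1}+\epsilon)$, since outside this Jerison--Kenig window no norm-preserving linear extension of $W^{1,p}$ is available across a generic Lipschitz boundary. The $C^{1,\alpha}$ regularity of $\partial H_k$ is exactly what yields the small parameter $\epsilon>0$ and gives the harmonic extension operator the needed $L^p$-boundedness; relaxing the geometric hypothesis on $\omega$ would shrink this window and is the reason this extension theory is restricted to $p$ near $2$.
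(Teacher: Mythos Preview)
Your overall strategy—local extension on each hole followed by periodic assembly—matches the paper's, and you correctly defer the $L^2$ case to \cite[Theorem 4.3]{OSY}. However, the local step differs substantially from the paper, and your explanation for the range of $p$ is not right.

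The paper does \emph{not} extend across a hole by a pure Dirichlet harmonic problem. Following OSY exactly, the local building block is a \emph{mixed} problem
\[
\nabla\cdot\bar a\nabla W=0\ \text{in }G,\qquad \vec n\cdot\bar a\nabla W=0\ \text{on }\partial G\cap\partial\mathcal D,\qquad W=u\ \text{on }\partial G\cap\mathcal D,
\]
where $G$ is the hole and $\mathcal D$ the surrounding cell; the Neumann portion appears because a hole may meet $\partial\mathcal D$. For this mixed problem the paper establishes a reverse H\"older inequality for $\nabla W$ at the exponent $\bar p=\tfrac{2d}{d-1}$ via Brown's Rellich estimate \cite[Theorem~1.5]{B}, self-improves to $\bar p^+=\tfrac{2d}{d-1}+\epsilon$, and then feeds this into Shen's real method (Lemma~\ref{lemma:6.1}) to obtain the $W^{1,p}$ bound for $2\le p<\bar p^+$; the range $p<2$ follows by duality, and the gradient-only bound from $P(c)=c$. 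Your Dirichlet-only construction together with Jerison--Kenig would also land in this range, but only after you explain how to avoid the mixed boundary portion (e.g.\ by enlarging the reference cell so that each hole is compactly contained). As written, the case $\partial H\cap\partial Y\neq\emptyset$ is not covered, and this is precisely the situation that forces OSY (and the paper) into the mixed problem.

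Finally, your claim that the $C^{1,\alpha}$ regularity of $\partial H_k$ ``is exactly what yields the small parameter $\epsilon>0$'' is backward. The lemma assumes only the separated property (Lipschitz holes), and the window $\bigl(\tfrac{2d}{d+1}-\epsilon,\tfrac{2d}{d-1}+\epsilon\bigr)$ is the Lipschitz range: the endpoints come from the $L^2$ Rellich estimate on Lipschitz domains, and the $\epsilon$ from Gehring-type self-improvement. Were the holes genuinely $C^{1,\alpha}$, the Dirichlet (or mixed) problem would be solvable in $W^{1,p}$ for all $1<p<\infty$, and no restriction on $p$ would appear; see Remark~\ref{remark:2.2}.
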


\begin{remark}\label{remark:2.2}
\emph{The extension property is very important in our later arguments. Due to this lemma, one may transfer the computations
from the region with holes to the ``usual'' one (without holes),
to avoid the difficulties arising from irregular boundary situations.
The condition $\text{dist}(\partial\Omega_0,\Omega)>1$ here can be improved into
$\text{dist}(\partial\Omega_0,\Omega)\sim 10\varepsilon$ through
some small tricks (see \cite[Lemma 2.10]{WXZ2020}).
If $\Omega =\mathbb{R}^d$,
the estimates $\eqref{pri:2.23},\eqref{pri:2.22}$ are still true
with the integral domain $\Omega_{0}$ replaced by $\mathbb{R}^d$.
Finally, we mention that the range of $p$ in the estimate $\eqref{pri:2.22}$ can hardly be
extended to $[2,\infty)$ due to nonsmoothness assumption on the domains, and the optimal range of $p$ owns itself interests.
}
\end{remark}

\begin{lemma}\label{lemma:2.15}
 For $w\in H^{1}(\Omega_{\varepsilon},\Gamma_{\varepsilon})$,
 let $\tilde{w}$ be the extension of $w$ given by Lemma \ref{extensiontheory}. Then we have
 \begin{equation}\label{pri:2.10}
   \|\tilde{w}\|_{L^{2}(O_{4\varepsilon})}\leq C\varepsilon \|\nabla \tilde{w}\|_{L^{2}(\Omega)},
 \end{equation}
 where C depends on $d,\Omega$ and $\omega$.
\end{lemma}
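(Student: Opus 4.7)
The plan is to reduce the global estimate in the boundary layer to a family of local Poincar\'e-type inequalities on $\varepsilon$-sized patches near $\partial\Omega$, and then to sum using a bounded overlap. First, I will cover $\partial\Omega$ by the enlarged balls $Q_k := B(x_k, R\varepsilon) \cap \Omega$, where $\{x_k\} \subset \partial\Omega$ is an $\varepsilon$-separated net and $R$ is a sufficiently large constant to be chosen. Since $\partial\Omega$ is Lipschitz, the family $\{Q_k\}$ covers $O_{4\varepsilon}$ with overlap number depending only on the Lipschitz character of $\Omega$ and on $R$.

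The heart of the argument is the local estimate
\begin{equation*}
    \|\tilde w\|_{L^2(Q_k)} \le C\varepsilon \|\nabla \tilde w\|_{L^2(Q_k)}.
\end{equation*}
To prove it, I will use that $\tilde w = w$ on $\Omega_\varepsilon$ (by the construction in Lemma \ref{extensiontheory}) and that $w$ vanishes on $\Gamma_\varepsilon = \partial\Omega \cap \overline{\varepsilon\omega}$, so the trace of $\tilde w$ on $\partial\Omega$ vanishes on $\Gamma_\varepsilon \cap Q_k$. Straightening the boundary via a Lipschitz chart and rescaling by $\varepsilon^{-1}$ converts this local estimate into the scale-invariant Poincar\'e inequality $\|u\|_{L^2(Q)} \le C\|\nabla u\|_{L^2(Q)}$ on a Lipschitz domain $Q$ of unit diameter, for functions whose trace vanishes on a subset $\Gamma_0 \subset \partial Q$ of $(d-1)$-measure at least $c_0 > 0$; the latter is a standard consequence of the compact embedding $H^1(Q) \hookrightarrow L^2(Q)$ via a compactness--contradiction argument.

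The crux of the argument, and the main obstacle, is securing the uniform geometric lower bound $|\Gamma_\varepsilon \cap Q_k|_{d-1} \ge c_0\varepsilon^{d-1}$, independent of $k$ and $\varepsilon$; this is precisely what guarantees a uniform constant in the scale-invariant Poincar\'e inequality. The separation property \eqref{g} and the regularity of $\partial\omega$ enter exactly here, controlling the number and size of holes that can simultaneously cut $\partial\Omega$ inside $Q_k$: within $Q_k$ only $O(R^{d-1})$ periodicity cells meet $\partial\Omega$, each carrying $O(1)$ holes and each hole intersecting $\partial\Omega$ in a $(d-1)$-set of measure $O(\varepsilon^{d-1})$, whereas $|\partial\Omega \cap Q_k|_{d-1} \sim R^{d-1}\varepsilon^{d-1}$. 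Choosing $R$ large in dependence on the character of $\omega$, a definite positive fraction of $\partial\Omega \cap Q_k$ survives in $\varepsilon\omega$, yielding the required density. Once this geometric bound is secured, summation with the bounded overlap gives
\begin{equation*}
    \|\tilde w\|_{L^2(O_{4\varepsilon})}^2 \le \sum_k \|\tilde w\|_{L^2(Q_k)}^2 \le C\varepsilon^2\sum_k \|\nabla \tilde w\|_{L^2(Q_k)}^2 \le C\varepsilon^2 \|\nabla \tilde w\|_{L^2(\Omega)}^2,
\end{equation*}
completing the proof.
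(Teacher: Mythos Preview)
The paper does not supply its own argument for this lemma; it defers entirely to \cite[Lemma~3.4]{PR}. Your overall strategy---cover $O_{4\varepsilon}$ by boundary patches $Q_k$ of radius $R\varepsilon$ with bounded overlap, prove a local Poincar\'e inequality on each using that $\tilde w$ vanishes on $\Gamma_\varepsilon\cap Q_k$, and sum---is a sound and standard route to such layer estimates.

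There is, however, a genuine gap in your justification of the key geometric input $|\Gamma_\varepsilon\cap Q_k|_{d-1}\ge c_0\varepsilon^{d-1}$. Your counting gives a removed measure of order $O(R^{d-1})\cdot O(1)\cdot O(\varepsilon^{d-1})=C_1(\omega,\Omega)\,R^{d-1}\varepsilon^{d-1}$ against a total $|\partial\Omega\cap Q_k|_{d-1}\ge c_2(\Omega)\,R^{d-1}\varepsilon^{d-1}$. Both sides scale \emph{identically} in $R$, so enlarging $R$ cannot force $c_2>C_1$; that difference is fixed by the characters of $\omega$ and $\Omega$, and when the holes are large relative to their separation the crude product (number of holes)$\times$(maximal slice) may well exceed the total. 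The sentence ``Choosing $R$ large \dots\ a definite positive fraction survives'' is therefore not justified.

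The density bound is true, but it needs a sharper use of the separation property~\eqref{g}. One clean repair: foliate the Lipschitz graph of $\partial\Omega$ locally by curves $t\mapsto(x'',t,\phi(x'',t))$ obtained by freezing $d-2$ tangential coordinates. Along each such curve the parameter sets $\{t:\gamma(t)\in H_j\}$ for distinct holes $H_j$ are contained in intervals of length at most $D$ (the maximal hole diameter) and are pairwise separated by at least $\mathfrak g^{\omega}/\sqrt{1+M_0^2}$; an elementary case analysis then shows that any parameter interval of length $L\ge D+\mathfrak g^{\omega}$ meets $\omega$ in a set of length at least $\min\{L-D,\ \mathfrak g^{\omega}/\sqrt{1+M_0^2}\}$. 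Integrating over $x''$ via Fubini yields the $(d-1)$-density lower bound with a constant depending only on $D,\mathfrak g^{\omega},M_0$. With this in hand, your Poincar\'e step (the uniform constant follows e.g.\ from Maz'ya's capacity version of the inequality, which is stable under variation of the vanishing set once its surface measure is bounded below) and the bounded-overlap summation complete the proof.
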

\begin{proof}
  See \cite[Lemma 3.4]{PR}.
\end{proof}

\begin{lemma}[Sobolev-Poincar\'e's inequality on perforated domains]\label{lemma:2.20}
Let
$\omega$ satisfy a separated property.
Let $w\in W^{1,p}{(\varepsilon\omega)}$ with
$|\frac{1}{p}-\frac{1}{2}|<\frac{1}{2d}+\epsilon$ and
$0<\epsilon\ll 1$. Let $1/q=1/p-1/d$.
Then for any $r>0$
and $x\in\mathbb{R}^d$ there exists a constant $c_r$ such that
\begin{equation}\label{pri:2.20}
\|w-c_r\|_{L^q(B^{\varepsilon}(x,r))}
\lesssim
\|\nabla w\|_{L^p(B^{\varepsilon}(x,3r))},
\end{equation}
where the up to constant is independent of
$\varepsilon, x$ and $r$. Moreover, if $w\in W^{1,p}(\Omega_\varepsilon,\Gamma_\varepsilon)$, then,
for any
$D_{5r}^{\varepsilon}\subset\Omega_\varepsilon$
with $r> 0$,  we have
\begin{equation}\label{pri:2.21}
 \|w\|_{L^q(D_{r}^{\varepsilon})}
 \lesssim \|\nabla w\|_{L^p(D_{3r}^{\varepsilon})},
\end{equation}
whose estimated constant will rely on $d$, $p$ and
the boundary character of $\Omega$ and $\omega$,
but does not depend on $r$ and $\varepsilon$ either.
\end{lemma}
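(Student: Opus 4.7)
The strategy is to reduce both inequalities to the classical Sobolev--Poincar\'e inequality on standard (non-perforated) Lipschitz domains by first extending $w$ through the machinery behind Lemma \ref{extensiontheory}, and then restricting the resulting inequality back to the perforated subset. The $L^p$ range $|1/p-1/2|<1/(2d)+\epsilon$ is exactly the one furnished by that extension lemma, so the two statements are tightly linked.

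For the interior inequality \eqref{pri:2.20}, I would first note that the extension procedure developed for Lemma \ref{extensiontheory} is local in nature: it is built cell-by-cell on the $1$-periodic structure of $\omega$ via partition of unity and cell-wise extension of Sobolev functions across the holes (this is the OSY construction), and it does \emph{not} require a vanishing trace on any boundary. Applied on the ball $B(x,3r)$ and after rescaling by $\varepsilon$, it yields $\tilde{w}\in W^{1,p}(B(x,2r))$ with
\[
\|\nabla\tilde{w}\|_{L^p(B(x,2r))}\lesssim \|\nabla w\|_{L^p(B^{\varepsilon}(x,3r))},
\]
the implicit constant being uniform in $\varepsilon$ and $r$ by scaling and the separated property \eqref{g}. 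Setting $c_r:=\dashint_{B(x,r)}\tilde{w}$ and applying the classical Sobolev--Poincar\'e inequality on the ball $B(x,r)$ — which is scale-invariant since $1/q=1/p-1/d$ — I obtain
\[
\|w-c_r\|_{L^q(B^{\varepsilon}(x,r))}\leq \|\tilde{w}-c_r\|_{L^q(B(x,r))}\lesssim \|\nabla\tilde{w}\|_{L^p(B(x,r))}\lesssim \|\nabla w\|_{L^p(B^{\varepsilon}(x,3r))},
\]
which is exactly \eqref{pri:2.20}.

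For \eqref{pri:2.21}, the situation is slightly different because $w$ carries the vanishing trace on $\Gamma_\varepsilon$, and this must be preserved through the extension. Here I would appeal to Lemma \ref{extensiontheory} directly, in a localized form on $D^{\varepsilon}_{3r}$ (the hypothesis $D_{5r}^\varepsilon\subset\Omega_\varepsilon$ provides the safety margin required by the OSY construction). The output is an extension $\tilde{w}\in W^{1,p}(D_{3r})$ that still vanishes on $\Delta_{3r}\subset\Gamma_\varepsilon$ and satisfies $\|\nabla\tilde{w}\|_{L^p(D_{3r})}\lesssim \|\nabla w\|_{L^p(D^{\varepsilon}_{3r})}$. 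Since $D_{3r}$ is a standard Lipschitz domain whose portion of boundary $\Delta_{3r}$ is Lipschitz and carries a Dirichlet trace, the Sobolev--Poincar\'e inequality with partial zero boundary data gives $\|\tilde{w}\|_{L^q(D_{r})}\lesssim \|\nabla\tilde{w}\|_{L^p(D_{3r})}$, again scale-invariant by the exponent relation. Restricting to $D_r^\varepsilon\subset D_r$ yields \eqref{pri:2.21}.

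The main obstacle I anticipate is ensuring that the extension constants are genuinely uniform in both $\varepsilon$ and $r$, rather than in $\varepsilon$ alone as stated in Lemma \ref{extensiontheory}. This requires re-examining the OSY-type construction on a local ball $B(x,3r)$ (resp.\ $D_{3r}^\varepsilon$) instead of on a fixed domain $\Omega_\varepsilon$: the key point is that the extension operator commutes with dilations of the periodic structure, so that after the change of variables $y=x/\varepsilon$ one is performing the same extension on a ball $B(x/\varepsilon,3r/\varepsilon)$ in the fixed periodic domain $\omega$, and the bound then only depends on the cell geometry via $\mathfrak{g}^{\omega}$ and the boundary character of $\omega$. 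A secondary subtlety is the preservation of the Dirichlet trace on $\Delta_{3r}$ in the second part; this is handled by choosing the extension so that within each cell adjacent to $\partial\Omega$ the extension is obtained by the same averaging/reflection procedure that underpins the proof of \eqref{pri:2.22}, which by construction leaves a zero-trace function zero.
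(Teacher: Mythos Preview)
Your approach is essentially the same as the paper's: extend $w$ through the cell-wise OSY construction underlying Lemma~\ref{extensiontheory}, apply the classical Sobolev--Poincar\'e inequality on the solid ball (resp.\ half-ball), and restrict back to the perforated set. The one point the paper makes explicit that you leave implicit is the dichotomy $r<\varepsilon$ versus $r\geq\varepsilon$: in the former regime the periodic cell-wise extension is unavailable (no complete cell fits), and the paper instead picks an intermediate Lipschitz domain $Q$ with $B(x,r)\subset Q\subset B(x,3r)$ so that $Q\cap\varepsilon\omega$ has controlled Lipschitz character and the classical inequality applies directly there.
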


\section{Convergence rates}\label{sec:3}
As a start, we introduce some cut-off functions.
Let ${\psi}_{\varepsilon}^{\prime},{\psi}_{\varepsilon}\in C_{0}^{\infty}(\Omega)$ satisfy
\begin{equation}\label{cutoff}
\left\{\begin{aligned}
  & 0\leq \psi_{\varepsilon},\psi_{\varepsilon}^{\prime}\leq 1\quad \text{for}~~x\in\Omega ,\\
  & \text{supp}(\psi_{\varepsilon})\subset \Sigma_{3\varepsilon},~~ \text{supp}(\psi_{\varepsilon}^{\prime})\subset \Sigma_{\varepsilon},\\
  & \psi_{\varepsilon}=1\quad \text{in}~~\Sigma_{4\varepsilon},\quad
  \psi_{\varepsilon}^{\prime}=1\quad \text{in}~~\Sigma_{2\varepsilon},\\
  & |\nabla \psi_{\varepsilon}|\lesssim \varepsilon^{-1},|\nabla \psi_{\varepsilon}^{\prime}|\lesssim \varepsilon^{-1}.
  \end{aligned}\right.
\end{equation}
By the definition of ${\psi}_{\varepsilon}^{\prime},{\psi}_{\varepsilon}$, it's known that $\psi_{\varepsilon}(1-\psi_{\varepsilon}^{\prime})=0~\text{in}~ \Omega.$

\subsection{The proof of Theorem $\ref{thm:1.1}$}

\begin{lemma}[energy estimates of weak formulations]
\label{lemma:3.1}
Let $\Omega\subset\mathbb{R}^d$ and $\omega$
be Lipschitz domains.
Assume that $A$ satisfies $\eqref{a:1}$ and $\eqref{a:2}$.
Let $F\in H^{s}(\mathbb{R}^d)$ with $0\leq s\leq 1$.
Suppose that $u_\varepsilon\in H^{1}(\Omega_{\varepsilon})~\text{and}~ u_0\in H^1(\Omega)$
satisfy equations \eqref{pde:1.1} and \eqref{pde:1.3}, respectively.
Let $w_{\varepsilon}=u_{\varepsilon}-v_{\varepsilon},$
$v_{\varepsilon}=u_{0}+\varepsilon N(x/\varepsilon,\varphi)$ in which $\varphi=S_{\varepsilon}(\psi_{\varepsilon}\nabla u_{0})$.
Then we have
\begin{equation}\label{pri:2.5}
\begin{aligned}
\|\nabla w_{\varepsilon}\|_{L^{2}(\Omega_{\varepsilon})}
\lesssim  \Bigg\{\varepsilon \bigg(\|\nabla_{\xi}E(\cdot /\varepsilon ,\varphi)\cdot\nabla\varphi\|_{L^{2}(\Omega)}
&+\|\nabla_{\xi}N(\cdot /\varepsilon ,\varphi)\cdot\nabla\varphi\|_{L^{2}(\Omega_{\varepsilon})}\bigg)\\
&+\varepsilon^{s}\|F\|_{H^{s}(\mathbb{R}^d)}+\|\nabla u_{0}-\varphi\|_{L^{2}(\Omega)}\Bigg\},
\end{aligned}
\end{equation}
in which the up to constant depends only on $\mu_{0},\mu_{1},d$, but independent of $\varepsilon$ and $s$.
\end{lemma}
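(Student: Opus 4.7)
The plan is to start from the strong monotonicity in \eqref{a:1}, which after pairing the difference of fluxes with $\nabla w_\varepsilon$ gives
\begin{equation*}
\mu_0\,\|\nabla w_\varepsilon\|_{L^2(\Omega_\varepsilon)}^2
\leq \int_{\Omega_\varepsilon}\!\big\langle A(x/\varepsilon,\nabla u_\varepsilon)-A(x/\varepsilon,\nabla v_\varepsilon),\,\nabla w_\varepsilon\big\rangle\,dx,
\end{equation*}
so the task reduces to bounding the RHS by the four quantities in \eqref{pri:2.5}. First I check admissibility: since $\psi_\varepsilon$ is supported in $\Sigma_{3\varepsilon}$ and the convolution kernel of $S_\varepsilon$ has support of radius $\varepsilon/2$, the field $\varphi$ vanishes in $O_{5\varepsilon/2}$; combined with $N(\cdot,0)=0$ from Remark \ref{remark:2.1}, this makes the corrector part of $v_\varepsilon$ vanish on $\Gamma_\varepsilon$, so that $w_\varepsilon\in H^1(\Omega_\varepsilon,\Gamma_\varepsilon)$ is a legitimate test function. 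The extension $\tilde w_\varepsilon\in H_0^1(\Omega_0)$ from Lemma \ref{extensiontheory} will then convert integrals over $\Omega_\varepsilon$ into integrals over $\Omega$ against the characteristic function $l_\varepsilon^+$.

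Testing \eqref{pri:1.5} with $w_\varepsilon$ gives $\int_\Omega l_\varepsilon^+ F\tilde w_\varepsilon$, while testing the homogenized equation \eqref{pde:1.3} against the admissible $\psi_\varepsilon'\tilde w_\varepsilon\in H_0^1(\Omega)$ and multiplying by $\theta$ gives $\theta\int_\Omega F\psi_\varepsilon'\tilde w_\varepsilon$. Subtracting isolates the boundary-layer term $T:=\int_\Omega(l_\varepsilon^+-\theta\psi_\varepsilon')F\tilde w_\varepsilon$ plus the familiar ``traditional'' terms. On the $v_\varepsilon$-side, the chain rule gives $\nabla v_\varepsilon=\nabla u_0+(\nabla_y N)(\cdot/\varepsilon,\varphi)+\varepsilon(\nabla_\xi N)(\cdot/\varepsilon,\varphi)\nabla\varphi$; invoking the Lipschitz half of \eqref{a:1}, I may replace $A(x/\varepsilon,\nabla v_\varepsilon)$ by $A(x/\varepsilon,\varphi+(\nabla_y N)(x/\varepsilon,\varphi))$ at the price of an error controlled by $\big(\|\nabla u_0-\varphi\|_{L^2(\Omega)}+\varepsilon\|(\nabla_\xi N)(\cdot/\varepsilon,\varphi)\nabla\varphi\|_{L^2(\Omega_\varepsilon)}\big)\|\nabla w_\varepsilon\|_{L^2(\Omega_\varepsilon)}$, directly accounting for two of the four terms in \eqref{pri:2.5}.

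For the remaining main term, the flux-corrector identity from Lemma \ref{lemma:2.5} gives $l^+(y)A(y,\varphi+\nabla_y N(y,\varphi))-\theta\widehat A(\varphi)=-\partial_{y_j}E_{ji}(y,\varphi)$ with $E_{ji}=-E_{ij}$, and the chain rule evaluated at $(x/\varepsilon,\varphi(x))$ produces
\begin{equation*}
(\partial_{y_j}E_{ji})(x/\varepsilon,\varphi(x))=\varepsilon\,\partial_{x_j}\!\big[E_{ji}(x/\varepsilon,\varphi(x))\big]-\varepsilon\,(\partial_{\xi_k}E_{ji})(x/\varepsilon,\varphi)\,\partial_{x_j}\varphi_k.
\end{equation*}
Contracting with $\partial_{x_i}\tilde w_\varepsilon$, the antisymmetry $E_{ji}=-E_{ij}$ annihilates the first piece after integration by parts (the boundary term on $\partial\Omega$ vanishes since $\varphi\equiv 0$ there and $E(\cdot,0)\equiv 0$), while the second piece yields the $\varepsilon\|\nabla_\xi E(\cdot/\varepsilon,\varphi)\nabla\varphi\|_{L^2(\Omega)}$ contribution by Cauchy--Schwarz. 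The residual bulk integral $\theta\int_\Omega[\widehat A(\varphi)-\widehat A(\nabla u_0)]\cdot\nabla\tilde w_\varepsilon$ is absorbed into a further $\|\nabla u_0-\varphi\|_{L^2}$ contribution by the Lipschitz continuity of $\widehat A$ (Lemma \ref{lemma:2.2}); corrections coming from $\nabla\psi_\varepsilon'$ or $1-\psi_\varepsilon'$ are localized in the boundary layer $O_{2\varepsilon}$, on which $\varphi\equiv 0$, and are dispatched by the bound $|\nabla\psi_\varepsilon'|\lesssim\varepsilon^{-1}$ together with Lemma \ref{lemma:2.15}.

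The genuinely delicate step, and the main obstacle, is the term $T$: a direct Cauchy--Schwarz combined with Lemma \ref{lemma:2.15} only produces $O(\varepsilon^{1/2})\|F\|_{L^2}$, insufficient when $s<1/2$ and for sharp powers. Following the strategy sketched in Subsection \ref{subsec:1.3}, I introduce the auxiliary periodic $\Phi$ with $-\Delta\Phi=l^+-\theta$ in $Y$ (the equation \eqref{auxi1}), so that $l_\varepsilon^+-\theta=-\varepsilon\,\partial_{x_j}\!\big[(\partial_{y_j}\Phi)(x/\varepsilon)\big]$, whence
\begin{equation*}
T=\varepsilon\int_\Omega(\nabla_y\Phi)(x/\varepsilon)\cdot\nabla\!\big(F\tilde w_\varepsilon\psi_\varepsilon'\big)\,dx+\text{lower-order cut-off remainders.}
\end{equation*}
The dominant piece $\varepsilon\!\int(\nabla_y\Phi)(x/\varepsilon)\cdot\nabla F\,(\tilde w_\varepsilon\psi_\varepsilon')\,dx$ is viewed as an $H^{s-1}$--$H^{1-s}$ duality pairing of $\nabla F$ against $(\nabla_y\Phi)(\cdot/\varepsilon)(\tilde w_\varepsilon\psi_\varepsilon')$; by Lemma \ref{interpolation} applied with index $1-s$ the periodic multiplier costs at most $\varepsilon^{-(1-s)}$, so this piece is bounded by $\varepsilon\cdot\|F\|_{H^s(\mathbb{R}^d)}\cdot\varepsilon^{-(1-s)}\|\nabla\tilde w_\varepsilon\|_{L^2(\Omega)}=\varepsilon^s\|F\|_{H^s(\mathbb{R}^d)}\|\nabla\tilde w_\varepsilon\|_{L^2(\Omega)}$. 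The remaining remainders, concentrated in $O_{2\varepsilon}$ through $|\nabla\psi_\varepsilon'|\lesssim\varepsilon^{-1}$, are again controlled by Lemma \ref{lemma:2.15}. Collecting everything and absorbing $\|\nabla\tilde w_\varepsilon\|_{L^2(\Omega)}\lesssim\|\nabla w_\varepsilon\|_{L^2(\Omega_\varepsilon)}$ via Lemma \ref{extensiontheory} into the LHS yields \eqref{pri:2.5}.
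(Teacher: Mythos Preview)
Your proposal is correct and follows essentially the same route as the paper's proof: the same decomposition into the four pieces $I_1$--$I_4$ of \eqref{eq:3.1}, the flux-corrector antisymmetry argument for $I_3$, the Lipschitz replacement for $I_4$, and the auxiliary periodic Poisson problem \eqref{auxi1} combined with the $H^{s-1}$--$H^{1-s}$ duality and Lemma~\ref{interpolation} for the critical term $T=I_1$. The only cosmetic differences are that the paper systematically contracts against $\nabla(\tilde w_\varepsilon\psi_\varepsilon')$ rather than $\nabla\tilde w_\varepsilon$ (you correctly note that the discrepancy lives in $O_{2\varepsilon}$ and is handled by Lemma~\ref{lemma:2.15}), and that the paper records the intermediate bound $\|\tilde w_\varepsilon\psi_\varepsilon'\|_{H^{1-s}}\le\|\tilde w_\varepsilon\psi_\varepsilon'\|_{H^1}$ before reducing to $\|\nabla\tilde w_\varepsilon\|_{L^2}$.
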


\begin{proof}
By the definition of $\varphi$, it's known that $\varphi\in H^{1}_0(\Omega).$
In view of
$u_\varepsilon$ and $u_{0}$ are solutions to \eqref{pde:1.1} and \eqref{pde:1.3}, respectively, we have
\begin{equation*}
\begin{aligned}
& \int_{\Omega_{\varepsilon}} A(x/\varepsilon,\nabla u_\varepsilon)\nabla w_{\varepsilon}dx=\int_{\Omega_{\varepsilon}}Fw_{\varepsilon} dx
=\int_{\Omega}l_{\varepsilon}^{+}F\tilde{w}_{\varepsilon} dx,\\
& \int_{\Omega} \widehat{A}(\nabla u_0)\nabla (\tilde{w}_{\varepsilon}\psi_{\varepsilon}^{\prime})dx
=\int_{\Omega}F(\tilde{w}_{\varepsilon}\psi_{\varepsilon}^{\prime})dx,
\end{aligned}
\end{equation*}
where we use the fact that $w_{\varepsilon}\in H^{1}(\Omega_{\varepsilon}, \Gamma_{\varepsilon})$ and $\tilde{w}_{\varepsilon}$ is the extension of $w_{\varepsilon}$ given by Lemma \ref{extensiontheory}.
In fact, because of  $\varphi=S_{\varepsilon}(\psi_{\varepsilon}\nabla u_{0}),$ we have $\varphi=0$ on $O_{2\varepsilon}$ and in view of Remark $\ref{remark:2.1}$, we see that
$N(x/\varepsilon,\varphi) = 0, \nabla_{y}N(x/\varepsilon,\varphi)=0$
for any $x\in O_{2\varepsilon}\cap \Omega_{\varepsilon}$.  This coupled with
$u_\varepsilon = u_0$ on $\Gamma_{\varepsilon}$ leads to the fact $w_\varepsilon\in H^1(\Omega_{\varepsilon},\Gamma_{\varepsilon})$.

It follows from the above two equalities that
\begin{equation}\label{eq:3.1}
\begin{aligned}
& \int_{\Omega_{\varepsilon}} \big(A(x/\varepsilon,\nabla u_\varepsilon) - A(x/\varepsilon,\nabla v_\varepsilon)\big)
\cdot \nabla w_{\varepsilon} dx\\
& =\int_{\Omega}l_{\varepsilon}^{+}F \tilde{w}_{\varepsilon}
-\theta F\tilde{w}_{\varepsilon}\psi_{\varepsilon}^{\prime}dx
+\theta\int_{\Omega}\widehat{A}(\nabla u_{0})\nabla (\tilde{w}_{\varepsilon}\psi_{\varepsilon}^{\prime})dx
-\int_{\Omega}l_{\varepsilon}^{+}A(x/\varepsilon,\nabla v_{\varepsilon})\nabla \tilde{w}_{\varepsilon}dx\\
& =\int_{\Omega}l_{\varepsilon}^{+}F \tilde{w}_{\varepsilon}
-\theta F\tilde{w}_{\varepsilon}\psi_{\varepsilon}^{\prime}dx
+\theta\int_{\Omega}[\widehat{A}(\nabla u_{0})-\widehat{A}(\varphi)]\nabla (\tilde{w}_{\varepsilon}\psi_{\varepsilon}^{\prime})dx\\
&\quad +\int_{\Omega}[\theta\widehat{A}( \varphi)-l_{\varepsilon}^{+}A(x/\varepsilon,\varphi+\nabla_{y}
N(x/\varepsilon,\varphi))]\nabla(\tilde{w}_{\varepsilon}
\psi_{\varepsilon}^{\prime})dx\\
& \quad+\int_{\Omega}l_{\varepsilon}^{+}A(x/\varepsilon,
\varphi+\nabla_{y}N(x/\varepsilon,\varphi))\nabla(\tilde{w}_{\varepsilon}
\psi_{\varepsilon}^{\prime})-l_{\varepsilon}^{+}A(x/\varepsilon,\nabla v_{\varepsilon})\nabla \tilde{w}_{\varepsilon}dx\\
& :=I_{1}+I_{2}+I_{3}+I_{4}.
\end{aligned}
\end{equation}
So, to obtain the desired
result $\eqref{pri:2.5}$ is reduced to estimate every
term $I_{i}$ with $i=1,2,3,4$.

With respect to $I_{1}$, there holds the following
decomposition
\begin{equation*}
  I_{1}=\int_{\Omega}(l_{\varepsilon}^{+}-\theta)
  F\tilde{w}_{\varepsilon}\psi_{\varepsilon}^{\prime}dx
  +\int_{\Omega}(1-\psi_{\varepsilon}^{\prime})l_{\varepsilon}^{+}
  F\tilde{w}_{\varepsilon}dx:=I_{11}+I_{12}.
\end{equation*}
Since $\text{supp}(1-\psi_{\varepsilon}^{\prime})
=O_{2\varepsilon}$ and Lemma \ref{lemma:2.15},
we have
\begin{equation}\label{f:3.6}
  \begin{aligned}
  |I_{12}|& \leq \int_{O_{2\varepsilon}}|F\tilde{w}_{\varepsilon}|dx
  \lesssim \|F\|_{L^{2}(O_{2\varepsilon})}
  \|\tilde{w}_{\varepsilon}\|_{L^{2}(O_{2\varepsilon})}
  \lesssim^{\eqref{pri:2.10}} \varepsilon \|F\|_{L^{2}({\Omega})}
  \|\nabla \tilde{w}_{\varepsilon}\|_{L^{2}(\Omega)}.
  \end{aligned}
\end{equation}
To deal with the first term $I_{11}$, we consider the
auxiliary equation
\begin{equation}\label{auxi1}
  \left\{\begin{aligned}
  & -\Delta \Psi(y)=l^{+}(y)-\theta~~\text{in}~ Y,\\
  & \dashint_{Y}\Psi dy=0,~\Psi\in H^{1}_{\text{per}}(Y).
  \end{aligned}\right.
\end{equation}
According to $\int_{Y}l^{+}(y)-\theta dy=0,$ it's known that \eqref{auxi1} has a solution $\Psi\in H^{1}_{per}(Y)$. Moreover,
let $B:=B(0,1/4)$, and from interior $W^{2,p}$ estimates it follows that
\begin{equation*}
\begin{aligned}
\|\nabla\Psi\|_{W^{1,p}(B)}
\lesssim \|\nabla\Psi\|_{L^{2}(2B)} + \|l^{+}-\theta\|_{L^p(2B)}
\lesssim 1
\end{aligned}
\end{equation*}
for $2\leq p< \infty$. Therefore, a covering argument leads to
\begin{equation}\label{f:3.2}
\|\nabla\Psi\|_{W^{1,p}(Y)} \lesssim 1.
\end{equation}
Now one may proceed to address the term $I_{11}$. Inserting the first line of
the equation $\eqref{auxi1}$ into $I_{11}$ and,
\begin{equation*}
  \begin{aligned}
  I_{11}
  & =-\int_{\Omega}\Delta_{y} \Psi(F\tilde{w}_{\varepsilon}\psi_{\varepsilon}^{\prime})dx
  =-\varepsilon
  \int_{\Omega}{\nabla_{x}}\cdot\big(\nabla_{y} \Psi\big)(F\tilde{w}_{\varepsilon}\psi_{\varepsilon}^{\prime})dx\\
  & =\varepsilon\int_{\Omega}\nabla_{y}
  \Psi\cdot\nabla(F\tilde{w}_{\varepsilon}
  \psi_{\varepsilon}^{\prime})dx=\varepsilon\int_{\Omega}\nabla_{y}
  \Psi\cdot\nabla F(\tilde{w}_{\varepsilon}
  \psi_{\varepsilon}^{\prime})dx
  +\varepsilon\int_{\Omega}\nabla_{y}\Psi\cdot\nabla (\tilde{w}_{\varepsilon}
  \psi_{\varepsilon}^{\prime})Fdx\\
  &:=\varepsilon I_{11}^a+\varepsilon I_{11}^b,
  \end{aligned}
\end{equation*}
where $y=x/\varepsilon$. The easier term is
\begin{equation}\label{f:3.4}
|I_{11}^{b}|\lesssim\|F\|_{L^{2}(\Omega)}(\|\nabla \tilde{w}_{\varepsilon}\|_{L^{2}(\Omega)}+
\varepsilon^{-1}\|\tilde{w}_{\varepsilon}\|_{L^{2}(O_{2\varepsilon})})
\lesssim^{\eqref{pri:2.10}}\|F\|_{L^{2}(\Omega)}\|\nabla \tilde{w}_{\varepsilon}\|_{L^{2}(\Omega)}.
\end{equation}
Then we deal with the other term $I_{11}^{a}$ as follows:
\begin{equation}\label{f:3.5}
\begin{aligned}
|I_{11}^{a}|& \leq \|\nabla F\|_{H^{s-1}(\mathbb{R}^{d})}\|
\nabla\Psi(\cdot/\varepsilon)\tilde{w}_{\varepsilon}
\psi^{'}_{\varepsilon}\|_{H^{1-s}(\mathbb{R}^d)}\\
& \lesssim^{\eqref{f:3.3}} \|F\|_{H^{s}(\mathbb{R}^{d})}\|
\nabla\Psi(\cdot/\varepsilon)\tilde{w}_{\varepsilon}
\psi^{'}_{\varepsilon}\|_{H^{1-s}(\Omega)}\\
& \lesssim^{\eqref{pri:2.8}}
\varepsilon^{s-1}\| F\|_{H^{s}(\mathbb{R}^{d})}
\|\nabla\Psi\|_{W^{1,p}(Y)}
\| \tilde{w}_{\varepsilon}\psi^{'}_{\varepsilon}
\|_{H^{1-s}(\Omega)}\\
& \lesssim^{\eqref{f:3.2}}
\varepsilon^{s-1}\| F\|_{H^{s}(\mathbb{R}^{d})}
\| \tilde{w}_{\varepsilon}\psi^{'}_{\varepsilon}\|_{H^{1}(\Omega)},
\end{aligned}
\end{equation}
where $p>d$ and $s\in[0,1]$. Here we adopt $\|f\|_{H^{s}(\mathbb{R}^d)}:=
\|(1+|\xi|^2)^{s/2}\widehat{f}\|_{L^2(\mathbb{R}^d)}$
(with $s\in\mathbb{R}$ and $\widehat{f}$
represents Fourier transform of $f$)
as the definition of the norms of
the fractional Sobolev functions. Thus, it is not hard to observe that
\begin{equation}\label{f:3.3}
\|\nabla F\|_{H^{s-1}(\mathbb{R}^{d})}
\lesssim
\| F\|_{H^{s}(\mathbb{R}^{d})}
\quad\text{and}\quad
\|F\|_{L^{2}(\mathbb{R}^d)}\leq \|F\|_{H^{s}(\mathbb{R}^d)}.
\end{equation}
In fact, we employ zero-extension (see \cite[Lemma 5.1]{DPV}) in the
second inequality of $\eqref{f:3.5}$, and \cite[Proposition 2.2]{DPV} in the last one.
Hence,  we have
\begin{equation*}
\begin{aligned}
|I_{1}|
&\lesssim^{\eqref{f:3.6},\eqref{f:3.4},
\eqref{f:3.5},\eqref{f:3.3}} \varepsilon^{s}\| F\|_{H^{s}(\mathbb{R}^{d})}
\Big\{\|\nabla \tilde{w}_{\varepsilon}\|_{L^{2}(\Omega)}
+ \| \tilde{w}_{\varepsilon}\psi^{'}_{\varepsilon}\|_{H^{1}(\Omega)}\Big\}\\
&\lesssim^{\eqref{pri:2.10}}
\varepsilon^{s}\| F\|_{H^{s}(\mathbb{R}^{d})}
\|\nabla \tilde{w}_{\varepsilon}\|_{L^{2}(\Omega)}.
\end{aligned}
\end{equation*}

By the properties of $\widehat{A}(\xi)$, we have
\begin{equation*}
  \begin{aligned}
  |I_{2}|
  & =\Big|\theta\int_{\Omega} (\widehat{A}(\nabla u_{0})-\widehat{A}(\varphi))\nabla(\tilde{w}_{\varepsilon}
  \psi_{\varepsilon}^{\prime})dx\Big|\\
  & \lesssim^{\eqref{pri:2.3}}
  \int_{\Omega} |\nabla u_{0}-\varphi|\cdot|\nabla(\tilde{w}_{\varepsilon}
  \psi_{\varepsilon}^{\prime})|dx
  \lesssim^{\eqref{pri:2.10}}\|\nabla u_{0}-\varphi\|_{L^{2}(\Omega)}
  \|\nabla\tilde{w}_{\varepsilon}\|_{L^{2}(\Omega)},
  \end{aligned}
\end{equation*}
where we also use H\"{o}lder's inequality in the last inequality.

Recalling that $b(y,\xi) = \theta \widehat{A}(\xi)-l^{+}(y)A(y,\xi+\nabla N(y,\xi)),$ it follows from Lemmas \ref{lemma:2.5} and \ref{lemma:2.15} that
\begin{equation*}
  \begin{aligned}
  |I_{3}|&=
   \Big|\int_{\Omega}b(x/\varepsilon,\varphi)
  \nabla(\tilde{w}_{\varepsilon}\psi_{\varepsilon}^{\prime})dx\Big|\\
  & =^{\eqref{eq:2.1}}\Big|\varepsilon\int_{\Omega}\frac{\partial}{\partial x_{j}}\{E_{ji}(x/\varepsilon,\varphi)\}\frac{\partial}
  {\partial x_{i}}(\tilde{w}_{\varepsilon}\psi_{\varepsilon}^{\prime})dx
  -\varepsilon\int_{\Omega}\frac{\partial}{\partial \xi_{k}}\{E_{ji}(x/\varepsilon,\varphi)\}\frac{\partial\varphi_{k}}{\partial x_{j}}\frac{\partial}
  {\partial x_{i}}(\tilde{w}_{\varepsilon}\psi_{\varepsilon}^{\prime})dx\Big|\\
  & =\Big|-\varepsilon\int_{\Omega}E_{ji}(x/\varepsilon,\varphi)
  \frac{\partial^{2}}
  {\partial x_{j}\partial x_{i}}(\tilde{w}_{\varepsilon}\psi_{\varepsilon}^{\prime})dx
  -\varepsilon\int_{\Omega}\frac{\partial}{\partial \xi_{k}}\{E_{ji}(x/\varepsilon,\varphi)\}\frac{\partial\varphi_{k}}{\partial x_{j}}\frac{\partial}
  {\partial x_{i}}(\tilde{w}_{\varepsilon}\psi_{\varepsilon}^{\prime})dx\Big|\\
  & =^{\eqref{eq:2.1}}\Big|\varepsilon\int_{\Omega}\frac{\partial}{\partial \xi_{k}}\{E_{ji}(x/\varepsilon,\varphi)\}
  \frac{\partial\varphi_{k}}{\partial x_{j}}\frac{\partial}
  {\partial x_{i}}(\tilde{w}_{\varepsilon}\psi_{\varepsilon}^{\prime})
  dx\Big|\\
  &\lesssim^{\eqref{pri:2.10}} \varepsilon\|\nabla_{\xi} E_{ji}(\cdot/\varepsilon,\varphi)\nabla_{j}
  \varphi\|_{L^{2}(\Omega)}\|\nabla_i\tilde{w}_{\varepsilon}
  \|_{L^{2}(\Omega)},
  \end{aligned}
\end{equation*}
where we use the fact that $E_{ji}(x/\varepsilon,\varphi)=0$ on $\partial\Omega$ according to $\varphi\in H^{1}_0(\Omega)$ in the third equality.

For the last term $I_{4}$, one may have the following
decomposition,
\begin{equation*}
  \begin{aligned}
  I_{4}
  & =\int_{\Omega}l_{\varepsilon}^{+}
[A(x/\varepsilon,\varphi+\nabla_{y}N(y,\varphi))-A(x/\varepsilon,\nabla v_{\varepsilon})]\nabla \tilde{w}_{\varepsilon}dx+\int_{\Omega}l_{\varepsilon}^{+}A(x/\varepsilon,
\varphi+\nabla_{y}N(y,\varphi))\nabla(\tilde{w}_{\varepsilon}
\psi_{\varepsilon}^{\prime}-\tilde{w}_{\varepsilon})dx\\
& :=I_{41}+I_{42}.
  \end{aligned}
\end{equation*}
Then we have
\begin{equation*}
  \begin{aligned}
  |I_{41}|
  & \lesssim^{\eqref{a:1}} \int_{\Omega}l_{\varepsilon}^{+}|\varphi-\nabla u_{0}-\varepsilon \nabla_{\xi}N(x/\varepsilon,\varphi)\nabla\varphi|\cdot
  |\nabla\tilde{w}_{\varepsilon}|dx\\
  & \lesssim  \|\varphi-\nabla u_{0}\|_{L^{2}(\Omega)}
  \|\nabla \tilde{w}_{\varepsilon}\|_{L^{2}(\Omega)}
  +\varepsilon\|\nabla_{\xi}N(\cdot/\varepsilon,\varphi)\nabla\varphi
  \|_{L^{2}(\Omega_{\varepsilon})}
  \|\nabla \tilde{w}_{\varepsilon}\|_{L^{2}(\Omega)};\\
|I_{42}|
& \lesssim^{\eqref{a:1}} \int_{\Omega}l^{+}_{\varepsilon}|
\varphi+\nabla_{y}N(y,\varphi)|\cdot|\nabla[\tilde{w}_{\varepsilon}
(1-\psi_{\varepsilon}^{\prime})]|dx.
\end{aligned}
\end{equation*}
 According to $\text{supp}(1-\psi_{\varepsilon}^{\prime})
 =O_{2\varepsilon}$ and
 $\nabla_{y}N(x/\varepsilon,\varphi)=0$
for any $x\in O_{2\varepsilon}\cap \Omega_{\varepsilon}$, we see that
$I_{42}=0$. Thus, plugging the estimates of $I_{41}$ back into $I_4$, there holds
\begin{equation*}
  \begin{aligned}
  |I_{4}|\lesssim \bigg( \|\varphi-\nabla u_{0}\|_{L^{2}(\Omega)}
  +\varepsilon\|\nabla_{\xi}N(\cdot/\varepsilon,\varphi)\nabla\varphi
  \|_{L^{2}(\Omega_{\varepsilon})}\bigg)
  \|\nabla \tilde{w}_{\varepsilon}\|_{L^{2}(\Omega)}.
  \end{aligned}
\end{equation*}

Consequently,
combining the above estimates of $I_{i}$
with $i=1,2,3,4$ and the assumption \eqref{a:1}, we arrive at the stated estimate \eqref{pri:2.5} appealing
to Lemma $\ref{extensiontheory}$. This ends
the proof.
\end{proof}

\begin{lemma}\label{lemma:3.2}
Assume the same conditions as in Theorem $\ref{thm:1.1}$,
while we set $F\in H^s(\mathbb{R}^d)$ with
$0\leq s\leq 1$ in the
present lemma.
Then we have the following estimate
\begin{equation}\label{pri:3.1}
\|\nabla w_\varepsilon\|_{L^2(\Omega_{\varepsilon})}
\lesssim
\bigg\{\|\nabla u_0\|_{L^2(O_{4\varepsilon})}
+ \varepsilon\|\nabla^2 u_0
\|_{L^2(\Sigma_{3\varepsilon})}
+\varepsilon^{s}\|F\|_{H^{s}(\mathbb{R}^d)}\bigg\}
\end{equation}
and
\begin{equation}\label{pri:3.2}
\|u_\varepsilon - u_0\|_{L^2(\Omega_{\varepsilon})}
\lesssim \bigg\{\|\nabla u_0\|_{L^2(O_{4\varepsilon})}
+ \varepsilon\|\nabla^2 u_0
\|_{L^2(\Sigma_{3\varepsilon})}
+\varepsilon^{s}\|F\|_{H^{s}(\mathbb{R}^d)}\bigg\},
\end{equation}
where the layer set $O_{4\varepsilon}$ and co-layer set $\Sigma_{3\varepsilon}$ are defined in Subsection $\ref{subsec:1.2}$, and
the up to constant depends on $\mu_0,\mu_1,\mu_2,\tau,d,r_0$ and
the boundary character of $\omega$, but never relies on $s$ and $\varepsilon$.
\end{lemma}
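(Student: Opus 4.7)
The plan is to start from the conclusion \eqref{pri:2.5} of Lemma~\ref{lemma:3.1} and bound each of its four ingredients in terms of the layer norm $\|\nabla u_0\|_{L^2(O_{4\varepsilon})}$, the co-layer Hessian norm $\varepsilon\|\nabla^2 u_0\|_{L^2(\Sigma_{3\varepsilon})}$, and $\varepsilon^s\|F\|_{H^s}$ (which will appear untouched). For the term $\|\nabla u_0-\varphi\|_{L^2(\Omega)}$, I would first decompose
\begin{equation*}
\nabla u_0 - \varphi = (1-\psi_\varepsilon)\nabla u_0 + \bigl[\psi_\varepsilon\nabla u_0 - S_\varepsilon(\psi_\varepsilon\nabla u_0)\bigr].
\end{equation*}
Since $\psi_\varepsilon\equiv 1$ on $\Sigma_{4\varepsilon}$, the first summand is supported in $O_{4\varepsilon}$ and bounded by $\|\nabla u_0\|_{L^2(O_{4\varepsilon})}$ directly. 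The second summand is controlled by $\varepsilon\|\nabla(\psi_\varepsilon\nabla u_0)\|_{L^2(\Omega)}$ via the smoothing error estimate \eqref{pri:2.7}; the Leibniz rule then produces a $\nabla\psi_\varepsilon$-contribution of size $\varepsilon^{-1}$ supported in $O_{4\varepsilon}$ (so that after multiplication by $\varepsilon$ it becomes $\|\nabla u_0\|_{L^2(O_{4\varepsilon})}$) and a $\psi_\varepsilon\nabla^2 u_0$ contribution supported in $\Sigma_{3\varepsilon}$, giving $\varepsilon\|\nabla^2 u_0\|_{L^2(\Sigma_{3\varepsilon})}$.

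For the two $\nabla\varphi$-terms in \eqref{pri:2.5}, I use that $S_\varepsilon$ commutes with differentiation and is bounded on $L^2$, so $\|\nabla\varphi\|_{L^2(\Omega)}\leq\|\nabla(\psi_\varepsilon\nabla u_0)\|_{L^2(\Omega)}$ is the same quantity already estimated. Combined with the uniform bounds $|\nabla_\xi N(y,\xi)|\lesssim 1$ from Lemma~\ref{lemma:2.1} (see \eqref{pri:2.15}) and $|\nabla_\xi E(y,\xi)|\lesssim 1$ from Lemma~\ref{lemma:2.5} (see \eqref{pri:2.17}), each of these terms is dominated by $\varepsilon\|\nabla\varphi\|_{L^2(\Omega)}\lesssim\|\nabla u_0\|_{L^2(O_{4\varepsilon})}+\varepsilon\|\nabla^2 u_0\|_{L^2(\Sigma_{3\varepsilon})}$. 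Substituting back into \eqref{pri:2.5} yields \eqref{pri:3.1}.

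To derive \eqref{pri:3.2}, I split $u_\varepsilon - u_0 = w_\varepsilon + \varepsilon N(\cdot/\varepsilon,\varphi)$. Since $w_\varepsilon\in H^1(\Omega_\varepsilon,\Gamma_\varepsilon)$, I extend it via Lemma~\ref{extensiontheory} to $\tilde w_\varepsilon\in H^1_0(\Omega_0)$ and apply Poincar\'e on the bounded domain $\Omega_0$, so that $\|w_\varepsilon\|_{L^2(\Omega_\varepsilon)}\leq\|\tilde w_\varepsilon\|_{L^2(\Omega_0)}\lesssim\|\nabla\tilde w_\varepsilon\|_{L^2(\Omega_0)}\lesssim\|\nabla w_\varepsilon\|_{L^2(\Omega_\varepsilon)}$, already bounded by \eqref{pri:3.1}. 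For the corrector piece, using $N(y,0)=0$ from Remark~\ref{remark:2.1} together with the Lipschitz bound \eqref{pri:2.15} in the variable $\xi$, I obtain $\varepsilon\|N(\cdot/\varepsilon,\varphi)\|_{L^2(\Omega_\varepsilon)}\lesssim\varepsilon\|\varphi\|_{L^2(\Omega)}\leq\varepsilon\|\psi_\varepsilon\nabla u_0\|_{L^2(\Omega)}\leq\varepsilon\|\nabla u_0\|_{L^2(\Sigma_{3\varepsilon})}$. The main obstacle is absorbing this last quantity into the advertised right-hand side: because $\nabla u_0$ need not vanish inside $\Sigma_{3\varepsilon}$, it is \emph{not} pointwise dominated by $\varepsilon\|\nabla^2 u_0\|_{L^2(\Sigma_{3\varepsilon})}$, so closing the inequality seems to require either an a priori energy estimate for the effective equation \eqref{pde:1.3} (which enters the constant through $r_0$) or a more delicate layer/co-layer decomposition exploiting that $\varphi$ already vanishes on $O_{2\varepsilon}$; apart from this bookkeeping the proof is a routine combination of triangle inequality, Leibniz rule, the smoothing estimates from Lemma~\ref{lemma:2.4}, and the corrector bounds of Lemmas~\ref{lemma:2.1} and \ref{lemma:2.5}.
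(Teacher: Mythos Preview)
Your argument for \eqref{pri:3.1} is correct and matches the paper's proof. The gap you flag in \eqref{pri:3.2} is real but the resolution is simpler than either alternative you propose: since $\psi_\varepsilon\nabla u_0\in H^1_0(\Omega)$ (it vanishes on $O_{3\varepsilon}$, hence on $\partial\Omega$), Poincar\'e's inequality on the bounded domain $\Omega$ gives
\[
\|\psi_\varepsilon\nabla u_0\|_{L^2(\Omega)}\lesssim_{r_0}\|\nabla(\psi_\varepsilon\nabla u_0)\|_{L^2(\Omega)}
\lesssim \varepsilon^{-1}\|\nabla u_0\|_{L^2(O_{4\varepsilon})}+\|\nabla^2 u_0\|_{L^2(\Sigma_{3\varepsilon})},
\]
the last step being exactly the Leibniz computation you already performed. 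After multiplication by $\varepsilon$, the corrector term $\varepsilon\|N(\cdot/\varepsilon,\varphi)\|_{L^2(\Omega_\varepsilon)}\lesssim\varepsilon\|\psi_\varepsilon\nabla u_0\|_{L^2(\Omega)}$ is absorbed into $\|\nabla u_0\|_{L^2(O_{4\varepsilon})}+\varepsilon\|\nabla^2 u_0\|_{L^2(\Sigma_{3\varepsilon})}$ as required. The dependence on $r_0$ enters precisely through this Poincar\'e constant, which is allowed by the statement. This is what the paper does; your instinct that ``$\varphi$ vanishing on $O_{2\varepsilon}$'' is the key was correct, you just needed to push it one step further.

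As a minor remark, your pointwise bound $|N(y,\varphi)|\lesssim|\varphi|$ via \eqref{pri:2.15} and Remark~\ref{remark:2.1} is actually cleaner than the paper's route, which proves the same $L^2$ inequality by a cell-by-cell covering argument using only the averaged estimate \eqref{pri:2.1}.
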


\begin{proof}
According to Lemma \ref{lemma:3.1},
to show the estimate $\eqref{pri:3.1}$, it suffices to estimate
$\|\nabla_{\xi}E(\cdot/\varepsilon,\varphi)\cdot\nabla\varphi\|_{L^{2}(\Omega)},$\qquad $\|\nabla_{\xi}N(\cdot /\varepsilon ,\varphi)\cdot\nabla\varphi\|_{L^{2}(\Omega_{\varepsilon})}$ and
$\|\nabla u_{0}-\varphi\|_{L^{2}(\Omega)}.$
On account of Lemmas \ref{lemma:2.1} and \ref{lemma:2.5}, we can derive that
\begin{equation*}
  \begin{aligned}
 \|\nabla_{\xi}E(\cdot/\varepsilon,\varphi)\cdot\nabla\varphi
  \|_{L^{2}(\Omega)}
&+ \|\nabla_{\xi}N(\cdot /\varepsilon ,\varphi)\cdot\nabla\varphi\|_{L^{2}(\Omega_{\varepsilon})}\\
& \lesssim^{\eqref{pri:2.15},\eqref{pri:2.17}} \|\nabla \varphi\|_{L^{2}(\Omega)}
\lesssim^{\eqref{pri:2.6}} \|\nabla^{2}  u_{0}\|_{L^{2}(\Sigma_{3\varepsilon})}+\varepsilon^{-1}\|\nabla u_{0}\|_{L^{2}(O_{4\varepsilon})},
  \end{aligned}
\end{equation*}
where we recall $\varphi=S_{\varepsilon}(\psi_{\varepsilon}\nabla u_{0})$,
and use the properties of $\psi_{\varepsilon}$ (see $\eqref{cutoff}$) in the last inequality. Also,
we have
\begin{equation*}
  \begin{aligned}
 \|\nabla u_{0}-\varphi\|_{L^{2}(\Omega)}
 & \leq \|\psi_{\varepsilon}\nabla u_{0}-S_{\varepsilon}(\psi_{\varepsilon}\nabla u_{0})\|_{L^{2}(\Omega)}+\|(1-\psi_{\varepsilon})\nabla u_{0}\|_{L^{2}(\Omega)}\\
&\lesssim^{\eqref{pri:2.7}}\varepsilon\|\nabla(\psi_{\varepsilon}\nabla u_{0})\|_{L^{2}(\Omega)}+\|(1-\psi_{\varepsilon})\nabla u_{0}\|_{L^{2}(\Omega)}
\lesssim\varepsilon\|\nabla^{2}
u_{0}\|_{L^{2}(\Sigma_{3\varepsilon})}
+\|\nabla  u_{0}\|_{L^{2}(O_{4\varepsilon})}.
  \end{aligned}
\end{equation*}
Consequently, plugging the above two estimates back into
the estimate $\eqref{pri:2.5}$ leads to the desired estimate
\eqref{pri:3.1}.

We proceed to show the estimate \eqref{pri:3.2}, and
\begin{equation*}
\begin{aligned}
\|u_{\varepsilon}-u_{0}\|_{L^{2}(\Omega_{\varepsilon})}
&\leq \|u_{\varepsilon}-u_{0}-\varepsilon N(\cdot/\varepsilon,\varphi)\|_{L^{2}(\Omega_{\varepsilon})}
+\varepsilon \|N(\cdot/\varepsilon,\varphi)\|_{L^{2}(\Omega_{\varepsilon})}\\
&\lesssim \|\nabla w_{\varepsilon}\|_{L^{2}(\Omega_{\varepsilon})}+
\varepsilon \|N(\cdot/\varepsilon,\varphi)\|_{L^{2}(\Omega_{\varepsilon})},
\end{aligned}
\end{equation*}
in which we employ Lemma \ref{extensiontheory} and Poincar\'{e}'s inequality in the second inequality.
Next, we will show
\begin{equation}\label{f:3.1}
\int_{\Omega_{\varepsilon}}|N(x/\varepsilon,\varphi)|^{2}dx\lesssim \int_{\Omega}|\varphi|^{2}dx.
\end{equation}
To do so, we collect a family of small cubes denoted by $Y_\varepsilon^i = \varepsilon(i+Y)$ for
$i\in\mathbb{Z}^d$ with an index set $I_\varepsilon$, such that $\Omega_{\varepsilon}\backslash O_{2\varepsilon}\subset
\cup_{i\in I_\varepsilon} Y_\varepsilon^i \subset \Omega$
and $Y_\varepsilon^i\cap Y_\varepsilon^j = \emptyset$ if $i\not=j$. Thus
\begin{equation}\label{f:3.15}
\begin{aligned}
 \int_{\Omega_{\varepsilon}} |N(x/\varepsilon,\varphi)|^2 dx
& \leq \sum_{i\in I_\varepsilon}\int_{Y^i_\varepsilon\cap \varepsilon\omega} |N(x/\varepsilon,\varphi)|^2 dx
 +  \int_{O_{2\varepsilon}\cap\Omega_{\varepsilon}}
 |N(x/\varepsilon,\varphi)|^2 dx \\
& \lesssim \sum_{i\in I_\varepsilon} |Y_\varepsilon^i||\varphi^i|^2
\lesssim \int_{\Omega}|\varphi|^2 dx,
\end{aligned}
\end{equation}
where we employ the estimate $\eqref{pri:2.1}$ and the fact that
\begin{equation*}
N(x/\varepsilon,\varphi)  = 0  \qquad \forall x\in O_{2\varepsilon}\cap\Omega_{\varepsilon}.
\end{equation*}
One may take $\varphi^i = \inf_{x\in Y_\varepsilon^i}
|S_\varepsilon(\psi_{\varepsilon}\nabla u_0)(x)|$,
and the second step in $\eqref{f:3.15}$ is due to the fact that $N(y,\xi)$ is continuous with respective to the second variable (see Lemma $\ref{lemma:2.1}$),
 while the last step in $\eqref{f:3.15}$ comes from
Chebyshev's inequality. Therefore, the estimate $\eqref{pri:3.2}$ consequently follows from \eqref{pri:3.1},\eqref{f:3.1} and the following inequality
\begin{equation*}
\|S_\varepsilon(\psi_{\varepsilon}\nabla u_0)\|_{L^2(\Omega)}
\lesssim^{\eqref{pri:2.6}} \|\psi_{\varepsilon}\nabla u_0\|_{L^2(\Omega)}
\lesssim \Big\{\varepsilon^{-1}\|\nabla u_0\|_{L^2(O_{4\varepsilon})}
+ \|\nabla^2 u_0\|_{L^2(\Sigma_{3\varepsilon})}\Big\},
\end{equation*}
where we employ Poincar\'e's inequality in the second step,
and this ends the proof.
\end{proof}

\noindent\textbf{Proof of Theorem $\ref{thm:1.1}$}.
For any $0\leq s\leq 1$,
let $\tilde{F}$ be the $H^{s}$-extension of
$F$ such that $\tilde{F} = F$ on $\Omega$ and
\begin{equation}\label{f:3.16}
\|\tilde{F}\|_{H^{s}(\mathbb{R}^d)}
\lesssim \|F\|_{H^{s}(\Omega)}
\end{equation}
(see \cite[Theorem 5.4]{DPV} for the case of $0<s<1$,
and for the case $s=0$ we take zero-extension while
we adopt common extension theorem in the Sobolev space
$W^{1,2}(\Omega)$ for $s=1$). Although we have extended
the given data $F$, there is no change in the equations
$\eqref{pde:1.1}$ and $\eqref{pde:1.3}$, and therefore
this operation has no influence on the related solutions.
Based upon Lemma $\ref{lemma:3.2}$, the desired results
are reduced to address the layer type quantity
$\|\nabla u_0\|_{L^2(O_{4\varepsilon})}$ and co-layer type one $\|\nabla^2 u_0\|_{L^2(\Sigma_{3\varepsilon})}$. Obviously, the related estimates will rely on the regularity of $\partial\Omega$.
We first hand the estimate $\eqref{pri:1.2}$.
It follows from the estimates $\eqref{pri:3.2}$ and $\eqref{pri:2.11}$ that
\begin{equation}\label{f:2.8}
\begin{aligned}
\|u_\varepsilon-u_0\|_{L^2(\Omega_{\varepsilon})}
&\lesssim \|\nabla u_0\|_{L^2(O_{4\varepsilon})}
+ \varepsilon
\Big\{\|F\|_{L^2(\Omega)}
+\|g\|_{H^{3/2}(\partial\Omega)}\Big\}
+\varepsilon^{s}\|\tilde{F}\|_{H^{s}(\mathbb{R}^d)}\\
&\lesssim^{\eqref{f:3.3}}\|\nabla u_0\|_{L^2(O_{4\varepsilon})}
+\varepsilon\|g\|_{H^{3/2}(\partial\Omega)}+
\varepsilon^{s}\|\tilde{F}\|_{H^{s}(\mathbb{R}^d)},
\end{aligned}
\end{equation}
where one may choose $s\in[1/2,1]$ and notice that
$F=\tilde{F}$ in $\Omega$.
Regarding to $\|\nabla u_0\|_{L^2(O_{4\varepsilon})}$, by
co-area formula, we carry out the following computations:
\begin{equation}\label{pri:3.13}
\begin{aligned}
\|\nabla u_0\|^2_{L^2(O_{4\varepsilon})}&=\int_{0}^{4\varepsilon}
\int_{\partial \Sigma_{t}}|\nabla u_{0}|^2dS_tdt
\lesssim\varepsilon\sup_{0<t<4\varepsilon}\int_{\partial \Sigma_{t}}|\nabla u_{0}|^2dS_t\\
&\lesssim \varepsilon\Big(\int_{\Omega}|\nabla u_{0}|^2dx+\int_{\Omega}|\nabla^2 u_{0}|^2dx\Big)\\
&\lesssim^{\eqref{pri:2.12},\eqref{pri:2.11}}
\varepsilon
\Big(\|F\|^2_{L^{2}(\Omega)}
+\|g\|^2_{H^{3/2}(\partial\Omega)}\Big),
\end{aligned}
\end{equation}
where we employ the trace theorem near the boundary
in the third step, i.e.,
\begin{equation*}
\int_{\partial \Sigma_t}|\nabla u_0|^2dS\lesssim\int_{\Omega}|\nabla u_0|^2dx+\int_{\Omega}|\nabla^2 u_0|^2dx
\end{equation*}
uniformly holds for $0<t\ll1$. By inserting \eqref{pri:3.13} into \eqref{f:2.8},
we arrive at
\begin{equation*}
\begin{aligned}
\|u_\varepsilon-u_0\|_{L^2(\Omega_{\varepsilon})}
&\lesssim
\varepsilon^{1/2}\Big\{\|F\|_{L^{2}(\Omega)}
+\|g\|_{H^{3/2}(\partial\Omega)}\Big\}
+\varepsilon\|g\|_{H^{3/2}(\partial\Omega)}+
\varepsilon^{s}\|\tilde{F}\|_{H^{s}(\mathbb{R}^d)}\\
&\lesssim
\varepsilon^{1/2}
\Big\{\|g\|_{H^{3/2}(\partial\Omega)}
+\|\tilde{F}\|_{H^{1/2}(\mathbb{R}^d)}\Big\}
\lesssim^{\eqref{f:3.16}} \varepsilon^{1/2}
\Big\{\|g\|_{H^{3/2}(\partial\Omega)}
+\|F\|_{H^{1/2}(\Omega)}\Big\}
\end{aligned}
\end{equation*}
This gives the stated estimate \eqref{pri:1.2}.

Then we proceed to show the estimate \eqref{pri:1.6}, and first claim that
\begin{equation}\label{pri:3.11}
  \|\nabla^2 u_{0}\|_{L^{2}(\Sigma_{3\varepsilon})}
  \lesssim\varepsilon^{-\frac{1}{2}-\frac{1}{p}}\|\nabla u_{0}\|_{L^{p}(\Omega)},
\end{equation}
and the details of the proof of \eqref{pri:3.11} can be found in \cite[Lemma 3.9]{W} (originally from \cite[Lemma 6.1.5]{S4}).
In view of Lemma \ref{lemma:3.2}, we have
\begin{equation}\label{f:3.17}
\begin{aligned}
\|u_{\varepsilon}-u_{0}\|_{L^2(\Omega_{\varepsilon})}
& \lesssim^{\eqref{pri:3.11}} \|\nabla u_{0}\|_{L^{2}(O_{4\varepsilon})}+\varepsilon^{\frac{1}{2}-\frac{1}{p}}
\|\nabla u_{0}\|_{L^{p}(\Omega)}
+\varepsilon^{s}\|\tilde{F}\|_{H^{s}(\mathbb{R}^d)}\\
 &\lesssim \varepsilon^{\frac{1}{2}-\frac{1}{p}}
\|\nabla u_{0}\|_{L^{p}(\Omega)}
+\varepsilon^{s}\|\tilde{F}\|_{H^{s}(\mathbb{R}^d)}
\end{aligned}
\end{equation}
for some $p>2$,
in which the second step follows from H\"{o}lder's inequality.
Let $\sigma=1/2-1/p$ and one may choose $p>2$ such that
$0<p-2\ll 1$.  Then it follows from
Theorem \ref{thm:2.20} that
\begin{equation}\label{f:3.18}
\begin{aligned}
\|\nabla u_{0}\|_{L^{p}(\Omega)}
&\lesssim^{\eqref{pri:2.13}} \Big\{\|\nabla (-\Delta)^{-1}\tilde{F}^{0}\|_{L^{p}(\Omega)}
+\|g\|_{W^{1-1/p,p}(\partial\Omega)}\Big\}\\
&\lesssim \Big\{\|F\|_{L^{\frac{pd}{d+p}}(\Omega)}
+\|g\|_{W^{1-1/p,p}(\partial\Omega)}\Big\}
\lesssim \Big\{\|F\|_{L^{\frac{2d}{d-2\sigma}}(\Omega)}
+\|g\|_{W^{1-1/p,p}(\partial\Omega)}\Big\}
\\
&\lesssim \Big\{\|\tilde{F}\|_{H^{\sigma}(\mathbb{R}^d)}
+\|g\|_{W^{1-1/p,p}(\partial\Omega)}\Big\},
\end{aligned}
\end{equation}
where the operator $\nabla(\Delta)^{-1}$ defines the Riesz potential of order $1$, and $\tilde{F}^0$ is zero-extension of $F$ to $\mathbb{R}^d$.
In the second step, we use fractional integral estimates
(see \cite[Theorem 7.25]{MGLM}). In the third one,
we merely employ H\"older's inequality by noting that
$\frac{pd}{d+p}<2<\frac{2d}{d-2\sigma}$, while
we employ fractional Sobolev inequality (see for example \cite[Theorem 6.5]{DPV}) in the last step.
Thus, plugging the estimate $\eqref{f:3.18}$ back into
$\eqref{f:3.17}$ and setting $s=\sigma$ in
$\eqref{f:3.17}$, we finally arrive at
\begin{equation*}
\|u_{\varepsilon}-u_{0}\|_{L^2(\Omega_{\varepsilon})}
\lesssim \varepsilon^\sigma\Big\{ \|g\|_{W^{1-1/p,p}(\partial\Omega)}
+\|\tilde{F}\|_{H^{\sigma}(\mathbb{R}^d)}\Big\}
\lesssim^{\eqref{f:3.16}} \varepsilon^\sigma\Big\{ \|g\|_{W^{1-1/p,p}(\partial\Omega)}
+\|F\|_{H^{\sigma}(\Omega)}\Big\},
\end{equation*}
and this closes the whole proof.
\qed

\subsection{The proof of Theorem $\ref{thm:1.5}$}

In this subsection,
we omit the subscript $\lambda$
of $u_{\varepsilon,\lambda}$ and $u_{0,\lambda}$ in
Theorem $\ref{thm:1.5}$ for the ease of the statement.

\begin{lemma}[weak formulation]
Let $\Omega=\mathbb{R}^d$ and $\omega$
be a Lipschitz domain.
Assume that $A$ satisfies $\eqref{a:1}$ and $\eqref{a:2}$.
Suppose that $u_\varepsilon\in H^{1}(\Omega_{\varepsilon})~\text{and}~ u_0\in H^1(\mathbb{R}^d)$
satisfy equations $\emph{(i)}$ and $\emph{(ii)}$ in
$\eqref{pde:1.5}$, respectively.
Let $w_{\varepsilon}=u_{\varepsilon}-v_{\varepsilon}$ and
$v_{\varepsilon}=u_{0}+\varepsilon N(x/\varepsilon,\varphi)$
with $\varphi\in H^1(\mathbb{R}^d;\mathbb{R}^d)$.
Then we have
\begin{equation}\label{eq:3.2}
\begin{aligned}
\lambda\int_{\Omega_\varepsilon}|w_\varepsilon|^2 dx
&+ \int_{\Omega_\varepsilon} \big[A(y,\nabla u_\varepsilon)
-A(y,\nabla v_\varepsilon)\big]\cdot\nabla w_\varepsilon dx\\
&= \int_{\mathbb{R}^d}(l_\varepsilon^{+}-\theta)(F-\lambda u_0)
\tilde{w}_\varepsilon dx
 -\varepsilon\lambda
\int_{\mathbb{R}^d}l_\varepsilon^{+}N(y,\varphi)\tilde{w}_\varepsilon dx\\
&\qquad\qquad + \int_{\mathbb{R}^d}
\Big\{\theta\widehat{A}(\nabla u_0)
-l_\varepsilon^{+}A\big(y,\varphi
+\nabla_y N(y,\varphi)\big)\Big\}\nabla\tilde{w}_\varepsilon dx\\
&\qquad\qquad\qquad\qquad + \int_{\mathbb{R}^d}
l_\varepsilon^{+}\Big\{A\big(y,\varphi
+\nabla_y N(y,\varphi)\big)
-A(y,\nabla v_\varepsilon)\Big\}\nabla\tilde{w}_\varepsilon dx
\end{aligned}
\end{equation}
in which $y=x/\varepsilon$, and $\tilde{w}_\varepsilon$ is the extension of
$w_\varepsilon$ in the sense of Lemma $\ref{extensiontheory}$ and Remark $\ref{remark:2.2}$.
\end{lemma}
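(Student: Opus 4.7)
The plan is to mimic the structure of the proof of Lemma~\ref{lemma:3.1}, testing the weak formulations of $\eqref{pde:1.5}$(i) and $\eqref{pde:1.5}$(ii) against $w_{\varepsilon}$ and $\tilde{w}_{\varepsilon}$ respectively, and then combining them via an add-and-subtract routine. Two simplifications and one novelty arise relative to Lemma~\ref{lemma:3.1}: since $\Omega=\mathbb{R}^{d}$, no cut-off function $\psi_{\varepsilon}^{\prime}$ is needed and the auxiliary corrector equation $\eqref{auxi1}$ does not appear; on the other hand the zeroth-order term $\lambda u$ in $\eqref{pde:1.5}$ will generate both the $\lambda\int|w_\varepsilon|^2$ piece on the left-hand side of $\eqref{eq:3.2}$ and the extra contributions $\varepsilon\lambda\int l_\varepsilon^{+}N(y,\varphi)\tilde{w}_\varepsilon$ and $\int(l_\varepsilon^{+}-\theta)\lambda u_0\tilde{w}_\varepsilon$ on the right.

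First I will verify admissibility of test functions: $w_\varepsilon\in H^1(\Omega_\varepsilon)$ (no Dirichlet data are imposed) and $\tilde{w}_\varepsilon\in H^1(\mathbb{R}^d)$ by Lemma~\ref{extensiontheory} in its whole-space version stated in Remark~\ref{remark:2.2}. Using the conormal condition on $\partial\Omega_\varepsilon$, testing $\eqref{pde:1.5}$(i) against $w_\varepsilon$ yields
\begin{equation*}
\lambda\int_{\Omega_\varepsilon}u_\varepsilon w_\varepsilon\,dx
+\int_{\Omega_\varepsilon}A(y,\nabla u_\varepsilon)\cdot\nabla w_\varepsilon\,dx
=\int_{\mathbb{R}^d}l_\varepsilon^{+}F\tilde{w}_\varepsilon\,dx,
\end{equation*}
while testing $\eqref{pde:1.5}$(ii) against $\tilde{w}_\varepsilon$ and multiplying by $\theta$ gives
\begin{equation*}
\theta\lambda\int_{\mathbb{R}^d}u_0\tilde{w}_\varepsilon\,dx
+\theta\int_{\mathbb{R}^d}\widehat{A}(\nabla u_0)\cdot\nabla\tilde{w}_\varepsilon\,dx
=\theta\int_{\mathbb{R}^d}F\tilde{w}_\varepsilon\,dx.
\end{equation*}

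Next I will subtract $\lambda\int_{\Omega_\varepsilon}v_\varepsilon w_\varepsilon+\int_{\Omega_\varepsilon}A(y,\nabla v_\varepsilon)\cdot\nabla w_\varepsilon$ from the first identity so that the left-hand side assumes the form stated in $\eqref{eq:3.2}$. On the right, I will split $\int l_\varepsilon^{+}F\tilde{w}_\varepsilon=\theta\int F\tilde{w}_\varepsilon+\int(l_\varepsilon^{+}-\theta)F\tilde{w}_\varepsilon$ and substitute the $\theta$ piece using the second identity above; similarly expand $\lambda\int_{\Omega_\varepsilon}v_\varepsilon w_\varepsilon=\lambda\int l_\varepsilon^{+}u_0\tilde{w}_\varepsilon+\varepsilon\lambda\int l_\varepsilon^{+}N(y,\varphi)\tilde{w}_\varepsilon$ and split $\lambda\int l_\varepsilon^{+}u_0\tilde{w}_\varepsilon=\theta\lambda\int u_0\tilde{w}_\varepsilon+\lambda\int(l_\varepsilon^{+}-\theta)u_0\tilde{w}_\varepsilon$. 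The two $\theta\lambda\int u_0\tilde{w}_\varepsilon$ contributions cancel, the $(l_\varepsilon^{+}-\theta)$ pieces combine into $\int(l_\varepsilon^{+}-\theta)(F-\lambda u_0)\tilde{w}_\varepsilon$, and the $\varepsilon\lambda$ piece gives the second line of $\eqref{eq:3.2}$.

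Finally, the third and fourth integrals on the right-hand side of $\eqref{eq:3.2}$ will be produced by adding and subtracting $l_\varepsilon^{+}A(y,\varphi+\nabla_y N(y,\varphi))\cdot\nabla\tilde{w}_\varepsilon$ inside the remaining quantity $\theta\int\widehat{A}(\nabla u_0)\cdot\nabla\tilde{w}_\varepsilon-\int l_\varepsilon^{+}A(y,\nabla v_\varepsilon)\cdot\nabla\tilde{w}_\varepsilon$. The derivation is essentially algebraic bookkeeping; the only real subtlety is the first step, namely confirming admissibility of $\tilde{w}_\varepsilon$ on the whole space via the $\mathbb{R}^d$-version of the extension theorem and correctly converting interior integrals over $\Omega_\varepsilon$ to integrals weighted by $l_\varepsilon^{+}$ on $\mathbb{R}^d$. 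No geometric hypothesis beyond $\omega$ being Lipschitz and no quantitative corrector estimate is needed at this formulation stage; coerciveness, the corrector bounds of Lemma~\ref{lemma:2.1} and the flux-corrector identities of Lemma~\ref{lemma:2.5} will enter only when $\eqref{eq:3.2}$ is subsequently exploited to estimate $\|w_\varepsilon\|$ and $\|\nabla w_\varepsilon\|$.
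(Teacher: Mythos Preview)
Your proposal is correct and follows essentially the same approach as the paper: test $\eqref{pde:1.5}$(i) against $w_\varepsilon$ and $\eqref{pde:1.5}$(ii) against $\tilde{w}_\varepsilon$, then perform the add-and-subtract routine of Lemma~\ref{lemma:3.1} adapted to the whole-space setting with the zeroth-order term. The paper's own proof is very terse (it merely writes down the two tested identities and says ``the rest of the calculations is standard''), so your more detailed bookkeeping---in particular the explicit tracking of how the $\theta\lambda\int u_0\tilde{w}_\varepsilon$ terms cancel and how the $(l_\varepsilon^{+}-\theta)$ pieces combine---is a faithful expansion of exactly what the authors intend.
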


\begin{proof}
The computation is similar to that given in $\eqref{eq:3.1}$, and start from
\begin{equation*}
\begin{aligned}
\lambda \int_{\Omega_\varepsilon} u_\varepsilon w_\varepsilon dx
+ \int_{\Omega_\varepsilon}A(x/\varepsilon,\nabla u_\varepsilon)
\nabla w_\varepsilon dx
&= \int_{\Omega_\varepsilon} F w_\varepsilon dx
= \int_{\mathbb{R}^d} l_\varepsilon^{+}F\tilde{w}_\varepsilon dx;\\
\lambda \int_{\mathbb{R}^d} u_0 \tilde{w}_\varepsilon dx
+ \int_{\mathbb{R}^d}\widehat{A}(\nabla u_0)
\nabla \tilde{w}_\varepsilon dx
&= \int_{\mathbb{R}^d} F \tilde{w}_\varepsilon dx.
\end{aligned}
\end{equation*}
and note that $
\int_{\Omega_\varepsilon} u_\varepsilon w_\varepsilon dx
=\int_{\mathbb{R}^d}l_\varepsilon^{+} \tilde{u}_\varepsilon \tilde{w}_\varepsilon dx$, where $\tilde{u}_\varepsilon$
and $\tilde{w}_\varepsilon$ are the extension functions of
$u_\varepsilon, w_\varepsilon$, respectively.
The rest of the calculations is standard and we
do not reproduce it here.
\end{proof}

\noindent\textbf{Proof of Theorem $\ref{thm:1.5}$}.
We first take $\varphi = \nabla u_0$ in the weak formulation
$\eqref{eq:3.2}$.
On account of the assumption $\eqref{a:1}$, we have
\begin{equation}\label{f:3.14}
\text{L.H.S.~of~}\eqref{eq:3.2}
\geq
\mu_0 \int_{\Omega_\varepsilon}|\nabla w_\varepsilon|^2 dx
+ \lambda\int_{\Omega_\varepsilon}|w_\varepsilon|^2 dx.
\end{equation}
We denote the right-hand side of $\eqref{eq:3.2}$ by
$I_1-\varepsilon\lambda I_2 + I_3 + I_4$ in order.
To make the estimated constant independent of
$\lambda$,
we split the proof into two cases: (1) $d\geq 3$;
(2) $d=2$.

We first show the proof under the assumption $d\geq 3$.
Thanks to the auxiliary equation $\eqref{auxi1}$, we have
\begin{equation*}
\begin{aligned}
I_1 := \int_{\mathbb{R}^d}(l_\varepsilon^{+}-\theta)(F-\lambda u_0)
\tilde{w}_\varepsilon dx
&=-\int_{\mathbb{R}^d}\Delta_y\Psi(y)(F-\lambda u_0)
\tilde{w}_\varepsilon dx \\
&=\varepsilon\int_{\mathbb{R}^d}\nabla_y\Psi(y)
\nabla(F-\lambda u_0)
\tilde{w}_\varepsilon dx
+\varepsilon\int_{\mathbb{R}^d}\nabla_y\Psi(y)
(F-\lambda u_0)
\nabla\tilde{w}_\varepsilon dx,
\end{aligned}
\end{equation*}
and there holds
\begin{equation*}
\begin{aligned}
|I_1|
&\lesssim^{\eqref{f:3.2}} \varepsilon
\bigg\{
\Big(\|\nabla F\|_{L^{\frac{2d}{d+2}}(\mathbb{R}^d)}
+\|F-\lambda u_0\|_{L^{2}(\mathbb{R}^d)}
\Big)\|\nabla \tilde{w}_\varepsilon\|_{L^{2}(\mathbb{R}^d)}
+\lambda\|\nabla u_0\|_{L^{2}(\mathbb{R}^d)}
\|\tilde{w}_\varepsilon\|_{L^{2}(\mathbb{R}^d)}\bigg\}\\
&\lesssim^{\eqref{pri:2.23}} \varepsilon
\bigg\{
\Big(\|\nabla F\|_{L^{\frac{2d}{d+2}}(\mathbb{R}^d)}
+\|F-\lambda u_0\|_{L^{2}(\mathbb{R}^d)}
\Big)\|\nabla w_\varepsilon\|_{L^{2}(\Omega_\varepsilon)}
+\lambda\|\nabla u_0\|_{L^{2}(\mathbb{R}^d)}
\|w_\varepsilon\|_{H^{1}(\Omega_\varepsilon)}\bigg\}\\
&\lesssim^{\eqref{f:3.9},\eqref{f:3.10}} \varepsilon
\|\nabla F\|_{L^{\frac{2d}{d+2}}(\mathbb{R}^d)}
\bigg\{\sqrt{\mu_0}\|\nabla w_\varepsilon\|_{L^{2}(\Omega_\varepsilon)}
+\sqrt{\lambda}
\|w_\varepsilon\|_{H^{1}(\Omega_\varepsilon)}\bigg\}\\
&\lesssim  \varepsilon
\|\nabla F\|_{L^{\frac{2d}{d+2}}(\mathbb{R}^d)}
\bigg\{\sqrt{\mu_0}
\|\nabla w_\varepsilon\|_{L^{2}(\Omega_\varepsilon)}
+\sqrt{\lambda}
\|w_\varepsilon\|_{L^{2}(\Omega_\varepsilon)}\bigg\}.
\end{aligned}
\end{equation*}
where we also employ Sobolev's inequality in the first
step, and the fact that
$0<\lambda\leq \mu_0$ in the last one. In the
above computations we mention that
\begin{equation}\label{f:3.9}
\begin{aligned}
\|F-\lambda u_0\|_{L^{2}(\mathbb{R}^d)}
&\lesssim^{(\text{ii})~\text{of~}\eqref{pde:1.5}} \|F\|_{L^{2}(\mathbb{R}^d)}
+ \|F+\nabla\cdot\widehat{A}(\nabla u_0)\|_{L^{2}(\mathbb{R}^d)}\\
&\lesssim^{\eqref{pri:2.3}} \|F\|_{L^{2}(\mathbb{R}^d)}
+ \|\nabla^2u_0\|_{L^2(\mathbb{R}^d)}
\lesssim^{\eqref{pri:7.3}} \|F\|_{L^{2}(\mathbb{R}^d)}
\lesssim\|\nabla F\|_{L^{\frac{2d}{d+2}}(\mathbb{R}^d)},
\end{aligned}
\end{equation}
and
\begin{equation}\label{f:3.10}
\begin{aligned}
\sqrt{\lambda}\|\nabla u_0\|_{L^{2}(\mathbb{R}^d)}
\lesssim^{\eqref{pri:7.3}} \|F\|_{L^{2}(\mathbb{R}^d)}
\lesssim\|\nabla F\|_{L^{\frac{2d}{d+2}}(\mathbb{R}^d)}.
\end{aligned}
\end{equation}
Hence, it follows from Young's inequality that
\begin{equation}\label{f:3.11}
|I_1|\lesssim \varepsilon^2
\|\nabla F\|_{L^{\frac{2d}{d+2}}(\mathbb{R}^d)}^2
+ \delta\lambda\|w_\varepsilon\|_{L^{2}(\Omega_\varepsilon)}^2
+ \delta\mu_0\|\nabla w_\varepsilon\|_{L^{2}(\Omega_\varepsilon)}^2,
\end{equation}
where we set $0<\delta<1/10$ throughout the present proof.

Then we turn to $I_2$, and
\begin{equation*}
I_2 := \int_{\mathbb{R}^d}l_\varepsilon^{+}N(y,\nabla u_0)\tilde{w}_\varepsilon dx
\lesssim
\|N(y,\nabla u_0)\|_{L^2(\Omega_\varepsilon)}
\|w_\varepsilon\|_{L^2(\Omega_\varepsilon)}
\lesssim^{\eqref{f:3.1}}
\|\nabla u_0\|_{L^2(\mathbb{R}^d)}
\|w_\varepsilon\|_{L^2(\Omega_\varepsilon)}.
\end{equation*}
This implies that
\begin{equation}\label{f:3.12}
\begin{aligned}
\big|\varepsilon\lambda I_2\big|
\lesssim^{\eqref{f:3.10}} \varepsilon\sqrt{\lambda}\|\nabla F\|_{L^{\frac{2d}{d+2}}(\mathbb{R}^d)}
\|w_\varepsilon\|_{L^2(\Omega_\varepsilon)}
\lesssim \varepsilon^2
\|\nabla F\|_{L^{\frac{2d}{d+2}}(\mathbb{R}^d)}^2
+ \delta\lambda\|w_\varepsilon\|_{L^{2}(\Omega_\varepsilon)}^2.
\end{aligned}
\end{equation}

Now, we proceed to handle the term $I_3$, which appeals to flux correctors.
\begin{equation*}
\begin{aligned}
I_3:& =
\int_{\mathbb{R}^d}
\Big\{\theta\widehat{A}(\nabla u_0)
-l_\varepsilon^{+}A\big(y,\nabla u_0
+\nabla_y N(y,\nabla u_0)\big)\Big\}\nabla\tilde{w}_\varepsilon dx\\
& = \int_{\mathbb{R}^d}b(y,\nabla u_0)
  \nabla\tilde{w}_{\varepsilon}dx
  =^{\eqref{eq:2.1}}-\varepsilon\int_{\Omega}\frac{\partial}{\partial \xi_{k}}\{E_{ji}(y,\nabla u_0)\}
  \nabla_{kj}^2u_0
  \nabla_i \tilde{w}_{\varepsilon}dx\\
  &\lesssim^{\eqref{pri:2.17}} \varepsilon\|\nabla^2 u_0\|_{L^2(\mathbb{R}^d)}
  \|\nabla\tilde{w}_\varepsilon\|_{L^2(\mathbb{R}^d)}
\end{aligned}
\end{equation*}

Concerning the last term $I_4$,  we have
\begin{equation*}
\begin{aligned}
I_4:&=\int_{\mathbb{R}^d}
l_\varepsilon^{+}\Big\{A\big(y,\nabla u_0
+\nabla_y N(y,\nabla u_0)\big)
-A(y,\nabla v_\varepsilon)\Big\}\nabla\tilde{w}_\varepsilon dx\\
&\lesssim^{\eqref{a:1}} \varepsilon\|\nabla_\xi N(y,\nabla u_0)\nabla^2 u_0\|_{L^2(\mathbb{R}^d)}
\|\nabla\tilde{w}_\varepsilon\|_{L^2(\mathbb{R}^d)}
\lesssim^{\eqref{pri:2.15}} \varepsilon\|\nabla^2 u_0\|_{L^2(\mathbb{R}^d)}
\|\nabla\tilde{w}_\varepsilon\|_{L^2(\mathbb{R}^d)}.
\end{aligned}
\end{equation*}
Thus, combining the estimates of $I_3$ and $I_4$ we
arrive at
\begin{equation}\label{f:3.13}
 I_3 + I_4
 \lesssim^{\eqref{pri:7.3}} \varepsilon\|F\|_{L^2(\mathbb{R}^d)}\|\nabla w_\varepsilon\|_{L^2(\Omega_\varepsilon)}
 \lesssim \varepsilon^2\|\nabla F\|_{L^{\frac{2d}{d+2}}(\mathbb{R}^d)}^2
 +\delta\mu_0\|\nabla w_\varepsilon\|_{L^2(\Omega_\varepsilon)}^2,
\end{equation}
where we use Sobolev's inequality and Young's inequality in the last step. Collecting the estimates
$\eqref{f:3.11},\eqref{f:3.12},\eqref{f:3.13}$ one may obtain
\begin{equation*}
\text{R.H.S.~of~}\eqref{eq:3.2}
\lesssim \varepsilon^2
\|\nabla F\|_{L^{\frac{2d}{d+2}}(\mathbb{R}^d)}^2
+ \delta\lambda\|w_\varepsilon\|_{L^{2}(\Omega_\varepsilon)}^2
+ \delta\mu_0\|\nabla w_\varepsilon\|_{L^{2}(\Omega_\varepsilon)}^2
\end{equation*}
and this together with $\eqref{f:3.14}$ leads to
\begin{equation*}
 \sqrt{\lambda/\mu_0}\| w_\varepsilon\|_{L^{2}(\Omega_\varepsilon)}
 + \|\nabla w_\varepsilon\|_{L^{2}(\Omega_\varepsilon)}
 \lesssim \varepsilon
 \|\nabla F\|_{L^{\frac{2d}{d+2}}(\mathbb{R}^d)},
\end{equation*}
where the up to constant never depends on $\lambda$. So,
it follows from Sobolev's inequality that
\begin{equation*}
\|w_\varepsilon\|_{L^{\frac{2d}{d-2}}(\Omega_\varepsilon)}
\leq \| \tilde{w}_\varepsilon\|_{L^{\frac{2d}{d-2}}(\mathbb{R}^d)}
\lesssim   \| \nabla \tilde{w}_\varepsilon\|_{L^{2}(\mathbb{R}^d)}
\lesssim^{\eqref{pri:2.23}}
   \|\nabla w_\varepsilon\|_{L^{2}(\Omega_\varepsilon)}
   \lesssim \varepsilon\|\nabla F\|_{L^{\frac{2d}{d+2}}(\mathbb{R}^d)},
\end{equation*}
which finally gives the desired estimate $\eqref{pri:1.7}$
(the proof is similar to the estimate $\eqref{pri:3.2}$ derived from $\eqref{pri:3.1}$).

In the case of $d=2$, we do not seek for the estimated constant independent of $\lambda$, which makes the
proof to be simple. We almost merely employ H\"older's inequality
to handle the right-hand side of $\eqref{eq:3.2}$, and without showing the details we present that
\begin{equation*}
\text{R.H.S.~of~}\eqref{eq:3.2}
\lesssim \varepsilon\|F\|_{H^1(\mathbb{R}^2)}
\|w_\varepsilon\|_{H^{1}(\Omega_\varepsilon)}
\end{equation*}
and this coupled with the estimate $\eqref{f:3.14}$ implies
$\|w_\varepsilon\|_{H^1(\Omega_\varepsilon)}
 \lesssim \varepsilon \|F\|_{H^1(\mathbb{R}^2)}
$. Hence, by the Sobolev embedding theorem and extension results $\eqref{pri:2.23}$ we consequently reach the
stated estimate $\eqref{pri:1.8}$ and we have completed the whole proof.
\qed

\section{Interior estimates}\label{sec:4}
\begin{lemma}[approximating lemma I]\label{lemma:4.1}
Let $\varepsilon\leq r<(1/2)$.
Assume the same conditions as in Theorem $\ref{thm:1.2}$.
Let $u_\varepsilon\in H^1(B^{\varepsilon}_{2r})$ be a weak solution of
\begin{equation}\label{pri:4.011}
 \left\{ \begin{aligned}
 \mathcal{L}_{\varepsilon}u_\varepsilon  &= 0
 &~&\emph{in}~~ B^{\varepsilon}_{2r},\\
 \sigma_{\varepsilon}(u_{\varepsilon})&= 0
 &~&\emph{on}~~ \partial B^{\varepsilon}_{2r}|_{B_{2r}}.
  \end{aligned}\right.
\end{equation}
 Then there exists
$w\in H^1(B_r)$ such that
$\mathcal{L}_0 w  = 0$
and,
\begin{equation}\label{pri:4.1}
\begin{aligned}
 \Big(\dashint_{B^{\varepsilon}_r} |u_\varepsilon - w|^2  \Big)^{1/2}
 \lesssim\left(\frac{\varepsilon}{r}\right)^{\sigma}
 &\Big(\dashint_{B^{\varepsilon}_{2r}}|u_\varepsilon|^2 \Big)^{1/2},
\end{aligned}
\end{equation}
where $\sigma=\frac{1}{2}-\frac{1}{p}$ and $0<p-2\ll1$ is the same $p$ as in \eqref{pri:1.6} and Theorem \ref{thm:2.20}.
\end{lemma}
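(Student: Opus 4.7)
The plan is to approximate $u_\varepsilon$ on $B_1^\varepsilon$ by the solution $w$ of a homogenized Dirichlet problem on $B_{3/2}$ whose boundary data is an $H^1$-extension of $u_\varepsilon$ across the holes, and then invoke the Meyer-type convergence rate $\eqref{pri:1.6}$ of Theorem $\ref{thm:1.1}$.

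First, rescaling $x\mapsto rx$ reduces the problem to the case $r=1$ with the effective homogenization parameter $\tilde\varepsilon := \varepsilon/r\in(0,1]$; the structure conditions $\eqref{a:1}$--$\eqref{a:3}$ on $A$ are preserved. Since adding a constant to $u_\varepsilon$ changes neither $\mathcal{L}_\varepsilon u_\varepsilon$ nor $\mathcal{L}_0 w$, we may additionally subtract $c := \dashint_{B_{3/2}^\varepsilon} u_\varepsilon$ and assume $u_\varepsilon$ has zero mean on $B_{3/2}^\varepsilon$ (the corresponding constant will be absorbed into the final $w$).

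Second, using the $L^p$-extension operator $P_{\tilde\varepsilon}$ of Lemma $\ref{extensiontheory}$, define $\tilde u_\varepsilon := P_{\tilde\varepsilon} u_\varepsilon \in W^{1,p}(B_{3/2})$ for some fixed $p>2$ in the range permitted by Lemma $\ref{extensiontheory}$, so that $\tilde u_\varepsilon = u_\varepsilon$ on $B_{3/2}^\varepsilon$ (hence on $\partial B_{3/2}^\varepsilon|_{\partial B_{3/2}}$). Let $w\in H^1(B_{3/2})$ be the solution of the Dirichlet problem $\mathcal{L}_0 w = 0$ in $B_{3/2}$ with $w = \tilde u_\varepsilon$ on $\partial B_{3/2}$. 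Since $u_\varepsilon|_{B_{3/2}^\varepsilon}$ itself is the weak solution of the mixed problem with $\sigma_\varepsilon(u_\varepsilon)=0$ on $\partial B_{3/2}^\varepsilon|_{B_{3/2}}$ and Dirichlet trace $\tilde u_\varepsilon$ on $\partial B_{3/2}^\varepsilon|_{\partial B_{3/2}}$, estimate $\eqref{pri:1.6}$ of Theorem $\ref{thm:1.1}$ (with $F\equiv 0$ and $g=\tilde u_\varepsilon|_{\partial B_{3/2}}$) applied on the Lipschitz domain $B_{3/2}$ produces
\begin{equation*}
  \|u_\varepsilon - w\|_{L^2(B_{3/2}^\varepsilon)}
  \lesssim \tilde\varepsilon^{\sigma}\,\|\tilde u_\varepsilon\|_{W^{1-1/p,p}(\partial B_{3/2})},
\end{equation*}
with Meyer index $\sigma=\tfrac{1}{2}-\tfrac{1}{p}$.

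Third, it remains to dominate the boundary trace by the $L^2$-norm of $u_\varepsilon$ on $B_2^\varepsilon$. Chaining the trace embedding $W^{1,p}(B_{3/2})\hookrightarrow W^{1-1/p,p}(\partial B_{3/2})$ with the $L^p$-bound $\eqref{pri:2.22}$ for the extension, the interior Meyer's higher integrability (Theorem $\ref{thm:7.1}$), a Caccioppoli inequality, and the perforated Sobolev--Poincar\'e inequality (Lemma $\ref{lemma:2.20}$) applied to the zero-mean $u_\varepsilon$, one obtains
\begin{equation*}
  \|\tilde u_\varepsilon\|_{W^{1-1/p,p}(\partial B_{3/2})}
  \lesssim \|\tilde u_\varepsilon\|_{W^{1,p}(B_{3/2})}
  \lesssim \|u_\varepsilon\|_{W^{1,p}(B_{3/2}^\varepsilon)}
  \lesssim \Big(\dashint_{B_2^\varepsilon}|u_\varepsilon|^2\Big)^{1/2}.
\end{equation*}
Combining this with the previous display, restricting to $B_1^\varepsilon\subset B_{3/2}^\varepsilon$, restoring the scale $r$, and adding $c$ back into $w$ yields $\eqref{pri:4.1}$.

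The main obstacle is the third step: converting $L^2$-control of $u_\varepsilon$ on $B_2^\varepsilon$ into a Besov-type bound on $\partial B_{3/2}$ demands the simultaneous use of (i) Meyer's higher integrability, which restricts $p$ to a narrow window above $2$ and thereby pins down $\sigma=\tfrac{1}{2}-\tfrac{1}{p}$, (ii) the $L^p$-extension theory of Lemma $\ref{extensiontheory}$ (precisely the reason one needs versions of that lemma with $p\neq 2$), and (iii) the perforated Sobolev--Poincar\'e inequality of Lemma $\ref{lemma:2.20}$; all three have to be compatible with the same range of $p$.
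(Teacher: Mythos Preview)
Your overall strategy matches the paper's exactly: rescale to $r=1$, extend $u_\varepsilon$ across the holes, solve the homogenized Dirichlet problem with this extension as boundary data, apply the rate $\eqref{pri:1.6}$, and then chain the trace embedding, the $L^p$-extension bound, Meyer's estimate, Caccioppoli, and the perforated Sobolev--Poincar\'e inequality to control the Besov norm by $\|u_\varepsilon\|_{L^2(B^\varepsilon_2)}$.

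There is, however, one step that does not go through as written. You invoke $P_{\tilde\varepsilon}$ from Lemma~\ref{extensiontheory} directly on $u_\varepsilon|_{B^\varepsilon_{3/2}}$, but that operator is only defined on $H^1(\Omega_\varepsilon,\Gamma_\varepsilon)$, i.e.\ on functions vanishing on the outer boundary portion $\Gamma_\varepsilon=\partial\Omega_\varepsilon\cap\partial\Omega$. Your $u_\varepsilon$ has no reason to vanish on $\partial B^\varepsilon_{3/2}|_{\partial B_{3/2}}$, and subtracting the mean does not remedy this. The paper singles out exactly this obstruction and resolves it by first multiplying $u_\varepsilon$ by a cutoff $\rho\in C_0^1(B_{7/4})$ with $\rho\equiv 1$ on $B_{3/2}$, so that $\rho u_\varepsilon\in H^1\big(B^\varepsilon_{7/4},\partial B^\varepsilon_{7/4}|_{\partial B_{7/4}}\big)$; Lemma~\ref{extensiontheory} is then applied to $\rho u_\varepsilon$, and the resulting $\tilde u_\varepsilon$ still coincides with $u_\varepsilon$ on $B^\varepsilon_{3/2}$. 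The $W^{1,p}$ bound on $\tilde u_\varepsilon$ then involves $\|\rho u_\varepsilon\|_{W^{1,p}(B^\varepsilon_{7/4})}$ rather than $\|u_\varepsilon\|_{W^{1,p}(B^\varepsilon_{3/2})}$, producing an extra $\|u_\varepsilon\|_{L^p(B^\varepsilon_{7/4})}$ from $\nabla\rho$, which is absorbed via Lemma~\ref{lemma:2.20} just as you outline. With this cutoff inserted, your argument and the paper's are the same.
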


\begin{proof}
The main idea may be found in \cite[Lemma 11.2]{SZ1}.
By rescaling argument one may assume $r=1$.
Before proceeding our proof, we need to extend
$u_{\varepsilon}\in H^{1}(B^{\varepsilon}_{3/2})$
to $H^{1}(B_3)$. Since $u_{\varepsilon}$
does not vanish on
$\partial B^\varepsilon_{3/2}|_{\partial B_{3/2}}$,
we can not apply the extension operator
in Lemma \ref{extensiontheory} directly.
The idea is to multiply a cut-off function at first. Suppose that
\begin{equation*}
  \rho\in C^{1}_{0}(B_{7/4}),~ \rho(x)=1~\text{on}~B_{3/2},\quad\text{and}\quad|\nabla \rho|\lesssim  1.
\end{equation*}
Then we have $\rho u_{\varepsilon}\in H^1(B^{\varepsilon}_{7/4},
\partial B^\varepsilon_{7/4}|_{\partial B_{7/4}})$. By Lemma \ref{extensiontheory}, we know that the extension function $\tilde{u}_{\varepsilon}$ satisfying that  $\tilde{u}_{\varepsilon}(x)=\rho(x)u_{\varepsilon}(x)
=u_{\varepsilon}(x)\text{~for~}x\in B^{\varepsilon}_{3/2}$ and $\tilde{u}_{\varepsilon}\in H^{1}_0(B_{3})$. Moreover,
there holds
\begin{equation*}
  \begin{aligned}
  \|\tilde{u}_{\varepsilon}\|_{H^{1}_0(B_3)}
 \lesssim^{\eqref{pri:2.23}} \|\rho u_{\varepsilon}\|_{H^1(B^{\varepsilon}_{7/4})}
& \lesssim \|u_{\varepsilon}\|_{L^{2}(B^{\varepsilon}_{2})}
+\|\nabla u_{\varepsilon}\|_{L^{2}(B^{\varepsilon}_{7/4})}
\lesssim^{\eqref{pri:2.14}} \|u_{\varepsilon}\|_{L^{2}(B^{\varepsilon}_{2})}.
\end{aligned}
\end{equation*}


Now, for $\bar{r}\in[1,3/2]$, we consider
\begin{equation*}
  \left\{\begin{aligned}
 \mathcal{L}_0 w &= 0
 & &\text{in}~~B_{\bar{r}},\\
 w&=\tilde{u}_{\varepsilon}
 & &\text{on}~~
  \partial B_{\bar{r}}.
  \end{aligned}\right.
\end{equation*}
It follows from the estimate \eqref{pri:1.6} that
\begin{equation}\label{pri:4.3}
\begin{aligned}
  \|u_{\varepsilon}-w\|_{L^{2}(B^{\varepsilon}_{\bar{r}})}
  &\lesssim\varepsilon^{\frac{1}{2}-\frac{1}{p}}
  \|\tilde{u}_{\varepsilon}\|_{W^{1-1/p,p}(\partial B_{\bar{r}})}\\
  &\lesssim
  \varepsilon^{\frac{1}{2}-\frac{1}{p}}
  \|\tilde{u}_{\varepsilon}\|_{W^{1,p}(B_{\bar{r}})}
  \lesssim
  \varepsilon^{\frac{1}{2}-\frac{1}{p}}
  \|\nabla \tilde{u}_{\varepsilon}\|_{L^{p}(B_{3})}\\
  &\lesssim^{\eqref{pri:2.22}}\varepsilon^{\sigma}
  \Big(\|\nabla u_{\varepsilon}\|_{L^{p}(B^{\varepsilon}_{7/4})}
  +\|u_{\varepsilon}\|_{L^{p}(B^{\varepsilon}_{7/4})}\Big),
  \end{aligned}
\end{equation}
in which $\sigma=1/2-1/p$, $p$ is the same in Theorem \ref{thm:2.20}, the second inequality follows from
the trace theorem, and the third one from
Poincar\'e's inequality.
Due to Meyer's estimates (also known as self-improvement properties),
we have
\begin{equation}\label{pri:4.6}
 \|\nabla u_{\varepsilon}\|_{L^{p}(B^{\varepsilon}_{7/4})}
\lesssim^{\eqref{pri:7.6}} \|\nabla u_{\varepsilon}\|_{L^2(B^{\varepsilon}_{15/8})}
\lesssim^{\eqref{pri:2.14}}
\| u_{\varepsilon}\|_{L^2(B^{\varepsilon}_{2})}.
\end{equation}
Appealing to Lemma \ref{lemma:2.20}, it follows that
\begin{equation}\label{pri:4.7}
\begin{aligned}
\|u_{\varepsilon}\|_{L^{p}(B^{\varepsilon}_{7/4})}
\lesssim\|u_{\varepsilon}-c_{r}\|_{L^{p}(B^{\varepsilon}_{7/4})}+c_r
&\lesssim^{\eqref{pri:2.20}}
\|\nabla u_{\varepsilon}\|_{L^{2}(B^{\varepsilon}_{15/8})}+c_r\\
&\lesssim\|\nabla u_{\varepsilon}\|_{L^{2}(B^{\varepsilon}_{15/8})}
+\|u_{\varepsilon}\|_{L^{2}(B^{\varepsilon}_{2})},
\end{aligned}
\end{equation}
where the last step follows from the fact that constant $c_r$ in \eqref{pri:2.20} is the average of $u_{\varepsilon}$ in a domain smaller than $B_2^{\varepsilon}$.
By inserting \eqref{pri:4.6} and \eqref{pri:4.7} into \eqref{pri:4.3},
we arrive at
\begin{equation}\label{f:4.10}
\|u_{\varepsilon}-w\|_{L^{2}(B^{\varepsilon}_1)}\lesssim
\varepsilon^{\sigma}
\| u_{\varepsilon}\|_{L^2(B^{\varepsilon}_{2})}.
\end{equation}

To complete the whole argument, we
appeal to rescaling arguments. Let $u(x) = u(ry)$ and
$x=ry$ with $r\geq \varepsilon$, where
$x\in B^\varepsilon_{2r}$ and
$y\in B^{\varepsilon^\prime}_2$
with $\varepsilon^\prime = \varepsilon/r$. Let
$u_r(y):= \frac{1}{r}u(ry)$, and then $u(x)=r u_r(x/r)$.
Then it is not hard to verify that
\begin{equation*}
0 = \nabla_x\cdot A(x/\varepsilon,\nabla_x u)
\quad\text{in}\quad B^{\varepsilon}_{2r};
\quad\Rightarrow\quad
\nabla_y\cdot
A(y/\varepsilon^\prime,\nabla_y u_r) = 0
\quad\text{in}\quad B^{\varepsilon^\prime}_{2},
\end{equation*}
and $A$ is exactly the same one by absorbing
the scale $r$ into the parameter $\varepsilon$.
Similarly, we can assume $v_r(y):=\frac{1}{r}v(ry)$, and
infer that $\nabla_y \widehat{A}(\nabla_y v_r) = 0$ in
$B_1$ (clearly $v_r$ and $u_r$ admit the same scale).
Hence, it follows from $\eqref{f:4.10}$ that
\begin{equation*}
\|u_{r}-v_{r}\|_{L^{2}(B^{\varepsilon^\prime}_1)}
\lesssim (\varepsilon^\prime)^{\sigma}
\|u_{r}\|_{L^2(B^{\varepsilon^\prime}_2)}.
\end{equation*}
Scaling back the above estimate leads to the desired estimate $\eqref{pri:4.1}$,
and we have completed the whole proof.
\end{proof}

Before we proceed further,
we recall the definition of $G(r,v)$ and
$G_{\varepsilon}(r,v)$ as follows.
\begin{equation*}
\begin{aligned}
G(r,v) &= \frac{1}{r}\inf_{M\in\mathbb{R}^{d}\atop c\in\mathbb{R}}
\Big(\dashint_{B(0,r)}|v-Mx-c|^2\Big)^{\frac{1}{2}};\\
G_{\varepsilon}(r,v)
&= \frac{1}{r}\inf_{M\in\mathbb{R}^{d}\atop c\in\mathbb{R}}
\Big(\dashint_{B^{\varepsilon}(0,r)}|v-Mx-c|^2\Big)^{\frac{1}{2}}.
\end{aligned}
\end{equation*}
%
\begin{lemma}[interior comparing at large-scales]\label{lemma:2.6}
  Suppose that $\mathcal{L}_{0}(v)=0 ~\text{in}~ B_{2r}$, $r\geq \varepsilon$, it holds that
  \begin{equation}\label{g(r,v)}
    G(r,v)\lesssim  G_{\varepsilon}(2r,v),
  \end{equation}
in which the up to constant depending on $\mu_{0},\mu_{1}, d, \mathfrak{g}^{\omega}$ and $\omega$.
\end{lemma}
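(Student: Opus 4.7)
The plan is to take an affine minimizer $P^\ast(x)=\bar{M}x+\bar{c}$ of $G_\varepsilon(2r,v)$ as a candidate in the infimum defining $G(r,v)$, and then transfer the $L^2$ bound from the perforated ball $B^\varepsilon_{2r}$ to the full ball $B_r$ using regularity of the linearized homogenized equation combined with the small-scale structure of the perforations. As in the proof of Lemma \ref{lemma:4.1}, I would first rescale $v_r(y):=r^{-1}v(ry)$ to reduce to $r=1$, $\varepsilon':=\varepsilon/r\in(0,1]$, under which $\mathcal L_0$ is unchanged.

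Set $w:=v-\bar{M}x-\bar{c}$. Since $\nabla\cdot\widehat A(\bar{M})=0$, the identity
\[
\widehat A(\nabla v)-\widehat A(\bar{M})=B(x)\nabla w,\qquad B(x):=\int_0^1\nabla\widehat A\bigl(\bar{M}+s\nabla w(x)\bigr)\,ds,
\]
exhibits $w$ as a solution of the linear scalar equation $-\nabla\cdot(B\nabla w)=0$ on $B_2$; the coefficient $B$ is bounded by Lemma \ref{lemma:2.2} and uniformly elliptic by \eqref{a:4}. The classical De Giorgi--Nash--Moser theory then yields both the sub-mean-value bound $\|w\|_{L^\infty(B_1)}^2\lesssim\dashint_{B_{3/2}}|w|^2$ and the interior Lipschitz estimate $\|\nabla w\|_{L^\infty(B_{7/4})}^2\lesssim\dashint_{B_2}|w|^2$. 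Since $G(1,v)^2\le\dashint_{B_1}|w|^2\le\|w\|_{L^\infty(B_1)}^2$, the remaining task is the comparison $\dashint_{B_{3/2}}|w|^2\lesssim\dashint_{B^{\varepsilon'}_2}|w|^2$.

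For this I would split $B_{3/2}=B^{\varepsilon'}_{3/2}\sqcup(B_{3/2}\setminus\varepsilon'\omega)$, the first piece being trivially dominated by $\int_{B^{\varepsilon'}_2}|w|^2$. For each hole $\varepsilon'H_k\subset B_{3/2}$, the separated property \eqref{g} supplies a matrix halo of volume $\sim(\varepsilon')^d$ contained in $B^{\varepsilon'}_2$; for $x\in\varepsilon'H_k$ and any $y$ in the halo, the Lipschitz bound gives $|w(x)|^2\le 2|w(y)|^2+C(\varepsilon')^2\|\nabla w\|_{L^\infty(B_{7/4})}^2$. Averaging in $y$, integrating in $x$ and summing in $k$ produces
\[
\int_{B_{3/2}\setminus\varepsilon'\omega}|w|^2\lesssim\int_{B^{\varepsilon'}_2}|w|^2+(\varepsilon')^2\dashint_{B_2}|w|^2,
\]
where the Lipschitz estimate has been used once more to handle the error. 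Chaining the above inequalities and undoing the rescaling yields $G(r,v)\lesssim G_\varepsilon(2r,v)$.

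The hardest part will be closing the absorption when $\varepsilon'$ is not small, since the $(\varepsilon')^2$-prefactor then cannot be used to beat the surviving $\dashint_{B_2}|w|^2$-term. In the borderline regime $r\sim\varepsilon$, I would treat the bounded parameter range $\varepsilon'\in[c,1]$ by a rescaling-plus-compactness argument, exploiting the $C^{1,\alpha}$ regularity of $\partial\omega$ together with $\mathfrak g^\omega>0$ to extract a uniform constant depending only on $\mu_0,\mu_1,d,\mathfrak g^\omega$ and the geometry of $\omega$.
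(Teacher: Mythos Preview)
Your approach has a genuine circularity that is present already in the ``easy'' regime $\varepsilon'\ll1$, not only in the borderline case you flag. From your hole--halo argument together with the global Lipschitz bound you obtain
\[
\int_{B_{3/2}}|w|^2 \;\lesssim\; \int_{B^{\varepsilon'}_2}|w|^2 \;+\; (\varepsilon')^2\int_{B_2}|w|^2,
\]
but the error carries the \emph{full}-ball norm $\|w\|_{L^2(B_2)}$, and you have no a priori control of this in terms of either $\|w\|_{L^2(B_{3/2})}$ or $\|w\|_{L^2(B^{\varepsilon'}_2)}$. Bounding a full-ball norm by a perforated one is exactly the content of the lemma, so the prefactor $(\varepsilon')^2$ buys you nothing: you cannot absorb without first assuming what you want to prove. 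Any iteration in radius runs into the same obstruction, since the interior Lipschitz estimate always costs you the $L^2$-norm on a strictly larger (full) ball and you are trapped inside $B_2$. Your compactness patch for $\varepsilon'\in[c,1]$ inherits the same defect: to extract a subsequence you would need a uniform bound on $\|w_n\|_{L^2(B_2)}$ starting from the normalization $\|w_n\|_{L^2(B^{\varepsilon'_n}_2)}=1$, and none is available. (A minor side remark: De~Giorgi--Nash--Moser applied to your linearized equation $-\nabla\cdot(B\nabla w)=0$ gives only $C^{0,\alpha}$ for $w$; the Lipschitz bound comes from differentiating $\mathcal L_0 v=0$ and applying DGNM to $\nabla_k v$, as in Theorem~\ref{lemma:4.2}. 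This is cosmetic compared to the main issue.)

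The paper avoids the circularity by working \emph{cell by cell} with a Caccioppoli inequality tailored to the perforation. On a unit cell $Y+z$ (after rescaling so that $\mathcal L_0\bar v=0$ on the enlarged cell $Y^*$) one takes a cutoff $\varphi$ with $\varphi\equiv1$ on the holes and $\varphi\equiv0$ at distance $\tfrac14\mathfrak g^\omega$ from them; the crucial point is that $\nabla\varphi$ is then supported in the \emph{matrix} $Y^*\cap\omega$. Testing the identity $\nabla\cdot\big(\widehat A(\nabla v)-\widehat A(M)\big)=0$ against $\varphi^2\tilde v$, with $\tilde v=v-Mx-c$, gives
\[
\int_{Y^*}\varphi^2|\nabla\tilde v|^2 \;\lesssim\; \int_{Y^*}|\tilde v|^2|\nabla\varphi|^2 \;=\; \int_{Y^*\cap\omega}|\tilde v|^2|\nabla\varphi|^2,
\]
so the right-hand side already lives on the perforated set, with \emph{no} full-ball remainder. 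Combined with Poincar\'e on the support of $\varphi$ this yields $\int_{(Y+z)\setminus\omega}|\tilde v|^2\lesssim\int_{Y^*\cap\omega}|\tilde v|^2$, and summing over the finite-overlap covering by $\varepsilon$-cells gives \eqref{pri:4.20} and hence the lemma. If you want to repair your argument, replace the global Lipschitz input by this local cutoff-Caccioppoli mechanism.
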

\begin{proof}
The main idea is similar to that in Lemma $\ref{lemma:2.20}$
(also see in \cite{BR}).
First, we decompose the domain $B(0,r)$.
$T_{\varepsilon}:=\{z\in \mathbb{Z}^{d}:\varepsilon(Y+z)\cap B(0,r)\neq\emptyset\}$.
Fix $z\in T_{\varepsilon}$, and we denote the bounded,
connected components of $\mathbb{R}^{d}
\backslash\omega$ by $\{H_{k}\}_{k=1}^N$
with $H_{k}\cap(Y+z)\neq\emptyset.$
Define cut-off function $\varphi_{k}\in C^{\infty}_0(Y^{*}(z))$ as
\begin{equation*}
  \left\{\begin{aligned}
 & \varphi_k(x)=1,~\text{if}~x\in H_k,\\
 & \varphi_k(x)=0,~\text{if~dist}(x,H_k)>\frac{1}{4}\mathfrak{g}^w,\\
 & |\nabla \varphi_k|\leq C,
  \end{aligned}\right.
\end{equation*}
where $\mathfrak{g}^{\omega}$ is
defined in \eqref{g}, C depends on $\omega$, and
\begin{equation*}
  Y^*(z):=\bigcup_{j=1}^{3^d}(Y+z_j),z_j\in\mathbb{Z}^d~
  \text{and}~|z-z_j|\leq\sqrt{d}.
\end{equation*}
(In the absence of confusion, we also write it as $Y^*$.)
Set $\varphi=\sum_{k=1}^N\varphi_k\in C^{\infty}_0(Y^*)$, We note that
\begin{equation*}
  \varphi(1-\varphi)=0~\text{in}~Y^*\backslash\omega,~\text{hence}~\nabla \varphi=0~\text{in}~Y^*\backslash\omega.
\end{equation*}
In the case of $\mathcal{L}_{0}(v)=0$ in $Y^*$,
for any $M\in\mathbb{R}^d,c\in\mathbb{R}$,
set $\tilde{v}(x)=v(x)-Mx-c$ and then we claim that
\begin{equation}\label{claim1}
  \int_{Y+z}|\tilde{v}|^{2}dx
  \lesssim \int_{Y^{*}\cap\omega}|\tilde{v}|^{2}dx,
\end{equation}
where the up to constant depends only on $\mu_{0},\mu_{1},d,\omega$ and is independent of $z$. By employing Poincar\'{e}'s inequality, it follows that
\begin{equation}\label{pri:4.16}
  \int_{(Y+z)\backslash\omega}|\tilde{v}(x)|^{2}dx\leq \sum_{k=1}^{N}\int_{H_{k}}|\varphi(x)\tilde{v}(x)|^{2}dx
  \lesssim \int_{Y^{*}}|\nabla(\tilde{v}\varphi)|^{2}dx.
\end{equation}
After a routine calculation, one may have
\begin{equation}\label{pri:4.17}
\int_{Y^{*}}|\nabla(\tilde{v}\varphi)|^{2}dx
\lesssim \int_{Y^{*}}|\nabla\tilde{v}|^{2}|\varphi|^{2}dx
+\int_{Y^{*}}|\nabla\varphi|^{2}|\tilde{v}|^{2}dx.
\end{equation}
The second term in the right-hand side
of the above inequality is good, and we
just need to deal with the first term.
According to $\mathcal{L}_{0}(v)=0$ in $Y^*$,
there holds
\begin{equation*}
\begin{aligned}
\int_{Y^{*}}|\nabla\tilde{v}|^{2}|\varphi|^{2}dx
& =\int_{Y^{*}}|\nabla v-M|^{2}|\varphi|^{2}dx
\leq^{\eqref{pri:2.3}} C \int_{Y^{*}}|\varphi|^{2}[\widehat{A}(\nabla v)-\widehat{A}(M)]\nabla\tilde{v}dx\\
& =-2C\int_{Y^{*}}\varphi\tilde{v}[\widehat{A}(\nabla v)-\widehat{A}(M)]\nabla\varphi dx-C\int_{Y^{*}}\text{div}[\widehat{A}(\nabla v)-\widehat{A}(M)]|\varphi|^{2}\tilde{v}dx,\\
& =-2C\int_{Y^{*}}\varphi\tilde{v}[\widehat{A}(\nabla v)-\widehat{A}(M)]\nabla\varphi dx
\end{aligned}
\end{equation*}
in which we employ divergence theorem in the third step. It follows from \eqref{pri:2.3} and Young's inequality that
\begin{equation*}
\begin{aligned}
\int_{Y^{*}}|\nabla\tilde{v}|^{2}|\varphi|^{2}dx
\lesssim\int_{Y^{*}}|\varphi\nabla\tilde{v}|\cdot|\tilde{v}
\nabla\varphi|dx
 \lesssim \delta\int_{Y^{*}}|\varphi\nabla\tilde{v}|^{2}dx
+C_{\delta}\int_{Y^{*}}|\tilde{v}\nabla\varphi|^{2}dx.
\end{aligned}
\end{equation*}
By choosing a suitable $\delta$, one may derive that
\begin{equation*}
 \int_{Y^{*}}|\nabla\tilde{v}|^{2}|\varphi|^{2}dx
 \lesssim\int_{Y^{*}}|\tilde{v}\nabla\varphi|^{2}dx.
\end{equation*}
Combining the above inequality with \eqref{pri:4.17},
we have proved that
\begin{equation*}
\int_{Y^{*}}|\nabla(\tilde{v}\varphi)|^{2}dx
\lesssim\int_{Y^{*}}|\tilde{v}|^{2}|\nabla\varphi|^{2}dx.
\end{equation*}
Therefore, on account of \eqref{pri:4.16} it is not hard to see that
\begin{equation*}
\begin{aligned}
\int_{(Y+z)\backslash\omega}|\tilde{v}|^{2}dx
\lesssim\int_{Y^{*}}|\tilde{v}|^{2}|\nabla\varphi|^{2}dx
\lesssim \int_{Y^{*}\cap\omega}|\tilde{v}|^{2}dx,
\end{aligned}
\end{equation*}
where the last inequality follows from the
fact that $\nabla \varphi=0$ on $Y^{*}\backslash\omega$.
Hence, we have the desired claim \eqref{claim1}.

In the following, we proceed to show \eqref{g(r,v)}.
Recalling that $\mathcal{L}_{0}(v)=0$ in
$\varepsilon Y^{*}$ and $\mathcal{L}_0 = -\nabla_x\cdot \widehat{A}(\nabla_x)$,
set $x=\varepsilon y$ with $y\in Y^{*}$ and
$\overline{v}(y)=\frac{1}{\varepsilon}v(\varepsilon y)
=\frac{1}{\varepsilon}v(x)$,
and then $\overline{v}$ satisfies
the equation
\begin{equation}\label{pri:4.19}
\mathcal{L}_0(\overline{v})
=0 \quad \text{in} ~ Y^{*}.
\end{equation}
Due to the claim \eqref{claim1}, one may obtain
\begin{equation*}
  \int_{Y+z}|\overline{v}(y)-My-c|^{2}dy
  \lesssim\int_{Y^{*}\cap\omega}\big|\overline{v}(y)-My-c\big|^{2}dy
\end{equation*}
for any $M\in \mathbb{R}^d$ and $c\in\mathbb{R}$, which is equivalent to
\begin{equation*}
\int_{\varepsilon(Y+z)}
\Big|\frac{1}{\varepsilon} v(x)-\frac{M}{\varepsilon}x-c\Big|^{2}
dx\lesssim
\int_{\varepsilon(Y^{*}\cap\omega)}
\Big|\frac{1}{\varepsilon} v(x)-\frac{M}{\varepsilon}x-c\Big|^{2}dx.
\end{equation*}
This further gives that
\begin{equation*}
\int_{\varepsilon(Y+z)}| v(x)-Mx-c\varepsilon
|^{2}dx\lesssim\int_{\varepsilon(Y^{*}\cap\omega)}
| v(x)-Mx-c\varepsilon|^{2}dx.
\end{equation*}
Because of the arbitrariness of $c$, we may derive that
\begin{equation*}
  \|v-Mx-c\|_{L^{2}(\varepsilon (Y+z))}
  \lesssim \|v-Mx-c\|_{L^{2}(\varepsilon (Y^{*}\cap\omega))}.
\end{equation*}
According to the fact that there is a constant $N'<\infty$
depending only on $d$ such that $Y^{*}(z_{1})\cap Y^{*}(z_{2})
\neq\emptyset$ for at most $N'$ coordinates if $z_{1}\neq z_{2}$,
and then summing over all $z\in T_{\varepsilon}$ gives
\begin{equation}\label{pri:4.20}
\|v-Mx-c\|_{L^{2}(B_r)}
\leq C \|v-Mx-c\|_{L^{2}(B^{\varepsilon}_{2r})},
\end{equation}
in which we use the fact $r\geq\varepsilon$
in the above inequality.
By recalling the definition of $G(r,v)$ and $G_{\varepsilon}(2r,v)$,
we have completed the whole proof.
\end{proof}

For the ease of the statement, we impose the notation
\begin{equation*}
\begin{aligned}
\Phi(r) := \frac{1}{r}\inf_{c\in\mathbb{R}}
\Big(\dashint_{B^{\varepsilon}(0,r)}|u_\varepsilon - c|^2 \Big)^{1/2}.
\end{aligned}
\end{equation*}

\begin{lemma}[iteration's inequality I]\label{lemma:4.4}
Let $\sigma=1/2-1/p$ be given as in
Lemma $\ref{lemma:4.1}$.
Assume the same conditions as in Theorem $\ref{thm:1.2}$.
Let $u_\varepsilon$ be the solution of
$\mathcal{L}_\varepsilon(u_\varepsilon) = 0$ in $B^{\varepsilon}(0,2r)$ with $\sigma_{\varepsilon}(u_{\varepsilon})=0$ on $\partial B^{\varepsilon}_{2r}|_{B_{2r}}$.
Then there exists $\theta\in(0,1/4)$ such that
\begin{equation}
 G_{\varepsilon}(\theta r, u_\varepsilon) \leq \frac{1}{2}G_{\varepsilon}(r,u_\varepsilon)
 + C\left(\frac{\varepsilon}{r}
 \right)^{\sigma}\Phi(2r)
\end{equation}
for any $\varepsilon\leq r<1/4$.
\end{lemma}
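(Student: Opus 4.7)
The strategy is a Campanato-type iteration: approximate $u_\varepsilon$ on $B^\varepsilon_{2r}$ by a solution of the homogenized equation (Lemma \ref{lemma:4.1}), use Lemma \ref{lemma:2.6} to compare $G$ and $G_\varepsilon$, and exploit the interior $C^{1,\alpha'}$ regularity of solutions to $\mathcal{L}_0 v = 0$ (available for some $\alpha'\in(0,1)$ via Lemma \ref{lemma:2.2} and classical De Giorgi--Nash--Moser theory) to decay $G(\cdot,w)$ at the smaller scale $\theta r$. Since additive constants preserve the equation, the conormal boundary condition, and both $G_\varepsilon(\cdot,u_\varepsilon)$ and $\Phi$, I may first replace $u_\varepsilon$ by $u_\varepsilon-c^*$ where $c^*$ realizes $\Phi(2r)$; Lemma \ref{lemma:4.1} then furnishes $w\in H^1(B_r)$ with $\mathcal{L}_0 w=0$ in $B_r$ and
\[
\Bigl(\dashint_{B^\varepsilon_r}|u_\varepsilon-w|^2\Bigr)^{1/2}\lesssim \Bigl(\frac{\varepsilon}{r}\Bigr)^{\sigma}\Bigl(\dashint_{B^\varepsilon_{2r}}|u_\varepsilon|^2\Bigr)^{1/2}\lesssim r\Bigl(\frac{\varepsilon}{r}\Bigr)^{\sigma}\Phi(2r).
\]

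Since $B^\varepsilon_{\theta r}\subset B_{\theta r}$ and the volume fraction $|B^\varepsilon_\rho|/|B_\rho|$ is bounded below for $\rho\geq\varepsilon$ (by periodicity of $\omega$), a triangle inequality yields $G_\varepsilon(\theta r, u_\varepsilon)\lesssim G(\theta r,w)+C_\theta(\varepsilon/r)^{\sigma}\Phi(2r)$. For the main term I will establish the Campanato decay $G(\theta r,w)\lesssim \theta^{\alpha'}G(r/2,w)$ for every $\theta\in(0,1/8)$. Let $(M^*,c^*)$ realize $G(r/2,w)$ and set $\tilde w=w-M^*x-c^*$; then $\tilde w$ satisfies $-\nabla\cdot \widehat{A}(\nabla\tilde w+M^*)=0$ in $B_{r/2}$, and the operator $\xi\mapsto \widehat{A}(\xi+M^*)$ inherits the same coercive and Lipschitz bounds as $\widehat{A}$ by Lemma \ref{lemma:2.2}. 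Consequently $\tilde w$ enjoys a uniform (in $M^*$) interior $C^{1,\alpha'}$ estimate, which combined with Caccioppoli on $B_{r/4}$ (and the identity $\dashint_{B_{r/2}}\tilde w=0$ coming from the optimality of $c^*$) yields
\[
[\nabla\tilde w]_{C^{\alpha'}(B_{r/8})}\lesssim r^{-\alpha'}\Bigl(\dashint_{B_{r/4}}|\nabla\tilde w|^2\Bigr)^{1/2}\lesssim r^{-\alpha'}G(r/2,w).
\]
Taylor expansion of $\tilde w$ at the origin, with the affine function $(M^*+\nabla\tilde w(0))x+c^*+\tilde w(0)$, then produces the stated decay.

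Finally, Lemma \ref{lemma:2.6} applied at scale $r/2$ (legitimate since $\mathcal{L}_0 w=0$ in $B_r$) gives $G(r/2,w)\lesssim G_\varepsilon(r,w)\leq G_\varepsilon(r,u_\varepsilon)+C(\varepsilon/r)^{\sigma}\Phi(2r)$, by one more triangle inequality. Chaining the pieces,
\[
G_\varepsilon(\theta r,u_\varepsilon)\leq C\theta^{\alpha'}G_\varepsilon(r,u_\varepsilon)+C_\theta\Bigl(\frac{\varepsilon}{r}\Bigr)^{\sigma}\Phi(2r),
\]
and one selects $\theta\in(0,1/8)$ so small that $C\theta^{\alpha'}\leq 1/2$, delivering the claim. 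The principal obstacle is the Campanato step: unlike the linear setting, shifting $w$ by an affine function produces a genuinely different nonlinear equation, so I must certify that the $C^{1,\alpha'}$ constants are uniform in the shift $M^*\in\mathbb{R}^d$. This is precisely guaranteed by Lemma \ref{lemma:2.2}, whose structure constants depend only on $\mu_0,\mu_1$ and are therefore invariant under gradient translations; this uniformity is what makes the iteration close.
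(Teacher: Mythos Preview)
Your proof is correct and follows essentially the same route as the paper's: approximate by $w$ via Lemma~\ref{lemma:4.1}, use the interior $C^{1,\alpha'}$ regularity of $\mathcal{L}_0$-solutions for the Campanato decay of $G(\cdot,w)$, pass from $G$ to $G_\varepsilon$ through Lemma~\ref{lemma:2.6}, and finally absorb the constant subtraction into $\Phi(2r)$. The only cosmetic difference is in the Campanato step: you argue that the shifted operator $\xi\mapsto\widehat{A}(\xi+M^*)$ retains the structure constants of Lemma~\ref{lemma:2.2} (hence uniform $C^{1,\alpha'}$ bounds), whereas the paper writes the linearized equation $-\nabla\cdot\tilde a\nabla(\nabla_k\tilde w)=0$ with $\tilde a=\nabla_\xi\widehat{A}(\nabla w)$ explicitly and invokes De~Giorgi--Nash H\"older estimates directly; since the nonlinear $C^{1,\alpha'}$ theory is itself obtained by that linearization, the two arguments are equivalent. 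Your remark that $\dashint_{B_{r/2}}\tilde w=0$ is correct but unnecessary---the Caccioppoli bound $\|\nabla\tilde w\|_{L^2(B_{r/4})}\lesssim r^{-1}\|\tilde w\|_{L^2(B_{r/2})}$ (uniform in $M^*$, via the coercivity $\langle\widehat{A}(\nabla w)-\widehat{A}(M^*),\nabla w-M^*\rangle\geq C_1|\nabla\tilde w|^2$) and the definition of $G(r/2,w)$ already give the chain.
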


\begin{proof}
Fixed $r\in[\varepsilon,1/4)$, let $w$ be a solution to
$\mathcal{L}_0 w = 0$ in $B(0,r)$ as in Lemma $\ref{lemma:4.1}$.
For any $\theta\in(0,\frac{1}{4})$ (which will be fixed later), we have
\begin{equation*}
\begin{aligned}
G_{\varepsilon}(\theta r,u_\varepsilon)
& =\frac{1}{\theta r}\inf_{M\in \mathbb{R}^{d}\atop c\in \mathbb{R}}
\bigg(\dashint_{B^{\varepsilon}_{\theta r}} |u_{\varepsilon}-Mx-c|^2\bigg)^{\frac{1}{2}}\\
& \leq \frac{1}{\theta r}\inf_{M\in \mathbb{R}^{d}\atop c\in \mathbb{R}}
\bigg(\dashint_{B^{\varepsilon}_{\theta r}} |w-Mx-c|^2\bigg)^{\frac{1}{2}}+\frac{1}{\theta r}
\bigg(\dashint_{B^{\varepsilon}_{\theta r}} |u_{\varepsilon}-w|^2\bigg)^{\frac{1}{2}}\\
& \leq \frac{1}{\theta r}[\nabla w]_{C^{0,\alpha}(B^{\varepsilon}_{\theta r})}(\theta r)^{1+\alpha}+\frac{1}{\theta r}
\bigg(\dashint_{B^{\varepsilon}_{\theta r}} |u_{\varepsilon}-w|^2\bigg)^{\frac{1}{2}},
\end{aligned}
\end{equation*}
where we take $M=\nabla w(0),c=w(0)$ and employ
the mean value theorem for $w(x)$ in the last step.
It's easy to see that the right-hand side above is less than
\begin{equation*}
  \theta^{\alpha}r^{\alpha}[\nabla w]_{C^{0,\alpha}
  (B_{\theta r})}
  +r^{-1}\theta^{-1-\frac{d}{2}}
  \bigg(\dashint_{B^{\varepsilon}_{r}} |u_{\varepsilon}-w|^2\bigg)^{\frac{1}{2}}.
\end{equation*}
Let $\tilde{w}(x)=w(x)-Mx-c$, and
we take $\tilde{a}(x)=(\tilde{a}_{ij}(x))
=\nabla_{\xi_{j}}\widehat{A}^{i}(\nabla w)$
as we did in Theorem \ref{lemma:4.2}, for $1\leq k\leq d$,
$\tilde{w}(x)$
satisfies
\begin{equation*}
  -\nabla\cdot \tilde{a}(x)\nabla(\nabla_{k}\tilde{w})=0~
  \text{~in~} B_{r}.
\end{equation*}
By \cite[Theorem 8.13]{MGLM} we have
\begin{equation*}
  [\nabla \tilde{w}]_{C^{0,\alpha}
  (B_{r/4})}\leq
  Cr^{-\alpha-1}\Big(\dashint_{B_{r/2}}|\tilde{w}|^2\Big)^{1/2}.
\end{equation*}
According to the fact that $\nabla \tilde{w}=\nabla w-M$, one may have
\begin{equation*}
[\nabla w]_{C^{0,\alpha}(B_{r/4})}
  =[\nabla \tilde{w}]_{C^{0,\alpha}(B_{r/4})}
  \leq Cr^{-\alpha-1}\Big(\dashint_{B_{r/2}}|w-Mx-c|^2\Big)^{1/2}.
\end{equation*}
Then we have
\begin{equation*}
\begin{aligned}
G_{\varepsilon}(\theta r,u_{\varepsilon})
& \lesssim \theta^{\alpha}r^{-1}\bigg(\dashint_{B_{r/2}} |w-Mx-c|^2\bigg)^{\frac{1}{2}} +r^{-1}\theta^{-1-\frac{d}{2}}
\bigg(\dashint_{B^{\varepsilon}_r} |u_{\varepsilon}-w|^2\bigg)^{\frac{1}{2}}\\
& \lesssim \theta^{\alpha}G(\frac{r}{2},w)+r^{-1}\theta^{-1-\frac{d}{2}}
\bigg(\dashint_{B^{\varepsilon}_r}
|u_{\varepsilon}-w|^2\bigg)^{\frac{1}{2}},
\end{aligned}
\end{equation*}
And then, by Lemma \ref{lemma:2.6}, the right hand side above is less than
\begin{equation*}
   C \theta^{\alpha}G_{\varepsilon}(r,w)
   +r^{-1}\theta^{-1-\frac{d}{2}}
   \bigg(\dashint_{B^{\varepsilon}_r} |u_{\varepsilon}-w|^2\bigg)^{\frac{1}{2}}.
\end{equation*}
By the definition of $G_{\varepsilon}(r,w)$, it follows that
\begin{equation*}
\begin{aligned}
  G_{\varepsilon}(\theta r,u_{\varepsilon})
  &\leq C \theta^{\alpha}G_{\varepsilon}(r,u_{\varepsilon})
  +Cr^{-1}\theta^{-1-\frac{d}{2}}\bigg(
  \dashint_{B^{\varepsilon}_r} |u_{\varepsilon}-w|^2\bigg)^{\frac{1}{2}}\\
  & \leq \frac{1}{2} G_{\varepsilon}(r,u_{\varepsilon})+Cr^{-1}
  \bigg(\dashint_{B^{\varepsilon}_r} |u_{\varepsilon}-w|^2\bigg)^{\frac{1}{2}},
  \end{aligned}
\end{equation*}
where we choose $\theta$ small enough such that $C\theta^{\alpha}=\frac{1}{2}$.
By Lemma \ref{lemma:4.1}, we arrive at
\begin{equation*}
G_{\varepsilon}(\theta r,u_\varepsilon)\leq \frac{1}{2}G_{\varepsilon}(r,u_\varepsilon)
+C\Big(\frac{\varepsilon}{r}\Big)^{\sigma}\frac{1}{r}
\bigg(\dashint_{B^{\varepsilon}_{2r}} |u_{\varepsilon}|^2\bigg)^{\frac{1}{2}}.
\end{equation*}
Note that for any $c\in\mathbb{R}$,
$u_\varepsilon - c$ is still a solution of
$\mathcal{L}_\varepsilon u_\varepsilon = 0$ in $B^{\varepsilon}_{2r}$,
and the proof is complete.
\end{proof}

\begin{lemma}[iteration lemma]\label{lemma:4.3}
Let $\Psi(r)$ and $\psi(r)$ be two nonnegative continuous functions on the integral $(0,1]$.
Let $0<\varepsilon\ll1$. Suppose that there exists a constant $C_0$ such that
\begin{equation}\label{pri:4.4}
\left\{\begin{aligned}
  &\max_{r\leq t\leq 2r} \Psi(t) \leq C_0 \Psi(2r),\\
  &\max_{r\leq s,t\leq 2r} |\psi(t)-\psi(s)|\leq C_0 \Psi(2r).
  \end{aligned}\right.
\end{equation}
We further assume that
\begin{equation}\label{pri:4.5}
\Psi(\theta r)\leq \frac{1}{2}\Psi(r) + C_0w(\varepsilon/r)
\Big\{\Psi(2r)+\psi(2r)\Big\}
\end{equation}
holds for any $\varepsilon\leq r <(1/4)$, where $\theta\in(0,1/4)$ and $w$ is a nonnegative
increasing function in $[0,1]$ such that $w(0)=0$ and
\begin{equation*}
 \int_0^1 \frac{w(t)}{t} dt <\infty.
\end{equation*}
Then, we have
\begin{equation}
\max_{\varepsilon\leq r\leq 1}\Big\{\Psi(r)+ \psi(r)\Big\}
\leq C\Big\{\Psi(1)+\psi(1)\Big\},
\end{equation}
where $C$ depends only on $C_0, \theta$ and $\omega$.
\end{lemma}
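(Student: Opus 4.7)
The plan is to discretize scales dyadically at rate $\theta$, iterate (3), and use conditions (1) and (2) to bridge between the ``dyadic" scales $r_k$ and the nearby scales $2r_k$ that appear in the perturbation; the Dini condition will make the accumulated perturbations summable.

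Set $r_k := \theta^k$ for $k = 0, 1, \dots, K$, where $K$ is the largest integer with $r_K \geq \varepsilon$, and write $h_k := \Psi(r_k)$, $g_k := \psi(r_k)$, $f_k := h_k + g_k$, and $M := \max_{0 \leq k \leq K} f_k$. Since $\theta < 1/4$, each interval $[r_{k+1}, r_k]$ is covered by at most $\lceil \log_2(1/\theta) \rceil$ dyadic doublings. Iterating (1) from scale $r_k$ up to $r_{k-1}$ yields $\Psi(2r_k) \leq C_\theta h_{k-1}$; chaining (2) over the same doublings gives both $|\psi(r_{k+1}) - \psi(r_k)| \leq C_\theta h_k$ and $\psi(2r_k) \leq g_{k-1} + C_\theta h_{k-1}$. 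Hence $\Psi(2r_k) + \psi(2r_k) \leq C_\theta f_{k-1}$, where $C_\theta$ depends only on $C_0$ and $\theta$.

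Substituting into (3) produces the contractive recursion $h_{k+1} \leq \tfrac{1}{2} h_k + C_0 C_\theta w_k f_{k-1}$, where $w_k := w(\varepsilon/r_k)$. Bounding $f_{k-1} \leq M$ and summing telescopically,
\[
\sum_{k=0}^K h_k \leq 2 h_0 + 2 C_0 C_\theta M \sum_{k=0}^K w_k.
\]
The change of variable $t_k = \varepsilon\theta^{-k}$ (for which $dt/t = -\log\theta \, dk$) realizes $\sum_k w(\varepsilon\theta^{-k})$ as a Riemann sum for $\int_0^1 w(t)/t\, dt$, so by the Dini hypothesis $\sum_{k=0}^K w_k \leq W_\theta := |\log\theta|^{-1}\int_0^1 w(t)/t\, dt$. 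Then $\sum_k h_k \leq 2h_0 + 2 C_0 C_\theta W_\theta M$. Summing the one-step increments of $\psi$ gives $\max_k g_k \leq g_0 + C_\theta \sum_k h_k \leq g_0 + 2 C_\theta h_0 + 2 C_0 C_\theta^2 W_\theta M$, while a direct maximum-principle argument for the recursion yields $\max_k h_k \leq h_0 + 2 C_0 C_\theta w(1) M$.

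Adding the two bounds gives $M \leq C'(h_0 + g_0) + C'' M$ with $C'' = 2 C_0 C_\theta(w(1) + C_\theta W_\theta)$. The main obstacle is closing this via absorption, since $C''$ need not be $<1$ for arbitrary admissible constants. The remedy is to split the range $[0, K]$ at the threshold $k^* := \max\{k : C_0 C_\theta w_k \leq 1/4\}$. On $[0, k^*]$ the recursion is strictly contractive (factor $\leq 3/4$), and the Dini estimate above specializes to $f_{k^*} \leq C(h_0 + g_0)$ with no residual $M$-dependence. Since $w_{k^*+1} > 1/(4 C_0 C_\theta)$ and $w$ is monotone increasing with $w(0) = 0$, the tail length $K - k^*$ is bounded by a constant $N^* = N^*(C_0, \theta, w)$ independent of $\varepsilon$, so $N^*$ additional applications of (1) and (2) bound $f_k$ by a constant multiple of $f_{k^*}$ on $(k^*, K]$. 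Finally, for any $r \in [\varepsilon, 1]$ there exists $k$ with $r \in [r_{k+1}, r_k]$, and the chained oscillation bounds from the second paragraph give $\Psi(r) + \psi(r) \leq C_\theta f_k \leq C(\Psi(1) + \psi(1))$, which completes the proof.
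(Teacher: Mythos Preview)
The paper itself does not prove this lemma but defers to \cite[Lemma~8.5]{S5}, so there is no in-paper argument to compare against directly. Your overall scheme --- discretize at rate $\theta$, iterate the decay inequality, and invoke the Dini condition to sum the perturbations --- is the standard route and matches Shen's. Your reductions in the second paragraph (bounding $\Psi(2r_k)+\psi(2r_k)\le C_\theta f_{k-1}$, and handling a bounded-length tail via conditions (1)--(2)) are correct.

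The gap is in the absorption step. Your threshold $k^*$ is chosen so that $C_0C_\theta w_k\le 1/4$ on $[0,k^*]$; this does cut the contribution of $\max_k h_k$ to $M^*:=\max_{k\le k^*}f_k$ down to $h_0+\tfrac12 M^*$. But it does \emph{nothing} for the $\max_k g_k$ contribution: you still have $\max_{k\le k^*} g_k \le g_0 + C_\theta\sum_{j\le k^*}h_j$, and by your own telescoping bound $\sum_{j\le k^*}h_j$ carries the factor $\sum_{j\le k^*}w_j$, which can equal the full Dini constant $W_\theta$ regardless of the threshold. So on $[0,k^*]$ you again land at $M^*\le C'f_0 + \big(\tfrac12 + 2C_0C_\theta^2 W_\theta\big)M^*$, and absorption still fails. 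The phrase ``strictly contractive (factor $\le 3/4$)'' does not rescue this: only $h$ is contracted, while $g$ can grow through $\sum h_j$.

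The clean fix is a Gronwall-type iteration rather than a threshold split. Set $T_k:=g_k+2C_\theta h_k$ and $\bar T_k:=\max_{j\le k}T_j$. From your two recursions one line gives
\[
T_{k+1}\le T_k + 2C_0C_\theta^2\,w_k\,f_{k-1}\le \bar T_k\big(1+2C_0C_\theta^2 w_k\big),
\]
hence $\bar T_K\le \bar T_1\exp\big(2C_0C_\theta^2\sum_k w_k\big)\le C_\theta T_0\exp(2C_0C_\theta^2 W_\theta)$. Since $f_k\le T_k\le \bar T_K$ and $T_0\le (1+2C_\theta)f_0$, this yields $\max_k f_k\le C f_0$ with no absorption needed; your final interpolation to arbitrary $r\in[\varepsilon,1]$ then finishes the proof.
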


\begin{proof}
The proof may be found in \cite[Lemma 8.5]{S5}.
\end{proof}

\noindent\textbf{Proof of Theorem $\ref{thm:1.2}$}.
It is fine to assume $0<\varepsilon<1/4$, otherwise it follows from the classical theory.
In view of Lemma $\ref{lemma:4.3}$,
we set $\Psi(r) = G_{\varepsilon}(r,u_\varepsilon)$, $w(t) =t^{\frac{1}{2}-\frac{1}{p}}$ with
$0<p-2\ll 1$.
To prove the desired estimate
$\eqref{pri:1.1}$, it is sufficient to verify $\eqref{pri:4.4}$
and $\eqref{pri:4.5}$.
Let $\psi(r) = |M_r|$, where $M_r$ is the matrix associated with $\Psi(r)$ such that
\begin{equation*}
\begin{aligned}
\Psi(r) &= \frac{1}{r}
\inf_{c\in\mathbb{R}}\Big(\dashint_{B^{\varepsilon}_r}|u_\varepsilon-M_r x - c|^2\Big)^{\frac{1}{2}}.
\end{aligned}
\end{equation*}
Then it follows that
\begin{equation}\label{f:4.7}
\left\{\begin{aligned}
& \max_{r\leq t\leq 2r} \Psi(t) \lesssim \Psi(2r),\\
& \Phi(2r) \lesssim \Big\{\Psi(2r) + \psi(2r)\Big\},\\
& \psi(r)\leq \Psi(r)+\frac{1}{r}\inf_{c\in
\mathbb{R}}\bigg(\dashint_{B^{\varepsilon}_r}
|u_{\varepsilon}-c|^2dx\bigg)^{\frac{1}{2}}.
\end{aligned}\right.
\end{equation}
According to Lemma $\ref{lemma:4.4}$, we have
\begin{equation*}
\Psi(\theta r)\leq \frac{1}{2}\Psi(r) + C_0 w(\varepsilon/r)\Big\{\Psi(2r)+\psi(2r)\Big\}
\end{equation*}
for $\varepsilon\leq r<1/4$,
so condition $\eqref{pri:4.5}$ in Lemma $\ref{lemma:4.3}$ holds.
Let $t,s\in [r,2r]$, and $v(x)=(M_t-M_s)x$. It is clear to see $v$ is harmonic in $\mathbb{R}^d$ and $\mathcal{L}_{0}(v)=0$ in $\mathbb{R}^d$, it's easy to see that
\begin{equation}\label{f:4.9}
\begin{aligned}
|M_t-M_s|&\lesssim  \frac{1}{r}\Big(\dashint_{B_{r/2}}|(M_t-M_s)x-c|^2\Big)^{\frac{1}{2}}
\lesssim^{\eqref{pri:4.20}}  \frac{1}{r}\Big(\dashint_{B^{\varepsilon}_r}|(M_t-M_s)x-c|^2\Big)^{\frac{1}{2}}\\
&\lesssim \frac{1}{t}
\Big(\dashint_{B^{\varepsilon}_t}|u_\varepsilon - M_tx-c|^2\Big)^{\frac{1}{2}}
+ \frac{1}{s}\Big(\dashint_{B^{\varepsilon}_s}|u_\varepsilon - M_sx-c|^2\Big)^{\frac{1}{2}}
\end{aligned}
\end{equation}
where the last step is based on the fact that $s,t\in[r,2r]$. By taking infimum about $c\in\mathbb{R}$ on the both sides of \eqref{f:4.9}, it follows that
\begin{equation}\label{f:4.8}
|M_t-M_s|\lesssim \Big\{\Psi(t)+\Psi(s)\Big\}
\lesssim \Psi(2r).
\end{equation}
Due to estimate $\eqref{f:4.8}$ , it's known that $\psi(r)$ satisfies the second condition in
$\eqref{pri:4.4}$.

Hence, according to Lemma $\ref{lemma:4.3}$,
for any $r\in(\varepsilon,1]$,
we have the following estimate
\begin{equation}
\begin{aligned}
\frac{1}{r}\inf_{c\in\mathbb{R}}
\Big(\dashint_{B^{\varepsilon}_r}|u_\varepsilon - c|^2 \Big)^{\frac{1}{2}}
&\leq \Big\{\Psi(r) + \psi(r)\Big\}
\lesssim\Big\{\Psi(1) + \psi(1)\Big\}\\
&\lesssim \Big\{G_{\varepsilon}(1,u_{\varepsilon})+\psi(1)\Big\}
\lesssim^{\eqref{f:4.7}} G_{\varepsilon}(1,u_{\varepsilon})+
\inf_{c\in\mathbb{R}}\bigg(\dashint_{B^{\varepsilon}_1} |u_{\varepsilon}-c|^2dx\bigg)^{\frac{1}{2}}.
\end{aligned}
\end{equation}
If we take $M=0$ and the constant
$c$ as in Lemma $\ref{lemma:2.20}$, then
it follows that
\begin{equation*}
\begin{aligned}
\frac{1}{r}\inf_{c\in\mathbb{R}}
\Big(\dashint_{B^{\varepsilon}_r}|u_\varepsilon - c|^2 \Big)^{\frac{1}{2}}
  &\lesssim \Big(\dashint_{B^{\varepsilon}_1}| u_\varepsilon-c|^2\Big)^{1/2}\\
&\lesssim^{\eqref{pri:2.20}} \Big(\dashint_{B^{\varepsilon}_3}|\nabla u_\varepsilon|^2\Big)^{1/2}.
\end{aligned}
\end{equation*}
Therefore, the desired estimate $\eqref{pri:1.1}$
is consequently obtained by
the above estimate coupled with
Caccioppoli's inequality $\eqref{pri:2.14}$,
and we have completed the whole proof.
\qed

\section{Boundary estimates}\label{sec:5}
\begin{lemma}[approximating lemma II]\label{lemma:5.2}
Let $\varepsilon\leq r\leq1$.
Let $\Omega$ be a bounded Lipschitz domain and $\omega$ be a regular reference domain. Suppose that $\mathcal{L}_\varepsilon$
satisfies $\eqref{a:1}$-$\eqref{a:3}$.
Let $u_\varepsilon$ be a weak solution of
\begin{equation*}
 \left\{ \begin{aligned}
 \mathcal{L}_{\varepsilon}u_\varepsilon  &= 0& ~\emph{in}&~ D^{\varepsilon}_{4r},\\
 \sigma_{\varepsilon}(u_{\varepsilon})&= 0& ~\emph{on}&~
 \partial D^{\varepsilon}_{4r}|_{D_{4r}},\\
 u_{\varepsilon}&= 0&~\emph{on}&~\partial D^{\varepsilon}_{4r}|_{\Delta_{4r}}.
  \end{aligned}\right.
\end{equation*}
Then there exists
$v\in H^1(D_{r})$ such that
$\mathcal{L}_0 v  = 0$ in $D_{r}$ with $v=0$ on $\Delta_{r}$,
and
\begin{equation}\label{pri:5.2}
 \Big(\dashint_{D^{\varepsilon}_{r/12}} |u_\varepsilon - v|^2  \Big)^{1/2}
 \lesssim \left(\frac{\varepsilon}{r}\right)^{\frac{\sigma}{2}}
 \Big(\dashint_{D^{\varepsilon}_{3r}}|u_\varepsilon|^2 \Big)^{1/2},
\end{equation}
where $\sigma=1/2-1/p$ and $0<p-2\ll1$ is the same $p$ as in \eqref{pri:1.6} and Theorem \ref{thm:2.20}.
\end{lemma}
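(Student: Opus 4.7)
The plan is to mirror the interior approximation argument of Lemma \ref{lemma:4.1}, adapted to accommodate the mixed boundary conditions (Dirichlet on $\Delta_{4r}$, Neumann on the hole-boundary) and the reduced regularity of $\partial D_{\bar r}$ away from $\Delta_{\bar r}$. By the standard rescaling $u_r(y) = r^{-1}u_\varepsilon(ry)$, which preserves the structural form of the equation, reduce to $r = 1$; the relevant small parameter becomes $\bar\varepsilon = \varepsilon/r$.

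I would first construct a suitable cut-off extension. Pick $\rho\in C^1_0(D_3)$ with $\rho\equiv 1$ on $D_{1/4}$ and $|\nabla\rho|\lesssim 1$. Since $\rho u_\varepsilon$ vanishes on $\partial D_3^\varepsilon\cap\partial D_3$ (via $\rho$) and on $\Delta_3$ (since $u_\varepsilon=0$ there by hypothesis), the function $\rho u_\varepsilon$ lies in $H^1(D_3^\varepsilon,\Gamma_\varepsilon)$ for the relevant piece of boundary. Lemma \ref{extensiontheory} then produces $\tilde u_\varepsilon\in H^1_0(\widehat D)$ on a slightly enlarged Lipschitz domain $\widehat D\supset D_3$, coinciding with $u_\varepsilon$ on $D_{1/4}^\varepsilon$ and (crucially) still vanishing on $\Delta_3$. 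A standard co-area / mean-value argument then selects an intermediate radius $\bar r\in[1/12,1/6]$ for which the trace of $\tilde u_\varepsilon$ on $\partial D_{\bar r}$ has a controlled $W^{1-1/p,p}$-norm. One then solves the homogenised Dirichlet problem
\[
\mathcal{L}_0 v = 0\ \text{in}\ D_{\bar r},\qquad v=\tilde u_\varepsilon\ \text{on}\ \partial D_{\bar r},
\]
noting that $\tilde u_\varepsilon\equiv 0$ on $\Delta_{\bar r}$ forces $v=0$ on $\Delta_{\bar r}$, as required in the statement.

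Next, apply the convergence estimate \eqref{pri:1.6} of Theorem \ref{thm:1.1} on the Lipschitz domain $D_{\bar r}$, yielding
\[
\|u_\varepsilon-v\|_{L^2(D_{\bar r}^\varepsilon)}\ \lesssim\ \bar\varepsilon^{\sigma}\,\|\tilde u_\varepsilon\|_{W^{1-1/p,p}(\partial D_{\bar r})}.
\]
Then combine the trace theorem, the $L^p$ extension bound \eqref{pri:2.22}, and Meyer's self-improvement (Theorem \ref{thm:7.1}) together with Caccioppoli's inequality to reduce the right-hand side to $\|u_\varepsilon\|_{L^2(D_{3}^\varepsilon)}$; passing from $D_{\bar r}^\varepsilon$ to the smaller $D_{1/12}^\varepsilon$ and rescaling back produces the desired averaged estimate.

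The main obstacle is the deterioration of the exponent from $\sigma$ to $\sigma/2$. I expect this to come from the Lipschitz character of $\partial D_{\bar r}\setminus\Delta_{\bar r}$: on that non-smooth piece, the trace norm $W^{1-1/p,p}$ does not scale in the same way as interior $L^p$ norms, and the factor lost in controlling the trace uniformly under the rescaling $r\mapsto 1$ (together with the necessity of selecting $\bar r$ via averaging rather than pointwise) effectively halves the rate. Keeping careful bookkeeping on the scale-dependence of the Lipschitz constant of $\partial D_{\bar r}$ and the constants in Lemma \ref{extensiontheory} (possibly invoking the $\mathrm{dist}(\partial\widehat D,D_3)\sim\varepsilon$ refinement of Remark \ref{remark:2.2}) is where the delicate part of the argument concentrates; the rest of the proof is essentially the boundary analogue of Lemma \ref{lemma:4.1}.
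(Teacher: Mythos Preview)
Your proposal contains a genuine gap at the very point you label ``crucially''. The extension operator of Lemma~\ref{extensiontheory} does \emph{not} produce a function vanishing on all of $\Delta_3$. The original $u_\varepsilon$ is only defined on $\Omega_\varepsilon=\Omega\cap\varepsilon\omega$, so its zero trace lives on $\Gamma_\varepsilon=\partial\Omega_\varepsilon\cap\partial\Omega=\Delta\cap\varepsilon\omega$; on the pieces $\Delta\setminus\varepsilon\omega$ (the parts of the flat boundary lying inside holes) $u_\varepsilon$ has no values at all, and the extension operator fills the holes with nonzero data in general. The paper states this explicitly at the start of its proof: ``$\tilde{u}_\varepsilon\neq 0$ on $\Delta_r\setminus(\varepsilon\omega)$, which means that it breaks the requirement in~(2).'' Consequently your Dirichlet problem $\mathcal{L}_0 v=0$, $v=\tilde u_\varepsilon$ on $\partial D_{\bar r}$ does not yield $v=0$ on $\Delta_{\bar r}$, and the construction collapses.

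The paper's actual mechanism is quite different and explains the $\sigma/2$ loss concretely, not via scaling of Lipschitz constants. One introduces a mollification scale $\delta$ and four auxiliary functions: $v_\varepsilon$ solving $\mathcal{L}_\varepsilon v_\varepsilon=0$ with smoothed data $S_\delta(\tilde u_\varepsilon)$; a harmonic corrector $z_\varepsilon$ measuring $\tilde u_\varepsilon-S_\delta(\tilde u_\varepsilon)$; $v_h$ solving $\mathcal{L}_0 v_h=0$ with data $S_\delta(\tilde u_\varepsilon)$; and finally $v$ solving $\mathcal{L}_0 v=0$ with data $S_\delta(\tilde u_\varepsilon)$ on $\partial D_{\bar t}\setminus\Delta_{\bar t}$ but \emph{forced to zero} on $\Delta_{\bar t}$. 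The telescoping $u_\varepsilon-v$ then produces four pieces: $\|v_\varepsilon-v_h\|\lesssim\varepsilon^\sigma$ from Theorem~\ref{thm:1.1}; $\|u_\varepsilon-v_\varepsilon-z_\varepsilon\|+\|z_\varepsilon\|\lesssim\delta^{1/2}$ from mollification; and the new boundary-mismatch term $\|v_h-v\|^2\lesssim\|\nabla S_\delta(\tilde u_\varepsilon)\|_{L^2(\Delta)}\|S_\delta(\tilde u_\varepsilon)\|_{L^2(\Delta)}\lesssim\delta^{-1/p}(\delta\varepsilon^{-1/2}+\varepsilon^{1/2})$. Choosing $\delta=\varepsilon$ makes the dominant contribution $(\varepsilon^{1/2-1/p})^{1/2}=\varepsilon^{\sigma/2}$. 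So the halved exponent is the price of forcing $v=0$ on the hole-portions of $\Delta$, not a trace-scaling artifact.
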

\begin{proof}
Although the main idea is similar to that given for
Lemma $\ref{lemma:4.1}$, it is proved to be still complicated
task to close the whole arguments due to the worse boundary conditions. As a normal way, people usually
build $v$ by solving a Dirichlet problem with the boundary
condition $v=u_\varepsilon$ on $\partial D_r$. There are
two notable problematic issues: (1). $u_\varepsilon$ has no definition on $\partial D_r\setminus(\varepsilon\omega)$;
(2). The constructed approximating function $v$ is required to vanish on $\Delta_r$. One may employ the extension of $u_\varepsilon$, denoted by $\tilde{u}_\varepsilon$, to make
the boundary equality well-defined. However, $\tilde{u}_\varepsilon \not=0$ on $\Delta_r\setminus (\varepsilon\omega)$, which means that
it breaks the requirement in (2).

Let $0<\delta\ll 1$ (it will be chosen later on), and $\bar{t}\in[1/4,1/2]$ be arbitrary but fixed. By rescaling one may assume $r=1$. The proof consists of three parts: \textbf{(A)}. Outline
the main ideas; \textbf{(B)}. Present some auxiliary estimates; \textbf{(C)}. Carry out computations and complete the proof.

\textbf{Part (A)}.
To overcome the stated difficulty, we divided the approximating process into
three ingredients.

(1). Find a regularization part of
$u_\varepsilon$ (denoted by $v_\varepsilon$), and define a function to measure their difference (denoted by $z_\varepsilon$), as follows:
\begin{equation*}
(\text{i})\left\{
\begin{aligned}
\mathcal{L}_{\varepsilon}(v_{\varepsilon})&=0&\quad&
\text{in}\quad D^{\varepsilon}_{\bar{t}},\\
\sigma_{\varepsilon}(v_{\varepsilon})&=0&\quad&\text{on}\quad \partial D^{\varepsilon}_{\bar{t}}|_{D_{\bar{t}}},\\
v_{\varepsilon}&=S_{\delta}(\tilde{u}_{\varepsilon})&\quad&\text{on}\quad \partial D_{\bar{t}}^\varepsilon|_{\partial D_{\bar{t}}};
\end{aligned}
 \right.
 \qquad
(\text{ii})
\left\{\begin{aligned}
z_\varepsilon-\Delta z_{\varepsilon}&=0&\quad&\text{in}\quad D_{\bar{t}},\\
z_{\varepsilon}&=\tilde{u}_{\varepsilon}-S_{\delta}(\tilde{u}_{\varepsilon})
&\quad&\text{on}\quad\partial D_{\bar{t}},
\end{aligned}\right.
\end{equation*}
in which $S_{\delta}(\tilde{u}_{\varepsilon})$ is defined in \eqref{pri:2.161}, and the extension of $\tilde{u}_\varepsilon$ is explained in Part (B).

(2). Approximate the regularization part by homogenization. Thus, we construct
$v_h$ satisfying
\begin{equation*}
(\text{iii})
\left\{
\begin{aligned}
\mathcal{L}_{0}(v_{h})&=0&\qquad&\text{in}\quad D_{\bar{t}},\\
v_{h}&=S_{\delta}(\tilde{u}_{\varepsilon})&\qquad&\text{on}\quad \partial D_{\bar{t}}.
 \end{aligned}\right.
\end{equation*}

(3). Define the desired approximating function in the following way, and estimate their difference to make the
whole arguments close,
\begin{equation*}
(\text{iv})\left\{\begin{aligned}
\mathcal{L}_{0}(v)&=0&\qquad&\text{in}\quad D_{\bar{t}},\\
v&=S_{\delta}(\tilde{u}_{\varepsilon})&\qquad&\text{on}\quad \partial D_{t}\backslash\Delta_{\bar{t}},\\
v&=0&\qquad&\text{on}\quad\Delta_{\bar{t}}.
 \end{aligned}\right.
\end{equation*}

\textbf{Part (B).}
First, as in Lemma \ref{lemma:4.1}, we want to extend $u_{\varepsilon}\in H^{1}(D^{\varepsilon}_{1/4})$ to $\tilde{u}_{\varepsilon}\in H^{1}_0(D^{0})$, in which $D_{1}\subset D^0$. Suppose that
 $$\varphi\in C^{\infty}_0(\mathbb{R}^d),~\varphi=1~\text{on}~D_{1/4},~
 \varphi=0~\text{on}~\Omega\backslash D_{1/2},~\text{and}~|\nabla \varphi|\lesssim 1.$$
Then we have $\varphi u_{\varepsilon}\in H^{1}\big(D^{\varepsilon}_{\frac{1}{2}},\partial D_{\frac{1}{2}}^\varepsilon|_{\partial D_{\frac{1}{2}}}\big)$ (the same to
the definition of $H^1(\Omega_\varepsilon,\Gamma_\varepsilon)$). By Lemma \ref{extensiontheory}, one may denote the extension function
of $\varphi u_\varepsilon$ by $\tilde{u}_{\varepsilon}$,  satisfying that $\tilde{u}_{\varepsilon}=\varphi u_{\varepsilon}=u_{\varepsilon}$
on $D_{1/4}^{\varepsilon}$ with $\tilde{u}_{\varepsilon}\in H^{1}_{0}(D^{0})$, and
\begin{equation}\label{pri:5.3}
 \|\tilde{u}_{\varepsilon}\|_{H^{1}_{0}(D^{0})}\lesssim \|\varphi u_{\varepsilon}\|_{H^{1}(D^{\varepsilon}_{1/2})}
 \lesssim^{\eqref{pri:5.1}}
 \|u_{\varepsilon}\|_{L^{2}(D^{\varepsilon}_{3})}.
\end{equation}

Now, we claim that for any $t\in (0,3/2]$,  one may have
\begin{equation}\label{pri:5.4}
\|\tilde{u}_{\varepsilon}\|_{L^{2}(\Delta_{t}\backslash \varepsilon\omega)}\lesssim\varepsilon^{1/2}
\|u_{\varepsilon}\|_{L^{2}(D_{3}^{\varepsilon})}.
\end{equation}
Note that $\Delta_{t}\backslash \varepsilon\omega$ represents the holes intersected with the boundary and the diameters of the holes are around the $\varepsilon$-scale. By the trace theorem near the boundary, it follows that
\begin{equation*}
\begin{aligned}
\int_{\Delta_{t}\backslash\varepsilon\omega}|\tilde{u}_{\varepsilon}|^2dS
&\lesssim\frac{1}{\varepsilon}\int_{O_{\varepsilon}\cap D_{t}}|\tilde{u}_{\varepsilon}|^2dx+\varepsilon\int_{O_{\varepsilon}\cap D_{t}}|\nabla\tilde{u}_{\varepsilon}|^2dx\\
&\lesssim\varepsilon\int_{O_{\varepsilon}\cap D_{t}}|\nabla\tilde{u}_{\varepsilon}|^2dx
\lesssim\varepsilon\int_{D_{3/2}}|\nabla\tilde{u}_{\varepsilon}|^2dx
\lesssim^{\eqref{pri:5.3}}\varepsilon\int_{D_{3}^{\varepsilon}}|u_{\varepsilon}|^2dx,
\end{aligned}
\end{equation*}
in which we employ Poincar\'{e}'s inequality in the second step, and we derive \eqref{pri:5.4}.

Also, for any $t\in (0,3/2]$, there holds
\begin{equation}\label{pri:5.8}
\|u_{\varepsilon}\|_{W^{1,p}(D_{t/3}^{\varepsilon})}
\lesssim^{\eqref{pri:2.21}} \|\nabla u_{\varepsilon}\|_{L^{p}(D_{t}^{\varepsilon})}
\lesssim^{\eqref{pri:7.7},\eqref{pri:5.1}}
\|u_{\varepsilon}\|_{L^{2}(D_{3}^{\varepsilon})},
\end{equation}
where $0<p-2\ll 1$.
As a result, we have
\begin{equation}\label{pri:5.5}
\|\tilde{u}_{\varepsilon}\|_{W^{1,p}(D^0)}\lesssim^{\eqref{pri:2.22}}
\|\varphi u_{\varepsilon}\|_{W^{1,p}(D_{1/2}^{\varepsilon})}
\lesssim^{\eqref{pri:5.8}} \|u_{\varepsilon}\|_{L^2(D_{3}^{\varepsilon})}.
\end{equation}

\textbf{Part (C).}
According to the description in Part (A), we will study the following estimates in this part. Recalling that
$\bar{t}\in[1/4,1/2]$,
\begin{equation}\label{f:5.4}
\begin{aligned}
\|u_{\varepsilon}-v\|_{L^{2}(D_{1/12}^{\varepsilon})}&
\leq\|u_{\varepsilon}-v_{\varepsilon}-z_{\varepsilon}
\|_{L^{2}(D_{\bar{t}/3}^{\varepsilon})}
+\|v_{\varepsilon}-v_{h}\|_{L^{2}(D_{\bar{t}}^{\varepsilon})}
+\|v_{h}-v\|_{L^{2}(D_{\bar{t}}^{\varepsilon})}
+\|z_{\varepsilon}\|_{L^{2}(D_{\bar{t}}^{\varepsilon})}\\
&:= I_1 + I_2 + I_3 + I_4.
\end{aligned}
\end{equation}

Let $\sigma=1/2-1/p$.
For $v_{\varepsilon}$ and $v_h$, it follows from estimates \eqref{pri:1.6} and trace theorem that
\begin{equation}\label{pri:5.6}
\begin{aligned}
I_2 = \|v_{\varepsilon}-v_{h}\|_{L^{2}(D_{\bar{t}}^{\varepsilon})}
&\lesssim\varepsilon^{\sigma}
\|S_{\delta}(\tilde{u}_{\varepsilon})\|_{W^{1-1/p,p}(\partial D_{\bar{t}})}
\lesssim \varepsilon^{\sigma}
\|S_{\delta}(\tilde{u}_{\varepsilon})\|_{W^{1,p}(D_{\bar{t}})}\\
&\lesssim^{\eqref{pri:2.26}} \varepsilon^{\sigma}
\|\tilde{u}_{\varepsilon}\|_{W^{1,p}((D_{\bar{t}})_{\delta})}\\
&\lesssim^{\eqref{pri:5.5}}
\varepsilon^{\sigma}
\|u_{\varepsilon}\|_{L^2(D_{3}^{\varepsilon})}.
\end{aligned}
\end{equation}


Next, we claim that
\begin{equation}\label{pri:5.121}
I_1 = \|u_{\varepsilon}-v_{\varepsilon}-z_{\varepsilon}\|_
{L^{2}(D_{\bar{t}/3}^{\varepsilon})}
\lesssim^{\eqref{pri:2.21}}
\|\nabla u_{\varepsilon}-\nabla v_{\varepsilon}\|_{L^{2}(D^{\varepsilon}_{\bar{t}})}
+\|\nabla z_{\varepsilon}\|_{L^{2}(D_{\bar{t}})}
\lesssim^{\eqref{f:5.3}}\|\nabla z_{\varepsilon}\|_{L^{2}(D_{\bar{t}})},
\end{equation}
and the last inequality is due to the following energy estimates.
If setting  $\phi:=u_{\varepsilon}-v_{\varepsilon}-z_{\varepsilon}$, then we observe that $\phi=0$ on $\partial D_{\bar{t}}^{\varepsilon}|_{\partial D_{\bar{t}}}$ by the definition of $u_{\varepsilon},v_{\varepsilon}$ and $z_{\varepsilon}$, and
so $\phi$ belongs to $H^{1}(D_{\bar{t}}^{\varepsilon},\partial D_{\bar{t}}^{\varepsilon}|_{\partial D_{\bar{t}}})$. Integration by parts, there holds
\begin{equation*}
  \int_{D_{\bar{t}}^{\varepsilon}}\big[A(x/\varepsilon,\nabla u_{\varepsilon})-
  A(x/\varepsilon,\nabla v_{\varepsilon})\big]\cdot\nabla \phi dx=0.
\end{equation*}
Thus, it follows from the assumption $\eqref{a:1}$ that
\begin{equation*}
 \mu_0\|\nabla u_\varepsilon - \nabla v_\varepsilon\|_{L^2(D_{\bar{t}}^\varepsilon)}^2
 \leq \mu_1\int_{D_{\bar{t}}^\varepsilon}|\nabla u_\varepsilon - \nabla v_\varepsilon||\nabla z_\varepsilon|dx
 \leq \frac{\mu_0}{2}
 \|\nabla u_\varepsilon - \nabla v_\varepsilon\|_{L^2(D_{\bar{t}}^\varepsilon)}^2
 + C_{\mu_0}\|\nabla z_\varepsilon\|_{L^2(D_{\bar{t}}^\varepsilon)}^2,
\end{equation*}
where we use Young's inequality in the second step, and this implies
\begin{equation}\label{f:5.3}
\|\nabla u_\varepsilon - \nabla v_\varepsilon\|_{L^2(D_{\bar{t}}^\varepsilon)}
\lesssim \|\nabla z_\varepsilon\|_{L^2(D_{\bar{t}}^\varepsilon)}.
\end{equation}

Again, in view of energy estimates for $z_{\varepsilon}$, one may have
\begin{equation}\label{pri:5.131}
\begin{aligned}
I_1+I_4 \lesssim^{\eqref{pri:5.121}} \|z_{\varepsilon}\|_{H^{1}(D_{\bar{t}})}\lesssim
 \|\tilde{u}_{\varepsilon}-S_{\delta}(\tilde{u}_{\varepsilon})\|
 _{H^{1/2}(\partial D_{\bar{t}})}
 &\lesssim
 \bigg(\|\tilde{u}_{\varepsilon}-S_{\delta}(\tilde{u}_{\varepsilon})
 \|_{H^{1}(D_{\bar{t}})}
 \|\tilde{u}_{\varepsilon}-S_{\delta}(\tilde{u}_{\varepsilon})\|
 _{L^{2}(D_{\bar{t}})} \bigg)^{1/2}\\
 &\lesssim^{\eqref{pri:2.7},\eqref{pri:2.26}} \bigg(\delta\|\tilde{u}_{\varepsilon} \|_{H^{1}((D_{\bar{t}})_{\delta})}
 \|\nabla\tilde{u}_{\varepsilon}\| _{L^{2}((D_{\bar{t}})_{\delta})} \bigg)^{1/2}\\
 &\lesssim^{\eqref{pri:2.23},\eqref{pri:5.1}}
 \delta^{1/2}\|u_{\varepsilon}
 \|_{L^{2}(D_{3}^{\varepsilon})},
 \end{aligned}
\end{equation}
where the second step comes
from the trace theorem in Besov spaces.


The reminder of the proof is to calculate $I_3$.
We note that $v_{h}-v=0$ on $\partial D_{t}\backslash \Delta_{t}$ and $v_{h}-v=S_{\delta}(\tilde{u}_{\varepsilon})$ on $\Delta_{t}$, coupled with \eqref{pri:2.3}, one may have
\begin{equation}\label{pri:5.7}
  \begin{aligned}
 I^2_3\leq \|v_{h}-v\|^2_{L^{2}(D_{\bar{t}})}
  &\lesssim\|\nabla v_{h}-\nabla v\|^2_{L^{2}(D_{\bar{t}})}\lesssim
  \int_{D_{\bar{t}}}[\hat{A}(\nabla v_{h})-\hat{A}(\nabla v)][\nabla v_{h}-\nabla v]\\
  &=-\int_{D_{\bar{t}}}\nabla\cdot[\hat{A}(\nabla v_{h})-\hat{A}(\nabla v)](v_{h}-v)
  +\int_{\partial D_{\bar{t}}}\vec{n}\cdot[\hat{A}(\nabla v_{h})-\hat{A}(\nabla v)](v_{h}-v)dS\\
  &\lesssim \int_{\Delta_{\bar{t}}}
  |\nabla S_{\delta}(\tilde{u}_{\varepsilon})|
  |S_{\delta}(\tilde{u}_{\varepsilon})|dS\lesssim \|\nabla S_{\delta}(\tilde{u}_{\varepsilon})\|_{L^{2}(\Delta_{\bar{t}})}
\|S_{\delta}(\tilde{u}_{\varepsilon})\|_{L^{2}(\Delta_{\bar{t}})}.
  \end{aligned}
\end{equation}
Appealing to the trace theorem, one may have
\begin{equation}\label{pri:5.91}
\begin{aligned}
\|\nabla S_{\delta}(\tilde{u}_{\varepsilon})\|_{L^{2}(\Delta_{\bar{t}})}
&\lesssim
\delta^{-\frac{1}{2}}\|\nabla S_{\delta}(\tilde{u}_{\varepsilon})\|_{L^{2}
(D_{\bar{t}}\cap O_\delta)}+
\delta^{\frac{1}{2}}\|\nabla^2 S_{\delta}(\tilde{u}_{\varepsilon})\|_{L^{2}(D_{\bar{t}}\cap O_\delta)}\\
&\lesssim^{\eqref{pri:2.26}}
\delta^{-\frac{1}{2}}\| S_{\delta}(\nabla\tilde{u}_{\varepsilon})\|_{L^{2}
(D_{\bar{t}}\cap O_\delta)}\\
&\lesssim
\delta^{-\frac{1}{p}}\| S_{\delta}(\nabla\tilde{u}_{\varepsilon})\|_{L^{p}
(D_{\bar{t}}\cap O_\delta)}\\
&\lesssim^{\eqref{pri:2.26}}\delta^{-\frac{1}{p}}\|\nabla \tilde{u}_{\varepsilon}\|_{L^{p}((D_{\bar{t}})_{\delta})}\\
&\lesssim^{\eqref{pri:2.22}} \delta^{-\frac{1}{p}}\| u_{\varepsilon}\|_{W^{1,p}(D_{5/2}^\varepsilon)}
\lesssim^{\eqref{pri:5.8}}\delta^{-\frac{1}{p}}\|u_{\varepsilon}
\|_{L^{2}(D^{\varepsilon}_{3})}
\end{aligned}
\end{equation}
where the third step follows from H\"{o}lder's inequality.
By the same token, it follows that
\begin{equation*}
\begin{aligned}
\int_{\Delta_{\bar{t}}}|S_{\delta}(\tilde{u}_{\varepsilon})|^2dS
&\lesssim\frac{1}{\varepsilon}\int_{O_{\varepsilon}\cap D_{\bar{t}}}
|S_{\delta}(\tilde{u}_{\varepsilon})|^2dx+
\varepsilon\int_{O_{\varepsilon}\cap D_{\bar{t}}}
|\nabla S_{\delta}(\tilde{u}_{\varepsilon})|^2dx\\
&\lesssim^{\eqref{pri:2.26}}
\frac{1}{\varepsilon}\int_{
(O_{\varepsilon}\cap D_{\bar{t}})_\delta}
|\tilde{u}_{\varepsilon}|^2dx+
\varepsilon\int_{(O_{\varepsilon}\cap D_{\bar{t}})_\delta}
|\nabla\tilde{u}_{\varepsilon}|^2dx\\
&\lesssim^{\eqref{pri:2.10}}
\Big\{\frac{\delta^2}{\varepsilon}+\varepsilon
\Big\}\int_{(O_{\varepsilon}\cap D_{\bar{t}})_\delta}
|\nabla\tilde{u}_{\varepsilon}|^2dx\\
&\lesssim^{\eqref{pri:5.3}}
\Big\{\frac{\delta^2}{\varepsilon}+\varepsilon
\Big\}
\int_{D_3^\varepsilon}
|u_{\varepsilon}|^2dx.
\end{aligned}
\end{equation*}
Then, we have
\begin{equation}\label{pri:5.101}
\bigg(
\int_{\Delta_{t}}|S_{\delta}(\tilde{u}_{\varepsilon})|^2dS
\bigg)^{1/2}\lesssim
\Big\{\frac{\delta}{\varepsilon^{1/2}}
+\varepsilon^{1/2}\Big\}
\|u_{\varepsilon}\|_{L^{2}(D_{3}^{\varepsilon})}.
\end{equation}
Combing the estimates \eqref{pri:5.7}-\eqref{pri:5.101}, we have
\begin{equation}\label{pri:5.111}
\begin{aligned}
I_3^2
&\lesssim
\Big\{\delta^{1-\frac{1}{p}}
\varepsilon^{-\frac{1}{2}}
+\varepsilon^{\frac{1}{2}}\delta^{-\frac{1}{p}}
\Big\}
\|u_{\varepsilon}\|_{L^{2}(D_{3}^{\varepsilon})}^2.
\end{aligned}
\end{equation}

Consequently, plugging the estimates $\eqref{pri:5.6}$, $\eqref{pri:5.131}$ and $\eqref{pri:5.111}$ back into
$\eqref{f:5.4}$ leads to
\begin{equation}\label{f:5.2}
\begin{aligned}
\|u_{\varepsilon}-v\|_{L^{2}(D_{1/12}^{\varepsilon})}&
\leq
\|v_{\varepsilon}-v_{h}\|_{L^{2}(D_{\bar{t}}^{\varepsilon})}
+\|u_{\varepsilon}-v_{\varepsilon}-z_{\varepsilon}
\|_{L^{2}(D_{\bar{t}/3}^{\varepsilon})}
+\|z_{\varepsilon}\|_{L^{2}(D_{\bar{t}}^{\varepsilon})}
+\|v_{h}-v\|_{L^{2}(D_{\bar{t}}^{\varepsilon})}\\
&\lesssim\bigg\{\varepsilon^{\sigma}
+\delta^{1/2}
+\Big(\delta^{1-\frac{1}{p}}
\varepsilon^{-\frac{1}{2}}
+\varepsilon^{\frac{1}{2}}\delta^{-\frac{1}{p}}
\Big)^{\frac{1}{2}}
\bigg\}
\|u_{\varepsilon}\|_{L^{2}(D_{3}^{\varepsilon})}\\
&\lesssim\varepsilon^{\frac{\sigma}{2}}
\|u_{\varepsilon}\|_{L^{2}(D_{3}^{\varepsilon})},
\end{aligned}
\end{equation}
where one may choose $\delta = \varepsilon$ for the last  inequality by noting that $0<p-2\ll1$ and
$\sigma=1/2-1/p$.
By appealing to rescaling arguments as in the proof of
Lemma $\ref{lemma:4.1}$, we can derive the stated estimate
$\eqref{pri:5.2}$ from $\eqref{f:5.2}$ and we do not reproduce the details here. We have completed the whole proof.
\end{proof}

For the ease of statement, we impose the following notations:
Let $g_M(x)= M\cdot x$ be a linear function with a direction vector $M\in\mathbb{R}^d$, and
\begin{equation}\label{def:5.1}
\begin{aligned}
& J_{\varepsilon}(r,v) =
\frac{1}{r}\inf_{M\in\mathbb{R}^{d}}
\Bigg\{\Big(\dashint_{D^{\varepsilon}_r}
|v-g_M|^2dx\Big)^{\frac{1}{2}}
+r\|\nabla_{\text{tan}}g_M\|_{L^{\infty}(\Delta_{r})}+
\|g_M\|_{L^{\infty}(\Delta_{r})}\Bigg\};\\
& J(r,v)=\frac{1}{r}\inf_{M\in\mathbb{R}^{d}}
\Bigg\{\Big(\dashint_{D_r}
|v-g_M|^2dx\Big)^{\frac{1}{2}}
+r\|\nabla_{\text{tan}}g_M\|_{L^{\infty}(\Delta_{r})}+\|g_M\|_{L^{\infty}(\Delta_{r})}\Bigg\},\\
\end{aligned}
\end{equation}
where we recall the tangential derivative
$\nabla _{\text{tan}}$ in Subsection $\ref{subsec:1.2}$.
\begin{lemma}[comparing at large-scales near boundaries]\label{lemma:5.3}
 Let $\varepsilon\leq r\leq 1$. Suppose that $\omega$ is
 a regular reference domain and $\partial \Omega\in C^{1,1}$.  Assume that $v\in H^{1}(D_{4r})$ is a solution to $\mathcal{L}_{0}(v)=0$ in $D_{4r}$ with $v=0$ on $\Delta_{4r}$. Then one may derive that
 \begin{equation}\label{pri:5.17}
   J(r,v)\lesssim J_{\varepsilon}(2r,v),
 \end{equation}
where the up to constant depends on $\mu_{0},\mu_{1},d,\omega$.
\end{lemma}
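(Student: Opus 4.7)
The plan is to mimic the cell-by-cell argument of Lemma $\ref{lemma:2.6}$, with the additional care that the cells meeting $\Delta_{4r}$ must be handled by exploiting the Dirichlet condition $v=0$ on $\Delta_{4r}$. After the rescaling $x=\varepsilon y$, $\bar v(y)=\varepsilon^{-1}v(\varepsilon y)$ (so that $\mathcal{L}_{0}\bar v=0$ on $\varepsilon^{-1}D_{4r}$ with $\bar v=0$ on $\varepsilon^{-1}\Delta_{4r}$), it is enough to prove the estimate at the unit scale and then rescale back.

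First, I would fix a minimizer $M\in\mathbb{R}^{d}$ of $J_{\varepsilon}(2r,v)$ and use the same $M$ to test $J(r,v)$. Since $\|\nabla_{\mathrm{tan}}g_{M}\|_{L^{\infty}(\Delta_{r})}\leq \|\nabla_{\mathrm{tan}}g_{M}\|_{L^{\infty}(\Delta_{2r})}$ and $r^{-1}\|g_{M}\|_{L^{\infty}(\Delta_{r})}\leq 2(2r)^{-1}\|g_{M}\|_{L^{\infty}(\Delta_{2r})}$, the ``tangential'' and ``boundary-value'' parts in \eqref{def:5.1} transfer directly, and it suffices to show
\begin{equation*}
\Big(\dashint_{D_{r}}|\tilde v|^{2}\,dx\Big)^{1/2}\;\lesssim\;\Big(\dashint_{D_{2r}^{\varepsilon}}|\tilde v|^{2}\,dx\Big)^{1/2}+\|g_{M}\|_{L^{\infty}(\Delta_{2r})}+r\|\nabla_{\mathrm{tan}}g_{M}\|_{L^{\infty}(\Delta_{2r})},
\qquad \tilde v := v-g_{M}.
\end{equation*}

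To prove this I would decompose $D_{r}$ into small cells $\varepsilon(Y+z)$ for $z\in T_{\varepsilon}$ and, on each cell, introduce the cut-off $\varphi=\sum_{k}\varphi_{k}\in C_{0}^{\infty}(Y^{*}(z))$ localized near the holes $H_{k}\subset Y^{*}(z)\setminus\omega$, exactly as in Lemma $\ref{lemma:2.6}$. For \emph{interior cells}, i.e.\ those whose enlarged version $\varepsilon Y^{*}(z)$ stays away from $\Delta_{4r}$, Lemma $\ref{lemma:2.6}$ applies verbatim and yields $\int_{\varepsilon(Y+z)}|\tilde v|^{2}\lesssim \int_{\varepsilon Y^{*}(z)\cap\omega}|\tilde v|^{2}$. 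For \emph{boundary cells}, the monotonicity test $\int_{\varepsilon Y^{*}(z)}\varphi^{2}(\widehat A(\nabla v)-\widehat A(M))\cdot\nabla\tilde v\,dx$ generates, after an integration by parts (the bulk term still vanishes because $\mathcal{L}_{0}v=0$), an extra boundary integral
\begin{equation*}
\int_{\varepsilon Y^{*}(z)\cap \Delta_{4r}}\varphi^{2}\,\tilde v\;\vec n\cdot\bigl(\widehat A(\nabla v)-\widehat A(M)\bigr)\,dS.
\end{equation*}
Using $\tilde v=-g_{M}$ on $\Delta_{4r}$, the Lipschitz bound \eqref{pri:2.3} on $\widehat A$, the trace theorem on the $C^{1,1}$ surface $\Delta_{4r}$ (locally flattened), and Young's inequality, this extra contribution is absorbed into a fraction of $\|\nabla\tilde v\|_{L^{2}(\varepsilon Y^{*}(z))}^{2}$ plus $\|g_{M}\|_{L^{\infty}(\Delta_{2r})}^{2}|\varepsilon Y^{*}(z)\cap\Delta_{4r}|$; the tangential-derivative term $r\|\nabla_{\mathrm{tan}}g_{M}\|_{L^{\infty}}$ is picked up when one estimates the oscillation of $g_{M}$ across cells of size $\varepsilon$.

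Summing the cell-wise estimate over $z\in T_{\varepsilon}$ (using the finite-overlap of $\{Y^{*}(z)\}$ and the fact that the number of boundary cells is $O(r^{d-1}\varepsilon^{1-d})$) and then scaling back, I expect to obtain
\begin{equation*}
\int_{D_{r}}|\tilde v|^{2}\,dx\;\lesssim\;\int_{D_{2r}^{\varepsilon}}|\tilde v|^{2}\,dx
+ r^{d}\|g_{M}\|_{L^{\infty}(\Delta_{2r})}^{2}
+ r^{d+2}\|\nabla_{\mathrm{tan}}g_{M}\|_{L^{\infty}(\Delta_{2r})}^{2},
\end{equation*}
which, after dividing by $r^{d+2}$ and taking square roots, delivers the reduced inequality and hence \eqref{pri:5.17}. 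The main obstacle is the boundary-cell estimate: one has to design the cut-off $\varphi$ compatibly with the locally flattened $\Delta_{4r}$ so that the Poincar\'e step on $H_{k}$ is preserved while the resulting boundary integral retains the correct scaling, and this is precisely where the hypothesis $\partial\Omega\in C^{1,1}$ (via a local flattening chart and a uniform trace bound along the family $\{\varepsilon Y^{*}(z)\}$) enters the argument.
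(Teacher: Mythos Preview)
Your reduction to the estimate
\[
\Big(\dashint_{D_{r}}|\tilde v|^{2}\Big)^{1/2}\lesssim\Big(\dashint_{D_{2r}^{\varepsilon}}|\tilde v|^{2}\Big)^{1/2}+\|g_{M}\|_{L^{\infty}(\Delta_{2r})}+r\|\nabla_{\mathrm{tan}}g_{M}\|_{L^{\infty}(\Delta_{2r})}
\]
is exactly right, and the idea of using a cutoff near the holes together with the monotonicity of $\widehat A$ is also the paper's starting point. The substantive gap is in the boundary-cell step. After integration by parts you are left with the surface term
\[
\int_{\varepsilon Y^{*}(z)\cap\Delta}\varphi^{2}(-g_{M})\,\vec n\cdot\bigl(\widehat A(\nabla v)-\widehat A(M)\bigr)\,dS,
\]
and you propose to absorb it, via the trace theorem and Young's inequality, into a fraction of $\|\varphi\nabla\tilde v\|_{L^{2}(\varepsilon Y^{*}(z))}^{2}$. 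This does not work: the integrand carries the full conormal derivative of $v$ on $\Delta$, and any trace estimate sending $\nabla\tilde v|_{\Delta}$ back to the bulk either produces $\nabla^{2}v$ or a factor $\varepsilon^{-1}$ that cannot be absorbed. If instead you lift $\varphi^{2}g_{M}$ from $\Delta$ by a harmonic-type extension, the resulting bulk bound involves $\|\nabla\tilde v\|_{L^{2}(\varepsilon Y^{*}(z)\cap D)}$ over the \emph{whole} half-cell, not only where $\varphi$ lives, so it again cannot be absorbed into $\|\varphi\nabla\tilde v\|_{L^{2}}^{2}$.

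The paper avoids this difficulty by a different device. Rather than estimating the boundary integral, it removes it: one solves an auxiliary problem $\mathcal L_{0}G=0$ in $D_{3r}$ with $G=\tilde g$ on $\partial D_{3r}$ (where $\tilde g$ is a cut-off extension of $g_{M}$), sets $w=\tilde v-G$, and tests with $(\varphi^{\varepsilon})^{2}w$. Since $w=0$ on $\Delta_{2r}$ and a domain $\tilde D_{r}$ is chosen so that $\varphi^{\varepsilon}=0$ on $\partial\tilde D_{r}\setminus\Delta_{2r}$, the test function vanishes on the entire boundary and no surface integral appears. The price is an extra bulk term coming from $\nabla\cdot\widehat A(M+\nabla G)\neq 0$, but this is controlled by $\|\nabla G\|_{L^{2}(D_{3r})}$, which in turn is bounded by the $H^{1}(\Delta_{2r})$-norm of $g_{M}$ via an energy estimate (see \eqref{pri:5.12}); this is precisely where the terms $\|g_{M}\|_{L^{\infty}}$ and $r\|\nabla_{\mathrm{tan}}g_{M}\|_{L^{\infty}}$ enter. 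Note also that the paper works with a single global cutoff $\varphi^{\varepsilon}$ on a well-chosen $\tilde D_{r}$ rather than cell-by-cell; this is mainly a matter of packaging, but the introduction of $G$ is the essential idea you are missing.
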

\begin{proof}
For any $M\in \mathbb{R}^{d}$, let $\tilde{v}=v-g_M$, and we have $\tilde{v}(x)=-g_M$ on $\Delta_{4r}$. The proof is reduced to show that
\begin{equation}\label{pri:5.15}
  \Big(\dashint_{D_{r}}|\tilde{v}|^{2}\Big)^{1/2}\lesssim
  \Big(\dashint_{D_{2r}^{\varepsilon}}|\tilde{v}|^{2}\Big)^{1/2}
  +r\|\nabla_{\text{tan}}g_M\|_{
  L^{\infty}(\Delta_{2r})}+\|g_M\|_{L^{\infty}(\Delta_{2r})},
\end{equation}
where the up to constant is independent of $\varepsilon$ and $r$.

First, we introduce a cutoff function $\varphi^{\varepsilon}$ as we did in Lemma \ref{lemma:2.6}. Recalling that $\mathbb{R}^{d}\backslash \omega=\bigcup^{\infty}_{k=1}H_{k}$ and
$0<\mathfrak{g}^{\omega}\leq\inf_{i\neq j}\bigg\{\text{dist}(H_{i},H_j)\bigg\}$. Let
$0<c<\mathfrak{g}^{\omega}/10$, $\varphi^{\varepsilon}(x)=1$ if $x\in\mathbb{R}^{d}\backslash \varepsilon\omega$, $\varphi^{\varepsilon}(x)=0$ if
$\text{dist}(x,\mathbb{R}^{d}\backslash\varepsilon\omega)\geq c\varepsilon$ and $|\nabla \varphi^{\varepsilon}|\lesssim 1/{\varepsilon}$. So, the support of $\varphi^{\varepsilon}$ is around the holes $\varepsilon H_{k}$. Obviously, it holds that
\begin{equation}\label{pri:5.14}
  \dashint_{D_{r}}|\tilde{v}|^2\lesssim
  \dashint_{D_{r}^{\varepsilon}}|\tilde{v}|^2+
  \dashint_{D_{r}\backslash\varepsilon\omega}|\tilde{v}|^2,
\end{equation}
we only need to deal with the second term of the right hand side above. One may choose a domain $\tilde{D}_{r}$ satisfying that
$D_{r}\subseteq \tilde{D}_{r}\subseteq D_{2r}$ and $\varphi^{\varepsilon}=0$ on $\partial \tilde{D}_{r}\backslash \Delta_{2r}$.
Due to the construction of $\varphi^{\varepsilon}$ and $\tilde{D}_{r}$, one may derive
\begin{equation}\label{pri:5.9}
\begin{aligned}
\dashint_{D_{r}\backslash\varepsilon\omega}|\tilde{v}|^2\leq
\dashint_{D_{r}}
|\varphi^{\varepsilon}\tilde{v}|^2
&\lesssim\varepsilon^{2}\dashint_{D_{r}}
|\nabla(\varphi^{\varepsilon}\tilde{v})|^2\\
&\lesssim\dashint_{D_{2r}^{\varepsilon}}|\tilde{v}|^2+\varepsilon^{2}
\dashint_{\tilde{D}_{r}}
|\varphi^{\varepsilon}\nabla \tilde{v}|^2
\end{aligned}
\end{equation}
Let $\tilde{g}$ be the classic linear extension of $g_M$ such that $\tilde{g}\in H^{1}_{0}(\Delta_{3r})$, $\tilde{g}=g_M$ on $\Delta_{2r}$ and $\|\tilde{g}\|_{H^{1}(\Delta_{3r})}\lesssim \|g_M\|_{H^{1}(\Delta_{2r})}$. Next, we consider $G$ satisfying:
\begin{equation*}
\left\{
\begin{aligned}
\mathcal{L}_{0}(G)&=0 &\qquad&\text{in}\quad D_{3r},\\
G&=\tilde{g}&\qquad&\text{on}\quad\partial D_{3r}.
\end{aligned}\right.
\end{equation*}
Appealing to the rescaling arguments,
we set $G_r(y):=\frac{1}{r}G(ry)$;
$\tilde{g}_r(y):=\frac{1}{r}\tilde{g}(ry)$
and $g_{M,r}:=\frac{1}{r}g_M(ry)$, where
$y\in D_{3}$ and $x=ry\in D_{3r}$. Thus, it is clear to see that $G_r$ is associated with $\tilde{g}_r$ by the same type
equations: $\mathcal{L}_0(G_r) = 0$ in $D_{3}$ with
$G_r = \tilde{g}_r$ on $\partial D_3$. So, it follows from energy estimates that
\begin{equation*}
\|\nabla G_r\|_{L^{2}(D_{3})}\lesssim
\|\tilde{g}_r\|_{H^{1/2}(\partial D_{3})}\lesssim
\|\tilde{g}_r\|_{H^{1}(\partial D_{3})}\lesssim\|g_{M,r}\|_{H^{1}(\Delta_{2})},
\end{equation*}
where we use the facts that $G_r=\tilde{g}_r=0$ on $\partial D_{3}\backslash\Delta_{3}$ and
$\|\tilde{g}_r\|_{H^{1}(\Delta_{3})}\lesssim \|g_{M,r}\|_{H^{1}(\Delta_{2})}$ in the last step.
Therefore, scaling back one may obtain the estimate
\begin{equation}\label{pri:5.12}
\Big(\dashint_{D_{3r}} |\nabla G|^2\Big)^{1/2}
\lesssim
\Big(\dashint_{\Delta_{2r}} |\nabla_{\text{tan}} g_M|^2\Big)^{1/2}
+ \frac{1}{r}\Big(\dashint_{\Delta_{2r}}
|g_M|^2\Big)^{1/2}
\end{equation}
Let $w=\tilde{v}-G$, then $w=0$ on $\Delta_{2r}$ and
$\nabla w=\nabla v-(M+\nabla G)$.
According to \eqref{pri:2.3} and $\varphi^{\varepsilon}w=0$ on $\partial \tilde{D}_r$, it follows that
\begin{equation*}
\begin{aligned}
\int_{\tilde{D}_{r}}|\nabla w|^2(\varphi^{\varepsilon})^2&\lesssim
\int_{\tilde{D}_{r}}\Big[\widehat{A}(\nabla v)-\widehat{A}(M+\nabla G)\Big]\nabla w(\varphi^\varepsilon)^2\\
&=-2\int_{\tilde{D}_{r}}\Big[\widehat{A}(\nabla v)-\widehat{A}(M+\nabla G)\Big]\nabla \varphi^\varepsilon
w\varphi^\varepsilon-\int_{\tilde{D}_{r}}\nabla\cdot
\Big[\widehat{A}(\nabla v)-\widehat{A}(M+\nabla G)\Big](\varphi^\varepsilon)^2w\\
&\lesssim \int_{\tilde{D}_{r}}|\nabla w||\nabla \varphi^{\varepsilon}||w\varphi^{\varepsilon}|-\int_{\tilde{D}_{r}}
\nabla\cdot\Big[\widehat{A}(\nabla v)-\widehat{A}(M+\nabla G)\Big](\varphi^\varepsilon)^2w.
\end{aligned}
\end{equation*}
By noting that $\mathcal{L}_{0}(v)=0$ and $\mathcal{L}_{0}(g_M)=0$, it follows from Young's inequality that
\begin{equation}\label{pri:5.10}
 \int_{\tilde{D}_{r}}|\varphi^{\varepsilon}\nabla w|^2
 \lesssim \delta \int_{\tilde{D}_{r}}|\varphi^{\varepsilon}\nabla w|^2+C_{\delta}\int_{\tilde{D}_{r}}|w\nabla \varphi^{\varepsilon}|^2-\int_{\tilde{D}_{r}}
 \nabla\cdot\Big[\widehat{A}(M)-\widehat{A}(M+\nabla G)\Big](\varphi^\varepsilon)^2w.
\end{equation}
On account of the divergence theorem for the last term in \eqref{pri:5.10}, one may have
\begin{equation}\label{pri:5.11}
\begin{aligned}
-\int_{\tilde{D}_{r}}
\nabla\cdot[\hat{A}(M)-\hat{A}(M+\nabla G)](\varphi^\varepsilon)^2w
&=\int_{\tilde{D}_{r}}
[\hat{A}(M)-\hat{A}(M+\nabla G)](\nabla w |\varphi^\varepsilon|^2+2\nabla\varphi^{\varepsilon}
w\varphi^{\varepsilon})\\
&\lesssim\int_{\tilde{D}_{r}}|\nabla G|(|\nabla w||\varphi^\varepsilon|^2+|\nabla\varphi^{\varepsilon}
||w\varphi^{\varepsilon}|)\\
&\lesssim \delta\int_{\tilde{D}_{r}}|\varphi^{\varepsilon}\nabla w|^2+C_{\delta}\int_{\tilde{D}_{r}}(|\varphi^{\varepsilon}\nabla G|^2+|w\nabla \varphi^{\varepsilon}|^2).
 \end{aligned}
\end{equation}
Plugging  \eqref{pri:5.11} back into \eqref{pri:5.10} and choosing $\delta$ small enough, we derive that
\begin{equation*}
\begin{aligned}
\int_{\tilde{D}_{r}}|\varphi^{\varepsilon}\nabla w|^2
\lesssim \int_{\tilde{D}_{r}}|w\nabla\varphi^{\varepsilon}|^2
+\int_{\tilde{D}_{r}}|\varphi^{\varepsilon}\nabla G|^2
\lesssim\varepsilon^{-2} \int_{D_{2r}^{\varepsilon}}w^2
+\int_{\tilde{D}_{r}}|\varphi^{\varepsilon}\nabla G|^2.
\end{aligned}
\end{equation*}
Reviewing the relationship $\tilde{v} = w + G$, the above estimate implies
\begin{equation}\label{pri:5.13}
\begin{aligned}
\varepsilon^{2}\dashint_{\tilde{D}_{r}}
|\varphi^{\varepsilon}\nabla \tilde{v}|^2
&\lesssim\varepsilon^{2}\dashint_{\tilde{D}_{r}}
|\varphi^{\varepsilon}\nabla w|^2+\varepsilon^{2}\dashint_{\tilde{D}_{r}}
|\varphi^{\varepsilon}\nabla G|^2\\
&\lesssim\dashint_{D_{2r}^{\varepsilon}}w^2+\varepsilon^2\dashint_{D_{3r}}
|\nabla G|^2\lesssim^{\eqref{pri:5.12}}
\dashint_{D_{2r}^{\varepsilon}}\tilde{v}^2+
\dashint_{\Delta_{2r}}
|g_M|^2+r^2\dashint_{\Delta_{2r}}
|\nabla_{\text{tan}}g_M|^2,
\end{aligned}
\end{equation}
in which we also employ the fact $\varepsilon\leq r$ in the last inequality.

Consequently,
collecting the estimates \eqref{pri:5.14}, \eqref{pri:5.9}, \eqref{pri:5.13} gives the desired estimate \eqref{pri:5.15}.
We have completed the whole proof.
\end{proof}

\begin{lemma}\label{lemma:5.4}
Let $\varepsilon<r<1$ and $\Omega$ be a bounded $C^{1,1}$ domain. Suppose that $v$ is the solution to $\mathcal{L}_{0}v=0$ in $D_{4r}$ with $v=0$ on $\Delta_{4r}$. Then for any $\theta\in(0,1/4)$ there holds
\begin{equation}\label{pri:5.16}
  J_{\varepsilon}(\theta r, v)\lesssim \theta^{\alpha}J_{\varepsilon}(r,v),
\end{equation}
where $\alpha\in (0,1)$ and the up to constant depends on $\mu_{0},\mu_{1},d,\mathfrak{g}^\omega$ and the characters of $\omega$ and $\Omega$.
\end{lemma}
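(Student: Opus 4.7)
The plan is to extract from the boundary regularity of $v$ an optimal affine approximant whose slope is $\nabla v(0)$, and then absorb the boundary seminorms appearing in $J_\varepsilon$ using the $C^{1,1}$ curvature of $\partial\Omega$. After a translation and rotation of coordinates one may assume $\vartheta(0)=0$ and $\nabla\vartheta(0)=0$, so that the outward normal at the base point $0\in\Delta_{4r}$ is $e_d$. Since $v\equiv 0$ on $\Delta_{4r}$, the tangential derivatives of $v$ at $0$ vanish and therefore $\nabla v(0)=\lambda e_d$ for some $\lambda\in\mathbb{R}$.

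First I would invoke boundary $C^{1,\alpha'}$ regularity for $v$: by the Lipschitz continuity and strict ellipticity of $\widehat{A}$ (Lemma \ref{lemma:2.2} and \eqref{a:4}), together with $\partial\Omega\in C^{1,1}$ and the homogeneous Dirichlet datum on $\Delta_{4r}$, a linearization-plus-Schauder argument of the type indicated in Remark \ref{remark:2.4} places $v$ in $C^{1,\alpha'}(\overline{D_{r/2}})$ for some $\alpha'\in(0,1)$, with the scale-invariant bound
\begin{equation*}
r\,\|\nabla v\|_{L^\infty(D_{r/2})}+r^{1+\alpha'}[\nabla v]_{C^{0,\alpha'}(\overline{D_{r/2}})}\lesssim \Big(\dashint_{D_r}|v|^2\Big)^{1/2}.
\end{equation*}
Taking $g_M(x)=\lambda x_d$ as the affine comparator in the infimum defining $J_\varepsilon(\theta r,v)$, a Taylor expansion of $v$ about $0$ yields $|v(x)-g_M(x)|\lesssim(\theta r)^{1+\alpha'}[\nabla v]_{C^{0,\alpha'}(D_{r/2})}$ on $D_{\theta r}^\varepsilon$, while the $C^{1,1}$ bounds $|\vartheta(x')|\lesssim M_0|x'|^2$ and $|\nabla\vartheta(x')|\lesssim M_0|x'|$ produce $\|g_M\|_{L^\infty(\Delta_{\theta r})}\lesssim M_0|\lambda|(\theta r)^2$ and $\|\nabla_{\text{tan}}g_M\|_{L^\infty(\Delta_{\theta r})}\lesssim M_0|\lambda|\theta r$. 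Combining, dividing by $\theta r$, and using $\theta\leq\theta^{\alpha'}$ for $\theta\in(0,1/4)$,
\begin{equation*}
J_\varepsilon(\theta r,v)\lesssim\theta^{\alpha'}\Big\{r^{\alpha'}[\nabla v]_{C^{0,\alpha'}(D_{r/2})}+|\lambda|r\Big\}\lesssim \frac{\theta^{\alpha'}}{r}\Big(\dashint_{D_r}|v|^2\Big)^{1/2}.
\end{equation*}

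The main obstacle is closing the loop by showing $r^{-1}(\dashint_{D_r}|v|^2)^{1/2}\lesssim J_\varepsilon(r,v)$; this is where $v|_{\Delta_{4r}}=0$ is essential. I would let $M^*$ denote the minimizer in $J_\varepsilon(2r,v)$, apply Caccioppoli's estimate to $v-g_{M^*}$ (available from the coercivity of $\widehat{A}$ in \eqref{pri:2.3}, exactly as in \eqref{pri:5.10}--\eqref{pri:5.11} in the proof of Lemma \ref{lemma:5.3}) to obtain $\|\nabla v-M^*\|_{L^2(D_r)}\lesssim r^{-1}\|v-g_{M^*}\|_{L^2(D_{2r})}$, then invoke Poincar\'e's inequality (justified because $v$ vanishes on a portion of $\partial D_r$), and finally apply Lemma \ref{lemma:5.3} at scale $2r$ to transfer the $L^2$-average from $D_{2r}$ to $D_{4r}^\varepsilon$. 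The ``free'' normal component $M_d^*$ not penalized by the tangential terms of $J_\varepsilon(2r,v)$ is reabsorbed through the curvature contribution $|M_d^*|r^2$ entering $\|g_{M^*}\|_{L^\infty(\Delta_{2r})}$. Setting $\alpha=\alpha'$ then yields \eqref{pri:5.16}. Should the direct route prove too delicate to track quantitatively, a compactness-contradiction argument in the spirit of Avellaneda--Lin---normalizing $J_\varepsilon(r,v_n)=1$, extracting a weakly convergent subsequence, and contradicting the $C^{1,\alpha'}$ regularity of the limit---delivers the same decay with a possibly smaller exponent $\alpha\in(0,\alpha')$.
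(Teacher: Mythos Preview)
Your first displayed bound
\[
J_\varepsilon(\theta r,v)\ \lesssim\ \frac{\theta^{\alpha'}}{r}\Big(\dashint_{D_r}|v|^2\Big)^{1/2}
\]
is fine, but the closing step --- showing that $r^{-1}(\dashint_{D_r}|v|^2)^{1/2}$ is controlled by $J_\varepsilon$ at a comparable scale --- is simply false. Take the boundary locally flat, $\Delta_{4r}\subset\{x_d=0\}$, and $v(x)=x_d$: then $\mathcal L_0 v=0$ (since $\widehat A(e_d)$ is a constant vector), $v=0$ on $\Delta_{4r}$, and $r^{-1}(\dashint_{D_r}|v|^2)^{1/2}\sim 1$; yet the choice $M=e_d$ makes every term in $J_\varepsilon(r,v)$ vanish. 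Your ``curvature reabsorption'' of $M_d^*$ through $\|g_{M^*}\|_{L^\infty(\Delta_{2r})}$ therefore fails outright in the flat case, and in the curved case it only yields $|M_d^*|\lesssim J_\varepsilon(2r,v)/(M_0 r)$, which carries an $r$-dependent constant that destroys the iteration. The compactness fallback does not rescue this either: the obstruction is not a lack of compactness but the fact that the intermediate quantity you factor through is genuinely too large.

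The remedy --- and this is what the paper does --- is to avoid factoring through $\|v\|_{L^2}$ altogether. Since $g_M$ is affine, $[\nabla v]_{C^{0,\alpha}}=[\nabla(v-g_M)]_{C^{0,\alpha}}$ for \emph{every} $M\in\mathbb R^d$. Apply the boundary $C^{1,\alpha}$ estimate (Theorem~\ref{thm:2.19}) not to $v$ but to $\tilde v:=v-g_M$, whose Dirichlet datum on $\Delta_{2r}$ is $-g_M$; the right-hand side then reads
\[
r^{\alpha}[\nabla v]_{C^{0,\alpha}}\ \lesssim\ \frac{1}{r}\Big(\dashint_{D_{r/2}}|v-g_M|^2\Big)^{1/2}+\frac{1}{r}\|g_M\|_{L^\infty(\Delta_{r/2})}+\|\nabla_{\text{tan}}g_M\|_{L^\infty(\Delta_{r/2})},
\]
which is exactly $J(r/2,v)$ once you take the infimum over $M$. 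Combining with your first step gives $J_\varepsilon(\theta r,v)\lesssim\theta^{\alpha}J(r/2,v)$, and Lemma~\ref{lemma:5.3} converts $J(r/2,v)\lesssim J_\varepsilon(r,v)$. The point is that the affine degree of freedom must be kept live all the way through the Schauder estimate, not frozen at $M=0$ and then recovered afterward.
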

\begin{proof}
  By the definition of $J_{\varepsilon}(\theta r,v)$ with  $\theta\in(0,\frac{1}{4})$, we have
\begin{equation*}
J_{\varepsilon}(\theta r,v)=\frac{1}{\theta r}\inf_{M\in \mathbb{R}^d}\bigg\{\Big(\dashint_{D_{\theta r}^{\varepsilon}}|v-g_M|^2\Big)^{1/2}
+\theta r\|\nabla_{\text{tan}}g_M\|_{L^{\infty}(\Delta_{\theta r})}+\|g_M\|_{L^{\infty}(\Delta_{\theta r})}
\bigg\}.
\end{equation*}
We may choose $M_0=\nabla v(x_{0})$ for some $x_{0}\in\Delta_{\theta r}$ here, and it follows from mean value theorem that
\begin{equation*}
\begin{aligned}
J_{\varepsilon}(\theta r,v)\lesssim J(\theta r,v)
&\lesssim
\frac{1}{\theta r}\bigg\{\Big(\dashint_{D_{\theta r}}|v-g_{M_0}|^2\Big)^{1/2}+\|v-g_{M_0}\|_{L^{\infty}(\Delta_{\theta r})}\bigg\}+\|g_{M_0}\|_{L^{\infty}(\Delta_{\theta r})}\\
&\lesssim (\theta r)^{\alpha}[\nabla v]_{C^{0,\alpha}(D_{\theta r}\cup \Delta_{\theta r})}.
\end{aligned}
\end{equation*}
By noting that $[\nabla v]_{C^{0,\alpha}(D_{\theta r}\cup \Delta_{\theta r})}=[\nabla \tilde{v}]_{C^{0,\alpha}(D_{\theta r}\cup \Delta_{\theta r})}$ if $\tilde{v}(x)=v(x)-g_M$ for any $M\in\mathbb{R}^d$, one may have
\begin{equation*}
  r^{\alpha}[\nabla v]_{C^{0,\alpha}(D_{\theta r}\cup \Delta_{\theta r})}\lesssim^{\eqref{pri:7.5}} \frac{1}{r}\bigg\{\Big(\dashint_{D_{\frac{r}{2}}}|\tilde{v}|^2\Big)^{1/2}
  +\|g_M\|_{L^{\infty}(\Delta_{\frac{r}{2}})}\bigg\}
  +\|\nabla_{\text{tan}}g_M\|_{L^{\infty}(\Delta_{\frac{r}{2}})},
\end{equation*}
For $M\in \mathbb{R}^d$ is arbitrary, the desired result
$\eqref{pri:5.16}$  finally follows from the estimate
$\eqref{pri:5.17}$. We have completed the whole proof.
\end{proof}

\begin{lemma}[iteration's inequality II]
Let $\sigma$ be given in Lemma $\ref{lemma:5.2}$.
Assume the same conditions as in Theorem $\ref{thm:1.3}$.
Let $u_\varepsilon$ be a weak solution of
$\eqref{pde:7.4}$.
Then there exists $\theta\in(0,1/4)$ such that
\begin{equation}\label{pri:5.22}
 J_{\varepsilon}(\theta r, u_\varepsilon) \leq \frac{1}{2}J_{\varepsilon}(r,u_\varepsilon)
 + C\left(\frac{\varepsilon}{r}\right)^{\frac{\sigma}{2}}\Phi(2r)
\end{equation}
for any $\varepsilon\leq r<1$, and we impose the new notation
$\Phi(r):=\frac{1}{r}\Big(\dashint_{D_r^{\varepsilon}}
|u_{\varepsilon}|^2\Big)^{\frac{1}{2}}$.
\end{lemma}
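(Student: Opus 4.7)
The plan is to follow the blueprint of Lemma \ref{lemma:4.4} (iteration's inequality I), adapted to the boundary setting. First I would invoke the approximating Lemma \ref{lemma:5.2}, suitably rescaled so that the resulting homogenized approximation $v$ lives on a large enough half-ball $D_{\rho}$ (with $\rho$ comparable to $r$) to be fed into Lemmas \ref{lemma:5.3} and \ref{lemma:5.4}. This produces $v$ with $\mathcal{L}_0 v = 0$ in $D_\rho$, $v=0$ on $\Delta_\rho$, together with the quantitative bound
\begin{equation*}
\Big(\dashint_{D_{c_1 r}^\varepsilon} |u_\varepsilon - v|^2\Big)^{1/2}
\lesssim \Big(\frac{\varepsilon}{r}\Big)^{\sigma/2}\, r\, \Phi(2r)
\end{equation*}
for some small constant $c_1>0$, where the factor $r$ comes from comparing averaged $L^2$-norms and the appearance of $\Phi(2r)$ rather than $\Phi(cr)$ for the $c$ coming out of Lemma \ref{lemma:5.2} is obtained by a routine averaging comparison.

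Next, letting $M^{*}$ achieve the infimum in $J_\varepsilon(\theta r, v)$ and plugging $g_{M^{*}}$ into the definition of $J_\varepsilon(\theta r, u_\varepsilon)$, a triangle inequality on the $L^2$-piece gives, for $\theta<c_1$,
\begin{equation*}
J_\varepsilon(\theta r, u_\varepsilon) \leq J_\varepsilon(\theta r, v)
+ \frac{1}{\theta r}\Big(\dashint_{D_{\theta r}^\varepsilon}|u_\varepsilon - v|^2\Big)^{1/2},
\end{equation*}
while the tangential-derivative and $L^{\infty}$ pieces in $J_\varepsilon$ evaluated at $g_{M^{*}}$ carry across unchanged. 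The last term is then $\lesssim \theta^{-1-d/2}(\varepsilon/r)^{\sigma/2}\Phi(2r)$ by the previous step (the $\theta^{-1-d/2}$ is absorbed in the final constant once $\theta$ is fixed). Apply the decay estimate Lemma \ref{lemma:5.4} to $v$ to get $J_\varepsilon(\theta r, v) \leq C\theta^{\alpha} J_\varepsilon(r, v)$; this is the step that internally exploits Lemma \ref{lemma:5.3} to transfer the $C^{1,\alpha}$ regularity of $\nabla v$ from the unperforated region to the perforated functional. A symmetric triangle inequality $J_\varepsilon(r, v) \leq J_\varepsilon(r, u_\varepsilon) + r^{-1}(\dashint_{D_r^\varepsilon}|u_\varepsilon - v|^2)^{1/2}$ then yields
\begin{equation*}
J_\varepsilon(\theta r, u_\varepsilon) \leq C\theta^{\alpha} J_\varepsilon(r, u_\varepsilon) + C_\theta \Big(\frac{\varepsilon}{r}\Big)^{\sigma/2} \Phi(2r),
\end{equation*}
and fixing $\theta \in (0, 1/4)$ small enough that $C\theta^{\alpha} \leq 1/2$ closes the iteration step.

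The step I expect to be most delicate is reconciling the scales: Lemma \ref{lemma:5.2} produces $v$ on $D_r$ from $u_\varepsilon$ on $D_{4r}^\varepsilon$ with an error controlled only on $D_{r/12}^\varepsilon$, whereas Lemmas \ref{lemma:5.3} and \ref{lemma:5.4} both demand $v$ on a $D_{4r}$-type half-ball in order to produce the decay from scale $r$ down to $\theta r$. A naive matching fails, so one has to invoke Lemma \ref{lemma:5.2} at an enlarged radius and then absorb the resulting volume factors in the constant $C_\theta$; the extra $\Phi(3r) \lesssim \Phi(2r)$ has to be justified at the same time. A second, more minor, subtlety is that the functional $J_\varepsilon$ has three constituent pieces (the $L^2$-deviation on $D^\varepsilon$, the tangential gradient of $g_M$ on $\Delta$, and the $L^{\infty}$-norm of $g_M$ on $\Delta$), so one must select the infimizing $M$ carefully and use $v|_{\Delta}=0$ to ensure that the boundary pieces of $J_\varepsilon(\theta r, u_\varepsilon)$ and $J_\varepsilon(\theta r, v)$ align properly under the triangle inequality.
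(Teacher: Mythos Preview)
Your proposal is correct and follows exactly the route the paper has in mind: the paper's own proof simply reads ``The proof directly follows from Lemmas \ref{lemma:5.2} and \ref{lemma:5.4} and we omit the details,'' and your outline (approximate by $v$ via Lemma \ref{lemma:5.2}, apply the decay Lemma \ref{lemma:5.4} to $v$, then transfer back by triangle inequalities and fix $\theta$ so that $C\theta^{\alpha}\leq 1/2$) is precisely the interior argument of Lemma \ref{lemma:4.4} adapted to the boundary setting. The scale-matching issue you flag is genuine but harmless: applying Lemma \ref{lemma:5.2} at a suitably dilated radius (so that the resulting $v$ lives on a half-ball large enough for Lemma \ref{lemma:5.4}, while the approximation error is controlled on the needed smaller half-balls) and absorbing the volume factors into $C_\theta$ resolves it, and the paper simply suppresses this bookkeeping.
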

\begin{proof}
The proof directly follows from Lemmas \ref{lemma:5.2} and \ref{lemma:5.4} and we omit the details.
\end{proof}
\noindent\textbf{The proof of Theorem \ref{thm:1.3}}
The desired estimate \eqref{pri:1.14} mainly follows from \eqref{pri:5.22} coupled with Lemma $\ref{lemma:4.3}$ (see \cite[Lemma 8.5]{S5}) and Caccioppoli's inequality \eqref{pri:5.1}. For the details on interior Lipschitz estimates have already been fully shown in Section $\ref{sec:4}$, we leave it to the reader.
\qed

\section{Quenched Caldenr\'{o}n-Zygmund estimates}\label{sec:6}
\begin{lemma}[Shen's lemma]\label{lemma:6.1}
Suppose that $q>2$ and $\Omega$ be a bounded Lipschitz domain. Let $F\in L^2(\Omega)$ and $f\in L^{p}(\Omega)$ for some $2<p<q$. Suppose that for each ball with the property that $|B|\leq c_{0}|\Omega|$ and either $4B\subset\Omega$ or $B$ is centered on $\partial\Omega$,
there exist two measurable functions $F_{B}$ and $R_B$ on $\Omega\cap2B$, such that $|F|\leq|F_{B}|+|R_B|$ on $\Omega\cap2B$,
\begin{equation}\label{pri:6.1}
\begin{aligned}
\bigg(\dashint_{2B\cap\Omega}|R_B|^q\bigg)^{\frac{1}{q}}
&\leq N_{1}\bigg\{\Big(\dashint_{4B\cap\Omega}|F|^2\Big)^{\frac{1}{2}}
+\sup_{4B_0\supseteq B'\supseteq B}\Big(\dashint_{B'\cap\Omega}|f|^2\Big)^{\frac{1}{2}}\bigg\},\\
\bigg(\dashint_{2B\cap\Omega}|F_B|^2\bigg)^{\frac{1}{2}}
&\leq N_2\sup_{4B_0\supseteq B'\supseteq B}\bigg(\dashint_{B'\cap\Omega}|f|^2\bigg)^{\frac{1}{2}},
\end{aligned}
\end{equation}
where $N_{1},N_2>0$ and $0<c_0<1$. Then $F\in L^{p}(\Omega)$ and
\begin{equation}
\bigg(\int_{\Omega}|F|^p\bigg)^{\frac{1}{p}}\leq C\bigg\{\Big(
\int_{\Omega}|F|^2\Big)^{\frac{1}{2}}+\Big(
\int_{\Omega}|f|^p\Big)^{\frac{1}{p}}\bigg\},
\end{equation}
in which $C$ depends at most on $N_{1},N_2,c_0,p,q$ and the Lipschitz character of $\Omega$.
\end{lemma}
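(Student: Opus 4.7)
The strategy is the classical real-variable argument of Shen: reduce the $L^p$ bound on $F$ to a level-set (good-$\lambda$) inequality for the Hardy–Littlewood maximal operator $\mathcal{M}$ on $\Omega$. By homogeneity one may normalize so that $\|F\|_{L^2(\Omega)}+\|f\|_{L^p(\Omega)}=1$. Setting $E(\lambda) := \{x \in \Omega : \mathcal{M}(|F|^2)(x) > \lambda\}$, the aim is to produce constants $A > 1$, $\eta \in (0,1)$ and a threshold $\lambda_0$ (depending only on $|\Omega|$) such that
\begin{equation*}
|E(A\lambda)| \;\leq\; C N_1^q A^{-q/2}\, |E(\lambda)| \;+\; \big|\{x\in\Omega : \mathcal{M}(|f|^2)(x) > \eta\lambda\}\big|, \qquad \forall\, \lambda > \lambda_0.
\end{equation*}
Given this inequality, multiplying by $\lambda^{p/2-1}$, integrating over $(\lambda_0,\infty)$, and rescaling $\lambda \mapsto A\lambda$ on the left produces the factor $A^{p/2-q/2}<1$ (this is where the hypothesis $p<q$ is essential); choosing $A$ large makes $CN_1^q A^{p/2-q/2}<1/2$, which allows absorption into the left-hand side. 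The remaining $\mathcal{M}(|f|^2)$ contribution is estimated by the strong $(p/2,p/2)$ bound of $\mathcal{M}$, and the truncated portion $\lambda\leq\lambda_0$ is handled trivially via $\|F\|_{L^2(\Omega)}$ and $|\Omega|$.

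To establish the level-set inequality, first use the weak-$(1,1)$ bound to choose $\lambda_0$ so that $|E(\lambda)| \leq c_0|\Omega|/10$ for $\lambda > \lambda_0$. Then perform a Vitali/Calder\'on–Zygmund-type decomposition of $E(\lambda)$ into a disjoint family $\{B_j\}$ of balls with $|B_j|\leq c_0|\Omega|$, each either satisfying $4B_j \subset \Omega$ or centered on $\partial\Omega$, with a selection inequality of the form $\dashint_{B_j^{\sharp}\cap\Omega}|F|^2 \leq \lambda$ on a controlled enlargement $B_j^{\sharp}$. Fix such a $B_j$ and apply the hypothesis to obtain $|F| \leq |F_{B_j}| + |R_{B_j}|$ on $2B_j\cap\Omega$. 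Splitting the definition of $\mathcal{M}(|F|^2)(x)$ for $x\in B_j$ into contributions from averages over balls inside $2B_j$ and those extending outside, the latter is dominated by $C\lambda$ thanks to the selection property, while for the former one gets
\begin{equation*}
\{x\in B_j : \mathcal{M}(|F|^2)(x) > A\lambda\} \;\subset\; \{|F_{B_j}|^2 > cA\lambda\} \cup \{|R_{B_j}|^2 > cA\lambda\}
\end{equation*}
once $A$ is large enough. Chebyshev's inequality combined with the first bound in \eqref{pri:6.1} yields $|\{|R_{B_j}|^2 > cA\lambda\}| \lesssim N_1^q A^{-q/2}|B_j|$, while the second bound in \eqref{pri:6.1} implies that if there exists some point of $B_j$ at which $\mathcal{M}(|f|^2)\leq\eta\lambda$ with $\eta \sim A/N_2^2$, then $\{|F_{B_j}|^2 > cA\lambda\}\cap B_j = \emptyset$. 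Summing over the disjoint balls and using $\sum_j|B_j|\lesssim |E(\lambda)|$ yields the claimed level-set inequality.

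The principal obstacle is carrying out the Vitali/Calder\'on–Zygmund selection cleanly on a bounded Lipschitz domain: one must arrange that each selected ball is either strictly interior (with $4B\subset\Omega$) or centered on $\partial\Omega$ \emph{and} satisfies a good selection inequality, which requires a doubling-type substitute that is uniform with respect to the Lipschitz character of $\Omega$. A secondary subtlety is the absorption step, whose validity rests on the strict inequality $p<q$; the implicit constant in the final bound necessarily blows up as $p\uparrow q$, reflecting that the higher integrability of $R_B$ must beat the target exponent $p$ with room to spare.
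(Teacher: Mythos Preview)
Your proposal is correct and follows exactly the real-variable/good-$\lambda$ scheme that underlies Shen's original argument. The paper, however, does not prove this lemma at all: it simply cites \cite[Theorem~4.2.6]{S4} and \cite[Theorem~4.13]{S3}. Your sketch is essentially a faithful outline of the proof found in those references, so there is no genuine divergence in approach---you have supplied the argument that the paper omits by citation.
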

\begin{proof}
See \cite[Theorem 4.2.6]{S4} or \cite[Theorem 4.13]{S3}.
\end{proof}

\begin{lemma}[primary geometry on integrals]
Let $f\in L^1_{\text{loc}}(\mathbb{R}^d)$, and $\Omega\subset\mathbb{R}^d$ be a bounded Lipschitz domain. Then there hold the following inequalities:
\begin{enumerate}
\item[(1).] If $0<r<\varepsilon$ and $D_{r}(x_0)$ is given, then for any $x\in D_{r}(x_0)$ we have
\begin{equation}\label{pri:6.3}
\dashint_{B_{\varepsilon}(x)\cap\Omega}|f|\lesssim\dashint_{D_{4r}(x_0)}
\dashint_{B_{\varepsilon}(x)\cap\Omega}|f|dx;
\end{equation}
\item[(2).] $r\geq\varepsilon$ and $D_{r}(x_0)$ is given, then there holds
\begin{equation}\label{pri:6.4}
\dashint_{D_r(x_0)}|f|\lesssim\dashint_{D_{2r}(x_0)}
\dashint_{B_{\varepsilon}(x)\cap\Omega}|f|\lesssim\dashint_{D_{6r}(x_0)}|f|,
\end{equation}
\end{enumerate}
where the up to constant depends only on $d$.
\end{lemma}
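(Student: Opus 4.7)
The plan is as follows. Both inequalities share a common strategy: apply Fubini's theorem to the double integral, then exploit the Lipschitz regularity of $\Omega$ (which yields $|B_{\rho}(z)\cap\Omega|\sim\rho^d$ uniformly for $z\in\Omega$ and $\rho$ bounded by $\mathrm{diam}(\Omega)$) together with explicit geometric inclusions of balls.

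For (2), assume $r\geq\varepsilon$. Switching the order of integration and using $|B_{\varepsilon}(x)\cap\Omega|\sim\varepsilon^d$ gives
$$\dashint_{D_{2r}(x_0)}\dashint_{B_{\varepsilon}(x)\cap\Omega}|f(y)|\,dy\,dx\;\sim\;\frac{1}{\varepsilon^d|D_{2r}(x_0)|}\int_{\Omega}|f(y)|\,|D_{2r}(x_0)\cap B_{\varepsilon}(y)|\,dy.$$
For the lower bound, restrict the $y$-integration to $D_r(x_0)$: when $y\in D_r(x_0)$, $|y-x_0|<r$ together with $\varepsilon\leq r$ force $B_{\varepsilon}(y)\cap\Omega\subset D_{2r}(x_0)$, so $|D_{2r}(x_0)\cap B_{\varepsilon}(y)|=|B_{\varepsilon}(y)\cap\Omega|\sim\varepsilon^d$; the Lipschitz volume comparison $|D_r(x_0)|\sim|D_{2r}(x_0)|$ then produces $\dashint_{D_r(x_0)}|f|$ as a lower bound. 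For the upper bound, combine the trivial estimate $|D_{2r}(x_0)\cap B_{\varepsilon}(y)|\lesssim\varepsilon^d$ with the observation that the intersection is empty unless $\mathrm{dist}(y,D_{2r}(x_0))<\varepsilon\leq r$, i.e.\ $y\in D_{3r}(x_0)$; the comparison $|D_{3r}(x_0)|\sim|D_{6r}(x_0)|$ then completes the argument.

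For (1), assume $r<\varepsilon$ and fix $x^*\in D_r(x_0)$. The same Fubini step recasts the desired inequality as the pointwise geometric lower bound
$$|D_{4r}(x_0)\cap B_{\varepsilon}(y)|\gtrsim|D_{4r}(x_0)|\qquad\text{for every }y\in B_{\varepsilon}(x^*)\cap\Omega.$$
For such $y$ one has $|y-x_0|<\varepsilon+r<2\varepsilon$ and $\mathrm{diam}(D_{4r}(x_0))\leq 8r<8\varepsilon$; moreover $x^*\in D_{4r}(x_0)\cap B_{\varepsilon}(y)$ since $|x^*-y|<\varepsilon$. A direct lens-volume comparison of the two Euclidean balls $B_{4r}(x_0)$ and $B_{\varepsilon}(y)$ then shows that their intersection covers a fixed fraction, depending only on $d$, of $B_{4r}(x_0)$; the interior cone condition inherited from the Lipschitz character of $\Omega$ transfers this bound to the intersection with $\Omega$.

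The principal obstacle is the uniform geometric estimate in (1): the lens volume $|B_{4r}(x_0)\cap B_{\varepsilon}(y)|$ must be controlled from below by a dimensional constant times $|B_{4r}(x_0)|$ for every admissible $y$, including the marginal configuration where $|y-x^*|$ is close to $\varepsilon$ so that $x^*$ only barely lies in $B_{\varepsilon}(y)$. This requires a case split according to whether $4r\leq\varepsilon$ or $4r>\varepsilon$, with the unifying input being that $x^*$ always belongs to the lens together with a neighborhood whose size is controlled by the gap $\varepsilon-|y-x^*|$; averaging this against the Lipschitz measure bound on $D_{4r}(x_0)$ then gives the required dimensional constant.
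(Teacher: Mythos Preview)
The paper itself does not prove this lemma; it simply cites \cite[Lemma 6.3]{WXZ2020} for bounded Lipschitz domains and \cite[Lemma 6.5]{DO} for $\Omega=\mathbb{R}^d$. Your Fubini-based strategy is the standard one and is almost certainly what those references contain; your argument for part (2) is correct as written.

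There is, however, a genuine gap in your treatment of part (1). You correctly reduce the question to the pointwise bound $|D_{4r}(x_0)\cap B_\varepsilon(y)|\gtrsim |D_{4r}(x_0)|$ for every $y\in B_\varepsilon(x^*)\cap\Omega$. But the justification you propose --- that $x^*$ lies in the lens together with a neighborhood of radius controlled by the gap $\delta:=\varepsilon-|y-x^*|$, and that ``averaging this against the Lipschitz measure bound on $D_{4r}(x_0)$'' produces a dimensional constant --- does not close. The gap $\delta$ can be arbitrarily small, so the neighborhood you describe has volume $\sim\delta^d$, which is \emph{not} bounded below by $r^d$; and since the inequality must hold for every single $y$ (for arbitrary $f\ge 0$), no averaging over $y$ can rescue this. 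The case split $4r\le\varepsilon$ versus $4r>\varepsilon$ does not change the situation.

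The correct argument keeps the Fubini reduction but picks the test ball differently: move the center along the segment $[x_0,y]$ rather than anchoring it at $x^*$. Since $|x_0-y|\le|x_0-x^*|+|x^*-y|<r+\varepsilon$, one can always choose $p$ on this segment with $|p-x_0|\le 3r$ and $|p-y|\le\varepsilon-r$, so that $B_r(p)\subset B_{4r}(x_0)\cap B_\varepsilon(y)$ and the Euclidean lens has volume $\gtrsim r^d$ uniformly in $y$. For the intersection with $\Omega$, your appeal to the interior cone condition is the right instinct but is not immediate as stated, because $p$ need not lie in $\overline{\Omega}$. The clean way is to pass to boundary-flattening coordinates: after a bi-Lipschitz change of variables one has $x_0\in\partial\mathbb{R}^d_+$ and $y\in\overline{\mathbb{R}^d_+}$, the segment $[x_0,y]$ stays in the closed half-space, and hence $|B_r(p)\cap\mathbb{R}^d_+|\ge\tfrac12|B_r|$; mapping back gives the desired bound with constants depending only on $d$ and the Lipschitz character of $\Omega$.
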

\begin{proof}
  See \cite[Lemma 6.3]{WXZ2020}, while in the case of $\Omega=\mathbb{R}^d$, we refer the readers to \cite[Lemma 6.5]{DO}
\end{proof}
\noindent\textbf{The proof of Theorem \ref{thm:1.4}}
The main idea follows from \cite[Theorem 1.5]{WXZ2020} and the main tool of the proof is Shen's real methods \cite{S3}.
Let $B:=B(x_0,r)$ be any ball with $0<r<\frac{r_0}{10}$ such that $x_{0}\in\partial\Omega$ or $4B\subset \Omega$. We define the following quantities for the ease of statement:
\begin{equation*}
\begin{aligned}
  U(x)&:=\Big(\dashint_{B(x,\varepsilon)\cap\Omega_{\varepsilon}}
  |\nabla u_{\varepsilon}|^2\Big)^{\frac{1}{2}},\qquad
  &F(x)&:=\Big(\dashint_{B(x,\varepsilon)\cap\Omega_{\varepsilon}}
  |f|^2\Big)^{\frac{1}{2}},\\
  W_{B}(x)&:=\Big(\dashint_{B(x,\varepsilon)\cap\Omega_{\varepsilon}}
  |\nabla w_{\varepsilon}|^2\Big)^{\frac{1}{2}},\qquad
  &V_B(x)&:=\Big(\dashint_{B(x,\varepsilon)\cap\Omega_{\varepsilon}}
  |\nabla v_{\varepsilon}
  |^2\Big)^{\frac{1}{2}}
  \end{aligned}
\end{equation*}
for any $x\in\Omega$ and $w_{\varepsilon},v_{\varepsilon}$ will be defined later. We mention that $\tilde{u}_{\varepsilon},\tilde{v}_{\varepsilon}$ and $\tilde{w}_{\varepsilon}$ are corresponding extension functions defined by Lemma \ref{extensiontheory}.

First, we consider the case: $0<r<\varepsilon$. It's fine to fix $W_B=U, V_B=0$ in this case. It's obviously that
\begin{equation}\label{pri:6.5}
\Big(\dashint_{B\cap\Omega}V_{B}^2\Big)^{\frac{1}{2}}\lesssim
\Big(\dashint_{B\cap\Omega}F^2\Big)^{\frac{1}{2}}.
\end{equation}
For any $x\in B\cap\Omega$, we have
\begin{equation}\begin{aligned}
W_B^2(x)=\dashint_{B(x,\varepsilon)\cap\Omega_{\varepsilon}}
  |\nabla u_{\varepsilon}|^2
  &=\dashint_{B(x,\varepsilon)\cap\Omega}l_{\varepsilon}^{+}
  |\nabla \tilde{u}_{\varepsilon}|^2\\
  &\lesssim^{\eqref{pri:6.3}}\dashint_{4B\cap\Omega}\dashint_{B(x,\varepsilon)\cap\Omega}
  l_{\varepsilon}^{+}  |\nabla \tilde{u}_{\varepsilon}|^2=\dashint_{4B\cap\Omega}U^2.
  \end{aligned}
\end{equation}
For any $p\geq2$,  this leads to
\begin{equation}\label{pri:6.7}
\Big(\dashint_{B\cap\Omega}W_{B}^p\Big)^{\frac{1}{p}}
\leq\sup_{x\in B\cap\Omega}|W_B(x)|\lesssim
\Big(\dashint_{4B\cap\Omega}U^2\Big)^{\frac{1}{2}}+
\Big(\dashint_{B\cap\Omega}F^2\Big)^{\frac{1}{2}}.
\end{equation}

Then, we consider the case: $r\geq\varepsilon$. Let $w_{\varepsilon}$ satisfy the following equation:
\begin{equation}\label{pri:6.8}
\left\{\begin{aligned}
\mathcal{L}_{\varepsilon}w_{\varepsilon}&=0&\qquad &\text{in}\quad D_{12r}^{\varepsilon}(x_0),\\
\sigma_{\varepsilon}(w_{\varepsilon})&=0&\qquad&\text{on}\quad \partial D_{12r}^{\varepsilon}(x_{0})|_{D_{12r}(x_{0})}\\
w_{\varepsilon}&=u_{\varepsilon}&\qquad&\text{on}\quad \partial D_{12r}^{\varepsilon}(x_{0})|_{\partial D_{12r}(x_{0})}.
\end{aligned}\right.
\end{equation}
For any $x\in B\cap\Omega$, it follows from boundary and interior Lipschitz estimates \eqref{pri:1.1} and \eqref{pri:1.14} that
\begin{equation}
\begin{aligned}
\dashint_{B(x,\varepsilon)\cap\Omega_{\varepsilon}}
|\nabla w_{\varepsilon}|^2\lesssim
\dashint_{D^{\varepsilon}_{2r}(x_{0})}|\nabla w_{\varepsilon}|^2
&=\dashint_{D_{2r}(x_{0})}l^{+}_{\varepsilon}|\nabla \tilde{w}_{\varepsilon}|^2\\
&\lesssim^{\eqref{pri:6.4}}\dashint_{D_{4r}(x_{0})}
\dashint_{B(x,\varepsilon)\cap\Omega}l^{+}_{\varepsilon}|\nabla \tilde{w}_{\varepsilon}|^2=\dashint_{B_{4r}(x_0)\cap\Omega}
W_B^2,
\end{aligned}
\end{equation}
and this implies
\begin{equation}\label{pri:6.10}
 \sup_{x\in B\cap\Omega}|W_B(x)|^2\lesssim \dashint_{B_{4r}(x_0)\cap\Omega}(U^2+V_B^2).
\end{equation}
Set $v_{\varepsilon}=u_{\varepsilon}-w_{\varepsilon}$, then we have
\begin{equation}
\dashint_{B_{4r}(x_0)\cap\Omega}V_B^2\lesssim^{\eqref{pri:6.4}}
\dashint_{D_{12r}(x_{0})}l^+_{\varepsilon}|\nabla \tilde{v}_{\varepsilon}|^2=\dashint_{D^{\varepsilon}_{12r}(x_{0})}
|\nabla u_{\varepsilon}-\nabla w_{\varepsilon}|^2.
\end{equation}
In view of \eqref{pri:1.15} and \eqref{pri:6.8}, for any $\varphi\in H^{1}(D^{\varepsilon}_{12r}(x_{0});\partial D_{12r}^{\varepsilon}(x_{0})|_{\partial D_{12r}(x_{0})})$, one may have
\begin{equation}\label{pri:6.12}
\int_{D^{\varepsilon}_{12r}(x_{0})}
[A(x/\varepsilon,\nabla u_{\varepsilon})-A(x/\varepsilon,\nabla w_{\varepsilon})] \cdot\nabla\varphi=-\int_{D^{\varepsilon}_{12r}(x_{0})}f\cdot\nabla\varphi.
\end{equation}
By setting $\varphi=u_{\varepsilon}-w_{\varepsilon}$, it follows from \eqref{a:1} that
\begin{equation*}
  \text{LHS~of \eqref{pri:6.12}}\geq\mu_{0}\int_{D^{\varepsilon}_{12r}(x_{0})}
  |\nabla u_{\varepsilon}-\nabla w_{\varepsilon}|^2,
\end{equation*}
and
\begin{equation*}
  \text{RHS~of \eqref{pri:6.12}}\leq\frac{\mu_{0}}{2}\int_{D^{\varepsilon}_{12r}(x_{0})}
  |\nabla u_{\varepsilon}-\nabla w_{\varepsilon}|^2
  +C\int_{D^{\varepsilon}_{12r}(x_{0})}|f|^2,
\end{equation*}
where we employ Young's inequality. The above two inequalities leads to
\begin{equation}\label{pri:6.13}
\dashint_{D^{\varepsilon}_{12r}(x_{0})}
  |\nabla u_{\varepsilon}-\nabla w_{\varepsilon}|^2\lesssim
  \dashint_{D^{\varepsilon}_{12r}(x_{0})}|f|^2
  =\dashint_{D_{12r}(x_{0})}l^+_{\varepsilon}|f|^2\lesssim^{\eqref{pri:6.4}}
  \dashint_{D_{24r}(x_{0})}|F|^2,
\end{equation}
Combing \eqref{pri:6.10} and \eqref{pri:6.13}, for any $p\geq 2$, one may have
\begin{equation}\label{pri:6.14}
\begin{aligned}
\Big(\int_{B\cap\Omega}W_{B}^p\Big)^{\frac{1}{p}}
&\leq\sup_{x\in B\cap\Omega}|W_{B}(x)|
\lesssim \Big(\dashint_{B_{4r}(x_0)\cap\Omega}U^2\Big)^{\frac{1}{2}}
+\Big(\dashint_{B_{24r}(x_0)\cap\Omega}F^2\Big)^{\frac{1}{2}},\\
\Big(\int_{B\cap\Omega}V_{B}^2\Big)^{\frac{1}{2}}
&\lesssim\Big(\dashint_{B_{24r}(x_0)\cap\Omega}F^2\Big)^{\frac{1}{2}}.
\end{aligned}
\end{equation}
Therefore, the condition \eqref{pri:6.1} has been verified by estimates \eqref{pri:6.5},\eqref{pri:6.7} and \eqref{pri:6.14}, and it follows that for any $p\geq 2$,
\begin{equation*}
  \Big(\int_{\Omega}U^p\Big)^{\frac{1}{p}}\lesssim
  \Big(\int_{\Omega}U^2\Big)^{\frac{1}{2}}
  +\Big(\int_{\Omega}F^p\Big)^{\frac{1}{p}}.
\end{equation*}
This together with
\begin{equation*}
\begin{aligned}
  \int_{\Omega}U^2=\int_{\Omega}\dashint_{B(x,\varepsilon)
  \cap\Omega_{\varepsilon}}|\nabla u_{\varepsilon}|^2dx
  &\lesssim\int_{\Omega_{0}}|\nabla \tilde{u}_{\varepsilon}|^2\\
  &\lesssim^{\eqref{pri:2.23}}\int_{\Omega_{\varepsilon}}|\nabla u_{\varepsilon}|^2
  \lesssim \int_{\Omega_{\varepsilon}}|f|^2
  \lesssim^{\eqref{pri:6.4}}\int_{\Omega}|F|^2
  \lesssim\Big(\int_{\Omega}|F|^p\Big)^{\frac{2}{p}}
\end{aligned}
\end{equation*}
implies the desired estimate \eqref{pri:1.16}. We have completed the proof.
\qed

\section{Proofs of lemmas stated in Preliminaries}
\label{sec:8}

\noindent\textbf{The proof of Lemma \ref{lemma:2.1}.}
The estimates $\eqref{pri:2.1}$ and $\eqref{pri:2.2}$ have already been included in \cite{ZR}, while
the estimate $\eqref{pri:2.15}$ seems to be new derived here. The main
ideas can also be found in \cite{WXZ,W} and
we provide proofs in order for the reader's convenience.
Multiplying both sides of $\eqref{pde:1.2}$ by $N(y,\xi)$ and then integrating by parts, we have
\begin{equation*}
\begin{aligned}
0 &= \int_{Y\cap \omega} A(y,\xi+\nabla_y N(y,\xi))\cdot\nabla_yN(y,\xi) dy \\
 &= \int_{Y\cap \omega} A(y,\xi+\nabla_y N(y,\xi))\cdot\big(\xi+\nabla_yN(y,\xi)\big)dy
 - \int_{Y\cap \omega} A(y,\xi+\nabla_y N(y,\xi))dy\cdot\xi \\
 &\geq^{\eqref{a:1}} \mu_0 \int_{Y\cap \omega }|\xi + \nabla_y N(y,\xi)|^2 dy - \mu_1|\xi|\int_{Y\cap \omega}|\xi+\nabla_y N(y,\xi)|dy.
\end{aligned}
\end{equation*}
By Young's inequality,
\begin{equation*}
\int_{Y\cap \omega} |\xi + \nabla_y N(y,\xi)|^2 dy \leq C(\mu_0,\mu_1)|\xi|^2.
\end{equation*}
Thus this together with Poincar\'e's inequality will give the stated estimate $\eqref{pri:2.1}$.

To show the estimate $\eqref{pri:2.2}$, we start with the following identity
\begin{equation}\label{f:2.1}
\begin{aligned}
&\int_{Y\cap \omega} \big[A(y,\xi+\nabla_y N(y,\xi)) - A(y,\xi^\prime+\nabla_y N(y,\xi^\prime))\big]\cdot
\big[\xi-\xi^\prime + \nabla_y N(y,\xi) - \nabla_y N(y,\xi^\prime)\big]dy \\
 &= \int_{Y\cap \omega} \big[A(y,\xi+\nabla_y N(y,\xi)) - A(y,\xi^\prime+\nabla_y N(y,\xi^\prime))\big] dy
\cdot \big(\xi-\xi^\prime\big),
\end{aligned}
\end{equation}
where we use the fact that $N(\cdot,\xi),N(\cdot,\xi^\prime)\in H^1_{per}(Y\cap \omega)$ satisfy the equation
$\eqref{pde:1.2}$ for $\xi,\xi^\prime\in\mathbb{R}^d$, respectively. By the assumption $\eqref{a:2}$,
the left-hand side above is greater than
\begin{equation*}
\mu_0\int_{Y\cap \omega} |\xi-\xi^\prime + \nabla_yN(y,\xi) - \nabla_y N(y,\xi^\prime)|^2 dy,
\end{equation*}
while it follows from Young's inequality that its right-hand side is less than
\begin{equation*}
\frac{\mu_0}{2}\int_{Y\cap \omega }|\xi-\xi^\prime + \nabla_yN(y,\xi) - \nabla_y N(y,\xi^\prime)|^2 dy
+ C(\mu_0,\mu_1)|\xi-\xi^\prime|^2.
\end{equation*}
Thus it is not hard to derive that
\begin{equation}\label{f:2.2}
\Big(\int_{Y\cap \omega} |\nabla_yN(y,\xi) - \nabla_y N(y,\xi^\prime)|^2 dy\Big)^{1/2} \lesssim |\xi-\xi^\prime|,
\end{equation}
and this will give the estimate $\eqref{pri:2.2}$ in a similar way.

Then we proceed to show \eqref{pri:2.15}. Let $u(y,\xi)=N(y,\xi)+y\cdot\xi$ and $\tilde{u}(y,\xi)=u(y,\xi)+\tilde{M}$, in which one may choose
$\tilde{M}$ such that $\tilde{u}$ is positive in $Y\cap \omega$.
Note that $\tilde{u}$ still satisfies the equation
\begin{equation*}
  \nabla\cdot A(y,\nabla u(y,\xi))=0, \text{~~in~}Y\cap \omega.
\end{equation*}
Thus, it follows from the local boundedness estimate and the weak Harnack inequality (see Lemma \ref{lemma:7.1}
for the case $B_r\cap \partial\omega \not=\emptyset$, and
\cite[Corollary 3.10, Theorem 3.13]{MZ} for the case
$B_{4r}\subset 2Y\cap \omega$) that
\begin{equation}\label{pri:2.16}
  \sup_{y\in Y\cap \omega\cap B_{r}}\tilde{u}(y,\xi)\lesssim\dashint_{Y\cap \omega\cap B_{\bar{r}}}\tilde{u}(\cdot,\xi)\qquad\text{and}\qquad
  \inf_{y\in Y\cap \omega\cap B_{r}}\tilde{u}(y,\xi)\gtrsim
  \dashint_{Y\cap \omega\cap B_{\bar{r}}}\tilde{u}(\cdot,\xi),
\end{equation}
in which $B_{4r}\subset B_{\bar{r}}\subset 2Y$.
Then for any $y\in Y\cap \omega$ such that $\tilde{u}(y,\xi)-\tilde{u}(y,\xi')>0$, it follows from \eqref{pri:2.16} that
\begin{equation*}
  \tilde{u}(y,\xi)-\tilde{u}(y,\xi')\leq\sup_{y\in Y\cap \omega\cap B_r}\tilde{u}(y,\xi)-\inf_{y\in Y\cap \omega \cap B_r}\tilde{u}(y,\xi')
  \lesssim\dashint_{Y\cap \omega\cap B_{\bar{r}}}|\tilde{u}(\cdot,\xi)-\tilde{u}(\cdot,\xi')|.
\end{equation*}
Similarly, for any $y\in Y\cap\omega$ such that $\tilde{u}(y,\xi')-\tilde{u}(y,\xi)>0$, we may have
\begin{equation*}
  \tilde{u}(y,\xi')-\tilde{u}(y,\xi)\leq\sup_{y\in Y\cap \omega\cap B_r}\tilde{u}(y,\xi')-\inf_{y\in Y\cap \omega \cap B_r}\tilde{u}(y,\xi)
  \lesssim\dashint_{Y\cap \omega\cap B_{\bar{r}}}|\tilde{u}(\cdot,\xi')-\tilde{u}(\cdot,\xi)|.
\end{equation*}

Therefore, for any $y\in Y\cap\omega$, we obtain that
\begin{equation*}
\begin{aligned}
  |\tilde{u}(y,\xi)-\tilde{u}(y,\xi')|
  & \lesssim\dashint_{Y\cap \omega\cap B_{\bar{r}}}|\tilde{u}(\cdot,\xi)-\tilde{u}(\cdot,\xi')|\\
&  \lesssim\dashint_{Y\cap \omega}|N(\cdot,\xi)-N(\cdot,\xi')|+|\xi-\xi'|.
  \end{aligned}
\end{equation*}
This together with
$\eqref{pri:2.2}$ implies \eqref{pri:2.15}, and we have completed the proof.
\qed

\noindent\textbf{The proof of Lemma \ref{lemma:2.55}.}
For any fixed $\xi,\xi'\in\mathbb{R}^{d}$, setting
$P_{1}=\xi+\nabla_{y}N(y,\xi)$ and  $P_{2}=\xi'+\nabla_{y}N(y,\xi')$, we have
\begin{equation*}
  \nabla\cdot[A(y,P_{1})-A(y,P_{2})]=0
  \quad \text{in}~ Y\cap\omega.
\end{equation*}
Under the assumptions $\eqref{a:1},\eqref{a:2}$ and $\eqref{a:3}$,
it is well-known that $P_1,P_2$ are H\"older continuous
(see for example \cite[Theorems 1.1, 1.3]{F}).
In view of the Newton-Leibniz formula,
\begin{equation*}
 \frac{\partial}{\partial y_i}\bigg[\int_{0}^{1}\partial_{\xi_{j}}
  A^i(y,tP_{1}+(1-t)P_{2})dt\cdot(P_{1}^j-P_2^j)\bigg]=0.
\end{equation*}
We write $a_{ij}(y)=\int_{0}^{1}\partial_{\xi_{j}}
  A^i(y,tP_{1}+(1-t)P_{2})dt$. Thus,
  this together with
  $A\in C^1(\mathbb{R}^{d}\times \mathbb{R}^{d})$ implies that $a_{ij}$ is continuous on $\mathbb{R}^d$.
  Setting $\pi=N(y,\xi)-N(y,\xi')$ we have
  \begin{equation*}
    -\nabla\cdot[a(y)\nabla \pi]=\nabla\cdot[a(y)](\xi-\xi')
    \qquad\text{in}\quad Y\cap\omega
  \end{equation*}
with a natural boundary condition
$\vec{n}\cdot a(P_1-P_2) =0$ on $\partial\omega$ and
$\pi$ being periodic on $\partial Y$.
It follows from
the $L^p$ estimate (see
for example \cite[Theorem 1.1]{KFS1})
that for any $p\geq 2$, there holds
\begin{equation*}
  \Big(\dashint_{Y\cap \omega}|\nabla \pi|^pdy\Big)^{\frac{1}{p}}\lesssim
  |\xi-\xi'|+ \Big(\dashint_{Y\cap \omega}|\nabla \pi|^2dy\Big)^{\frac{1}{2}}\lesssim^{\eqref{f:2.2}}|\xi-\xi'|,
\end{equation*}
and the proof is complete.
\qed

\noindent\textbf{The proof of Lemma \ref{lemma:2.2}.}
The proof relies on the extension theorem heavily,
and the idea
is inspired by \cite{PR,ZR}.
Due to the formula $\eqref{f:2.1}$, we have that
\begin{equation}\label{f:2.5}
\begin{aligned}
\big<\widehat{A}(\xi)-\widehat{A}(\xi^\prime),\xi-\xi^\prime\big>
&\geq \mu_0\dashint_{Y\cap\omega} |\xi-\xi^\prime + \nabla_y(N(y,\xi) - N(y,\xi^\prime))|^2 dy.
\end{aligned}
\end{equation}

There are two cases: (1) $\partial Y\cap(\mathbb{R}^d\setminus\omega)=\emptyset$;
(2) $\partial Y\cap(\mathbb{R}^d\setminus\omega) \not=\emptyset$.

For the case (1), it follows from \cite[Lemma 2.6]{APMP} that
there is a linear extension operator from $H^1(Y\cap\omega)$
to $H^1(Y)$ such that the extended function
(denoted by $\tilde{N}(y,\xi)$) satisfies the inequality
\begin{equation}\label{f:3.7}
 \int_{Y\cap\omega}|\nabla_{y}N(y,\xi)|^2dy\geq C\int_{Y}|\nabla_{y}\tilde{N}(y,\xi)|^{2}dy,
 \end{equation}
where $C$ is independent of $N$ and $\xi$. Since
$\tilde{N} = N$ on $\partial Y$ and $N\in H_{\text{per}}^1(Y\cap\omega)$, we have
\begin{equation}\label{f:2.4}
  \int_{\partial Y} n \tilde{N}(\cdot,\xi) dS = 0.
\end{equation}
Thus, one may derive from $\eqref{f:2.4}$ that
\begin{equation}\label{pri:1.3}
 \int_{Y\cap\omega}
 |\nabla_{y}N(y,\xi)+\xi|^2dy\geq C\int_{Y}
 |\nabla_{y}\tilde{N}(y,\xi)+\xi|^{2}dy
 \geq C|\xi|^2,
\end{equation}
where $\xi\in\mathbb{R}^d$ is arbitrary, and
we also employ the facts that the extension operator is
linear and the extension
of a linear function is itself. By the same token, it is
not hard to see
\begin{equation}\label{}
\int_{Y\cap\omega} |\xi-\xi^\prime + \nabla_y(N(y,\xi) - N(y,\xi^\prime))|^2 dy
\geq C|\xi-\xi^\prime|^2,
\end{equation}
and this together with $\eqref{f:2.5}$ implies the first line of $\eqref{pri:2.3}$ in such the case.

For the case (2), let $\tilde{N}(\cdot,\xi)$ be
the extension function of $N(\cdot,\xi)$ in the sense of
\cite[pp.47, Theorem 4.2]{OSY}. Then we have
$\eqref{f:3.7}$ with a different estimated constant.
Moreover, from the construction of the extension operator
in \cite[pp.47, Theorem 4.2]{OSY},
one may infer that $\tilde{N}(\cdot,\xi)\in H_{\text{per}}^1(Y)$, and therefore the equality $\eqref{f:2.4}$ also holds.
Consequently, the first line of $\eqref{pri:2.3}$
would be true following the same computation as in the case (1).

Then we turn to the second line of $\eqref{pri:2.3}$, and
note that
\begin{equation*}
\begin{aligned}
|\widehat{A}(\xi)-\widehat{A}(\xi^\prime)|&\leq
\dashint_{Y\cap\omega}\big|A(y,\xi+\nabla N(y,\xi))-A(y,\xi^\prime+\nabla N(y,\xi^\prime))\big| dy \\
&\leq^{\eqref{a:1}} \mu_1 \dashint_{Y\cap\omega} |\xi-\xi^\prime + \nabla N(y,\xi) - \nabla N(y,\xi^\prime)|dy \\
&\leq^{\eqref{f:2.2}} C|\xi-\xi^\prime|.
\end{aligned}
\end{equation*}
In view of Remark $\ref{remark:2.1}$, we may have the third line of
$\eqref{pri:2.3}$ and the proof is complete.
\qed

\noindent\textbf{The proof of Lemma \ref{lemma:2.5}.}
The proof is quite similar to the linear case
(see for example \cite{S4,ZVVPSE1}) and surprisingly
depends on a linear structure of an auxiliary equation.
It is clear to see that (i) and (ii) follow from the formula $\eqref{eq:1.1}$
and the equation $\eqref{pde:1.2}$, respectively. By (i),
there exists $f_i(\cdot,\xi)\in H_{\text{per}}^{2}(Y)$ such that $\Delta f_i(\cdot,\xi) = b_i(\cdot,\xi)$ in Y.
Let $E_{ji}(y,\xi)= \frac{\partial}{\partial y_j}\big\{f_{i}(y,\xi)\big\}
-\frac{\partial}{\partial y_i}\big\{f_{j}(y,\xi)\big\}$. Thus $E_{ji} = - E_{ij}$, and
one may derive the first expression in $\eqref{eq:2.1}$ from the fact $(\text{ii})$.
Then, the rest thing is to show the estimate $\eqref{pri:2.4}$.
For any $\xi,\xi^\prime\in\mathbb{R}^d$, note that
\begin{equation*}
\begin{aligned}
\int_{Y} |\nabla E_{ji}(y,\xi)- \nabla E_{ji}(y,\xi^\prime)|^2 dy
&\leq 2 \int_{Y} \big|\nabla^2 \big(f(y,\xi)-f(y,\xi^\prime)\big)\big|^2 dy\\
&\lesssim \int_{2Y} \big|b_{i}(y,\xi)-b_{i}(y,\xi^\prime)\big|^2 dy
\lesssim^{\eqref{pri:2.3},\eqref{a:1},\eqref{f:2.2}} |\xi-\xi^\prime|^2,
\end{aligned}
\end{equation*}
where we employ interior $H^2$ theory and energy estimates
for the constructed Poisson's equation in the second step.
This together with Poincar\'e's inequality finally leads to the desired estimate $\eqref{pri:2.4}$.

To show \eqref{pri:2.17}, we claim that
\begin{equation*}
  \|\nabla f(\cdot,\xi)-\nabla f(\cdot,\xi')
  \|_{L^{\infty}(Y)}\lesssim|\xi-\xi'|.
\end{equation*}
According to $\Delta f(y,\xi)=b(y,\xi)$ in $Y$, we have
\begin{equation*}
  \Delta[f(y,\xi)-f(y,\xi')]=b(y,\xi)-b(y,\xi')\text{~in~}Y,
\end{equation*}
and it follows from the assumption
$\eqref{a:1}$ and the estimate $\eqref{pri:2.3}$ that
\begin{equation*}
  |b(y,\xi)-b(y,\xi')|\lesssim |\xi-\xi'|+l^+|\nabla(N(y,\xi)-N(y,\xi'))|.
\end{equation*}
Due to Lemma \ref{lemma:2.55},
it is known that $\|\nabla(N(\cdot,\xi)-N(\cdot,\xi'))
\|_{L^{p}(Y\cap\omega)}\lesssim |\xi-\xi'|$ for any $p\geq 2$,
and this coupled with the above estimate gives
\begin{equation}\label{f:2.3}
\|b(y,\xi)-b(y,\xi')\|_{L^{p}(Y)}
\lesssim |\xi-\xi^\prime|
\end{equation}
for any $p\geq 2$.
By interior Lipschitz's estimates one may derive that
\begin{equation*}
\begin{aligned}
  \|\nabla f(\cdot,\xi)-\nabla f(\cdot,\xi')\|_{L^{\infty}(Y)}
  &\lesssim
  \|\nabla f(\cdot,\xi)-\nabla f(\cdot,\xi')\|_{L^{2}(2Y)} +
  \|b(y,\xi)-b(y,\xi')\|_{L^{q}(2Y)}\\
  &\lesssim \|b(y,\xi)-b(y,\xi')\|_{L^{q}(Y)}\\
  &\lesssim |\xi-\xi'|
\end{aligned}
\end{equation*}
with $q>d$, where we employ the energy
estimate and H\"older's inequality in the second inequality,
and the estimate $\eqref{f:2.3}$ in the last one. Hence,
by the definition of $E_{ij}$, we obtain that
\begin{equation*}
  |E(y,\xi)-E(y,\xi')|\lesssim|\nabla f(\cdot,\xi)-\nabla
  f(\cdot,\xi')|\lesssim|\xi-\xi'|
  \text{~~~for~any~}y,\xi ,\xi'\in \mathbb{R}^{d},
\end{equation*}
which means that $|\nabla_{\xi} E(y,\xi)|\leq C$ for
any $y,\xi\in \mathbb{R}^{d}$, and we have completed the
whole proof.
\qed

\noindent\textbf{The proof of Lemma $\ref{interpolation}$.}
The end-point cases $s=0,1$ has been included in
the process of the proof, while we focus ourselves on
$0<s<1$ in the later proof. The idea relies on
interpolation and duality arguments.
Define
$T_\varepsilon(f) := \varpi(\cdot/\varepsilon)f$ on $\mathbb{R}^d$, and it is not hard to see that
\begin{equation*}
 \|T_\varepsilon (f)\|_{L^2(\Omega)}
 \leq \|\varpi\|_{L^\infty(Y)}\|f\|_{L^2(\Omega)}
 \leq C\|\varpi\|_{W^{1,p}(Y)}\|f\|_{L^2(\Omega)}.
\end{equation*}
This implies $\|T_\varepsilon\|_{L^2\to L^2}\leq C\|\varpi\|_{W^{1,p}(Y)}$ (this in fact proved the result in the case of $s=0$).
Then we obtain
\begin{equation*}
\begin{aligned}
 \|\nabla T_\varepsilon (f)\|_{L^2(\Omega)}
& \leq \varepsilon^{-1}\|\nabla\varpi(\cdot/\varepsilon)f\|_{L^2(\Omega)}
 + \|\varpi(\cdot/\varepsilon)\nabla f\|_{L^2(\Omega)}\\
& \leq \varepsilon^{-1}\|\nabla \varpi(\cdot/\varepsilon)\|_{L^{2_*}(\Omega)}
\|f\|_{L^{2^*}(\Omega)}
+ \|\varpi\|_{L^\infty(Y)}\|\nabla f\|_{L^2(\Omega)}\\
& \leq C\varepsilon^{-1}\Big\{\|\nabla \varpi\|_{L^{2_*}(Y)}
+ \|\varpi\|_{L^\infty(Y)}\Big\}\|\nabla f\|_{L^2(\Omega)}\\
& \leq C\varepsilon^{-1}\|\varpi\|_{W^{1,p}(Y)}\|\nabla f\|_{L^2(\Omega)},
\end{aligned}
\end{equation*}
where $2^*=2d/(d-2)$ and $2_*=d$ if $d>2$;
$2^*=2p/(p-2)$ and $2_*= p$ if $d=2$. Here we merely employ Sobolev's inequality in the last two steps. The above computations lead to
$\|T_\varepsilon\|_{H^1\to H^1}\leq C\varepsilon^{-1}\|\varpi\|_{W^{1,p}(Y)}$ (which has
proved the result for $s=1$).

Thus, on account of the complex interpolation inequality (see for example \cite[Theorem 2.6]{L}),  we have
\begin{equation*}
 \|T_\varepsilon\|_{H^{s}\to H^{s}}
 \leq C\varepsilon^{-s}\|\varpi\|_{W^{1,p}(Y)},
\end{equation*}
where we note that
$H^{s}(\Omega)=[L^2(\Omega),
H^{1}(\Omega)]_{s}$ with $s\in(0,1)$ (see for example \cite[Proposition 2.17]{JK}). This gives the estimate
$\eqref{pri:2.8}$. Consequently,
the desired estimate $\eqref{pri:2.9}$ follows from a duality argument,
\begin{equation*}
\int_{\Omega} \varpi(x/\varepsilon)f\zeta dx
\leq \|f\|_{H^{-s}(\Omega)}
\|\varpi(\cdot/\varepsilon)\zeta\|_{H^s(\Omega)}
\lesssim \varepsilon^{-s}\|\varpi\|_{W^{1,p}(Y)}
\|f\|_{H^{-s}(\Omega)}\|\zeta\|_{H^{s}(\Omega)}
\quad \forall \zeta\in C_0^{\infty}(\Omega),
\end{equation*}
and we have completed the whole proof.
\qed

\noindent\textbf{The proof of Lemma $\ref{extensiontheory}$.}
Since the stated estimates $\eqref{pri:2.23}$ had
been shown in \cite[Theorem 4.3]{OSY}, we focus on
the estimate $\eqref{pri:2.22}$. Before proceeding further,
it is better to outline the core ideas included in \cite[Theorem 4.3]{OSY}, and we take their
terminology like ``perforated domains of type I, II'' (whose definition can be found in \cite[pp.42-44]{OSY}).
Roughly speaking, there are three steps to complete the whole arguments. (1) Due to $u=0$ on $\Gamma_\varepsilon$, it is possible to transfer the perforated domain from type I to type II.
(2) After transferred to the perforated of type II, there are
finite cases that holes intersect with the given torus (its
scale is comparable to that of the holes). (3) Focus on one cell (produced by
torus and periodic holes), and the construction of
the extension map is reduced to build elliptic partial differential equations with mixed boundary value problems, as well as, regularity estimates
(see \cite[Lemma 4.1]{OSY}).
By this way, the estimated constant is consequently independent of $\varepsilon$.

Hence, to establish the desired estimate $\eqref{pri:2.22}$ we need
to improve some estimates addressed in Step (3) above.
Compared to the proof in \cite[Lemma 4.1]{OSY}, the core difference here is that we employ Shen's real approach (see Lemma $\ref{lemma:6.1}$) to obtain $W^{1,p}$ estimates for
the auxiliary equation $\eqref{pri:2.27}$ on Lipschitz domains.
At first, we introduce the same notations as in \cite[Lemma 4.1]{OSY}. Let $G\subset \mathcal{D}\subset \mathbb{R}^d$ and each of the sets $G,\mathcal{D},\mathcal{D}\backslash\overline{G}$ be non-empty bounded Lipschitz domain. Suppose that $\partial G\cap\mathcal{D}$ is non-empty. Denote $W$ as the weak solution of the following equation:
  \begin{equation}\label{pri:2.27}
  \left\{\begin{aligned}
  \nabla\cdot\overline{a}\nabla W&=0&\qquad&\text{in}\quad G,\\
   \vec{n}\cdot\overline{a}\nabla W&=0&\qquad&\text{on}\quad \partial G\cap\partial \mathcal{D},\\
   W&=u&\qquad&\text{on}\quad \partial G\cap\mathcal{D},
  \end{aligned}\right.
  \end{equation}
  where $u\in W^{1,p}(\mathcal{D}\backslash G)$ and $\overline{a}$ is an arbitrary constant matrix satisfying $
  \mu_0|\xi|^2\leq \xi\cdot\overline{a}\xi
  \leq \mu_1|\xi|^2$ for any $\xi\in\mathbb{R}^d$. Then, one may
  set
\begin{equation}
  P(u)=\left\{
\begin{aligned}
 &u(x)&\qquad&\text{for}\quad x\in\mathcal{D}\backslash G,\\
 &W(x)&\qquad&\text{for}\quad x\in G.
\end{aligned}\right.
  \end{equation}
To give the first line of \eqref{pri:2.22}, it's not hard to see that it suffices to show the following estimate
\begin{equation}\label{pri:2.29}
\|P(u)\|_{W^{1,p}(\mathcal{D})}\lesssim \|u\|_{W^{1,p}(\mathcal{D}\backslash G)}.
\end{equation}
According to the classic linear extension theorem
for Sobolev spaces, we may have $\tilde{u}\in W^{1,p}(\mathcal{D})$ from $u\in W^{1,p}(\mathcal{D}\backslash G)$, and
$\|\tilde{u}\|_{W^{1,p}(\mathcal{D})}\leq C \|u\|_{W^{1,p}(\mathcal{D}\backslash G)}$ in which the constant $C$ is independent of $u$. By setting $\tilde{W}=W-\tilde{u}$, we rewrite \eqref{pri:2.27} as follows:
\begin{equation}\label{pri:2.30}
\left\{\begin{aligned}
  \nabla\cdot\overline{a}\nabla \tilde{W}&=-\nabla\cdot\overline{a}\nabla\tilde{u}&\qquad&\text{in}\quad G,\\
   \vec{n}\cdot\overline{a}\nabla \tilde{W}&=-\vec{n}\cdot\overline{a}\nabla \tilde{u}&\qquad&\text{on}\quad \partial G\cap\partial \mathcal{D},\\
   \tilde{W}&=0&\qquad&\text{on}\quad \partial G\cap\mathcal{D}.
  \end{aligned}\right.
\end{equation}
Thus, the remainder of the proof is to establish $W^{1,p}$ estimates
for $\eqref{pri:2.30}$, and we will close it by two steps.

\textbf{Step 1.}
For $\bar{p}=\frac{2d}{d-1}$, we claim that the reverse H\"{o}lder's inequality
\begin{equation}\label{pri:2.25}
\Big(\dashint_{G_{r}}|\nabla \phi|^{\bar{p}}\Big)^{\frac{1}{\bar{p}}}
\lesssim\Big(\dashint_{G_{2r}}|\nabla \phi|^2\Big)^{\frac{1}{2}}
\end{equation}
holds for $\phi$ that satisfies
$\nabla\cdot\bar{a}\nabla \phi=0$ in $G_{4r}$ with $\vec{n}\cdot\overline{a}\nabla \phi=0$ on $G_{4r}^{N}$ and $\phi=0$ on $G^D_{4r}$, in which $G_{r}=G\cap B(x_0,r)$ with
$x_0\in\overline{G},~
G_{4r}^{N}=\partial G_{4r}\cap\Big(\partial G\cap\partial \mathcal{D}\Big)$
and $G_{4r}^{D}=\partial G_{4r}\cap\Big(\partial G\cap\mathcal{D}\Big)$.
Based on the Sobolev embedding theorem and duality arguments (see for example \cite[Remark 9.3]{KFS1}), for $t\in (1,2)$, it follows that
\begin{equation}\label{pri:2.24}
\begin{aligned}
\Big(\int_{G_{tr}}|\nabla \phi|^{\bar{p}}dx\Big)^{1/\bar{p}}
&\lesssim \Big(\int_{\partial G_{tr}}|(\nabla \phi)^*|^{2}dx\Big)^{1/2}\\
&\lesssim \Big(\int_{\partial G_{tr}\backslash\partial G}|\nabla \phi|^{2}dS\Big)^{1/2}+
\Big(\int_{\partial G_{tr}\cap\partial G\cap\mathcal{D}}
|\nabla_{\text{tan}}\phi|^{2}dS\Big)^{1/2}\\
&\quad+
\Big(\int_{\partial G_{tr}\cap\partial G\cap\partial \mathcal{D}}
\Big|\frac{\partial \phi}{\partial \nu}\Big|^{2}dS\Big)^{1/2}\\
&\lesssim
\Big(\int_{\partial G_{tr}\backslash\partial G}|\nabla \phi|^{2}dS\Big)^{1/2},
\end{aligned}
\end{equation}
in which the notation $(\nabla\phi)^*$ represents its
the nontangential maximal function of $|\nabla\phi|$
(see \cite[pp.1220]{B} for the definition).
Here we employ the Rellich's estimate \cite[Theorem 1.5]{B} in the second inequality and the last inequality is due to the assumption on the boundary data. Then, squaring and integrating on both sides of \eqref{pri:2.24}
with respect to $t\in(1,2)$, it follows that
\begin{equation}
\Big(\int_{G_{r}}|\nabla \phi|^{\bar{p}}dx\Big)^{2/\bar{p}}\lesssim
\frac{1}{r}\int_{G_{2r}}|\nabla \phi|^2dx,
\end{equation}
and this implies \eqref{pri:2.25}.

Consequently, a self-improvement property implies that there exists a small parameter $\epsilon>0$, depending on $\mu_{0},\mu_1,d$ and the character of $\Omega$, such that the estimate \eqref{pri:2.25} is still true for the new index $\bar{p}^+:=\frac{2d}{d-1}+\epsilon$.

\textbf{Step 2.} In view of real methods (see Lemma $\ref{lemma:6.1}$),
one may have the $W^{1,p}$ estimates for $2\leq p<\bar{p}^+$, i.e.,
\begin{equation}
\Big(\int_{G}|\nabla \tilde{W}|^pdx\Big)^{1/p}\lesssim
\Big(\int_{G}|\nabla\tilde{u}|^pdx\Big)^{1/p},
\end{equation}
provided $\tilde{W}$ being associated with $\tilde{u}$ by $\eqref{pri:2.30}$.
Now, we decompose equation \eqref{pri:2.30} as follows:
\begin{equation*}
(\text{i})\left\{\begin{aligned}
\nabla\cdot\overline{a}\nabla v
&=-\nabla\cdot(I_{B}\overline{a}\nabla\tilde{u})&\quad&\text{in}\quad G,\\
\vec{n}\cdot\overline{a}\nabla v&=-\vec{n}\cdot(I_{B}\overline{a}\nabla \tilde{u})&\quad&\text{on}\quad \partial G\cap\partial \mathcal{D},\\
   v&=0&\quad&\text{on}\quad \partial G\cap\mathcal{D},
\end{aligned}\right.
\quad
(\text{ii})\left\{\begin{aligned}
\nabla\cdot\overline{a}\nabla w
&=-\nabla\cdot[(1-I_{B})\overline{a}\nabla\tilde{u}]&\quad&\text{in}\quad G,\\
\vec{n}\cdot\overline{a}\nabla w&=-\vec{n}\cdot[(1-I_{B})\overline{a}\nabla\tilde{u}]&\quad&\text{on}\quad \partial G\cap\partial \mathcal{D},\\
   w&=0&\quad&\text{on}\quad \partial G\cap\mathcal{D},
\end{aligned}\right.
\end{equation*}
in which $B:=B(x,r)$ with $r>0$ and $x\in\overline{G}$ are arbitrary. Due to the linearity of the operator, one may easily have
$\tilde{W}=v+w$. For the first equation $(\text{i})$ above, it follows from energy estimate that
\begin{equation}\label{pri:2.36}
\Big(\dashint_{\frac{1}{2}B\cap G}|\nabla v|^2 \Big)^{1/2}
\lesssim\Big(\dashint_{B\cap G}|\nabla \tilde{u}|^2 \Big)^{1/2}.
\end{equation}
On the other hand, we may employ Step 1 for $w$:
\begin{equation}\label{pri:2.37}
\begin{aligned}
\Big(\dashint_{\frac{1}{4}B\cap G}|\nabla w|^{\bar{p}^+}\Big)^{1/\bar{p}^+}
&\lesssim^{\eqref{pri:2.25}}\Big(\dashint_{\frac{1}{2}B\cap G}|\nabla w|^{2}\Big)^{1/2}\\
&\lesssim\Big(\dashint_{\frac{1}{2}B\cap G}|\nabla \tilde{W}|^{2}\Big)^{1/2}+
\Big(\dashint_{\frac{1}{2}B\cap G}|\nabla v|^{2}\Big)^{1/2}\\
&\lesssim^{\eqref{pri:2.36}}\Big(\dashint_{\frac{1}{2}B\cap G}|\nabla \tilde{W}|^{2}\Big)^{1/2}+
\Big(\dashint_{B\cap G}|\nabla \tilde{u}|^{2}\Big)^{1/2}.
\end{aligned}
\end{equation}
 For any $2\leq p<\bar{p}^+$, combing estimates \eqref{pri:2.36} and \eqref{pri:2.37} with Lemma \ref{lemma:6.1}, one may get
\begin{equation}\label{pri:2.38}
\Big(\int_{G}|\nabla \tilde{W}|^{p}\Big)^{1/p}\lesssim
\Big(\int_{G}|\nabla \tilde{W}|^{2}\Big)^{1/2}+\Big(\int_{G}|\nabla \tilde{u}|^{p}\Big)^{1/p}\lesssim \Big(\int_{G}|\nabla \tilde{u}|^{p}\Big)^{1/p},
\end{equation}
where the last step comes from energy estimate and H\"{o}lder's inequality. The desired estimate \eqref{pri:2.29} (for the case $p\geq 2$) follows from \eqref{pri:2.38} and the fact that
$\|\tilde{u}\|_{W^{1,p}(\mathcal{D})}\leq C \|u\|_{W^{1,p}(\mathcal{D}\backslash G)}$ and $\tilde{W}=W-\tilde{u}$.
Then, by duality arguments one may derive the case
of $\frac{2d}{d+1}-\epsilon<p<2$, and we left it to the reader.

We proceed to study the second line of $\eqref{pri:2.22}$.
In fact, $P(c) = c$ for any $c\in\mathbb{R}^d$. Therefore,
\begin{equation}\label{pri:2.19}
\|\nabla P(u)\|_{L^p(\mathcal{D})}
\lesssim \|\nabla P(u-c)\|_{L^p(\mathcal{D})}
\lesssim \|u-c\|_{W^{1,p}(\mathcal{D}\setminus G)}
\lesssim \|\nabla u\|_{L^{p}(\mathcal{D}\setminus G)},
\end{equation}
where we prefer $c=\dashint_{\mathcal{D}\setminus G} u$.
Then we define the extension operator $P_\varepsilon$ by a rescaling argument and
the proof is complete.
\qed

\noindent\textbf{The proof of Lemma $\ref{lemma:2.20}$.}
Roughly, we may separate two cases to talk about the proof.
(1). $r\geq \varepsilon$; (2). $0<r<\varepsilon$.
In fact, in the second case, it is known that
$B(x,r)$ will at most intersect with the finite number of the holes $\{\varepsilon H_k\}_{k=1}^\infty$ according to
the separated assumption $\eqref{g}$. Moreover, the region
$B^{\varepsilon}(x,r)$ is a
bounded connected Lipschitz domain. To avoid losing the
control of the Lipschitz constant of that region, one may
choose $Q$ such that $B(x,r)\subset Q\subset B(x,3r)$ to make sure that
$Q\cap(\varepsilon\omega)$ own a better Lipschitz constant
of the boundary. Then one may appeal to the classical Sobolev-Poincar\'e's inequality
(see for example \cite[Theorem 3.27]{GT}) on this region, and
\begin{equation*}
\|w-c_r\|_{L^q(B^{\varepsilon}_{r}(x))}
\leq \|w-c_r\|_{L^q(Q\cap(\varepsilon\omega)}
\lesssim \|\nabla w\|_{L^p(Q\cap(\varepsilon\omega))}
\leq
\|\nabla w\|_{L^p(B^{\varepsilon}_{3r}(x))}
\end{equation*}
where one may choose $c_r=\dashint_{Q\cap(\varepsilon\omega)} w$, and the up to constant is in dependent of $x$, $r$ and $\varepsilon$. The above estimate gives the desired estimate
$\eqref{pri:2.20}$ in the case of $0<r<\varepsilon$.

Now, we proceed to handle the interesting case $r\geq\varepsilon$.
Let $Y$ be the unite cell, and we define the index set and the related cover region as follows
\begin{equation}\label{}
 T_\varepsilon:=\big\{z\in\mathbb{Z}^d:\varepsilon(z+Y)\cap B(x,r)\not=\emptyset\big\};
 \qquad Y^{*}_{B(x,r)}:= \bigcup_{z\in T_\varepsilon} \varepsilon(z+Y).
\end{equation}
It is not hard to see that $T_\varepsilon\not=\emptyset$ due to $r\geq\varepsilon$. Then, we have two important observations: (1) $B(x,r)\subset Y_{B(x,r)}^*\subset B(x,3r)$; (2) the region $Y_{B(x,r)}^*\cap(\varepsilon\omega)$ is the so-called perforated domains of type II (whose definition can be found in \cite[pp.42-44]{OSY}). Thus, one may appeal to the results
of Lemma $\ref{extensiontheory}$ directly (here we even release from the operation of transferring the perforated domains of type I to type II). In this regard, we may denote
the extension of $w$ by $\tilde{w}$ in the sense of Lemma
$\ref{extensiontheory}$. Thus, we have the following computations.
\begin{equation}\label{f:2.20}
\begin{aligned}
\|w-c\|_{L^q(B^{\varepsilon}_r(x))}
&\leq \|\tilde{w}-c\|_{L^q(B_r(x))}\\
&\lesssim \|\nabla \tilde{w}\|_{L^p(B_r(x))}
\lesssim  \|\nabla w\|_{L^p(B^{\varepsilon}_r(x))}
+
\Big(\int_{B_r(x)\setminus(\varepsilon\omega)}|\nabla \tilde{w}|^p dy\Big)^{1/p},
\end{aligned}
\end{equation}
where we note that $\tilde{w} = w$ on $B^\varepsilon(x,r)$,  and the main job is to estimate the last term above. In fact,
\begin{equation*}
\begin{aligned}
\int_{B_r(x)\setminus(\varepsilon\omega)}|\nabla \tilde{w}|^p dy
\leq \sum_{z\in T_\varepsilon}
\int_{\varepsilon(z+Y)\setminus(\varepsilon\omega)}
|\nabla \tilde{w}|^p dy
\lesssim^{\eqref{pri:2.19}} \sum_{z\in T_\varepsilon}
\int_{\varepsilon(z+Y)\cap(\varepsilon\omega)}
|\nabla w|^p dy
\end{aligned}
\end{equation*}
in which we emphasis that the estimated constant of  $\eqref{pri:2.19}$ is
independent of $w$ and the location (due to the periodicity).
Therefore, we continue to compute the above inequalities, and
\begin{equation}\label{f:2.21}
\int_{B_r(x)\setminus(\varepsilon\omega)}|\nabla \tilde{w}|^p dy
\lesssim
\int_{Y_{B_r(x)}^{*}\cap(\varepsilon\omega)}
|\nabla w|^p dy
\leq \int_{B_{3r}(x)\cap(\varepsilon\omega)}
|\nabla w|^p dy
\end{equation}
Consequently, inserting the estimate $\eqref{f:2.21}$
back into $\eqref{f:2.20}$ we obtain
\begin{equation*}
\|w-c\|_{L^q(B^{\varepsilon}(x,r))}
\lesssim  \|\nabla w\|_{L^p(B^{\varepsilon}(x,r))}
+ \|\nabla w\|_{L^p(B^{\varepsilon}(x,3r))}
\lesssim \|\nabla w\|_{L^p(B^{\varepsilon}(x,3r))}
\end{equation*}
and this closes the proof of $\eqref{pri:2.20}$.
By the same token, one may derive
the estimate $\eqref{pri:2.21}$ without any real difficulty, and left these details to the reader. We have completed
the proof.
\qed

\section{Appendix}\label{sec:7}

\subsection{Fundamental regularities of weak solutions
to homogenization problems}

\begin{lemma}[interior Caccioppoli's inequality]\label{lemma:2.3}
Assume that $\mathcal{L}_\varepsilon$ satisfies the conditions
$\eqref{a:1}$, $\eqref{a:2}$.
Let $u_\varepsilon\in H^1(B^{\varepsilon}_2)$ be a weak solution of $\eqref{pde:7.3}$.
Then for
any $c\in\mathbb{R}$ and $0<r\leq1$, we have
\begin{equation}\label{pri:2.14}
\int_{B^{\varepsilon}_{r}}|\nabla u_\varepsilon|^2 dx
\leq \frac{C}{r^2}\inf_{c\in\mathbb{R}}\int_{B^{\varepsilon}_{2r}}
|u_\varepsilon - c|^2 dx,
\end{equation}
where $C$ depends on $\mu_0,\mu_1$ and $d$.
\end{lemma}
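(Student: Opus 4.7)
\textbf{Proof plan for Lemma \ref{lemma:2.3}.} The strategy is the classical Caccioppoli energy argument; the only reason to spell it out is that we must verify that the natural test function respects the mixed boundary structure encoded in the space $H^1(B_{2r}^\varepsilon, \partial B_{2r}^\varepsilon|_{\partial B_{2r}})$. First I would observe that, since $A(y,\cdot)$ depends only on the gradient, the shifted function $u_\varepsilon - c$ solves the same system $\eqref{pde:7.3}$ for any $c\in\mathbb{R}$; so it is enough to establish the bound for a fixed $c$ and then take the infimum at the end. Next, pick a standard cutoff $\eta\in C_0^\infty(B_{2r})$ with $0\leq\eta\leq 1$, $\eta\equiv 1$ on $B_r$ and $|\nabla\eta|\lesssim 1/r$, and set $w:=\eta^2(u_\varepsilon-c)$. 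Since $\eta$ is supported away from $\partial B_{2r}$, the function $w$ belongs to $H^1(B_{2r}^\varepsilon,\partial B_{2r}^\varepsilon|_{\partial B_{2r}})$, so the natural Neumann condition $\sigma_\varepsilon(u_\varepsilon)=0$ on $\partial B_{2r}^\varepsilon|_{B_{2r}}$ imposes no constraint and $w$ is an admissible test function.

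Inserting $w$ in the weak formulation of $\eqref{pde:7.3}$ and expanding
\begin{equation*}
\nabla(\eta^2(u_\varepsilon-c)) = \eta^2\nabla u_\varepsilon + 2\eta(u_\varepsilon-c)\nabla\eta,
\end{equation*}
I would next invoke the two structure bounds. Combining $\eqref{a:1}$ with the normalisation $A(y,0)=0$ from $\eqref{a:2}$ gives the pointwise coerciveness
\begin{equation*}
\langle A(y,\nabla u_\varepsilon),\nabla u_\varepsilon\rangle
= \langle A(y,\nabla u_\varepsilon)-A(y,0),\nabla u_\varepsilon - 0\rangle
\geq \mu_0|\nabla u_\varepsilon|^2,
\end{equation*}
while the Lipschitz bound in $\eqref{a:1}$ together with $A(y,0)=0$ yields the growth estimate $|A(y,\nabla u_\varepsilon)|\leq \mu_1|\nabla u_\varepsilon|$. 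Feeding these into the identity produced by testing gives
\begin{equation*}
\mu_0 \int_{B_{2r}^\varepsilon} \eta^2|\nabla u_\varepsilon|^2\,dx
\leq 2\mu_1 \int_{B_{2r}^\varepsilon} \eta|\nabla u_\varepsilon|\,|\nabla\eta|\,|u_\varepsilon-c|\,dx.
\end{equation*}

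Finally I would apply Young's inequality with a small parameter to absorb the factor $\eta|\nabla u_\varepsilon|$ back into the left-hand side, leaving a remainder of the form $C_{\mu_0,\mu_1}\int|\nabla\eta|^2|u_\varepsilon-c|^2$. Using $\eta\equiv 1$ on $B_r$ and $|\nabla\eta|\lesssim 1/r$ on $B_{2r}$, then restricting the integration on the left to $B_r^\varepsilon$ and taking the infimum over $c\in\mathbb{R}$ on the right delivers $\eqref{pri:2.14}$. No genuine obstacle appears: the monotonicity and Lipschitz hypotheses $\eqref{a:1}$ handle the nonlinearity exactly as an elliptic coefficient would, the cutoff argument is standard, and the mixed boundary setting is innocuous because $\sigma_\varepsilon=0$ is a natural boundary condition whose effect is simply to render every compactly-supported test function admissible on $B_{2r}^\varepsilon$.
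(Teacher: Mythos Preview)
Your proof is correct and follows exactly the same approach as the paper: test the weak formulation against $\psi_r^2(u_\varepsilon-c)$ for a standard cutoff $\psi_r\in C_0^1(B_{2r})$, then combine the coerciveness and growth from \eqref{a:1}--\eqref{a:2} with Young's inequality. The paper's own proof is in fact terser than yours, merely naming the test function and citing the structural assumptions and Young's inequality without writing out the intermediate estimates.
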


\begin{proof}
It's a classical result and we provide a proof for the reader's convenience.
For $0<r\leq1$, by the definition of the weak solution,
there holds
\begin{equation}\label{pde:2.4}
\int_{B^{\varepsilon}_{2r}}A(x/\varepsilon,\nabla u_\varepsilon)\cdot\nabla\phi dx
= 0
\end{equation}
for any $\phi\in H^1(B^{\varepsilon}_{2r},\partial
B^\varepsilon_{2r}|_{\partial B_{2r}})$ (see Subsection $\ref{subsec:1.2}$).
Set $\phi = \psi_{r}^2(u_\varepsilon - c)$ with any $c\in\mathbb{R}$, where
$\psi_{r}\in C_0^1(B_{2r})$ is a cut-off function, satisfying
$\psi_r = 1$ in $B_r$ and $\psi_r = 0$ outside $B_{2r}$ with
$|\nabla\psi_r|\lesssim 1/r$. The stated estimate $\eqref{pri:2.14}$ follows
from the assumptions $\eqref{a:1}$, $\eqref{a:2}$, as well as, Young's inequality.
\end{proof}
\begin{lemma}[boundary Caccioppoli's inequality]\label{lemma:5.1}
Suppose that the coefficient A satisfies \eqref{a:1} and
$\eqref{a:2}$. Let $u_{\varepsilon}\in H^{1}(D_{4}^{\varepsilon})$ be the weak solution to
$\eqref{pde:7.4}$.
Then, for any $0<r\leq 1$, one may have
\begin{equation}\label{pri:5.1}
 \Big(\dashint_{D^{\varepsilon}_{r}} |\nabla u_\varepsilon|^2  \Big)^{1/2}
 \lesssim \frac{1}{r} \Big(\dashint_{D^{\varepsilon}_{2r}}|u_\varepsilon|^2 \Big)^{1/2}.
\end{equation}
\end{lemma}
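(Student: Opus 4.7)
The plan is to test the weak formulation of $\eqref{pde:7.4}$ against $\phi = \psi_r^2 u_\varepsilon$, where $\psi_r \in C_0^1(B(z,2r))$ is a standard cutoff with $z\in\partial\Omega$ the reference point of $D_r$, $\psi_r\equiv 1$ on $B(z,r)$, $0\le \psi_r\le 1$, and $|\nabla\psi_r|\lesssim 1/r$. Unlike the interior case treated in Lemma $\ref{lemma:2.3}$, the Dirichlet condition $u_\varepsilon=0$ on $\partial D_4^\varepsilon|_{\Delta_4}$ forces us to test against $u_\varepsilon$ itself rather than $u_\varepsilon - c$, since only the value $c=0$ is compatible with the boundary condition. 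Admissibility of the test function is checked as follows: $\psi_r^2 u_\varepsilon$ vanishes on $\partial D_{2r}^\varepsilon|_{\Delta_{2r}}$ because $u_\varepsilon$ does, and it vanishes on $\partial D_{2r}^\varepsilon|_{\partial D_{2r}\setminus\Delta_{2r}}$ because $\psi_r$ has compact support inside $B(z,2r)$. On the hole-boundary $\partial D_{2r}^\varepsilon|_{D_{2r}}$ the condition $\sigma_\varepsilon(u_\varepsilon)=0$ causes the boundary integral from integration by parts to vanish automatically. Therefore $\phi$ lies in the correct admissible space and no boundary contributions survive.

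With this test function, the identity
\begin{equation*}
\int_{D_{2r}^\varepsilon}\psi_r^2\, A(x/\varepsilon,\nabla u_\varepsilon)\cdot\nabla u_\varepsilon\, dx
= -2\int_{D_{2r}^\varepsilon}\psi_r\, u_\varepsilon\, A(x/\varepsilon,\nabla u_\varepsilon)\cdot\nabla\psi_r\, dx
\end{equation*}
follows. Applying $\eqref{a:2}$, i.e.\ $A(y,0)=0$, the coercivity half of $\eqref{a:1}$ gives $A(x/\varepsilon,\nabla u_\varepsilon)\cdot\nabla u_\varepsilon\ge \mu_0|\nabla u_\varepsilon|^2$, while the growth half gives $|A(x/\varepsilon,\nabla u_\varepsilon)|\le \mu_1|\nabla u_\varepsilon|$. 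Plugging these into the identity and applying Young's inequality to absorb the gradient term yields
\begin{equation*}
\frac{\mu_0}{2}\int_{D_{2r}^\varepsilon}\psi_r^2|\nabla u_\varepsilon|^2\, dx
\;\lesssim\;\int_{D_{2r}^\varepsilon}|u_\varepsilon|^2|\nabla\psi_r|^2\, dx
\;\lesssim\;\frac{1}{r^2}\int_{D_{2r}^\varepsilon}|u_\varepsilon|^2\, dx.
\end{equation*}
Restricting the left-hand side to $D_r^\varepsilon$, where $\psi_r\equiv 1$, and normalising by the common scale $|D_r^\varepsilon|\sim |D_{2r}^\varepsilon|\sim r^d$ produces the claimed estimate $\eqref{pri:5.1}$.

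The only potentially delicate point is the admissibility check, since the cutoff has to respect three different boundary pieces of $D_{2r}^\varepsilon$ simultaneously: the flat Dirichlet portion $\Delta_{2r}$, the spherical portion $\partial D_{2r}\setminus\Delta_{2r}$, and the hole boundary $D_{2r}\cap\partial(\varepsilon\omega)$. The first is handled by $u_\varepsilon=0$, the second by the support of $\psi_r$, and the third by the natural boundary condition, so no additional geometric hypotheses on $\omega$ or on the regularity of $\partial\Omega$ are needed beyond what is already assumed. In particular, the argument is completely local and uniform in $\varepsilon$, and the smoothness assumptions $\eqref{a:3}$ and the separation condition $\eqref{g}$ play no role here.
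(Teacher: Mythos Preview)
Your proof is correct and follows precisely the standard argument the paper alludes to when it says ``the proof is standard and similar to Lemma~\ref{lemma:2.3}.'' You correctly identify the one modification needed over the interior case: the Dirichlet condition on $\partial D_4^\varepsilon|_{\Delta_4}$ forces $c=0$ in the test function, and you carefully verify admissibility on all three boundary components.
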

\begin{proof}
The proof is standard and similar to Lemma \ref{lemma:2.3}, and we do not repeat it here.
\end{proof}

\begin{theorem}[self-improvement properties]\label{thm:7.1}
Let $\omega$ satisfy the separated property.
Suppose that the coefficient A satisfies \eqref{a:1} and
$\eqref{a:2}$. Let $u_\varepsilon$ satisfy the equation
$\eqref{pde:7.3}$. Then there exists $0<p-2\ll 1$, depending on $\mu_0,\mu_1$ and $d$, such that
\begin{equation}\label{pri:7.6}
\Big(\dashint_{B^{\varepsilon}_r}
|\nabla u_\varepsilon|^p\Big)^{1/p}
\lesssim
\Big(\dashint_{B^{\varepsilon}_{6r}}
|\nabla u_\varepsilon|^2\Big)^{1/2}
\end{equation}
for $\varepsilon\leq r\leq 1/3$. Moreover,
let $\Omega$ be a Lipschitz domain. If
$u_\varepsilon\in H^1(D^\varepsilon_4,\Delta^\varepsilon_4)$
is a weak solution to $\eqref{pde:7.4}$. Then one similarly obtains
\begin{equation}\label{pri:7.7}
\Big(\dashint_{D^{\varepsilon}_{r}}
|\nabla u_\varepsilon|^p\Big)^{1/p}
\lesssim
\Big(\dashint_{D^{\varepsilon}_{2r}}
|\nabla u_\varepsilon|^2\Big)^{1/2},
\end{equation}
where the up to constant and $p$ additionally depends on
the boundary character of $\Omega$.
\end{theorem}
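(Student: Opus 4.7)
The strategy is to combine Caccioppoli's inequality with the Sobolev-Poincar\'e inequality on perforated domains (Lemma \ref{lemma:2.20}) to obtain a reverse H\"older inequality at all scales $r \geq \varepsilon$, and then invoke a Gehring-type self-improvement lemma in the spirit of Giaquinta-Modica.

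For the interior estimate $\eqref{pri:7.6}$, fix $\rho \geq \varepsilon$ such that $B_{6\rho}^{\varepsilon}(x_0) \subset B_2^{\varepsilon}$. Choose $q \in (2-\delta_0,2)$ with $\delta_0$ small enough that the constraint $|q^{-1}-1/2| < (2d)^{-1} + \epsilon$ of Lemma \ref{lemma:2.20} is met, and set $q^{*} = \tfrac{qd}{d-q}$, which is strictly greater than $2$. Picking the constant $c_\rho$ as in the first part of Lemma \ref{lemma:2.20} produces
\begin{equation*}
\Big(\dashint_{B_{2\rho}^{\varepsilon}} |u_\varepsilon - c_\rho|^{q^{*}}\Big)^{1/q^{*}}
\lesssim \rho \Big(\dashint_{B_{6\rho}^{\varepsilon}} |\nabla u_\varepsilon|^{q}\Big)^{1/q}.
\end{equation*}
Since $q^{*}>2$, Jensen's inequality dominates the $L^{2}$-average of $u_\varepsilon - c_\rho$ by its $L^{q^{*}}$-average, and plugging this into the interior Caccioppoli inequality $\eqref{pri:2.14}$ applied on the pair $(B_\rho^{\varepsilon}, B_{2\rho}^{\varepsilon})$ yields the reverse H\"older estimate
\begin{equation*}
\Big(\dashint_{B_{\rho}^{\varepsilon}} |\nabla u_\varepsilon|^{2}\Big)^{1/2}
\lesssim \Big(\dashint_{B_{6\rho}^{\varepsilon}} |\nabla u_\varepsilon|^{q}\Big)^{1/q}, \qquad q<2,
\end{equation*}
uniformly in $\varepsilon$ and in the centre $x_0$, provided $\rho \geq \varepsilon$. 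The boundary estimate $\eqref{pri:7.7}$ follows in the same way with two substitutions: the boundary Caccioppoli inequality $\eqref{pri:5.1}$ replaces the interior one, and the second part of Lemma \ref{lemma:2.20} (for functions vanishing on $\Gamma_\varepsilon$) eliminates the need to subtract a constant, thanks to the Dirichlet datum $u_\varepsilon = 0$ on $\partial D_4^{\varepsilon}|_{\Delta_4}$.

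The concluding step is to upgrade the reverse H\"older inequality to higher integrability via a Gehring-type lemma, producing an exponent $p = 2 + \gamma$ with $\gamma>0$ small, depending only on $\mu_0, \mu_1, d$ (and on $\mathfrak{g}^{\omega}$ through Lemma \ref{lemma:2.20}, together with the boundary character of $\Omega$ in the case of $\eqref{pri:7.7}$). The main technical obstacle to watch is that the reverse H\"older inequality above is only uniform at scales $r \geq \varepsilon$, so the Calder\'on-Zygmund stopping-time decomposition underlying Gehring's lemma must be truncated to avoid producing sub-cubes of side length below $\varepsilon$. This is possible precisely because the final statement is restricted to $r \geq \varepsilon$: the stopping criterion terminates at scale $\varepsilon$, the sub-$\varepsilon$ contributions are absorbed into the $L^{2}$ norm on the right-hand side, and the resulting estimate matches exactly the form claimed in $\eqref{pri:7.6}$ and $\eqref{pri:7.7}$.
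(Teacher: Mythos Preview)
Your approach is essentially identical to the paper's: combine Caccioppoli's inequality with the Sobolev--Poincar\'e inequality on perforated domains (Lemma~\ref{lemma:2.20}) to obtain a reverse H\"older inequality, then invoke Gehring-type self-improvement. The paper simply fixes the specific exponent $q = \tfrac{2d}{d+1}$ (so that $q^{*} = \tfrac{2d}{d-1}$), but your choice of a generic $q<2$ close to $2$ works equally well.

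One simplification you are missing: Lemma~\ref{lemma:2.20} is established for \emph{all} radii $r>0$, not only $r\geq\varepsilon$. Its proof in Section~\ref{sec:8} treats the sub-scale case $0<r<\varepsilon$ separately, noting that at those scales $B_r^{\varepsilon}$ intersects at most finitely many holes and is itself a connected Lipschitz domain with uniformly controlled constants, so the classical Sobolev--Poincar\'e inequality applies directly. Hence the reverse H\"older inequality you derive actually holds uniformly at every scale, and the standard Gehring lemma (as cited in the paper from \cite[Theorem~6.38]{MGLM}) applies without any modification. Your truncated Calder\'on--Zygmund argument would also go through, but it is an unnecessary complication here.
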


\begin{proof}
The main idea of the proof is
based upon Caccioppoli's inequalities and reverse H\"older's inequalities. Since we require the results to be established on perforated domains, we appeal to Lemma $\ref{lemma:2.20}$
to make the whole arguments workable. We merely describe the
proof of the estimate $\eqref{pri:7.6}$ for the reader's convenience. For any $0<r\leq 1$, it follows from the estimate $\eqref{pri:2.14}$ that
\begin{equation}
\begin{aligned}
\Big(\dashint_{B^{\varepsilon}_r}|\nabla u_\varepsilon|^2\Big)^{\frac{1}{2}}
&\lesssim \frac{1}{r}
\Big(\dashint_{B^{\varepsilon}_{2r}}
|u_\varepsilon - c_r|^2\Big)^{\frac{1}{2}}\\
&\leq
\frac{1}{r}
\Big(\dashint_{B^{\varepsilon}_{2r}}
|u_\varepsilon - c_r|^{\frac{2d}{d-1}}\Big)^{\frac{d-1}{2d}}
\lesssim^{\eqref{pri:2.20}}
\Big(\dashint_{B^{\varepsilon}_{6r}}|\nabla u_\varepsilon|^{\frac{2d}{d+1}}\Big)^{\frac{d+1}{2d}}.
\end{aligned}
\end{equation}
Let $f=|\nabla u_\varepsilon|^{\frac{2d}{d+1}}$, and we rewrite the above estimate as
\begin{equation*}
 \Big(\dashint_{B^{\varepsilon}_r}
 f^{\frac{d+1}{d}}\Big)^{\frac{d}{d+1}}
 \lesssim \dashint_{B^{\varepsilon}_{6r}} f,
\end{equation*}
Then on account of reverse H\"older's inequality
(see for example \cite[Theorem 6.38]{MGLM}) there exists
some $0<\epsilon\ll 1$, depending on $\mu_0,\mu_1,d$, such that for $\frac{d+1}{d}< s\leq \frac{d+1}{d}+\epsilon$, it holds that
\begin{equation*}
 \Big(\dashint_{B^{\varepsilon}_{r}}
 f^{s}\Big)^{\frac{1}{s}}
 \lesssim
 \Big(\dashint_{B^{\varepsilon}_{6r}} f^{\frac{d+1}{d}}
 \Big)^{\frac{d}{d+1}}.
\end{equation*}
By setting $p=\frac{2ds}{d+1}$, one may derive the stated estimate $\eqref{pri:7.6}$, while the estimate $\eqref{pri:7.7}$ follows from the same ingredients and
we left it to the reader. The proof is complete.
\end{proof}

\begin{theorem}[$H^1$ theory]\label{thm:2.1}
Let $\Omega$ be a bounded Lipschitz domain.
Assume that $\mathcal{L}_\varepsilon$ satisfies the conditions
$\eqref{a:1}$, $\eqref{a:2}$. Let
$u_\varepsilon\in H^1(\Omega_{\varepsilon})$ be the solution of $\eqref{pde:1.1}$ with $F\in H^{-1}(\Omega)$. Then we have
\begin{equation}\label{pri:2.12}
 \|\nabla u_\varepsilon\|_{L^2(\Omega_{\varepsilon})}
 \leq C\Big\{\|F\|_{H^{-1}(\Omega)} + \|g\|_{H^{1/2}(\partial\Omega)}\Big\},
\end{equation}
where $C$ depends on $\mu_0,\mu_1,d$ and the character of $\Omega$. Moreover, if $\Omega=\mathbb{R}^d$ and
$u_\varepsilon\in H^1(\Omega_\varepsilon)$ satisfies the regular problem:
$\lambda u_\varepsilon +\mathcal{L}_\varepsilon(u_\varepsilon) = \nabla\cdot f$ in $\Omega_\varepsilon$ and
$\sigma_\varepsilon(u_\varepsilon) = 0$ on
$\partial\Omega_\varepsilon$, where
$\lambda\in(0,\mu_0)$ and $f\in L^2(\mathbb{R}^d;\mathbb{R}^d)$, then there holds
\begin{equation}\label{pri:7.4}
\sqrt{\lambda}\|u_\varepsilon\|_{L^2(\Omega_\varepsilon)}
+ \|\nabla u_\varepsilon\|_{L^2(\Omega_\varepsilon)}
\lesssim \|f\|_{L^{2}(\mathbb{R}^d)},
\end{equation}
where the up to constant is independent of $\lambda$.
\end{theorem}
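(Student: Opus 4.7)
The plan for Theorem \ref{thm:2.1} is standard in spirit—test the weak formulation against a suitable function and exploit the coerciveness in \eqref{a:1}—but the perforated geometry forces us to route Poincar\'e-type inequalities through the extension operator of Lemma \ref{extensiontheory}. I will handle the two estimates in sequence.

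For the estimate \eqref{pri:2.12}, the first move is to reduce to a homogeneous boundary condition on $\Gamma_\varepsilon$. Using the trace theorem, I extend the boundary datum $g$ to a function $G \in H^1(\Omega)$ with $\|G\|_{H^1(\Omega)} \lesssim \|g\|_{H^{1/2}(\partial\Omega)}$, and set $v_\varepsilon = u_\varepsilon - G|_{\Omega_\varepsilon}$. Then $v_\varepsilon \in H^1(\Omega_\varepsilon,\Gamma_\varepsilon)$ is an admissible test function in \eqref{pri:1.5}. Testing against $v_\varepsilon$ and using $A(y,0)=0$ together with the second line of \eqref{a:1} to write $A(y,\nabla u_\varepsilon) = A(y,\nabla u_\varepsilon) - A(y,\nabla G) + A(y,\nabla G)$, I obtain after rearranging
\begin{equation*}
\int_{\Omega_\varepsilon}\bigl[A(x/\varepsilon,\nabla u_\varepsilon)-A(x/\varepsilon,\nabla G)\bigr]\cdot\nabla v_\varepsilon\,dx
= \int_{\Omega_\varepsilon} F v_\varepsilon\,dx - \int_{\Omega_\varepsilon}A(x/\varepsilon,\nabla G)\cdot\nabla v_\varepsilon\,dx.
\end{equation*}
Coerciveness controls the left-hand side from below by $\mu_0\|\nabla v_\varepsilon\|_{L^2(\Omega_\varepsilon)}^2$, while the last term on the right is bounded by $\mu_1\|\nabla G\|_{L^2(\Omega)}\|\nabla v_\varepsilon\|_{L^2(\Omega_\varepsilon)}$.

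The remaining task is to dominate $\int_{\Omega_\varepsilon} F v_\varepsilon\,dx$ by $\|F\|_{H^{-1}(\Omega)}\|\nabla v_\varepsilon\|_{L^2(\Omega_\varepsilon)}$ with a constant independent of $\varepsilon$. This is the one step that actually uses the geometry of the perforated domain: since $v_\varepsilon$ vanishes on $\Gamma_\varepsilon$, Lemma \ref{extensiontheory} yields an extension $\widetilde{v}_\varepsilon \in H^1_0(\Omega_0)$ with $\|\nabla \widetilde{v}_\varepsilon\|_{L^2(\Omega_0)} \lesssim \|\nabla v_\varepsilon\|_{L^2(\Omega_\varepsilon)}$. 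Then $\int_{\Omega_\varepsilon} F v_\varepsilon\,dx = \int_{\Omega} F \widetilde{v}_\varepsilon\,dx$ (after extending $F$ by $0$, which does not increase its $H^{-1}$ norm) is bounded by $\|F\|_{H^{-1}(\Omega)}\|\widetilde{v}_\varepsilon\|_{H^1_0(\Omega_0)}$, and Poincar\'e on $\Omega_0$ finishes the duality pairing. Young's inequality absorbs $\|\nabla v_\varepsilon\|_{L^2(\Omega_\varepsilon)}$ into the coercive term, and the triangle inequality $\|\nabla u_\varepsilon\|_{L^2(\Omega_\varepsilon)} \leq \|\nabla v_\varepsilon\|_{L^2(\Omega_\varepsilon)} + \|\nabla G\|_{L^2(\Omega)}$ delivers \eqref{pri:2.12}.

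For the $\lambda$-uniform estimate \eqref{pri:7.4}, the situation is simpler because there is no nontrivial boundary data: I test the equation directly with $u_\varepsilon \in H^1(\Omega_\varepsilon)$ (which is admissible owing to the natural Neumann condition on $\partial\Omega_\varepsilon$). Coerciveness plus the mass term give
\begin{equation*}
\lambda\int_{\Omega_\varepsilon}|u_\varepsilon|^2\,dx + \mu_0\int_{\Omega_\varepsilon}|\nabla u_\varepsilon|^2\,dx \leq \int_{\Omega_\varepsilon} f\cdot\nabla u_\varepsilon\,dx \leq \frac{\mu_0}{2}\|\nabla u_\varepsilon\|_{L^2(\Omega_\varepsilon)}^2 + \frac{1}{2\mu_0}\|f\|_{L^2(\mathbb{R}^d)}^2.
\end{equation*}
Keeping the mass term on the left-hand side is what makes the constant independent of $\lambda$; absorbing the $\|\nabla u_\varepsilon\|^2$ term on the right and taking square roots yields \eqref{pri:7.4}. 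The main subtlety in the whole argument is the first part: ensuring that the $H^{-1}$--$H^1$ duality for $\int F v_\varepsilon$ can be carried out with a constant depending only on $\Omega$, $\omega$ and dimension, which is precisely where the uniform extension operator of Lemma \ref{extensiontheory} plays its role.
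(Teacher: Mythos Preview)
The paper states this theorem without proof, treating it as a routine energy estimate; your argument is the expected one and is essentially correct. One step deserves tightening: the identity $\int_{\Omega_\varepsilon} F v_\varepsilon\,dx = \int_{\Omega} F \widetilde{v}_\varepsilon\,dx$ is not literally true, since the extension $\widetilde{v}_\varepsilon$ does not vanish on the holes $\Omega\setminus\Omega_\varepsilon$ and $F$ is already defined on all of $\Omega$ (so ``extending $F$ by $0$'' is not the right description). What you actually need---and what the extension lemma delivers---is the uniform Poincar\'e-type bound
\[
\|v_\varepsilon\|_{L^2(\Omega_\varepsilon)} \leq \|\widetilde{v}_\varepsilon\|_{L^2(\Omega_0)} \lesssim \|\nabla \widetilde{v}_\varepsilon\|_{L^2(\Omega_0)} \lesssim^{\eqref{pri:2.23}} \|\nabla v_\varepsilon\|_{L^2(\Omega_\varepsilon)},
\]
after which writing $F = f_0 + \nabla\cdot f_1$ with $f_0,f_1\in L^2(\Omega)$ and estimating $\int_{\Omega_\varepsilon}(f_0 v_\varepsilon - f_1\cdot\nabla v_\varepsilon)\,dx$ directly gives the desired control by $\|F\|_{H^{-1}(\Omega)}\|\nabla v_\varepsilon\|_{L^2(\Omega_\varepsilon)}$. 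Your treatment of \eqref{pri:7.4} is clean and correct as written.
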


\subsection{Fundamental regularities of weak solutions
to effective problems}

\begin{theorem}[Meyer's estimates]\label{thm:2.20}
Let $\Omega$ be a bounded Lipschitz domain.
Given $f\in L^{p}(\Omega;\mathbb{R}^d)$ for some $0<p-2\ll1$ and $g\in W^{1-1/p,p}(\partial\Omega)$, let
$u\in H^1(\Omega)$ is the weak solution of
$\mathcal{L}_0(u_0) = \nabla\cdot f$ in $\Omega$ with
$u_0 = g$ on $\partial\Omega$. Then there holds
\begin{equation}\label{pri:2.13}
  \|\nabla u_0\|_{L^{p}(\Omega)}\leq C_{p}
  \Big\{\|f\|_{L^{p}(\Omega)}+
  \|g\|_{W^{1-1/p,p}(\partial\Omega)}\Big\},
\end{equation}
where the constant $C_{p}$ is dependent on $\mu_{0},\mu_{1},d,p$ and the character of $\Omega$.
\end{theorem}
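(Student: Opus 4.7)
\medskip

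\noindent\textbf{Proof plan for Theorem \ref{thm:2.20}.} The strategy is the classical Meyer's self-improvement scheme adapted to the nonlinear effective operator $\mathcal{L}_0$, closed by either Gehring's lemma or Shen's real argument (Lemma \ref{lemma:6.1}). First I would reduce to the zero boundary-value case. By the surjectivity of the trace operator $W^{1,p}(\Omega)\to W^{1-1/p,p}(\partial\Omega)$ on bounded Lipschitz domains, there exists $G\in W^{1,p}(\Omega)$ with $G|_{\partial\Omega}=g$ and $\|G\|_{W^{1,p}(\Omega)}\lesssim \|g\|_{W^{1-1/p,p}(\partial\Omega)}$. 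Setting $v:=u_0-G\in H^1_0(\Omega)$, we have $-\nabla\cdot \widehat{A}(\nabla v+\nabla G)=\nabla\cdot f$ in $\Omega$, so it suffices to control $\|\nabla v\|_{L^p(\Omega)}$ in terms of $\|f\|_{L^p}+\|\nabla G\|_{L^p}$.

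Next I would establish a reverse H\"older inequality at both interior and boundary scales. For an interior ball $B_{2r}\subset\Omega$, test the weak form with $\eta^2(v-c)$ for a cutoff $\eta$ and $c=\dashint_{B_{2r}}v$. Using the coerciveness in \eqref{pri:2.3}, the Lipschitz bound $|\widehat{A}(\xi)-\widehat{A}(\xi')|\lesssim|\xi-\xi'|$, Young's inequality and the Sobolev--Poincar\'e inequality (to replace $v-c$ by $\nabla v$ in an $L^{2_*}$-exponent with $2_*=2d/(d+2)<2$), one derives
\begin{equation*}
\Big(\dashint_{B_r}|\nabla v|^2\Big)^{1/2}\lesssim
\Big(\dashint_{B_{2r}}|\nabla v|^{2_*}\Big)^{1/{2_*}}
+\Big(\dashint_{B_{2r}}(|f|^2+|\nabla G|^2)\Big)^{1/2}.
\end{equation*}
For a boundary half-ball $D_{2r}$ centered on $\partial\Omega$, the same computation goes through with the test function $\eta^2 v$ (no subtraction of a constant is needed because $v=0$ on $\Delta_{2r}$, so the zero-boundary Sobolev--Poincar\'e inequality applies), yielding the analogous estimate on $D_r$ after extending $v$ by zero outside $\Omega$.

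Then I would invoke the self-improvement principle. Applying Gehring's lemma to $|\nabla v|^{2_*}$ (or equivalently Shen's real-argument Lemma \ref{lemma:6.1} with $F=|\nabla v|$, $f=|f|+|\nabla G|$), one obtains a $p_0>2$ such that
\begin{equation*}
\Big(\int_\Omega|\nabla v|^p\Big)^{1/p}\lesssim
\Big(\int_\Omega|\nabla v|^2\Big)^{1/2}+\Big(\int_\Omega(|f|^p+|\nabla G|^p)\Big)^{1/p}
\end{equation*}
for all $2\le p<p_0$. The $L^2$-term on the right is bounded via the standard energy estimate for $\mathcal{L}_0$ (which follows from strong monotonicity and $\widehat{A}(0)=0$) by $\|f\|_{L^2}+\|\nabla G\|_{L^2}\lesssim\|f\|_{L^p}+\|\nabla G\|_{L^p}$. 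Combining this with $\|\nabla u_0\|_{L^p}\le\|\nabla v\|_{L^p}+\|\nabla G\|_{L^p}$ and the trace bound for $G$ gives \eqref{pri:2.13} for $p$ slightly greater than $2$, which is exactly the asserted range $0<p-2\ll1$.

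The main technical obstacle is the \emph{boundary} reverse H\"older inequality: one must ensure that the Caccioppoli estimate and the Sobolev--Poincar\'e inequality with zero trace on $\Delta_{2r}$ both hold with constants uniform in the location of the boundary half-ball on the Lipschitz surface $\partial\Omega$. This is standard on Lipschitz domains (after straightening the boundary) but is the delicate ingredient; once it is in hand, Shen's Lemma \ref{lemma:6.1} packages the interior and boundary reverse H\"older estimates into the global $L^p$ bound without further difficulty. The nonlinearity of $\widehat{A}$ plays no special role beyond the use of the strong monotonicity/Lipschitz pair \eqref{pri:2.3}, which lets us treat $[\widehat{A}(\nabla v+\nabla G)-\widehat{A}(\nabla G)]\cdot\nabla v\ge C_1|\nabla v|^2$ in the Caccioppoli step exactly as in the linear case.
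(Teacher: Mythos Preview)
Your proposal is correct and follows essentially the same route as the paper: the paper's own proof simply states that the result is ``based on reverse H\"older's inequality'' and defers the details to \cite[Theorem 2.13]{WXZ}, which is precisely the Caccioppoli--Sobolev--Poincar\'e/Gehring scheme you outline. Your reduction to zero boundary data, the interior/boundary reverse H\"older inequalities via the monotonicity pair \eqref{pri:2.3}, and the closure by Gehring's lemma (or equivalently Lemma~\ref{lemma:6.1}) are all standard and accurate.
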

\begin{proof}
  The main idea of the proof is based on reverse
  H\"older's inequality (see for example
  \cite[Theorem 6.38]{MGLM}), and the related details may be found
  in \cite[Theorem 2.13]{WXZ}.
\end{proof}

\begin{remark}
\emph{To obtain the higher regularities of $\nabla u_0$,
it relies on the
smoothness of $\widehat{A}$. We emphasis that
$\widehat{A}$ is merely proved to be Lipschitz continuous.
Thus the later results heavily relies on De Giorgi-Nash-Moser theorem for linearized equations, and therefore we only dare
to say there exists $\alpha\in(0,1)$ such that $u\in C^{1,\alpha}(\bar{\Omega})$ under suitable boundary conditions. Usually, $\alpha$ would be very small and
there is no hope to improve this result unless we master more
information on regularities of $\widehat{A}$, which is, of course, a very interesting problem in nonlinear homogenization theories.}
\end{remark}

\begin{theorem}[$H^2$ theory]\label{thm:2.3}
Let $\Omega$ be a bounded $C^{1,1}$ domain.
Given $g\in H^{3/2}(\partial\Omega)$ and $F\in L^2(\Omega)$,
assume that
$u_0\in H^1(\Omega)$ is the weak solution of
$\mathcal{L}_0(u_0) = F$ in $\Omega$ with $u_0 = g$ on $\partial\Omega$.
Then we have $u_0\in H^2(\Omega)$ satisfying
\begin{equation}\label{pri:2.11}
 \|\nabla^2 u_0\|_{L^2(\Omega)}
 \leq C\Big\{\|F\|_{L^2(\Omega)}
 +\|g\|_{H^{3/2}(\partial\Omega)}\Big\},
\end{equation}
where $C$ depends on $\mu_0,\mu_1,d$ and the character of $\Omega$. Moreover, if $\Omega=\mathbb{R}^d$ and
$u_0$ satisfies the regular problem:
$\lambda u_0 +\mathcal{L}_0(u_0) = F$ in $\mathbb{R}^d$ with
$\lambda\in(0,\mu_0)$, then there holds
\begin{equation}\label{pri:7.3}
\sqrt{\lambda}\|\nabla u_0\|_{L^2(\mathbb{R}^d)}
+ \|\nabla^2 u_0\|_{L^2(\mathbb{R}^d)}
\lesssim \|F\|_{L^2(\mathbb{R}^d)},
\end{equation}
where the up to constant is independent of $\lambda$.
\end{theorem}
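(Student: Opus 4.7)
The plan is to prove \eqref{pri:2.11} and \eqref{pri:7.3} by Nirenberg's difference-quotient method. Although $\widehat{A}$ is only Lipschitz, Lemma \ref{lemma:2.2} supplies the strong monotonicity
\[
\bigl[\widehat{A}(\xi)-\widehat{A}(\xi')\bigr]\cdot(\xi-\xi')\geq C_{1}|\xi-\xi'|^{2},
\]
which is exactly the quadratic coercivity that will substitute for pointwise ellipticity of $\nabla_\xi\widehat{A}$ when one bounds second-order difference quotients. A preliminary reduction subtracts a global lifting $G\in H^{2}(\Omega)$ of $g$ with $\|G\|_{H^{2}(\Omega)}\lesssim\|g\|_{H^{3/2}(\partial\Omega)}$ (available because $\partial\Omega\in C^{1,1}$), so that $\bar{u}:=u_{0}-G\in H^{1}_{0}(\Omega)$ satisfies $-\nabla\cdot\widehat{A}(\nabla\bar{u}+\nabla G)=F$; the resulting $x$-dependent nonlinearity $\widetilde{A}(x,\xi):=\widehat{A}(\xi+\nabla G(x))$ inherits the same monotonicity and Lipschitz constants in $\xi$, and its $x$-dependence contributes at worst an $L^{2}$ error involving $\nabla^{2}G$.

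For the interior estimate, fix $\Omega'\subset\subset\Omega$, a cutoff $\eta\in C_{0}^{\infty}(\Omega)$ with $\eta\equiv 1$ on $\Omega'$, and for small $|h|$ and $1\leq k\leq d$ put $\Delta_{k}^{h}w(x):=h^{-1}(w(x+he_{k})-w(x))$. Testing the weak form against $\Delta_{k}^{-h}(\eta^{2}\Delta_{k}^{h}\bar{u})$ and using translation covariance of difference quotients yields
\[
\int_{\Omega}\bigl[\widetilde{A}(x+he_{k},\nabla\bar{u}(x+he_{k}))-\widetilde{A}(x,\nabla\bar{u}(x))\bigr]\cdot\nabla(\eta^{2}\Delta_{k}^{h}\bar{u})\,dx=\int_{\Omega} F\,\Delta_{k}^{-h}(\eta^{2}\Delta_{k}^{h}\bar{u})\,dx.
\]
Splitting the left-hand side into its $x$- and $\xi$-parts, applying the monotonicity to the $\xi$-part, controlling the $x$-part by $\mu_{1}\|\Delta_{k}^{h}\nabla G\|_{L^{2}}\lesssim\|\nabla^{2}G\|_{L^{2}}$, and absorbing cross terms via Young's inequality gives a bound on $\|\eta\,\Delta_{k}^{h}\nabla\bar{u}\|_{L^{2}}$ uniform in $h$; passing $h\to 0$ delivers the interior $H^{2}$ estimate. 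At the boundary, one applies a $C^{1,1}$ straightening diffeomorphism to reduce to a half-ball $B^{+}$ whose flat piece lies in $\{x_{d}=0\}$. Tangential translations $x\mapsto x+he_{k}$, $k<d$, preserve the flat piece, so the same Nirenberg scheme with a cutoff supported up to $\{x_{d}=0\}$ (and using $\bar{u}\equiv 0$ there) controls the mixed derivatives $\partial_{k}\nabla\bar{u}$ with $k<d$ in $L^{2}(B^{+})$.

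The step I expect to be the main obstacle is recovering $\partial_{d}^{2}\bar{u}\in L^{2}$, because $\widehat{A}$ is only Lipschitz and $\nabla_\xi\widehat{A}$ is defined merely almost everywhere (Remark \ref{remark:2.4}). The remedy is to write
\[
\widehat{A}_{i}(\nabla u_{0}(x+he_{d}))-\widehat{A}_{i}(\nabla u_{0}(x))=a^{h}_{ij}(x)\bigl[\partial_{j}u_{0}(x+he_{d})-\partial_{j}u_{0}(x)\bigr]
\]
with $a^{h}_{ij}(x):=\int_{0}^{1}\nabla_{\xi_{j}}\widehat{A}_{i}\bigl(t\nabla u_{0}(x+he_{d})+(1-t)\nabla u_{0}(x)\bigr)\,dt$, valid for a.e.\ $x$ by Fubini combined with Rademacher. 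By Lemma \ref{lemma:2.2} and \eqref{a:4}, the matrix $a^{h}$ is uniformly bounded and satisfies $a^{h}_{dd}\geq C_{1}$. Letting $h\to 0$ (legal because the tangential second derivatives are already in $L^{2}$), the equation rearranges to
\[
a_{dd}(x)\,\partial_{d}^{2}u_{0}=F-\sum_{(i,j)\ne(d,d)}a_{ij}(x)\,\partial_{i}\partial_{j}u_{0}+\text{lower order terms},
\]
and dividing by $a_{dd}$ provides the missing derivative in $L^{2}$. A finite covering argument combined with the interior estimate then closes \eqref{pri:2.11}. Finally, for the $\mathbb{R}^{d}$ problem with the $\lambda u_{0}$ term, testing against $u_{0}$ and using $\widehat{A}(0)=0$ with the monotonicity yields $\lambda\|u_{0}\|_{L^{2}}^{2}+C_{1}\|\nabla u_{0}\|_{L^{2}}^{2}\leq\|F\|_{L^{2}}\|u_{0}\|_{L^{2}}$; Young's inequality then gives $\sqrt{\lambda}\|\nabla u_{0}\|_{L^{2}(\mathbb{R}^{d})}\lesssim\|F\|_{L^{2}(\mathbb{R}^{d})}$. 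Repeating the interior difference-quotient computation on the whole space without cutoffs, the extra term $\lambda\int_{\mathbb{R}^{d}}(\Delta_{k}^{h}u_{0})^{2}$ appears with a favourable sign on the left, while the right-hand side is bounded by $\|F\|_{L^{2}}\|\Delta_{k}^{h}\nabla u_{0}\|_{L^{2}}$ and absorbs into the monotonicity term via Young's inequality; this produces $\|\nabla^{2}u_{0}\|_{L^{2}(\mathbb{R}^{d})}\lesssim\|F\|_{L^{2}(\mathbb{R}^{d})}$ with a constant independent of $\lambda$.
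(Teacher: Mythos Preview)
Your proposal is correct and follows essentially the same route the paper sketches: linearization plus boundary flattening, with the normal second derivative recovered algebraically from the equation via the lower bound $a_{dd}\geq C_{1}$ (exactly the mechanism the paper spells out in detail in the proof of Theorem~\ref{thm:2.19}). Your treatment of the a.e.\ differentiability of $\widehat{A}$ through the integral formula for $a^{h}_{ij}$ is at the same level of rigor as the paper's use of Remark~\ref{remark:2.4}, and your $\mathbb{R}^{d}$ argument for \eqref{pri:7.3} with the favourable-sign $\lambda$ term is clean and gives the $\lambda$-independent constant as required.
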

\begin{proof}
The main idea is linearization of the equations, coupled with
straightening the boundary arguments, where
we pointed out that the map of the local changing coordinates to flatten out the boundary does not change the type of the operator classes (see for example \cite[Theorem 2.16]{WXZ}).
\end{proof}
\begin{theorem}[interior $C^{1,\alpha}$ estimates]\label{lemma:4.2}
Given $F\in L^{p}(\Omega)$ for some $p>d$,
let $u_0\in H^1(B_{2r})$ be a solution of
$\mathcal{L}_0(u_0) = F$ in $B_{2r}$.
Then
there exists $\alpha\in(0,1)$, and a constant $C>0$
depending on $\mu_0,\mu_1, p, d$, such that
\begin{equation}\label{pri:4.2}
 [\nabla u_0]_{C^{0,\alpha}(B_{r/2})}
 \leq C r^{-\alpha}\bigg\{\frac{1}{r}\Big(\dashint_{B_r}|u_0|^2\Big)^{1/2}
 + r\Big(\dashint_{B_r}|F|^p\Big)^{1/p}\bigg\}.
\end{equation}
\end{theorem}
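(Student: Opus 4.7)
The approach is to linearize the quasilinear operator $\mathcal{L}_0 = -\nabla\cdot\widehat{A}(\nabla\cdot)$ and reduce the proof of $C^{1,\alpha}$ regularity to a H\"older estimate for linear divergence-form equations with bounded measurable, uniformly elliptic coefficients; the latter is the classical De Giorgi-Nash-Moser theorem, and here the ellipticity and boundedness of the linearized coefficients come directly from Lemma $\ref{lemma:2.2}$ together with Remark $\ref{remark:2.4}$.

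By the rescaling $u_0^r(y) := r^{-1}u_0(ry)$ and $F^r(y) := rF(ry)$, one has $-\nabla_y\cdot\widehat{A}(\nabla_y u_0^r) = F^r$ in $B_2$ (the operator $\widehat{A}$ has no $x$-dependence), so the stated estimate $\eqref{pri:4.2}$ will follow from its $r=1$ analogue together with the standard scalings of the $L^2$, $L^p$, and $C^{0,\alpha}$ norms. Assume $r=1$ and fix a small $\eta\in\mathbb{R}^d$. Set $v^\eta(x) := u_0(x+\eta) - u_0(x)$. Using the Lipschitz continuity of $\widehat{A}$ (Lemma $\ref{lemma:2.2}$) and the Newton-Leibniz formula along the segment joining $\nabla u_0(x)$ and $\nabla u_0(x+\eta)$, one obtains
\begin{equation*}
\widehat{A}(\nabla u_0(x+\eta)) - \widehat{A}(\nabla u_0(x)) = a^\eta(x)\nabla v^\eta(x), \qquad a^\eta_{ij}(x) := \int_0^1 \partial_{\xi_j}\widehat{A}_i\bigl(t\nabla u_0(x+\eta) + (1-t)\nabla u_0(x)\bigr)\,dt,
\end{equation*}
where the integrand is well-defined a.e.\ by Rademacher's theorem applied to the globally Lipschitz map $\widehat{A}$. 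Therefore $v^\eta$ satisfies the linear divergence-form equation
\begin{equation*}
-\nabla\cdot\bigl(a^\eta(x)\nabla v^\eta\bigr) = F(x+\eta) - F(x) \quad\text{in}~B_{7/4}.
\end{equation*}
By the growth bound in the second line of $\eqref{pri:2.3}$ and the uniform lower bound $\eqref{a:4}$, the field $a^\eta$ is bounded and uniformly elliptic with constants depending only on $\mu_0,\mu_1$. The interior H\"older estimate for such linear equations (see e.g.\ \cite[Theorem 8.22]{GT}) then yields, for some $\alpha\in(0,1)$ depending only on $\mu_0,\mu_1,d,p$,
\begin{equation*}
[v^\eta]_{C^{0,\alpha}(B_{1/2})} \lesssim \|v^\eta\|_{L^2(B_1)} + \|F(\cdot+\eta)-F(\cdot)\|_{L^p(B_1)}.
\end{equation*}
Dividing by $|\eta|$ and sending $|\eta|\to 0$ along coordinate directions produces H\"older continuity of $\nabla u_0$, and absorbing $\|\nabla u_0\|_{L^2(B_1)}$ through the interior Caccioppoli inequality (Lemma $\ref{lemma:2.3}$, applied to $\mathcal{L}_0$ by the same proof) concludes the proof of $\eqref{pri:4.2}$.

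The chief technical subtlety lies in making the integral formula for $a^\eta$ rigorous: since $\widehat{A}$ is only globally Lipschitz, $\nabla\widehat{A}$ exists merely almost everywhere on $\mathbb{R}^d$, so the segments along which the integration runs must avoid the null set of non-differentiability for a.e.\ $x\in B_{7/4}$. This can be handled by a Fubini-type argument, or equivalently by taking the difference of the weak formulations for $u_0(\cdot+\eta)$ and $u_0(\cdot)$ and identifying $a^\eta$ via a measurable selection/averaging procedure; once $a^\eta$ is installed as a bounded measurable uniformly elliptic field, the remainder is entirely classical. Finally, I emphasise that the $\alpha$ produced by this argument is necessarily small and cannot be improved without further information on the smoothness of $\widehat{A}$, which, as explained in Remark $\ref{remark:2.4}$, is presently unavailable beyond Lipschitz continuity.
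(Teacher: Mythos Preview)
Your linearization-plus-De Giorgi--Nash--Moser strategy is exactly what the paper does; the only difference is that the paper first invokes interior $H^2$ regularity and differentiates the equation directly, whereas you work with difference quotients. Both routes are standard, but your handling of the inhomogeneity $F$ contains a real gap.

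After dividing your displayed estimate by $|\eta|$, the right-hand side acquires the term $|\eta|^{-1}\|F(\cdot+\eta)-F(\cdot)\|_{L^p(B_1)}$, and this is \emph{not} uniformly bounded as $\eta\to 0$ for a merely $L^p$ function $F$ --- uniform boundedness of such $L^p$ difference quotients characterises $W^{1,p}$. The passage to the limit therefore fails as written. The fix is to note that for $\eta=he_k$ the difference quotient of $F$ is itself a divergence,
\begin{equation*}
\frac{F(\cdot+he_k)-F(\cdot)}{h}=\nabla\cdot\big((S^h_kF)\,e_k\big),\qquad S^h_kF(x):=\frac{1}{h}\int_0^h F(x+te_k)\,dt,
\end{equation*}
with $\|S^h_kF\|_{L^p}\le\|F\|_{L^p}$ uniformly in $h$. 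The equation for $v^{he_k}/h$ then has right-hand side in divergence form with an $L^p$ potential bounded independently of $h$, and the De Giorgi--Nash estimate applied in that form yields a bound uniform in $h$, after which the limit $h\to 0$ is legitimate. This is precisely why the paper, after differentiating, records the weak formulation $\eqref{pde:4.1}$ with $-\int F\,\nabla_k\phi$ on the right: the source term for $\nabla_k u_0$ is $\nabla\cdot(Fe_k)$ with $F\in L^p$, $p>d$, not a bare $L^p$ right-hand side to be divided by $|\eta|$.
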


\begin{proof}
The main idea is linearization.
It is fine to assume $u_0\in H^2(B(0,r))$ and we have the following equation
\begin{equation}\label{pde:4.1}
\int_{B(0,r)}\nabla_{\xi_j}\widehat{A}^i(\nabla u_0)\nabla^2_{jk}u_0\nabla_i\phi dx =
-\int_{B(0,r)} F \nabla_k\phi dx
\end{equation}
for any $\phi\in H_0^1(B(0,r))$, and $k=1,\cdots,d$.
Let $\tilde{a}_{ij}(x) = \nabla_{\xi_j}\widehat{A}^i(\nabla u_0)$, which
will give a linear operator with the uniform ellipticity on account of
$\eqref{pri:2.3}$ and $\eqref{a:4}$. Hence, the De Giorgi-Nash-Moser theorem
tells us that for any $p>d$, there exists $\alpha\in(0,1)$ and $C>1$, depending only on
$\mu_0,\mu_2, d$ and $p$, such that
\begin{equation}
 [\nabla u_0]_{C^{0,\alpha}(B(0,r/2))}
 \leq Cr^{-\alpha}\bigg\{\frac{1}{r}\Big(\dashint_{B(0,r)}|u_0|^2\Big)^{1/2}
 + r\Big(\dashint_{B(0,r)}|F|^p\Big)^{1/p}\bigg\}
\end{equation}
(see for example \cite[Theorem 8.13]{MGLM}).
\end{proof}

\begin{theorem}[boundary $C^{1,\alpha}$ estimates]\label{thm:2.19}
Let $\alpha\in (0,1)$ be obtained as in Theorem $\ref{lemma:4.2}$.
Let $\Omega$ be a bounded $C^{1,1}$ domain.
Given $g\in C^{2}(\Delta_{4r})$,
assume that
$u_{0}\in H^1(D_{4r})$ is the weak solution of
$\mathcal{L}_0(u_0) = 0$ in $D_{4r}$ and $u_0 = g$ on $\Delta_{4r}$ with $g(0) = 0$.
Then we have $\nabla u_0\in C^{0,\alpha}(D_{r}\cup\Delta_{r})$ satisfying
\begin{equation}\label{pri:7.5}
 r^{\alpha}[\nabla u_0]_{C^{0,\alpha}(\overline{D_{r}})}
 \lesssim
  \frac{1}{r}\bigg\{
  \Big(\dashint_{D_{2r}}|u_{0}|^2\Big)^{1/2}
 +r\|\nabla_{\emph{tan}}g\|_{L^{\infty}(\Delta_{2r})}\bigg\}
 +r\|\nabla\nabla_{\emph{tan}}g\|_{L^{\infty}(\Delta_{2r})},
\end{equation}
where $C$ depends on $\mu_0,\mu_1,d$.
\end{theorem}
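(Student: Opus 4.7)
\noindent\textbf{Proof proposal for Theorem \ref{thm:2.19}.}
The plan is to reduce the boundary $C^{1,\alpha}$ estimate to an interior-type estimate for a linearized equation by straightening $\partial\Omega$ and homogenizing the boundary data. First, since $\partial\Omega\in C^{1,1}$, I would pick a $C^{1,1}$ diffeomorphism $\Phi$ that flattens $\Delta_{4r}$ into a portion of $\{y_d = 0\}$, so that $D_{4r}$ becomes a (comparable) half-ball $D_{4r}^+$. The transformed equation $\mathcal{L}_0$ retains the form $-\nabla\cdot \widetilde{A}(y,\nabla v)=0$, where $\widetilde{A}$ still satisfies the structure conditions $\eqref{pri:2.3}$ (with constants depending only on $\mu_0,\mu_1$ and the $C^{1,1}$ character of $\partial\Omega$), since the Jacobian $D\Phi$ is uniformly bounded and bi-Lipschitz. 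Next, since $g\in C^2(\Delta_{4r})$ and $g(0)=0$, I would use a classical $C^{1,1}$ extension $G$ of $g\circ\Phi^{-1}$ to the flattened neighborhood such that
\[
\|\nabla G\|_{L^\infty(D_{2r}^+)}\lesssim \|\nabla_{\text{tan}}g\|_{L^\infty(\Delta_{2r})},\quad
\|\nabla^2 G\|_{L^\infty(D_{2r}^+)}\lesssim \|\nabla\nabla_{\text{tan}}g\|_{L^\infty(\Delta_{2r})},
\]
and reduce to the zero-Dirichlet problem by setting $w=v-G$.

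The heart of the argument is the linearization step, which parallels Theorem \ref{lemma:4.2}. Formally differentiating the equation in a tangential direction $k\in\{1,\dots,d-1\}$ gives
\[
-\nabla_i\bigl(\tilde{a}_{ij}(y)\,\nabla_j(\nabla_k w)\bigr)
= \nabla_i\bigl(\tilde{a}_{ij}(y)\,\nabla_j \nabla_k G\bigr) \qquad \text{in } D_{2r}^+,
\]
with $\nabla_k w = 0$ on the flat face $\Delta_{2r}$, where $\tilde{a}_{ij}(y)=\nabla_{\xi_j}\widetilde{A}^i(\nabla v(y))$ is uniformly bounded by $\eqref{pri:2.3}$ and uniformly elliptic by $\eqref{a:4}$ (but only measurable, since we only have Lipschitz continuity of $\widehat{A}$, cf. Remark \ref{remark:2.4}). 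Because $\nabla_k w$ vanishes on $\Delta_{2r}$, boundary De Giorgi--Nash--Moser theory (see e.g. \cite[Theorem 8.29]{MGLM}) yields
\[
[\nabla_k w]_{C^{0,\alpha'}(\overline{D_r^+})}
\lesssim r^{-\alpha'}\Bigl\{ r^{-1}\|\nabla_k w\|_{L^2(D_{2r}^+)}+r\,\|\nabla^2 G\|_{L^\infty(D_{2r}^+)}\Bigr\}
\]
for some $\alpha'\in(0,1)$; renaming $\alpha:=\min(\alpha,\alpha')$ if necessary keeps the exponent consistent with Theorem \ref{lemma:4.2}. Summing over $k=1,\dots,d-1$ controls all tangential components of $\nabla w$ on $\overline{D_r^+}$.

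To recover the normal component, I would use the equation algebraically: rewriting
$\tilde{a}_{dd}(y)\,\nabla_d(\nabla_d w)=-\sum_{(i,j)\neq(d,d)}\tilde{a}_{ij}\nabla^2_{ij}w-(\text{lower-order terms involving }\nabla G),
$
together with uniform ellipticity $\tilde{a}_{dd}\geq C_1>0$, allows the $C^{0,\alpha}$ norm of $\nabla_d w$ on $\overline{D_r^+}$ to be bounded in terms of the already-controlled tangential derivatives plus the $C^{1,1}$ norm of $G$. Combining with Caccioppoli (Lemma \ref{lemma:5.1}) to replace $\|\nabla w\|_{L^2}$ by $\|u_0\|_{L^2(D_{2r})}+r\|\nabla_{\text{tan}}g\|_{L^\infty}$, and then pulling back through $\Phi^{-1}$, yields the stated estimate $\eqref{pri:7.5}$.

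The main obstacle will be the linearization step: since $\widehat{A}$ is only Lipschitz, the coefficients $\tilde{a}_{ij}(y)$ are merely $L^\infty$, so the differentiation of the equation in a tangential direction is only formal and must be justified via difference quotients (or by an approximation argument regularizing $\widehat{A}$). One would test the original weak formulation against $D_k^{-h}\eta D_k^h w$ for a cutoff $\eta$ supported in $D_{2r}^+\cup\Delta_{2r}$, obtain uniform $H^1$ bounds for the tangential difference quotients, pass to the limit $h\to 0$, and only then interpret $\nabla_k w$ as a weak solution of the linearized equation. The other routine but care-demanding point is checking that the straightening and extension constants depend only on the allowed parameters; this is standard for $C^{1,1}$ domains.
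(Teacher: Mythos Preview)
Your proposal is essentially the same strategy as the paper's proof: straighten the boundary via a $C^{1,1}$ map, linearize the equation in the tangential directions, apply boundary De~Giorgi--Nash--Moser to the linearized equation (the paper also cites \cite[Theorem 8.29]{MGLM}), and then recover the normal derivative from the algebraic identity~\eqref{f:7.2}. The order is slightly different---the paper first proves the estimate on the flat half-ball and only afterwards flattens, explicitly acknowledging that this sweeps the $y$-dependence of $\widehat{A}^{J}$ under the rug; your order (flatten first) is more honest and produces exactly the ``lower-order terms'' the paper prefers to suppress. The paper also keeps the nonzero tangential trace $w^k=\nabla_k g$ rather than subtracting an extension $G$; both routes are equivalent.

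The one place where your write-up is too compressed to be a proof is the recovery of $[\nabla_d w]_{C^{0,\alpha}}$. You cannot read off H\"older continuity of $\nabla_d w$ directly from the identity $\tilde a_{dd}\nabla_{dd}^2 w = -\sum_{(i,j)\neq(d,d)}\tilde a_{ij}\nabla_{ij}^2 w+\cdots$, because the coefficients $\tilde a_{ij}$ are only $L^\infty$ and the right-hand side involves second derivatives that you do not yet control pointwise. The paper fills this in by an extra Caccioppoli step: once $[\nabla_k w]_{C^{0,\alpha}}$ is known for $k<d$, testing the linearized equation with $\eta^2(\nabla_k w - \nabla_k w(0))$ yields the Campanato-type decay $\int_{D_\rho^+}|\nabla\nabla_k w|^2\lesssim \rho^{d-2+2\alpha}$ for all $\rho\le r$; the algebraic identity then transfers this decay to $\nabla_{dd}^2 w$, and Morrey's theorem converts it into $[\nabla_d w]_{C^{0,\alpha}}$. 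You should insert this Caccioppoli--Morrey bridge explicitly.
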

\begin{proof}
The main idea can be found in Theorem \cite[Theorem 13.2]{GT}
and we provide a proof for the reader's convenience.
Roughly speaking, the proof includes two ingredients. The first one is the so-called flatten boundary arguments, and
then we linearize the transferred equations and appeal to
boundary H\"older estimates for linear equations. However,
to avoid the proof involving ``lower order terms'', we prefer
to flatten boundary in the second step. Although this way
includes flaw, it has already revealed the key information and techniques therein.

\textbf{Step 1.} Consider the equations on the flatten boundary region, i.e., $D_{4r}^{+} := B(0,4r)\cap\{x\in\mathbb{R}^d:x_d>0\}$ and $T_{4r} = B(0,4r)\cap\{x\in\mathbb{R}^d:x_d=0\}$,
\begin{equation}\label{pde:7.1}
\nabla\cdot\widehat{A}(\nabla u_0)=0 \quad\text{in}~D_{4r}^{+},
\qquad u_0 = g \quad \text{on}~ T_{4r}.
\end{equation}

We claim that there exists $\alpha\in(0,1)$ such that the following estimate
\begin{equation}\label{f:7.4}
 r^{\alpha}[\nabla u_0]_{C^{0,\alpha}(\overline{D_{\frac{r}{2}}^{+}})}
 \lesssim \bigg\{
 \frac{1}{r}\Big(
 \dashint_{D_{2r}^{+}}|u_0|^2\Big)^{\frac{1}{2}}
 +\frac{1}{r}\|g\|_{L^\infty(T_{2r})}
 + \|\nabla_{\text{tan}}g\|_{L^\infty(T_{2r})}
 + r\|\nabla\nabla_{\text{tan}}g\|_{L^\infty(T_{2r})}\bigg\}.
\end{equation}
By rescaling techniques one may assume $r=1$. The idea is
to linearize the equation $\eqref{pde:7.1}$ with respective
to the tangential directions, and  we obtain the linearized
equation as follows:
\begin{equation}\label{pde:7.2}
 \nabla\cdot a \nabla w^k = 0
 \quad\text{in}~~D_{4}^{+},
 \qquad w^k = \nabla_k g
 \quad \text{on}~~T_4
 \qquad \text{for~} k=1,\cdots,d-1,
\end{equation}
where $w^k = \nabla_k u_0$ and the coefficient $a=(a_{ij})$ with  $a_{ij}=\partial_{\xi_j}\widehat{A}_i(\nabla u_0)$.
From Lemma $\ref{lemma:2.4}$, it is known $a$ is a uniform elliptic coefficient.
It follows from boundary H\"older estimates (see for example
\cite[Theorem 8.29]{}) that there exists $\alpha\in(0,1)$
(which is usually very small even when the boundary data
is sufficiently smooth) such that
\begin{equation}\label{f:7.1}
 \big[w^k\big]_{C^{0,\alpha}(\overline{D_{1}^{+}})}
 \lesssim \|w^k\|_{L^2(D_{2}^{+})}
 + \|\nabla_{\text{tan}} g\|_{C^{0,\alpha}(T_2)} =: K,
\end{equation}
where we denote $\nabla_k$ on $T_{2}$ by $\nabla_{\text{tan}}$ for any $k=1,\cdots,d-1$. In fact, the estimate
$\eqref{f:7.1}$ revealed that we have controlled the H\"older seminorm for $\nabla_{ij}^{2}u_0$ and
$\nabla_{id}^{2}u_0$ with $i,j=1,\cdots,d-1$. The next job is
to show estimates for $\nabla_{dd}^2 u_0$. This time, we appeal to the equation $\eqref{pde:7.1}$, and it tells us
\begin{equation*}
\sum_{i,j=1}^{d} \partial_{\xi_j}\widehat{A}_i(\nabla u_0)\nabla_{ij}^2 u_0 = 0.
\end{equation*}
By noting $a_{ij} = \partial_{\xi_j}\widehat{A}_i(\nabla u_0)$
the above equality implies
\begin{equation}\label{f:7.2}
 -a_{dd}\nabla_{dd}^2 u_0 =
 \sum_{i=1}^{d-1}a_{id}\nabla^2_{id}u_0+ \sum_{j=1}^{d-1}a_{dj}\nabla^2_{dj}u_0
 +\sum_{i,j=1}^{d-1}a_{ij}\nabla^2_{ij}u_0
\end{equation}
(recalling Remark $\ref{remark:2.4}$ one may have $a_{dd}\geq C_1>0$). To complete the argument, let
$\eta\in C^{1}_0(B(0,2r))$ with $0<r<(1/2)$ be a cut-off function.
Then take $\eta^2 (w^k-c)$ with $c\in\mathbb{R}$ as a test function to multiply
the equation $\eqref{pde:7.2}$, and we have
\begin{equation*}
\begin{aligned}
 \int_{D_{r}^{+}}|\nabla w^k|^2dx
 &\lesssim \frac{1}{r^2}\int_{D_{2r}^{+}}|w^k-c|^2dx
 + \int_{T_{2r}}|\nabla\nabla_{\text{tan}}g| |\nabla_{\text{tan}}g-c| dS\\
 &\lesssim r^{d+2\alpha-2}[w^k]_{C^{0,\alpha}(\overline{D_{1}^{+}})}
 + r^d \|\nabla\nabla_{\text{tan}}g\|_{L^\infty(T_1)}\\
 &\lesssim^{\eqref{f:7.1}} r^{d+2\alpha-2}
 \Big\{K+\|\nabla\nabla_{\text{tan}}g\|_{L^\infty(T_2)}\Big\},
\end{aligned}
\end{equation*}
where we take $c=w^k(0)=\nabla^kg(0)$. Inserting this estimate
back into the right-hand side of $\eqref{f:7.2}$ we obtain
\begin{equation*}
\int_{D_{r}^{+}}|\nabla^2_{dd} u_0|^2dx
\lesssim r^{d+2\alpha-2}
 \Big\{K+\|\nabla\nabla_{\text{tan}}g\|_{L^\infty(T_2)}\Big\}
\end{equation*}
for any $0<r<(1/2)$. Thus,
by Morrey's estimates (see for example \cite[Theorem 7.19]{GT}) we conclude that
$[\nabla u_0]_{C^{0,\alpha}(\overline{D_{\frac{1}{2}}^{+}})}$
for any $i,j=1,\cdots,d$, and
\begin{equation}\label{f:7.3}
[\nabla u_0]_{C^{0,\alpha}(\overline{D_{1/2}^{+}})}
\lesssim \Big\{\|u_0\|_{L^2(D_2^{+})} + \|g\|_{C^{1}(T_2)}
+\|\nabla\nabla_{\text{tan}}g\|_{L^\infty(T_2)}\Big\}.
\end{equation}

The remainder of the proof in this step is appealing to
rescaling arguments. Let $u_0(x)=u_0(ry)$ where $x\in D_{4r}^{+}$ and $y\in D_{4}^{+}$. Let $u_r(y)=\frac{1}{r}u_0(ry)$ and $g_r(y):=\frac{1}{r}g(ry)$.
It is not hard to see that
\begin{equation*}
 0 = \nabla_x\cdot\widehat{A}(\nabla_xu_0)
 =\frac{1}{r}\nabla_y\cdot\widehat{A}(\frac{1}{r}\nabla_y u(ry)) = \frac{1}{r}\nabla_y\cdot\widehat{A}(\nabla_y u_r)
 \quad \Rightarrow
 \quad
 \nabla_y\cdot\widehat{A}(\nabla_y u_r) = 0 \quad\text{in}~D_{4}^{+},
\end{equation*}
and $u_r = g_r$ on $T_4$. Thus, on account of the estimate $\eqref{f:7.3}$, we in fact obtain
\begin{equation*}
[\nabla u_r]_{C^{0,\alpha}(\overline{D_{1/2}^{+}})}
\lesssim \Big\{\|u_r\|_{L^2(D_2^{+})} + \|g_r\|_{C^{1}(T_2)}
+\|\nabla\nabla_{\text{tan}}g_r\|_{L^\infty(T_2)}\Big\}.
\end{equation*}
By noting that $u_r(y) = \frac{1}{r}u_0(ry)$
and $x=ry$, the desired estimate $\eqref{f:7.4}$ simply follows
from the result by changing variables.

\textbf{Step 2.} Flatten out the boundary arguments.
Let $\Psi: D_{4r}\to D_{4}^{+}$ be a boundary flatten map,
which is a $C^{1,1}$ map and its Jacobian matrix
$\nabla\Psi$ is bounded from above and below, which guarantees
that the transferred operator satisfies the same type conditions as $\mathcal{L}_0$ does. Precisely, set
$y=\Psi(x)$ and $v(y) = u_0(\Psi^{-1}(y))$. Thus it is not
hard to obtain $\nabla_x=\nabla\Psi\nabla_y$, and therefore
\begin{equation*}
0 = \nabla_x\cdot\widehat{A}(\nabla_x u_0)
= \nabla\Psi\nabla_y\cdot\widehat{A}(\nabla\Psi\nabla_yv)
= \nabla_y\cdot \widehat{A}^{J}(\nabla_y v),
\end{equation*}
where $\widehat{A}^{J}(\cdot) = J^{t}\widehat{A}(J\cdot)$ with $J=\nabla\Psi$ and $J^{t}$ represents
the transport of $J$. It is not hard to verify that $\widehat{A}^{J}$ satisfies
the coerciveness and growth properties $\eqref{pri:2.3}$ with different character constants. Besides, $v(y) = u_0(x) = g(\Psi^{-1}y)=:\tilde{g}(y)$. Thus, we have transferred the equations into:
$\nabla\cdot \widehat{A}^{J}(\nabla v) = 0$ in $D_{4}^{+}$
with $v = \tilde{g}$ on $T_{4}$. Then apply the estimate
$\eqref{f:7.4}$ to $v$ with $\tilde{g}$ and changing variable
back we finally obtain the desired estimates $\eqref{pri:7.5}$.

We remark that as changing variable back, we will require the map $\Psi$ to be $C^{1,1}$, although this requirement
can not be observed from the most operations
in the second step. Essentially, it is because of the  linearizing of the equations, compared with the related theory for  linear equations.
\end{proof}

\subsection{Local boundary estimates on correctors}

\begin{lemma}[local boundary estimates]\label{lemma:7.1}
Let $\omega$ satisfy the separated property $\eqref{g}$.
Suppose that $A$ satisfy the conditions $\eqref{a:1}$
and $\eqref{a:2}$.
Let $u\in H^{1}_{\text{loc}}(Y\cap \omega)$ be a nonnegative solution of
$\nabla\cdot A(y, \nabla u) = 0$ in
$Y\cap\omega$ with $\vec{n}\cdot A(y, \nabla u) =0$ on $\partial\omega$. Then for any $B_r\subset B_{R}
\subset Y$ centered at
$\partial\omega$ with $0<r<R/4$, there hold the
\emph{local boundedness estimate}
\begin{equation}\label{pri:7.1}
  \sup_{y\in Y\cap \omega\cap B_{r}}u(y)\lesssim
  \Big(\dashint_{Y\cap \omega\cap B_{R}}|u|^p\Big)^{1/p}
\end{equation}
for any $p>0$,
and the weak Harnack inequality
\begin{equation}\label{pri:7.2}
  \inf_{y\in Y\cap \omega\cap B_{r}}u(y)\gtrsim
  \Big(\dashint_{Y\cap \omega\cap B_{R}} |u|^q
  \Big)^{1/q}
\end{equation}
is true for $1<q<\frac{2d}{d-2}$, where the up to constant depends only on $\mu_0,\mu_1,d,p,q$.
\end{lemma}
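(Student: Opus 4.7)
\begin{pf}[Proposal]
The plan is to run Moser's iteration scheme adapted to the perforated setting. The key point is that the natural (conormal) boundary condition $\vec{n}\cdot A(y,\nabla u)=0$ on $\partial\omega$ makes all boundary contributions on $\partial\omega$ vanish when we integrate by parts against powers of $u$, so the argument proceeds exactly as in the interior case provided we have access to a uniform Sobolev--Poincar\'e inequality on $Y\cap\omega\cap B_R$.

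First I would derive the Caccioppoli inequality for powers. For $\beta\ge 1/2$, $\delta>0$, and $\eta\in C_0^\infty(B_R)$, plugging the admissible test function $\phi=\eta^2(u+\delta)^{2\beta-1}$ into the weak formulation of $\nabla\cdot A(y,\nabla u)=0$ and using $\vec{n}\cdot A(y,\nabla u)=0$ on $\partial\omega$ to kill the boundary term on $\partial\omega\cap B_R$, the conditions $\eqref{a:1}$ together with Young's inequality produce
\begin{equation*}
\int_{Y\cap\omega\cap B_R}\eta^2\bigl|\nabla (u+\delta)^\beta\bigr|^2 dy
\lesssim \beta^2\int_{Y\cap\omega\cap B_R}|\nabla\eta|^2(u+\delta)^{2\beta}dy.
\end{equation*}
For $\beta<0$ and $\beta\ne -1/2$ the same calculation applies to the positive function $u+\delta$ (both supersolution/subsolution properties being irrelevant since we only use the equation itself plus $\eqref{a:1}$).

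Second I would feed this into a Sobolev inequality on the perforated domain. The separated property $\eqref{g}$ together with the $C^{1,\alpha}$ boundary of each hole ensure that $Y\cap\omega\cap B_R$ is a Lipschitz domain whose character is controlled uniformly in the position of $B_R$, so Lemma $\ref{lemma:2.20}$ (at unit scale, $\varepsilon=1$) gives for any $w$ supported in $B_R$
\begin{equation*}
\Bigl(\int_{Y\cap\omega\cap B_R}|w|^{2^*}dy\Bigr)^{1/2^*}
\lesssim \Bigl(\int_{Y\cap\omega\cap B_R}|\nabla w|^2 dy\Bigr)^{1/2},
\end{equation*}
with $2^*=2d/(d-2)$. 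Applying this to $w=\eta(u+\delta)^\beta$ and combining with the Caccioppoli estimate above yields the reverse-H\"older chain
\begin{equation*}
\Bigl(\dashint_{Y\cap\omega\cap B_{r_1}}(u+\delta)^{2\chi\beta}\Bigr)^{\!1/(2\chi\beta)}
\lesssim \Bigl(\tfrac{C\beta}{r_2-r_1}\Bigr)^{\!1/\beta}\Bigl(\dashint_{Y\cap\omega\cap B_{r_2}}(u+\delta)^{2\beta}\Bigr)^{\!1/(2\beta)},
\end{equation*}
with $\chi=d/(d-2)$, valid for all $r\le r_1<r_2\le R$ and all $\beta\ge 1/2$.

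Third I would iterate over $\beta_k=\chi^k/2$ along a geometric sequence of radii shrinking from $R$ to $r$, and use a standard interpolation to replace $p=2$ by arbitrary $p>0$; letting $\delta\to 0$ gives $\eqref{pri:7.1}$. For the weak Harnack $\eqref{pri:7.2}$, applying the same iteration with negative exponents $\beta<0$ yields
$\inf_{Y\cap\omega\cap B_r}u\gtrsim (\dashint_{Y\cap\omega\cap B_R}(u+\delta)^{-p_0})^{-1/p_0}$ for some $p_0>0$. The gap between negative and positive exponents is closed in Moser's classical way: testing with $\eta^2/(u+\delta)$ produces a Caccioppoli estimate for $\log(u+\delta)$, which together with a Poincar\'e inequality on the perforated domain shows $\log(u+\delta)\in\mathrm{BMO}$ on $Y\cap\omega\cap B_R$; John--Nirenberg then bridges the two regimes and gives $\eqref{pri:7.2}$ for any $1<q<2d/(d-2)$ after letting $\delta\to 0$.

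The main obstacle is the uniform Sobolev--Poincar\'e inequality on $Y\cap\omega\cap B_R$ when the ball is centred on $\partial\omega$: a priori the Lipschitz character of this domain could blow up for balls tangent to the holes. This is where the separated property $\eqref{g}$ and the periodic $C^{1,\alpha}$ structure of $\partial\omega$ are crucial, as they give uniform bounds on the Lipschitz constants independent of the centre within $Y$, which in turn makes the extension-operator constant in Lemma $\ref{extensiontheory}$ (and hence the Sobolev constant) uniform. Once this uniformity is in place, the remainder of the argument is a faithful transcription of Moser's scheme.
\end{pf}
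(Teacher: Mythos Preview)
Your proposal is correct and follows essentially the same route as the paper: Moser iteration on the perforated region with the Neumann condition killing boundary terms on $\partial\omega$, then the John--Nirenberg lemma applied to $\log u$ to bridge the positive and negative exponent regimes for the weak Harnack inequality. The only cosmetic differences are that the paper regularizes via truncation $u_M=\min\{u,M\}$ and the shift $u_k=u+1/k$ rather than your uniform shift $u+\delta$, and the paper simply invokes ``Sobolev's inequality'' in Step~4 without naming Lemma~\ref{lemma:2.20} explicitly. One small point: the lemma as stated only assumes the separated property~\eqref{g} (plus the standing Lipschitz assumption on $\omega$), not the $C^{1,\alpha}$ regularity of $\partial\omega$ that you invoke; the Lipschitz structure together with periodicity already suffices for the uniform Sobolev constant.
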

\begin{proof}
The main ideas had been well presented in \cite{MGLM,GT,MZ}, and
we provide a proof for the sake of the reader's convenience.
There are five steps to complete the whole arguments.

\textbf{Step 1.} We claim that if $u\in H^{1}_{loc}(Y\cap \omega)$ is a solution satisfying
\begin{equation*}
  \int_{Y\cap \omega}A(y,\nabla u)\cdot\nabla vdx=0
\end{equation*}
for any $v\in C^{1}_{0}(B_{R})$ with $B_{R}\subset\subset Y$.
Then $u^{+}=\max\{u,0\}$ is a sub-solution, which means that
\begin{equation}\label{f:5.1}
  \int_{Y\cap \omega}A(y,\nabla u^{+})\cdot\nabla v dx
  =\int_{Y\cap \omega\cap\{u>0\}}A(y,\nabla u)\cdot\nabla vdx\leq 0
\end{equation}
for any $v\geq 0$ and $v\in C^{1}_{0}(B_{R})$.
To see this,
let $v_{k}=\min\{ku^{+},1\}$.
Then for $\varphi\geq 0, \varphi\in C^{1}_{0}(B_{R})$ we have
\begin{equation*}
  0=\int_{Y\cap \omega}A(y,\nabla u)
  \cdot\nabla(\varphi v_{k})dx=\int_{Y\cap \omega}A(y,\nabla u)\cdot\nabla\varphi v_{k}dx+\int_{Y\cap \omega}A(y,\nabla u)\cdot \nabla v_{k}\varphi dx,
\end{equation*}
and this together with $\eqref{a:2}$ implies that
\begin{equation*}
\int_{Y\cap \omega}A(y,\nabla u)\cdot\nabla\varphi
v_{k}dx=-k\int_{Y\cap \omega\cap\{0<u^{+}\leq\frac{1}{k}\}}
A(y,\nabla u)\cdot\nabla u^{+}\varphi dx\leq-k\mu_{0}
\int_{Y\cap \omega\cap\{0<u^{+}\leq\frac{1}{k}\}}
|\nabla u^{+}|^{2}\varphi\leq 0.
\end{equation*}
Hence,
Let $k\to \infty$ one may obtain
\begin{equation*}
\int_{Y\cap \omega}A(y,\nabla u^{+})\cdot
\nabla\varphi dx\leq 0.
\end{equation*}

\textbf{Step 2.}
Let $B_{R}=B_{R}(x_{0})$ with $x_0\in\partial\omega$ and
$D_{R}=B_{R}\cap Y\cap \omega$. Let $\eta\in C_0^1(B_R)$
be a cutoff function such that $\eta=1$ on $B_{r}$ and $\eta=0$ on
$\mathbb{R}^{d}\setminus B_{R}$ with $|\nabla\eta|\leq C/(R-r)$.
For any $\beta \geq 0$, one may establish that
\begin{equation}\label{appendix:5.1}
 \int_{D_{R}}
  \eta^{2}|\nabla u|^{2}u^{\beta}dx
  \leq C(\mu_{0},\mu_{1},d,\beta)
  \int_{D_{R}}|\nabla \eta|^{2}u^{\beta+2}dx.
\end{equation}
To do so, it is firstly known by the assumption that $u=u^+$,
and then we set $v=\eta^{2}u_{M}^{\beta}u>0$, where
\begin{equation*}
  u_{M}=
  \left\{\begin{aligned}
  & u, &~& \text{if~}0<u<M;\\
  & M, &~& \text{if~} u\geq M.
  \end{aligned}\right.
\end{equation*}
Then plugging $v$ back into $\eqref{f:5.1}$ one may obtain
\begin{equation*}
\begin{aligned}
0& \geq\int_{D_{R}}A(y,\nabla u)\cdot\nabla(\eta^{2}u_{M}^{\beta}u)dx\\
& =\int_{D_{R}}\eta^{2} A(y,\nabla u)\cdot(\beta u^{\beta-1}_{M}u
\nabla u_{M}+u_{M}^{\beta}\nabla u)dx+2\int_{D_{R}}
\eta A(y,\nabla u)
\cdot\nabla\eta u^{\beta}_{M}udx
 :=I_{1}+I_{2}.
\end{aligned}
\end{equation*}
It follows from the condition $\eqref{a:2}$ that
\begin{equation*}
\begin{aligned}
 I_{1}
& \geq
\beta\mu_{0}\int_{D_{R}}
  \eta^{2}|\nabla u_{M}|^{2}u_{M}^{\beta}dx
+\mu_{0}\int_{D_{R}}\eta^{2}|\nabla u|^{2}u_{M}^{\beta}dx \\
 I_{2}
& \geq -2\mu_{1}\int_{D_{R}}\eta|\nabla u|
|\nabla\eta|u_{M}^{\beta}udx
 \geq -\frac{\mu_{0}}{2}
 \int_{D_{R}}\eta^{2}|\nabla u|^{2}u_{M}^{\beta}dx
 -C(\mu_{0},\mu_{1})\int_{D_{R}}|\nabla \eta|^{2}
 u_{M}^{\beta}u^{2}dx,
  \end{aligned}
\end{equation*}
where we use Young's inequality in the last step.
Thus, on account of $I_{1}+I_{2}\leq 0$, we arrive at
\begin{equation*}
  \frac{\mu_{0}}{2}\int_{D_{R}}\eta^{2}
  |\nabla u|^{2}u_{M}^{\beta}dx
  +\beta\mu_{0}\int_{D_{R}\cap\{0<u<M\}}
  \eta^{2}|\nabla u_{M}|^{2}u_{M}^{\beta}dx
  \leq C(\mu_{0},\mu_{1})\int_{D_{R}}
  |\nabla \eta|^{2}u^{\beta+2}dx,
\end{equation*}
and letting $M\to\infty$ leads to the stated estimate
\eqref{appendix:5.1}, which is in fact a good formula
for the later iteration.

\textbf{Step 3.}
In this part, we plan to derive the same formula like
$\eqref{appendix:5.1}$ for the non-negative
supersolution which is defined as follows:
\begin{equation*}
 \int_{D_{R}}A(y,\nabla u)\cdot\nabla v dx
 \geq 0
\end{equation*}
for any $v\in C^{1}_{0}(B_{R})$ with $v\geq 0$.
To achieve our goal, we set
$v=\eta^{2}u_{k}^{\beta}$, where
$u_{k}=u+\frac{1}{k}$ and $\beta<0$. Hence,
\begin{equation*}
  2\int_{D_{R}}\eta A(y,\nabla u)
  \cdot\nabla\eta u_{k}^{\beta}dx+\beta\int_{D_{R}}
  \eta^{2}A(y,\nabla u)\cdot\nabla u u_{k}^{\beta-1}dx\geq 0.
\end{equation*}
In terms of the condition $\eqref{a:2}$, we obtain
\begin{equation*}
\begin{aligned}
-\beta\mu_{0}\int_{D_{R}}\eta^{2}|\nabla u|^{2}u_{k}^{\beta-1}
& \leq 2\mu_{1}\int_{D_{R}}|\nabla u|\eta|\nabla \eta|u_{k}^{\beta}dx\\
& \leq -\frac{\beta \mu_{0}}{2}\int_{D_{R}}|\nabla u|^{2}\eta^{2}u_{k}^{\beta-1}dx+C(\mu_{0},\mu_{1},|\beta|,d)
\int_{D_{R}}|\nabla \eta|^{2}u_{k}^{\beta+1}dx,
\end{aligned}
\end{equation*}
where we employ Young's inequality again, and it implies
\begin{equation*}
  \int_{D_{R}}\eta^{2}|\nabla u|^{2}u_{k}^{\beta-1}dx\leq C(\mu_{0},\mu_{1},|\beta|,d)
\int_{D_{R}}|\nabla \eta|^{2}u_{k}^{\beta+1}dx.
\end{equation*}
Let $k\to\infty$ and $\tilde{\beta}=\beta-1$, we have
\begin{equation}\label{appendix:5.2}
  \int_{D_{R}}\eta^{2}|\nabla u|^{2}u^{\tilde{\beta}}dx\leq
  C(\mu_{0},\mu_{1},|\beta|,d)
\int_{D_{R}}|\nabla \eta|^{2}u^{\tilde{\beta}+2}dx.
\end{equation}

\textbf{Step 4.} We claim that \eqref{appendix:5.1}
implies the local boundedness estimate $\eqref{pri:7.1}$.
We first prove the case $p\geq 2$.
Let $w=u^{\frac{\beta}{2}+1}$, and then the estimate
\eqref{appendix:5.1} may be rewrite as
\begin{equation*}
  \int_{D_{R}}\eta^{2}|\nabla w|^{2}dx
  \lesssim\int_{D_{R}}|\nabla\eta|^{2}w^{2}dx,
\end{equation*}
which together with Sobolev's inequality gives
\begin{equation*}
  \Big(\int_{D_{R}}|\eta w|^{2\chi}dx\Big)^{1/\chi}
  \lesssim\int_{D_{R}}|\nabla \eta|^{2}w^{2}dx,
\end{equation*}
where $\chi= \frac{d}{d-2}$ if $d\geq 3$, and we prefer some
$\chi>2$ in the case of $d=2$.
Recalling $w=u^{\frac{\beta}{2}+1}$, there holds
\begin{equation*}
  \Big(\int_{D_{r}}\big(u^{\beta+2}\big)^{\chi}dx\Big)^{1/\chi}\lesssim
  \frac{1}{(R-r)^{2}}\int_{D_{R}}u^{\beta+2}dx.
\end{equation*}
By setting $\gamma=\beta+2\geq 2$, the above inequality becomes
\begin{equation*}
  \bigg(\int_{D_{r}}u^{\gamma\chi}dx\bigg)^{\frac{1}{\chi\gamma}}
  \lesssim
  \frac{1}{(R-r)^{\frac{2}{\gamma}}}
  \bigg(\int_{D_{R}}u^{\gamma}dx\bigg)^{1/\gamma}.
\end{equation*}
In order to realize the iteration, we prefer
$R_{i}=\frac{R}{2}+\frac{R}{2^{i+1}}$,
$\rho_{i}=2\chi^{i}$ and
$\rho_{i}=\chi\rho_{i-1}, i=0,1,2,\cdots$.
Hence, one may have the formula
\begin{equation*}
  \bigg(\dashint_{D_{R_{i+1}}}u^{\rho_{i+1}}
  \bigg)^{\frac{1}{\rho_{i+1}}}\leq
  C^{\frac{i}{\rho_{i}}}\bigg(\dashint_{D_{R_{i}}}
  u^{\rho_{i}}\bigg)^{\frac{1}{\rho_{i}}}
  \leq C^{\sum\frac{i}{\rho_{i}}}
  \bigg(\dashint_{D_{R}}
  u^{2}\bigg)^{\frac{1}{2}},
\end{equation*}
in which the constant $C$ is independent of $R$.
Consequently,
letting $i\to\infty$, we have proved the desired estimate
$\eqref{pri:7.1}$ for $p\geq 2$.
The case $0<p<2$ easily follows from another
iteration argument and we left it to the readers.

\textbf{Step 5.} We turn to show the estimate
$\eqref{pri:7.2}$ for some $p_0>0$. In terms of
the estimate $\eqref{appendix:5.2}$, it is clear to see that
$u^{-1}$ in fact satisfies the estimate \eqref{appendix:5.1},
which means $u^{-1}$ plays a role as subsolution. Thus, there holds
\begin{equation*}
\sup_{D_{\frac{R}{2}}}u^{-1}
\leq C \Big(\dashint_{D_{R}}u^{-p}\Big)^{\frac{1}{p}},
\end{equation*}
for any $p>0$, and this implies
\begin{equation*}
\begin{aligned}
  \inf_{D_{\frac{R}{2}}}u
 \geq C\Big(\dashint_{D_{R}}u^{-p}\Big)^{-\frac{1}{p}}
 = C\Big(\dashint_{D_{R}}u^{-p}
 \dashint_{D_{R}}u^{p}\Big)^{-\frac{1}{p}}
 \Big(\dashint_{D_{R}}u^{p}\Big)^{\frac{1}{p}}.
  \end{aligned}
\end{equation*}
It's reduced to show for some $p_0>0$, there holds
\begin{equation*}
  \dashint_{D_{R}}u^{-p_0}\dashint_{D_{R}}u^{p_0}\leq C,
\end{equation*}
and it would be done if we proved the following estimate
\begin{equation}\label{appendix:5.5}
 \dashint_{D_{R}}e^{p_0|w|}\leq C,
\end{equation}
where $w=\ln u-\dashint_{B_{R}}\ln u$. To see so, we have the following computation,
\begin{equation*}
\begin{aligned}
\dashint_{D_{R}}e^{p_0 \ln u - p_0\dashint_{D_{R}}\ln u}dx
& =\dashint_{D_{R}}u^{p_0}e^{-\dashint_{D_{R}}p_0\ln u}dx\\
& \geq \dashint_{D_{R}}u^{p_0}\dashint_{D_{R}}
e^{-p_0 \ln u}dx=\dashint_{D_{R}}u^{p_0}\dashint_{D_{R}}
u^{-p_0}dx,
  \end{aligned}
\end{equation*}
where the third step follows from Jensen's inequality.
Now we just need to check \eqref{appendix:5.5}. In fact,
due to John-Nirenberg's inequality it suffices to verify
$w=\ln u-\dashint_{B_{R}}\ln u \in \text{BMO}$. To do so,
Recalling the estimate \eqref{appendix:5.2},
we choose $\beta=-2$ and then
\begin{equation*}
  \int_{D_{R}}\eta^{2}|\nabla u|^{2}u^{-2}dx\leq C\int_{D_{R}}|\nabla\eta|^{2}dx.
\end{equation*}
Noting that $\nabla w=\frac{\nabla u}{u}$, the above estimate
gives
\begin{equation*}
  \int_{D_{r}}|\nabla w|^{2}dx\lesssim r^{d-2}.
\end{equation*}
Thus, it's clear to see
\begin{equation*}
\begin{aligned}
  \dashint_{D_{r}}|w-\dashint_{D_{r}}w|dx
 \leq\Big(\dashint_{D_{r}}|w-\dashint_{D_{r}}w|^{2}dx\Big)^{1/2}
 \lesssim r\Big
 (\dashint_{D_{r}}|\nabla w|^{2}dx\Big)^{1/2}\lesssim 1.
 \end{aligned}
\end{equation*}
Hence, $w\in\text{BMO}$,
and the estimate \eqref{appendix:5.5} follows, and
this leads to the desired estimate $\eqref{pri:7.2}$.
We have completed the whole proof.
\end{proof}

\begin{center}
\textbf{Acknowledgements}
\end{center}
The first author appreciated the hospitality when
she visited the Max Planck Institute for Mathematics
in the Sciences in the winter of 2019. The second
author deeply appreciated Prof. Felix Otto and his lectures,
for the source of the idea of Theorem $\ref{thm:1.3}$.
The research was supported
by the Young Scientists Fund of the National Natural Science
Foundation of China (Grant NO. 11901262),
and supported by the Fundamental Research
Funds for the Central Universities(Grant No. lzujbky-2019-21).

\noindent Li Wang\\
School of Mathematics and Statistics, Lanzhou University,
Lanzhou, 710000, China.\\
E-mail:lwang10@lzu.edu.cn\\

\noindent Qiang Xu\\
Max Planck Institute for Mathematics in the Sciences,
Inselstrasse 22, 04103 Leipzig, Germany\\
E-mail:qiangxu@mis.mpg.de\\

\noindent Peihao Zhao\\
School of Mathematics and Statistics, Lanzhou University,
Lanzhou, 710000, China.\\
E-mail:zhaoph@lzu.edu.cn\\

\end{document}